\title[Counting Resonances with Unitary Twists]{Counting Resonances on Hyperbolic Surfaces with Unitary Twists}
\subjclass[2020]{Primary: 58C40, 58J50; Secondary: 30F35, 35P25, 57R18}
\keywords{Hyperbolic surfaces, resonance counting, unitary representations, scattering theory}
\author[M.~Doll]{Moritz Doll}
\address{Moritz Doll, University of Bremen, Department~3 -- Mathematics, 
Bibliothekstr.~5, 28359 Bremen, Germany}
\email{doll@uni-bremen.de}
\author[K.~Fedosova]{Ksenia Fedosova}
\address{Ksenia Fedosova, Albert Ludwigs University of Freiburg, Mathematical Institute, Ernst-Zermelo-Str. 1, 79104 Freiburg im Breisgau, Germany}
\email{ksenia.fedosova@math.uni-freiburg.de}
\author[A.~Pohl]{Anke Pohl}
\address{Anke Pohl, University of Bremen, Department~3 -- Mathematics, 
Bibliothekstr.~5, 28359 Bremen, Germany}
\email{apohl@uni-bremen.de}
\begin{document}

\begin{abstract}
    We present the Laplace operator associated to a hyperbolic surface~$\Gamma\setminus\mathbb{H}$ and a unitary representation of the fundamental group~$\Gamma$, extending the previous definition for hyperbolic surfaces of finite area to those of infinite area. We show that the resolvent of this operator admits a meromorphic continuation to all of~$\mathbb{C}$ by constructing a parametrix for the Laplacian, following the approach by Guillop\'e and Zworski. We use the construction to provide an optimal upper bound for the counting function of the poles of the continued resolvent.
\end{abstract}

\maketitle

\tableofcontents

\section{Introduction}

We consider a finitely generated Fuchsian group~$\group \subset \Isom^+(\h)$ together with a finite-dimensional Hermitian vector space $V$ and a unitary representation ${\twist \colon  \group \to \Unit(V)}$ of~$\group$ on~$V$. We let $X \coloneqq \group \bs \h$ denote the associated orbifold. We emphasize that we allow $\group$ to contain elliptic elements; hence $X$ is not necessarily a manifold. Nevertheless we call $X$ a \emph{hyperbolic surface} even though it need not be a genuine manifold.

The representation $\twist$ induces a Hermitian vector orbibundle $\bundle \to X$ with typical fiber $V$. On $\bundle$ we consider the Laplace operator, $\LapTwist$, which extends to an unbounded positive self-adjoint operator on $L^2(X, \bundle)$. The resolvent \[\ResTwist(s) = (\LapTwist - s(1-s))^{-1}\] is defined for $s\in\C$ with $\Re s > 1/2$ and $s(1-s)$ not in the spectrum of~$\LapTwist$, as a bounded operator $L^2(X,\bundle) \to L^2(X,\bundle)$.

\medskip

Our first main result is that the resolvent admits a meromorphic continuation to $s \in \C$.

\begin{theorem}\label{thm:resolv_merom}
Let $X = \group \bs \h$ be a geometrically finite hyperbolic surface and
$\twist \colon  \group \to \Unit(V)$ be a finite-dimensional unitary
representation.
The resolvent~$\ResTwist(s)$ admits a meromorphic continuation to $s \in \C$
with poles of finite multiplicity as an operator
\begin{align*}
\ResTwist(s) \colon  L^2_\cpt(X, \bundle) \to L^2_\loc(X, \bundle)\,.
\end{align*}
\end{theorem}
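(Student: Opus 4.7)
The plan is to adapt the parametrix construction of Guillop\'e--Zworski to the twisted setting and to the presence of elliptic orbifold points. By geometric finiteness, I would first decompose $X$ into a compact core $K$ (which may contain elliptic orbifold points), finitely many funnel ends $F_1,\dots,F_{n_f}$, and finitely many cusp ends $C_1,\dots,C_{n_c}$. Each end is covered by a standard hyperbolic model: a hyperbolic cylinder $\langle \gamma_j\rangle \bs \h$ for each funnel and a parabolic cylinder $\langle p_j\rangle \bs \h$ for each cusp. On each such model the twist $\twist$ reduces to a single unitary matrix, which is exactly what makes explicit model resolvents available.

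For each model piece I would construct a meromorphic model resolvent. On $\h$ the free resolvent $R_\h(s)$ is classically meromorphic on all of $\C$. On each funnel a twisted resolvent can be built by the method of images,
\[
R_{F_j}(s;z,z') \;=\; \sum_{k\in\Z} \twist(\gamma_j)^k \, R_\h(s)(z,\gamma_j^k z'),
\]
the sum converging absolutely thanks to the exponential orbit separation of hyperbolic isometries and uniformity in $k$ coming from $\|\twist(\gamma_j)^k\|=1$. For each cusp I would decompose $V = V_0 \oplus V_1$ with $V_0 \coloneqq \ker(\twist(p_j)-\mathrm{id})$. On $V_1$ the analogous orbit sum still converges absolutely and yields a resolvent meromorphic on all of $\C$, because no eigenvalue of $\twist(p_j)\restriction_{V_1}$ equals $1$. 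On $V_0$ the problem reduces to the scalar parabolic-cylinder resolvent, whose meromorphic continuation to $\C$ is well-known via Fourier expansion along the cusp direction and the Whittaker equation for the zero mode. For the core $K$ I would use an interior elliptic parametrix, carried out on the orbibundle $\bundle$ using orbifold Sobolev spaces (alternatively, on a finite equivariant cover).

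Gluing these pieces with a partition of unity $\{\phi_i\}$ subordinate to the decomposition and overlapping cutoffs $\{\psi_i\}$ with $\psi_i\equiv 1$ on $\operatorname{supp}\phi_i$, the candidate parametrix is
\[
M(s) \;=\; \psi_K R_K(s)\phi_K \;+\; \sum_{j} \psi_{F_j} R_{F_j}(s)\phi_{F_j} \;+\; \sum_{j} \psi_{C_j} R_{C_j}(s)\phi_{C_j}.
\]
A commutator computation gives $(\LapTwist - s(1-s))\, M(s) = I + K(s)$ with an error $K(s)$ supported in the transition regions $\operatorname{supp}\nabla \psi_i$, which are relatively compact in $X$. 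Consequently $K(s)$ is compact as an operator $L^2_\cpt(X,\bundle)\to L^2_\loc(X,\bundle)$ and depends meromorphically on $s$. Choosing $s_0$ with $\operatorname{Re} s_0$ large enough to make $\|K(s_0)\|<1$ (using that $\ResTwist(s)$ exists as an $L^2$-bounded operator in that range), the analytic Fredholm theorem produces a meromorphic inverse $(I+K(s))^{-1}$ on all of $\C$ with finite-rank residues, and $\ResTwist(s) = M(s)(I+K(s))^{-1}$ is the sought meromorphic continuation with poles of finite multiplicity.

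The main obstacle is the cusp contribution coming from the subspace $V_0$ on which the parabolic holonomy acts trivially: there the image sum diverges, genuine scattering is present, and the meromorphic model resolvent has to be set up from the indicial equation at the cusp rather than by summation; one must also check that the resulting kernel glues correctly with the compact-core parametrix (this is what forces the passage to $L^2_\cpt \to L^2_\loc$). A secondary but non-trivial technical point is that the presence of elliptic orbifold points in $K$ requires carrying out the interior construction on the orbibundle $\bundle$ with the correct functional setup, so that the commutators $[\LapTwist,\psi_i]$ and the cutoff compositions remain well-defined on the orbifold.
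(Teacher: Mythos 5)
Your overall strategy coincides with the paper's: decompose $X$ into a compact core and model funnel/cusp ends, build meromorphic model resolvents on the ends, glue with nested cutoffs so that the error $L(s)$ is supported in the transition regions, and apply the analytic Fredholm theorem after checking compactness and invertibility of $\I - L(s)$ at one point. Two steps, however, are not yet justified as written. First, for the funnel (and for the cusp on the complement of the $1$-eigenspace of the parabolic holonomy) the image sum converges absolutely only in a half-plane ($\Re s>0$ for the funnel, $\Re s>1/2$ for the cusp, since $\sigma(z,\gamma^k z')\asymp e^{|k|\ell}$ resp.\ $\asymp k^2$); absolute convergence there does \emph{not} produce the continuation of the model resolvent to all of $\C$, and unitarity of the holonomy only removes growth, it does not create decay. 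The continuation is the substantive content of the model computations: one either diagonalizes the unitary holonomy and carries out the Fourier-mode analysis in the angular/horocyclic variable (hypergeometric resp.\ Bessel equations, with the twist entering as the frequency shift $k\mapsto k+\vartheta_j$), or, for the funnel, resums the expansion of $\sigma(z,e^{k\ell}z')^{-s}$ in powers of $e^{-k\ell}$ against the geometric series $\sum_k \twist(h_\ell)^k e^{-(s+p)k\ell}$, whose poles produce exactly the funnel resonances $-\N_0\pm\ell^{-1}(\log\lambda+2\pi i\Z)$; for the cusp off the $1$-eigenspace one uses Poisson summation. Second, to obtain $\norm{L(s_1,s_0)}<1$ at some point you should take the interior piece to be the cut-off exact resolvent $\eta\,\ResTwist(s_0)\,\eta'$ at a fixed $s_0$ with $\Re s_0$ large (as your parenthetical hints), rather than a fixed elliptic parametrix: a fixed parametrix leaves an $O(1)$ smoothing error that need not become small, whereas with the cut-off resolvent the interior error is a commutator term of size $O(\norm{\ResTwist(s_0)})=O((\Re s_0)^{-1})$ plus a term vanishing at $s=s_0$, and the end errors are small by the corresponding model resolvent bounds. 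With these two repairs (and Selberg's lemma, as you note, to handle the elliptic points by passing to a finite manifold cover for the self-adjointness and the compactness of the interior term), your argument matches the paper's proof.
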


The poles of $\ResTwist(s)$ are called \emph{resonances}, the set of resonances is denoted by $\ResSet_{X,\twist}$ and the multiplicity of each resonance $s \in \ResSet_{X,\twist}$ is denoted by~$m_{X,\twist}(s)$. We define the \emph{resonance counting function} $N_{X,\twist}(r)$ as the sum of all resonances $s \in \C$ with $\abs{s} < r$ counted with multiplicities
(see \eqref{eq:resonance_counting_function}).

\medskip

Our second main result is that the counting function of the resonances admits a
quadratic upper bound.
\begin{theorem}\label{thm:upper-bound}
The resonance counting function satisfies
\begin{align*}
N_{X,\twist}(r) = O(r^2) \qquad\text{as $r \to \infty$}.
\end{align*}
\end{theorem}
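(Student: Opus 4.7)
The approach is to follow the determinant method of Guillop\'e--Zworski adapted to the twisted setting. The starting point is the parametrix $M(s)$ for $\LapTwist - s(1-s)$ constructed in the proof of Theorem~\ref{thm:resolv_merom}. This parametrix satisfies an operator identity of the form
\[
(\LapTwist - s(1-s))\, M(s) \;=\; I - K(s),
\]
where $M(s)$ is a meromorphic family of bounded maps $L^2_\cpt(X,\bundle) \to L^2_\loc(X,\bundle)$ with only finitely many ``model'' poles in any bounded subset of $\C$, and $K(s)$ is a holomorphic family of compact operators arising from commutators of $\LapTwist$ with interior cutoffs. Consequently $\ResTwist(s) = M(s)(I - K(s))^{-1}$, and the resonances are either the finitely many poles of $M(s)$ in each disk or the zeros of a regularized Fredholm determinant $D(s) \coloneqq \det\nolimits_p(I - K(s))$ for some suitable Schatten index $p$.

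The decisive estimate is a Schatten-class bound of the form
\[
\|K(s)\|_{\mathfrak{S}^p}^{p} \;\leq\; C(1+|s|)^{2} \qquad (s\in\C),
\]
uniformly in $s$. Granting this, the standard inequality $|\det_p(I - T)| \leq \exp(c_p \|T\|_{\mathfrak{S}^p}^p)$ implies that $D(s)$ is an entire function of order at most $2$, and Jensen's formula yields $\#\{s : D(s)=0,\ |s|<r\} = O(r^2)$. Combined with the contribution from the poles of $M(s)$, which is $O(1)$ in any bounded region, this gives $N_{X,\twist}(r) = O(r^2)$.

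To verify the Schatten bound, I would split $K(s)$ according to the interior and funnel pieces of the parametrix. For the interior part, the kernel is that of the free resolvent on $\h$ multiplied by compactly supported cutoffs; it is given explicitly through Legendre $Q$-functions and admits the familiar polynomial-in-$|s|$ times exponential-in-hyperbolic-distance estimates on its continuation to all of $\C$. For the funnel part, one exploits the explicit separation-of-variables form of the funnel resolvent. The unitary representation $\twist$ enters only through a tensor factor $V$, contributing a multiplicative $\dim V$ to singular-value counts but not affecting the exponent $2$ in the Schatten bound; the quadratic exponent itself reflects the two-dimensional geometry via a Weyl-type count of eigenvalues on the compact interior piece and of transverse frequencies in the funnel.

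The main obstacle is the uniformity of the kernel estimates across the whole complex plane: as $\Re s \to -\infty$ the meromorphically continued free resolvent kernel grows rapidly in the hyperbolic distance, and one must show that the compact support of the interior cutoffs together with the exponential transversal decay in the funnel absorb this growth without spoiling the $(1+|s|)^2$ rate. A secondary bookkeeping issue is tracking the trivial poles originating from the funnel model carefully so that they do not inflate the counting function beyond $O(r^2)$.
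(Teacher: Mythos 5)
Your overall architecture --- parametrix identity, Fredholm determinant, order-two entire function, Jensen's formula --- is the same as the paper's (which forms $D(s)=\det(\I-L_3(s)^3)$ and invokes Vodev's argument rather than a $\det_p$-regularization, a cosmetic difference). However, there are two genuine gaps. First, your $K(s)$ is \emph{not} a holomorphic family on $\C$: the error term of the parametrix contains the meromorphically continued model resolvents of the funnel and cusp ends, so the determinant is meromorphic, with poles at the model resonances. In the twisted case the funnel resonances form a set of counting function $\asymp r^2$ whose location depends on the eigenvalues $e^{2\pi i\vartheta_j}$ of $\twist(h_\ell)$ (Proposition~\ref{prop:fourier-funnel}); making the determinant entire requires multiplying by an explicit Weierstrass product $g_{F_\ell,\twist}$ over these resonances, proving it has order $2$ and finite type, and then showing that the \emph{product} $g_{F_\ell,\twist}(s)\det(\I+c\abs{K_f(s)})$ is $O(e^{C\ang{s}^2})$ --- which forces the singular-value estimates of the scattering matrix to carry distance-to-resonance weights $d_n(s)$ with the correct multiplicities $m_{\vartheta_j}$. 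This is not ``secondary bookkeeping''; it is where the representation genuinely enters beyond a factor of $\dim V$, and it is the main new difficulty relative to Guillop\'e--Zworski.

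Second, the ``decisive estimate'' $\norm{K(s)}_{\mathfrak{S}^p}^p\le C\ang{s}^2$ uniformly on $\C$ is not attainable and is not what is proved. On the left half-plane the funnel term must be handled via the reflection identity $R_{F_\ell,\twist}(s)=R_{F_\ell,\twist}(1-s)+(2s-1)E_{F_\ell,\twist}(1-s)S_{F_\ell,\twist}(s)E_{F_\ell,\twist}(1-s)^T$; the individual singular values of $S_{F_\ell,\twist}(s)$ grow like $\ang{s}^{1-2\Re s}$ (super-exponentially in $\abs{\Re s}$), and the determinant bound is recovered only from their decay in the index $n$ together with the exponential decay of the Poisson-operator singular values, not from a polynomial Schatten bound. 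Moreover, in the critical strip one only obtains $e^{C\ang{s}^{5/2}}$ (funnel) and $e^{C\ang{s}^3}$ (cusp), and a Phragm\'en--Lindel\"of interpolation is needed to restore the exponent $2$. Finally, you split $K(s)$ into interior and funnel pieces only; the cusp term requires its own treatment (Bessel-function estimates, the factor $(2s-1)^{n_c^\twist}$ cancelling the pole at $s=1/2$, and its own reflection argument). As written, your proposal would not close.
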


\subsection*{Prior Results}
For finite area $X$ (hence no funnel ends) possibly with unitary twists, we refer to the articles of Phillips~\cite{Phillips_scattering_twist,Phillips_perturb_twist} and the books by Venkov~\cite{Venkov_book} and Hejhal~\cite{Hejhal1,Hejhal2}.
For hyperbolic surfaces without elliptic points and no twists, Guillop\'e~\cite{Guillope} showed the meromorphic continuation of the resolvent and
Guillop\'e--Zworski~\cite{GuZw95} proved the upper bound on the number of resonances.
For asymptotically hyperbolic manifolds and infinite volume hyperbolic surfaces without cusps, the meromorphic continuation was proved by Mazzeo--Melrose~\cite{MaMe87}, see also Guillarmou~\cite{Guillarmou05}.

\subsection*{Structure}
We will mainly follow the approach of Guillop\'e--Zworski~\cite{GuZw95}. However, since our Laplacian acts on sections in a vector orbibundle, we have to modify the construction of the meromorphic continuation of the resolvent. In particular, the gluing construction of Guillop\'e--Zworski is more involved, with a new obstacle arising from finding a suitable way to restrict the representation to the funnel ends and the cusp ends. For the model calculations of the funnel ends and the cusp ends, we can diagonalize the representation to reduce the Fourier expansion of the resolvent kernel to the untwisted case. To prove the upper bound on the number of resonances, we relate the resonances to zeros of a determinant of a certain operator that is closely related to the resolvent and estimate then the number of zeros of this determinant. We use the explicit representations of the resolvent for the funnel ends and the cusp ends.
Since the resonances of the funnel depend of the eigenvalues of the representation this is more delicate than in the untwisted case.

\subsection*{Outline}
The article is structured as follows: we collect the necessary background material about  hyperbolic spaces and representations of Fuchsian groups in Section~\ref{sec:geometry}. In particular, we present the vector bundle associated to~$\twist$, its restriction to the boundary as well as the notion of section that are smooth up to the boundary. For the two model ends, the funnel and the cusp, we explicitly compute the meromorphic continuation of the resolvent in Section~\ref{sec:model}. The main analysis of the Laplacian or rather its resolvent is performed in Section~\ref{sec:resolvent},
where we prove Theorem~\ref{thm:resolv_merom}, establishing that the resolvent $(\LapTwist - s(1-s))^{-1}$ admits a meromorphic extension to $\C$ with poles of finite multiplicity.
In Section~\ref{sec:upper-bound} we prove Theorem~\ref{thm:upper-bound}, the upper bound for the resonances.

\subsubsection*{Acknowledgements}

KF and AP wish to thank the Institut Mittag-Leffler where part of this work was
done during the conference ``Thermodynamic Formalism -- Applications to
Geometry, Number Theory, and Stochastics.'' All authors acknowledge partial support by the Trimester Program ``Dynamics: Topology and Numbers'' by the Hausdorff Research Institute for Mathematics in Bonn. AP's research is funded by the Deutsche Forschungsgemeinschaft (DFG, German Research Foundation) -- project no.~441868048 (Priority Program~2026 ``Geometry at Infinity''). MD was partially funded by a Universit\"at Bremen ZF 04-A grant.

\section{Preliminaries}
We denote the Euclidean distance function on $\C$ by~$d_{\C}$. For $z \in \C$ we define the Japanese bracket $\ang{z} \coloneqq \left(1 + \abs{z}^2\right)^{1/2}$.

\subsection*{Conventions}
Let $(X,g_X)$ be an oriented Riemannian manifold. We denote the canonical measure by $d\mu_X$. We denote by~$\CcI(X)$ the space of complex-valued compactly supported smooth (i.e., infinitely often differentiable) functions on~$X$, endowed with its canonical LF topology. Its topological dual space, the space of distributions, is denoted by~$\CmI(X)$. Further we denote the canonical dual pairing between~$u \in \CcI(X)$ and~$f \in \CmI(X)$ by~$\ang{f, u} \coloneqq f(u)$. We denote the dual space of $\CI(X)$ by $\CcmI(X)$, which is the space of compactly supported distributions. We always identify operators $A \colon  \CcI(X) \to \CmI(X)$ with their Schwartz kernel $K_A \in \CmI(X \times X)$ via
\begin{align*}
\ang{Au,v} = \ang{K_A, u \otimes v}\,,
\end{align*}
that is
\begin{align*}
A \coloneqq K_A \in \CmI(X \times X)\,.
\end{align*}
Or, in the sense of distributions, we may write
\[
\int_X (Au)(z) v(z) \,\meas_X(z) = \int_{X\times X} A(z,z') u(z) v(z') \,\meas_{X\times X}(z,z')\,.
\]

Let also $Y$ be a Riemannian manifold. Let $E \to X$ and $F \to Y$ be smooth vector bundles over $X$ and $Y$, respectively. If $A\colon \CcI(Y,F) \to \CmI(X,E)$ is a bounded linear operator, then its Schwartz kernel is~$A \in\nobreak \CmI(X \times Y, E \boxtimes F')$ (see~Kumano-go~\cite[Theorem~8.7]{KumanoGo} for the case that $A$ is smoothing), where the exterior tensor product $E \boxtimes F'$
is defined via
\[
(E \boxtimes F')_{(x,y)} = E_x \otimes F_y'\,.
\]
If $U_X \subseteq X$ and $U_Y\subseteq Y$ are coordinate neighborhoods in
which $E|_{U_X} \cong U_X \times V_X$ and $F|_{U_Y} \cong U_Y \times V_Y$ for some fixed vector spaces $V_X$ and $V_Y$, then the bundle $E \boxtimes F'$ has the local trivialization
\begin{align*}
(E \boxtimes F')|_{U_X \times U_Y} \cong U_X \times U_Y \times V_X \times
V_Y' \cong U_X \times U_Y \times \Hom(V_Y, V_X)\,.
\end{align*}
In particular, for $X = Y$ and $E = F$ and any coordinate neighborhood $U \subset X$, we have
\begin{align}\label{eq:boxtimes-local-coord}
(E \boxtimes E')|_{U \times U} \cong U \times U \times \End(V)\,.
\end{align}
In these coordinates, a section $u \in \CI(U \times U, E \boxtimes E')$ is of
the form 
\[
u \colon  U \times U \to \End(V)\,.
\]

Let $\mathcal{U}$ be a Banach space and $\Omega \subset \C$ open. A function $f \colon  \Omega \to \mathcal{U}$ is holomorphic if it is complex differentiable on
all of~$\Omega$. If $f \colon  \Omega \to \mathcal{U}$ is locally bounded, then
$f$ is holomorphic if and only if for every functional~$v \in \mathcal{U}'$ the
composed function $v \circ f \colon  \Omega \to \C$ is holomorphic (see, e.g., \cite[Chapter~3, Theorem~3.12]{Kato}). The function~$f$ is meromorphic if it is holomorphic
except on a set of isolated points in~$\Omega$ and $f$ has poles at these
isolated points.

For any bounded operator $A$, we set $\abs{A} \coloneqq (A^* A)^{1/2}$.
Its non-zero eigenvalues are called the \textit{singular values} of the operator $A$, which we denote by $\mu_k(A)$, $k\in\N$, listed in decreasing order.

We denote the identity operator on function spaces by $\I$ and we will omit it if it is clear from the context. For instance, we write the resolvent of an operator $A$ as $(A - \lambda)^{-1}$ instead of $(A - \lambda \I)^{-1}$.
The identity on a finite-dimensional space $V$ will be denoted by $\id_V$.

\subsection*{Estimates and Asymptotics}
Let $X$ be a set and $a,b \colon  X \to \R$ functions. We write
\[
a\lesssim b\qquad\text{or}\qquad a(x) \lesssim b(x)
\]
if there exists a constant $C > 0$ such that for all $x \in X$,  $\abs{a(x)} \leq C \abs{b(x)}$. Let now $X=\R$ and suppose that $x_0 \in \R\cup\{\pm\infty\}$. We say that
\[
a \in O(b) \qquad\text{or}\qquad a = O(b) \quad \text{as $x \to x_0$}
\]
if $a(x)/b(x)$ is bounded as $x\to  x_0$. If $a \in O(b)$ as $x \to x_0$ and $b \in O(a)$ as $x \to x_0$, then we write
\[
a \asymp b \qquad\text{or}\qquad a(x) \asymp b(x)\quad \text{ as $x \to x_0$}\,.
\]

\section{Geometry of Hyperbolic Surfaces}\label{sec:geometry}

\subsection{Hyperbolic Surfaces}

We denote the hyperbolic plane by~$\h$ and identify it with its
upper half-plane model
\[
\h = \{z \in \C \setmid \Im z > 0\}
\]
with Riemannian metric
\begin{equation}\label{eq:RiemmetricH}
g_\h(x,y) = \frac{dx^2 + dy^2}{y^2} \qquad (z=x+iy\in\h)\,.
\end{equation}
We let $\Isom^+(\h)$ denote the group of orientation-preserving Riemannian
isometries of~$\h$, endowed with the compact-open topology. As the action of~$\Isom^+(\h)$ on~$\h$ is faithful, we identify the elements of~$\Isom^+(\h)$ with their action. If $\group$ is a discrete subgroup of~$\Isom^+(\h)$, then the orbit space
\[
 X = \group\bs\h
\]
naturally carries the structure of a good hyperbolic Riemannian orbifold.
It inherits its orbifold Riemannian metric $g_X$ from~$\h$, by pushing the Riemannian metric $g_\h$ defined in~\eqref{eq:RiemmetricH} to~$X$ via the canonical quotient map
\begin{equation}\label{eq:def_pigroup}
\pi_\group \colon  \h \to \group\bs\h\,.
\end{equation}
Equivalently, the Riemannian metric of~$X$ is defined via a family of compatible, equivariant Riemannian metrics in the orbifold charts, for which we may choose here, by slight abuse of concept, all of~$\h$ as a global chart. For simplicity, we will call any such orbifold~$X$ a \emph{hyperbolic surface}, allowing a slight abuse of notion. We emphasize that we allow hyperbolic surfaces to have singularity points, which are caused by those non-identity elements in~$\group$ whose action on~$\h$ has fixed points.

We call a subset~$\funddom$ of~$\h$ a \emph{fundamental domain} for the action of~$\group$ on~$\h$ if
\begin{enumerate}[label=$\mathrm{(\alph*)}$, ref=$\mathrm{\alph*}$]
\item $\funddom$ is connected,
\item for any $g\in\group$, $g.\funddom^\circ \cap \funddom^\circ = \emptyset$,
and
\item $\h = \bigcup \{ g.\overline\funddom \setmid g\in\group\}$.
\end{enumerate}
We remark that we do not require any specific properties of the boundary of a fundamental domain. In particular, we deviate slightly from the usual definition of fundamental domains and admit also closed or semi-closed sets as fundamental domain. For certain types of investigations, it is helpful to identify the hyperbolic space~$X$ with any of its fundamental domains (including its side pairings).

The group~$\Isom^+(\h)$ of orientation-preserving isometries can and shall be identified with the projective special linear group $\PSL(2,\R)$, which acts by M\"obius transformations on the upper half-plane~$\h$. For $g \in \PSL(2,\R)$ and $z \in \h$, we denote by $g.z$ the action of $g$ on $z$.
Every $g \in \PSL(2,\R)$, $g\not=\id$, can be classified as either elliptic, parabolic, or hyperbolic if $\abs{\tr g} < 2$, $\abs{\tr g} = 2$, or $\abs{\tr g} > 2$, respectively. If $g$ is hyperbolic, then we find $h \in \PSL(2,\R)$ such that
\begin{align*}
h^{-1} g h = h_\ell\,,
\end{align*}
where $h_\ell .z = e^\ell z$ and $\ell= 2\operatorname{arcosh}(\abs{\tr g}/2)$.
If $g \in \PSL(2,\R)$ is parabolic, we can conjugate~$g$ to the element $T \in \PSL(2,\R)$ which is given by $T.z \coloneqq z+1$.

If the fundamental group~$\group$ of a hyperbolic surface~$X = \group\bs\h$ is finitely generated, then $X$ has only finitely many ends. In this case, $X$ is  called \emph{geometrically finite}.
\begin{center}
\framebox{
\begin{minipage}{.75\textwidth}
Throughout this article, we will restrict our consideration to hyperbolic surfaces that are geometrically finite.
\end{minipage}
}
\end{center}

Among the geometrically finite hyperbolic surfaces, we will sometimes consider separately the \emph{elementary} ones. These are those hyperbolic surfaces~$X = \group\bs\h$ for which $\group$ is either generated by a single element (thus,
$\group$ is cyclic) or $\group$ is conjugate to a group~$\langle a,b\rangle \subseteq \Isom^+(\h)$, where $a$ acts as a dilation,
\[
 a.z = qz \quad\text{for some $q>1$ and all $z\in\h$}\,,
\]
and $b$ is the reflection on the unit circle,
\[
 b.z = -\frac1z\quad\text{for all $z\in\h$}\,.
\]

\subsection{Ends of Hyperbolic Surfaces and Decomposition}\label{sec:ends}

It is well-known that any non-elementary, geometrically finite hyperbolic surface has only two types of ends, namely funnels and cusps, and has only finitely many of these. In this section, we will recall these types of ends and relate them to the elementary surfaces.

\subsubsection{Funnel ends}\label{sec:funnel_ends}
For $\ell \in (0,\infty)$, we define the model funnel $F_\ell$ as the Riemannian
manifold 
\begin{align*}
F_{\ell} \coloneqq (0,\infty)_r \times (\R / 2\pi \Z)_\phi\,,
\\
g_{F_\ell} \coloneqq dr^2 + \frac{\ell^2}{4\pi^2} \cosh^2 r\, d\phi^2\,.
\end{align*}
The subscript at sets denotes here and further in the article the typical variable name for its elements. The simplest hyperbolic surface with funnel ends is a \emph{hyperbolic cylinder}, which is given by the quotient
\[
C_{\ell} \coloneqq \ang{h_\ell} \bs \h\,,
\]
where $h_\ell\colon  z\mapsto e^{\ell}z $ is the standard hyperbolic element
with displacement length $ \ell \in (0,\infty)$. A (semi-closed) fundamental domain for~$C_\ell$ is given by 
\begin{align}\label{eq:funddom_hypcyl}
\funddom = \{ z\in \h \setmid 1\leq \abs{z} < e^{\ell} \}\,.
\end{align}
Set $\omega \coloneqq 2\pi / \ell$. On the fundamental domain, we introduce geodesic coordinates $(r,\phi) \in \R \times (\R/2\pi\Z)$ via
\begin{align}\label{eq:cylinder-iso}
z = e^{\omega^{-1} \phi} \frac{e^r + i}{e^r - i}\,.
\end{align}
In these coordinates, the hyperbolic metric of~$C_\ell$, as induced from~$\h$, reads
\begin{align}\label{eq:metric-hyp-cylinder}
g_{C_\ell} = dr^2 + \omega^{-2} \cosh^2(r) d\phi^2\,.
\end{align}
Consequently, we use the identification $C_\ell \cong \R \times (\R/2\pi\Z)$.
We also see that the hyperbolic cylinder is the union of two funnel ends joined by a single period geodesic, tracing out the set $\{r = 0\}$. In particular, the model funnel $F_\ell$ is given by $F_\ell = C_\ell \cap \{r > 0\}$ under this identification. We note that it has geodesic boundary.

\subsubsection{Cuspidal ends}
The model cusp is given by (geodesic-horocyclic coordinates)
\begin{align*}
F_\infty \coloneqq (0,\infty)_r \times (\R / 2\pi\Z)_\phi\,, 
\\
g_{F_\infty} \coloneqq dr^2+ e^{-2r} \, \frac{d\phi^2}{4\pi^2}\,.
\end{align*}
The \emph{parabolic cylinder} is given by the quotient $C_\infty \coloneqq \ang{T} \bs \h$, where $T.z = z+1$ is the unit shift. A geodesically convex fundamental domain for~$C_\infty$ is given by
\begin{align*}
\funddom = \{z \in \h \setmid \Rea z \in (0,1]\}\,.
\end{align*}
The map
\[
(r,\phi) \mapsto z = \frac{\phi}{2\pi} + i e^r
\]
induces a diffeomorphism
\[
\R \times (\R / 2\pi \Z) \cong C_\infty
\]
with respect to which the hyperbolic metric on~$C_\infty$ reads
\begin{align*}
g_{C_\infty} = dr^2 + e^{-2r} \frac{d\phi^2}{4\pi^2}\,.
\end{align*}
Under this identification, the model cusp $F_\infty$ is given by $C_\infty \cap \{r > 0\}$. The level sets $\{r = c\}$ are the \emph{horocycles} centered at~$\infty$. (All other horocycles in~$\h$ are the images of these particular ones under the action of~$\Isom^+(\h)$.)

\subsubsection{Decomposition}\label{sec:decomposition}

The hyperbolic surface $X$ decomposes into
\[
X = K \sqcup X_f \sqcup X_c\,,
\]
where $X_f$ and $X_c$ are disjoint unions of funnels and cusps, respectively, and $K$ is compact (the \emph{compact core} of~$X$). That is
\begin{align*}
X_f &= \bigsqcup_{j=1}^{n_f} X_{f,j}\,,
\\
X_c &= \bigsqcup_{j=1}^{n_c} X_{c,j}\,,
\end{align*}
where each $X_{f,j}$ is isometric to $F_{\ell_j}$ for some $\ell_j \in (0,\infty)$ and each $X_{c,j}$ is isometric to $F_\infty$. The boundary of $X_f$ is geodesic and each cusp end has area $1$. See, e.g., \cite[Section~2.4.1]{Borthwick_book}

A subset~$U$ of $\h$ is called \emph{$\group$-stable} if for each $g\in\group$ either
\[
 g.U = U \quad\text{or}\quad g.U \cap U = \emptyset\,.
\]
For any $\group$-stable set $U\subseteq\h$ we define the \emph{stabilizer of $U$} as
\[
 \group_U \coloneqq \{g\in\group\setmid g.U = U \}\,.
\]
Then $\group_U$ is a subgroup of $\group$. As discussed in \cite[\S 10.4]{Beardon} (see also \cite{Garland_Raghunathan} for hyperbolic surfaces of finite area, or \cite[Proof of Theorem 2.23]{Borthwick_book}) for each connected end $F$ of $X$ there exists an open connected set $U \subset \h$ such that
\begin{enumerate}[label=$\mathrm{(\alph*)}$, ref=$\mathrm{\alph*}$]
\item $\pi_\group(U) = F$.
\item The set $U$ is $\group$-stable.
\item The stabilizer group $\group_U$ equals the cyclic group~$\ang{\gamma}$ for some element $\gamma \in \group$. The element $\gamma$ is a primitive hyperbolic (parabolic) element of $\group$ if $F$ is a funnel (a cusp).
\end{enumerate}

\subsubsection{Boundary defining functions}

We fix a funnel boundary defining function $\rho_f \in \CI(X, (0,\infty))$ 
satisfying
\begin{align*}
\rho_f|_{X_{f,j}}(r,\phi) = \frac{1}{\cosh(r)}
\end{align*}
for all $j = 1, \dotsc, n_f$ in the coordinates as above and $\rho_f = 1$ on $X_c$. Similarly, we fix a cusp boundary defining function $\rho_c \in \CI(X, (0,\infty))$ with the properties that
\begin{align*}
\rho_c|_{X_{c,j}}(r,\phi) = e^{-r}
\end{align*}
for all $j = 1, \dotsc, n_c$ and $\rho_c = 1$ on $X_f$. Moreover, set
\begin{equation}\label{eq:rho_def}
\rho \coloneqq \rho_f \rho_c\,.
\end{equation}

\subsection{Geometry at Infinity and Compactifications}

We will take advantage of the geodesic compactification of the hyperbolic plane~$\h$. In order to present this compactification, we let 
\[
 \wh\C\coloneqq \C\cup\{\infty\}
\]
denote the Riemann sphere, that is, the one-point compactification of~$\C$. We 
consider $\h$ canonically as a subset of~$\wh\C$. Then the \emph{geodesic 
compactification} of~$\h$, denoted~$\overline\h$, is the topological closure 
of~$\h$ in~$\wh\C$. The \emph{geodesic boundary} of~$\h$, denoted~$\partial\h$, 
is then the topological boundary. Thus,
\[
 \partial\h = \R \cup \{\infty\}\,.
\]
We may also define~$\overline\h$ and~$\partial\h$ intrinsically, in which 
case the points at infinity are characterized as equivalence classes of 
asymptotic geodesics, and neighborhood bases for the points at infinity are 
provided by nested families of horoballs. We refer for this approach to, e.g., 
\cite{Eberlein}. 

The action of~$\PSL(2,\R) \cong \Isom^+(\h)$ on~$\h$ extends continuously to~$\overline\h$. We will take advantage of the standard classification of the elements of~$\PSL(2,\R)$ using the number of the fixed points of their action on~$\overline\h$, as we recall now. Let $g\in\PSL(2,\R)$. Then $g$ is the identity in~$\PSL(2,\R)$ if and only if the action of $g$ on~$\overline\h$ has more than two fixed points. If this action has exactly two fixed points, then both fixed points are on the boundary of~$\h$. This is the case if and only if $g$ is hyperbolic. If the action of~$g$ has a single fixed point, then this fixed point is on the boundary of~$\h$ if and only if $g$ is parabolic, and it is in the interior of~$\h$ if and only if $g$ is elliptic.

Let $\group$ be a discrete subgroup of~$\Isom^+(\h)$. We denote by 
$\limSet(\group)$ the \emph{limit set} of the group $\group$, that is the set 
of all accumulation points (limit points) in~$\overline\h$ of the action of~$\group$ on~$\h$. The discreteness of~$\group$ implies that $\limSet(\group) \subseteq\partial\h$. We denote by 
\[
\Omega(\group) \coloneqq \overline{\h} \setminus \limSet(\group)
\] 
the set of \emph{ordinary points} and by~$\paraFix(\group)$ the set of points fixed by parabolic elements. The geodesic compactification of the hyperbolic surface~$X=\group\bs\h$ is then 
\begin{align*}
\overline{X} = \group \bs \bigl(\Omega(\group)\cup\paraFix(\group)\bigr)\,,
\end{align*}
which is also our choice of compactification in this article. We remark that~$\overline X$ is not necessarily a manifold with boundary nor an orbifold as each cusp of~$X$ is compactified by a single point.

The \emph{geodesic boundary} or \emph{ideal boundary}~$\pa_\infty X \coloneqq 
\pa \overline{X} = \overline{X} \setminus X$ of~$\overline X$ can be identified 
with
\begin{align*}
\pa_\infty X \cong \{1,\dotsc,n_c\} \sqcup \bigsqcup_{j = 1}^{n_f} (\R / 2\pi \Z)\,,
\end{align*}
using the isomorphisms between the ends of~$X$ and the model ends (see  Section~\ref{sec:ends}). Since funnel and cusp ends behave structurally different, we will split the boundary into two parts,
\begin{align}\label{eq:split-infty}
\pa_\infty X = \pa_c X \sqcup \pa_f X\,,
\end{align}
where $\pa_c X \cong \{1,\dotsc,n_c\}$ and $\pa_f X \cong \bigsqcup_{j = 1}^{n_f} (\R / 2\pi \Z)$. When focusing on the geometry of the funnel ends, we may want to rescale the Riemannian metric of~$X$ by a factor of~$\rho^2$. The metric $\rho^2 g$ has the property that it restricts to a metric on $\pa_f X$. For any single funnel end, under the isomorphism with the model funnel, the metric on~$\pa_f X_{f,j}$ reads 
\[
g_{\pa_\infty F_\ell} \coloneqq \rho^2 g_{F_\ell}|_{\pa_\infty F_\ell} = \frac{\ell^2}{4\pi^2} d\phi^2\,.
\]

The set of \emph{smooth functions} $\CI(\overline{X})$ consists of the 
functions $u \in \CI(X)$ that satisfy
\begin{align*}
u\vert_{X_{c,j}}(\rho, \phi) &= u_0(\rho) + O(\rho^\infty)\,, \quad (\rho,\phi) \in X_{c,j}\,,\ j \in \{1,\dotsc, n_c\}
& \intertext{for some $u_0\in \CI_b(X_{c,j})$ not depending on $\phi$, and}
u\vert_{{X_{f,j}}} &\in \CI(X_{f,j}) \text{ extends smoothly to } \overline{X_{f,j}}\,.
\end{align*}

\subsection{Blow-Up of the Funnel Ends}\label{sec:blow-up}

Guillop\'e and Zworski~\cite[pp.~607-608]{GuZw97} showed that the
untwisted resolvent kernel has a good description when pulled back to the stretched product space. In this section, we recall the definition of this space and the natural coordinates for the case of the hyperbolic plane. 

Let $M$ be a manifold with corners and $Y$ be a sufficiently nice submanifold of~$\pa M$. (The submanifold $S$ has to be a $p$-submanifold in the language of \cite{Melrosemwc}.) The \emph{blow-up} of $Y$ in $M$ is defined as the disjoint union
\begin{align*}
[M; Y] \coloneqq S^+ NY \sqcup M \setminus Y\,,
\end{align*}
where $S^+NY$ denotes the inward-pointing unit normal bundle of~$Y$ in~$M$.
The blow-down map $\beta \colon  [M; Y] \to M$ is defined by setting $\beta|_{M \setminus Y} = \id$ and $\beta|_{S^+N_yY} = \pi_{S^+NY}$ for all $y \in Y$, where $\pi_{S^+NY}$ is the canonical projection $S^+NY \to Y$ on base points.
There is a natural smooth structure on $[M; Y]$ and $\beta$ is a smooth map 
between manifolds with corners. Intuitively, blow-ups introduce polar coordinates in a geometrically invariant way. For more details, we refer to Melrose~\cite{Melrosemwc}.

\begin{defi}
Let $X$ be a hyperbolic surface. The stretched product is given by
\begin{align}\label{eq:def_stretchprod}
X \times_0 X \coloneqq [\overline{X} \times \overline{X}; \pa_f \diagon]
\xrightarrow{\beta} \overline{X} \times \overline{X}\,,
\end{align}
where $\diagon \coloneqq \{(x,x) \setmid x \in \overline{X}\}$ 
denotes the diagonal in $\overline{X} \times \overline{X}$ and
\[
\pa_f \diagon \coloneqq \diagon \cap (\pa_f X \times \pa_f X)\,.
\]
\end{defi}

Some remarks are in order. 

\begin{remark}
\begin{enumerate}[label=$\mathrm{(\roman*)}$, ref=$\mathrm{\roman*}$]
\item The hyperbolic surfaces that we consider here are orbifolds and possibly not genuine manifolds. However, but since the orbifold points (i.e., the singularity points) are contained in a compact set and the blow-up is only performed near infinity, this causes no problems. 
\item In~\eqref{eq:def_stretchprod} we use $\pa_f \diagon$ instead 
of all of the whole diagonal in $\partial_\infty X\times \partial_\infty X$, i.e., the set $\diagon \cap (\partial_\infty X \times \partial_\infty X)$, for the submanifold to be blown up. This has the consequence that cusps are not blown up in a stretched product. 
\item The stretched product $X \times_0 X $ is also called the $\mathscr{V}_0$-stretched product (for example in~\cite{MaMe87}) to distinguish it from the stretched product used in the $b$-calculus. 
\item The stretched product $X \times_0 X$ is a manifold with corners. Its boundary faces are given by the \emph{front face}, $\beta^{-1}(\pa_f \diagon)$, and the \emph{side faces}, $\beta^{-1}(\pa_f X \times X)$ and $\beta^{-1}(X \times \pa_f X)$. 
\item One has to be careful when pulling back boundary defining functions on $X \times X$ via the blow-down map: if $\rho$ is a boundary defining function for $\pa_f X \times X$ on $X \times X$, then $\beta^*\rho$ vanishes on $\beta^{-1} \left(\overline{\pa_f X \times X}\right) = \beta^{-1}(\pa_f \diagon) \cup \beta^{-1}(\pa_f X \times X)$ and its derivative vanishes on $\beta^{-1}(\pa_f \diagon)$. Therefore $\beta^*\rho$ is \emph{not} a boundary defining function on $X \times_0 X$.
\end{enumerate}
\end{remark}

We define the diagonal in $X \times_0 X$ as
\begin{align*}
\diagon_0 \coloneqq \overline{ \beta^{-1}(\diagon \setminus \pa_f \diagon) }\,.
\end{align*}
We note that $\diagon_0$ is a $2$-dimensional submanifold of $X \times_0 X$. If $E \to X$ is a vector bundle over $X$, we define the exterior tensor 
product over $X \times_0 X$ by 
\[
E \boxtimes_0 E \coloneqq \beta^*( E \boxtimes E)\,.
\]
In the case $X = \h$, we have a particularly simple description of the blow-up space.
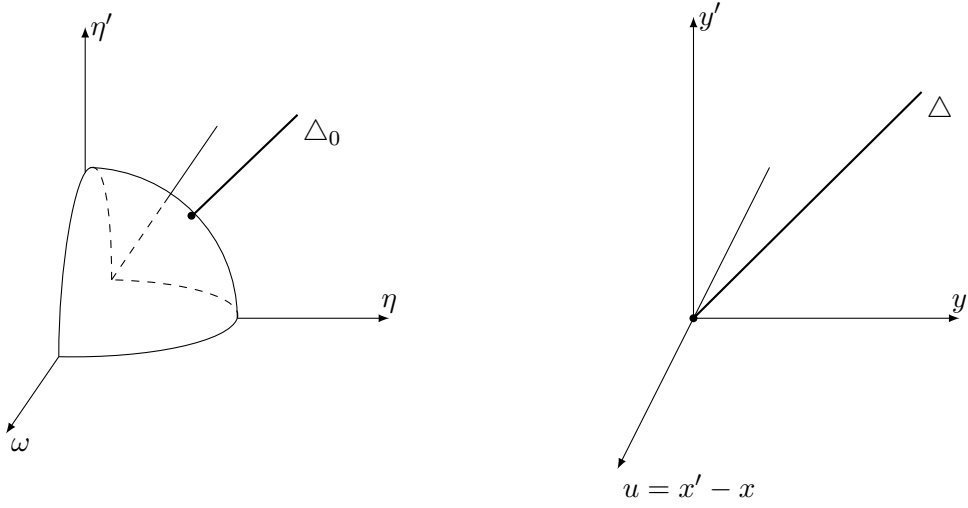
\begin{figure}
\centering
\begin{tikzpicture} 


\def\R{2} 
\def\angEl{15} 
\def\angAz{-100} 

\LatitudePlane[equator]{\angEl}{0}

\draw[equator] (\angAz:\R) to[bend right=45] (0:\R);
\draw[equator,dashed] (0:\R) to[bend right=45] (\angAz+180:\R);
\draw[equator,->] (\angAz:\R) to (\angAz:3*\R) node[below right] {$\omega$};
\draw[equator,dashed] (\angAz:-\R) to (\angAz:-3*\R);
\draw[equator] (\angAz:-3*\R) to (\angAz:-\R*5);
\draw[equator,->] (0:\R) to (0:2*\R) node[above] {$\eta$};
\LongitudePlane[xzplane]{\angEl}{0}
\draw[xzplane,->] (90:\R) to (90:2*\R) node[right] {$\eta'$};
\draw[xzplane,thick] (45:\R) to (45:2*\R-0.05) node[below right] {$\diagon_0$};
\coordinate[mark coordinate] (N) at (44.2:\R-0.05);
\LongitudePlane[yzplane]{\angEl}{\angAz}
\draw[yzplane] (0:\R) to[bend right=52] (102:\R);
\draw[yzplane,dashed] (0:-\R) to[bend left=41] (100:\R); 
\draw (\R,0) arc (0:88.2:\R);

\draw[<->] (6*\R-0.5,0) node[above] {$y$} -- (4*\R,0) -- (4*\R,2*\R) node[right] {$y'$};
\draw[->] (4.5*\R,\R) -- (3.5*\R,-1*\R) node[below right] {$u = x' - x$};
\draw[thick] (4*\R,0) -- (5.5*\R,1.5*\R) node[below right] {$\diagon$};
\coordinate[mark coordinate] (D) at (4*\R,0);

\end{tikzpicture}
\caption{On the right: the cross section of 
$\overline\h\times\overline\h$, determined by fixing the value of~$x$; on the 
left: the corresponding cross section of the blown-up space 
$\h \times_0 \h$, determined by $\omega'=x$ being fixed.}
\label{fig:blow-up}
\end{figure}
We set $\pa_f\h \coloneqq \partial \h$ and then
\[
 \pa_\infty\diagon = \pa_f\diagon = \diagon \cap (\pa_f \h \times \pa_f \h) = 
\{(z,z) \setmid z\in\pa\h\}\,.
\]
Hence,
\[
\h \times_0 \h = [\overline{\h} \times \overline{\h}; \pa_\infty \diagon]\,.
\]
Guillop\'e and Zworski~\cite{GuZw97} provided the following global coordinates 
on $\h\times\h$ and local coordinates on $\h\times_0 \h$, which are very 
convenient for our considerations. We set 
\begin{equation}\label{eq:def_chart}
 \chart \coloneqq \{ (r,\eta,\eta',\omega,\omega')\in \R_+^3 \times \R^2 
\setmid \eta^2 + \eta'^2 + \omega^2 = 1 \}\,.
\end{equation}
Then the map 
\begin{equation}\label{eq:chart_coord}
 \chart \to \h\times\h\,,\quad (r,\eta,\eta',\omega,\omega') \mapsto 
(\omega',r\eta, \omega'+r\omega, r\eta')
\end{equation}
is a diffeomorphism. We remark that the condition~$\eta^2+\eta'^2+\omega^2 = 1$
in~\eqref{eq:def_chart} yields injectivity. The obvious extension of the above diffeomorphism to 
\begin{equation}\label{eq:blow-up_chart}
\overline\chart\coloneqq \{ (r,\eta,\eta',\omega,\omega')\in \R_{\geq 0}^3 \times \R^2 
\setmid \eta^2 + \eta'^2 + \omega^2 = 1 \}
\end{equation}
surjects onto 
\begin{equation}\label{eq:image_chart}
\bigr(\overline\h\setminus\{\infty\}\bigl) \times
\bigr(\overline\h\setminus\{\infty\}\bigl)\,.
\end{equation}
Restricted to $\overline\chart$, the coordinate function $r$ is locally the boundary defining function of the front face, the coordinates $\eta$ and $\eta'$ are locally the boundary defining functions for the side faces of $\h \times_0 \h$. In other words, $\overline\chart$ with the implied coordinates is a local chart of the stretched product~$\h\times_0\h$. In these coordinates and restricted to~$\overline\chart$, the blow-down map~$\beta$ is given 
by~\eqref{eq:chart_coord}, extended to~$\overline\chart$. We note that away from $\infty$, the coordinate function $y$ is a boundary defining function of $\pa_\infty \h \times \h$, but its pull-back to $\h \times_0 \h$ is \emph{not} a boundary defining function of the side face $\beta^{-1}( \pa_\infty \h \times \h )$ near the front face $\beta^{-1}( \pa_\infty \diagon )$.

For a full description of the stretched product~$\h\times_0\h$ and the map~$\beta$, further local charts are needed. These may be constructed analogously after applying the isometry $z\mapsto -1/z$ of~$\h$, which maps the boundary point~$\infty$ to~$0$, on either one of the factors of $\h \times_0 \h$. A total of three charts is then sufficient to cover~$\h \times_0 \h$.

For $z,z'\in \h$, define the pair-point invariant  
\begin{equation}\label{eq:sigma_coordfree}
\sigma(z,z') \coloneqq \cosh^2(d_\h(z,z')/2)\,,
\end{equation}
where $d_\h(z,z')$ is the hyperbolic distance between $z$ and $z'$.
We note that (see \cite[Eq. (4.18)]{Borthwick_book})
\begin{align}\label{eq:sigma}
\sigma(x+iy,x'+iy') = \frac{(x-x')^2 + (y+y')^2}{4yy'}\,.
\end{align}
Under the isomorphism in~\eqref{eq:chart_coord} and then in the full 
chart~$\overline\chart$ of the blown-up space, the function~$\sigma$ is 
given by
\begin{align}\label{eq:sigma-blownup}
(\beta^*\sigma)(r,\eta,\eta',\omega,\omega') =  \frac{1 + 2\eta\eta'}{4\eta\eta'}\,.
\end{align}
Taking advantage of the fact that $\sigma$ is a point-pair invariant and hence 
satisfies
\[
 \sigma(gz,gz') = \sigma(z,z')
\]
for all~$g\in\Isom^+(\h)$ and $z,z'\in\h$, we can easily cover the stretched 
product~$\h\times_0\h$ with charts with respect to which~$\beta^*\sigma$ is 
given as in~\eqref{eq:sigma-blownup} (separately for each chart, with respect 
to the coordinates of the considered chart). Thus, $1/(\beta^*\sigma)$ is 
smooth on all of~$\h \times_0 \h$.

\subsection{Bundles}\label{sec:bundles}

In this section we present the necessary details concerning bundle structures on 
orbifolds, specialized to the case of hyperbolic surfaces. In the case that the 
hyperbolic surface is a manifold, all the notions reduce to the classical notions for manifolds. All the notions will be provided in the smooth ($\CI$) category.

Let $\group$ be a Fuchsian group and let $X\coloneqq \group\backslash\h$ denote the associated hyperbolic surface. We let $[z]$ denote the element of $X$ that is represented by $z\in\h$. The projection map
\begin{equation}\label{eq:fibrebundle}
 \pi_\group\colon  \h\to X\,,\quad z\mapsto [z]\,,
\end{equation}
is a fiber orbibundle. The fiber of $[z]\in\h$ is
\[
\pi_\group^{-1}([z]) = \group.z\,,
\]
which is isomorphic to
\[
 \group/\group_z\,,
\]
where $\group_z$ denotes the stabilizer group of $z\in\h$ under $\group$ (or, in 
the language of orbifolds, the \emph{local group} at $[z]$). If $\group$ does not contain elliptic elements, then $\pi_\group$ is the universal covering of $X$. Let $\twist\colon \group\to\GL(V)$ be any finite-dimensional representation. (In this section we do not need to restrict to unitary representations.) In what follows we will define the \emph{bundle associated} to the fiber orbibundle in~\eqref{eq:fibrebundle} and the representation~$\twist$.

The action of $\group$ on $\h$ and the action of $\group$ on $V$ combines to an 
action of~$\group$ on  $\h\times V$ by setting
\[
 g.(z,v) \coloneqq (g.z, \twist(g)v)
\]
for $g\in\group$ and $(z,v)\in \h\times V$. We let
\[
 \h\times_\twist V \coloneqq \group\backslash\big(\h\times V\big) \coloneqq \{ 
\group.(z,v) \setmid (z,v) \in \h\times V\}
\]
denote the space of orbits of this $\group$-action on $\h\times V$. Clearly, the 
space $\h\times_\twist V$ carries an orbifold structure, given by the global chart $\h\times V$ with the action of $\group$. We set
\[
 E_\twist \coloneqq \h\times_\twist V
\]
and let $[z,v]$ denote the element of $E_\twist$ that is represented by 
$(z,v)\in\h\times V$. We let
\begin{equation}\label{eq:assbundle}
 \pi_1\colon  E_\twist \to X\,,\quad [z,v] \mapsto [z]\,,
\end{equation}
denote the projection map. The map~$\pi_1$ in~\eqref{eq:assbundle} is a fiber 
orbibundle, the \emph{bundle associated} to the fiber orbibundle~$\pi_\group$ 
in~\eqref{eq:fibrebundle} and the representation~$\twist$. For each $[z]\in X$, 
its fiber
\[
 \pi_1^{-1}([z]) = \{ [z,v] \setmid v\in V \}
\]
is isomorphic to
\[
 \group_z\backslash V\,.
\]
A \emph{section} of $\pi_1 \colon  E_\twist \to X$ is a smooth map $s\colon  X\to E_\twist$ such that
\[
 \pi_1\circ s = \id_X\,.
\]
We denote the space of sections of $\pi_1$ by $\CI(X; E_\twist)$. The following interpretation of sections as maps $\h\to V$ is folklore. However, since we require global maps instead of only local ones, we provide a proof for convenience.

We say a function $f$ belongs to the space $\CI(\h;V)_\ssec$ if $f\in \CI(\h;V)$ and  for all $z \in \h$ and $g \in \group$, we have
\begin{equation}\label{eq:sec_equiv}
 f(g.z) = \twist(g)f(z)\,.
\end{equation}
\begin{lemma}\label{lem:unraveledsections}
The map $\CI(\h;V)_\ssec \to \CI(X,\bundle), f \mapsto s$, with $s$ given by
\begin{align*}
s([z]) \coloneqq [z,f(z)]
\end{align*}
is an isomorphism.
\end{lemma}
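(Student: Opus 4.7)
The plan is to verify in turn well-definedness, linearity, injectivity, smoothness, and surjectivity of the map $\Phi\colon f \mapsto s$ with $s([z]) \coloneqq [z, f(z)]$. Independence of $s([z])$ from the choice of representative and the identity $\pi_1 \circ s = \id_X$ follow directly from the equivariance condition~\eqref{eq:sec_equiv}: for any $g \in \group$,
\begin{align*}
[g.z, f(g.z)] = [g.z, \twist(g) f(z)] = [z, f(z)].
\end{align*}
Linearity of $\Phi$ is clear. For injectivity, if $\Phi(f) = 0$, then $[z, f(z)] = [z, 0]$ for every $z \in \h$, so there exists $g \in \group_z$ with $\twist(g) f(z) = 0$; invertibility of $\twist(g) \in \GL(V)$ then forces $f(z) = 0$.

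The main work is smoothness and surjectivity, which I would handle through orbifold charts. Since $X$ is a good orbifold with global uniformizing chart $\h$, smooth sections of $\bundle = E_\twist$ correspond to $\group$-equivariant smooth sections of the pullback bundle $\pi_\group^* E_\twist \to \h$, and this pullback trivializes canonically as $\h \times V$ equipped with the diagonal $\group$-action. Under this identification, a $\group$-equivariant smooth section is exactly an element of $\CI(\h;V)_\ssec$. Concretely, for each $z_0 \in \h$ I would pick an open $\group_{z_0}$-invariant neighborhood $\widetilde{U} \subset \h$ with $g.\widetilde{U} \cap \widetilde{U} = \emptyset$ for $g \in \group \setminus \group_{z_0}$; then $\widetilde{U}/\group_{z_0}$ is an orbifold chart around $[z_0]$, and the restriction of $s$ to this chart lifts to a unique smooth $\group_{z_0}$-equivariant map $f_{\widetilde{U}}\colon \widetilde{U} \to V$. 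Propagating these local lifts across the $\group$-action via $f(g.z) \coloneqq \twist(g) f_{\widetilde{U}}(z)$ glues them to a globally defined smooth $f \in \CI(\h;V)_\ssec$ with $\Phi(f) = s$.

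The main obstacle is the behavior at elliptic fixed points, where $\group_{z_0}$ is non-trivial and the equivariance condition forces $f(z_0)$ to lie in the $\twist(\group_{z_0})$-invariant subspace of $V$. In the orbifold-section language this constraint is automatic, because the fiber of $\bundle$ over $[z_0]$ is the quotient $\group_{z_0} \backslash V$, so the required lift exists at every point and the compatibility across overlapping charts is dictated by the $\group$-equivariance. Away from the discrete set of elliptic fixed points, $\pi_\group$ is a genuine covering map and the argument reduces to the standard identification of sections of an associated bundle with equivariant functions on the total space.
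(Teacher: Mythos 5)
Your proposal is correct and follows essentially the same route as the paper: both arguments produce $f$ from $s$ via local lifts in ($\group$-stable) orbifold charts and propagate them by the equivariance \eqref{eq:sec_equiv}, with the only delicate point being the elliptic fixed points. The paper fills in the detail you assert via the orbifold formalism — it normalizes the local lift so that $\pr_1\circ\tilde s=\id$ and uses the density of the set of points with trivial stabilizer to get uniqueness, smoothness of the extension, and the equivariance identity — but the underlying idea is the same.
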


\begin{proof}
We suppose first that $s\in \CI(X; E_\twist)$ is a section of~$\pi_1$. We will 
show that there is a unique map~$f\in \CI(\h;V)$ such that for all~$z\in\h$ we 
have
\begin{equation}\label{eq:sec_map}
 s([z]) = [z,f(z)]\,.
\end{equation}
To that end we recall that for each~$z\in\h$, the fiber~$\pi_1^{-1}([z])$ is 
isomorphic to~$\group_z\backslash V$. Thus, the element~$v\in V$ with~$s([z]) = 
[z,v]$ is unique up to left-action of~$\group_z$. Let $Z$ denote the set of 
points $z\in\h$ for which $\group_z = \{\id\}$. Then, for any~$z\in Z$, there 
is a unique~$v_z\in V$ such that~$s([z]) = [z,v_z]$. This yields a unique map
\[
 f\colon  Z \to V\,,\quad z\mapsto v_z\,,
\]
satisfying~\eqref{eq:sec_map} on~$Z$. It remains to show that~$f$ extends to a 
smooth map on~$\h$ and satisfies~\eqref{eq:sec_equiv}. For any~$z\in\h$ and any~$v\in V$ such that $s([z]) = [z,v]$ there exist open neighborhoods~$U_z$ of~$z$ in~$\h$ and $U_v$ of~$v$ in~$V$ and a smooth map $\tilde s\colon  U_z \to U_z\times U_v$ such that
\[
 \xymatrix{
 U_z \ar[r]^{\tilde s} \ar[d]_{\pi_\group} & U_z\times U_v \ar[d]^{\pi_\group}
 \\
 X \ar[r]^{s} & E_\twist
 }
\]
commutes. Since~$\h\setminus Z$ is a countable union of isolated points, we may choose the neighborhood~$U_z$ such that it is connected,
intersects the set~$\h\setminus Z$ in at most the point~$z$ and is $\group$-stable.
Let
\begin{align*}
 \pr_1&\colon  U_z\times U_v\to U_z
 \intertext{and}
 \pr_2&\colon  U_z\times U_v\to U_v
\end{align*}
denote the projection on the first and second component, respectively. Again 
using that~$\h\setminus Z$ consists of isolated points, we may assume without 
loss of generality that 
\[
 \pr_1\circ \tilde s = \id_{U_z}\,.
\]
(There exists an element of~$\group_z$ such that the composition of~$\tilde s$ with this element calibrates~$\tilde s$ to such a map.) Then $\tilde s(w) = (w, f(w))$ for all $w\in Z\cap U_z$ or, in other words,
\begin{equation}\label{eq:fislift}
 \pr_2\circ\tilde s = f \quad\text{on $Z\cap U_z$}.
\end{equation}
Since $\pr_2\circ\tilde s$ is smooth, \eqref{eq:fislift} yields a smooth 
extension of~$f$ to~$z$. Because~${\h\setminus Z}$ is discrete, this extension is 
well-defined and unique. Since $z$ was arbitrary, $f$ extends (uniquely) to a 
smooth map on $\h$, satisfying \eqref{eq:sec_map} on all of~$\h$. For any $z\in\h$ and any $g\in\group$ we have
\begin{align*}
 [g.z, f(g.z)] & = s([g.z]) = s([z]) = [z,f(z)] = [g.z, \chi(g)f(z)]\,.
\end{align*}
Therefore,
\[
 f(g.z) = \chi(h)\chi(g)f(z)
\]
for some $h\in\group_{g.z}$. Using that for $z\in Z$, we have $h=\id$, and that 
$Z$ is dense in $\h$, we find that $f(g.z)= \chi(g)f(z)$ on all of 
$\h\times\group$. This shows that~$f$ satisfies~\eqref{eq:sec_equiv}. Let now $f\in \CI(\h;V)_\ssec$ and set
\[
 s([z]) \coloneqq [z,f(z)]
\]
for all $z\in\h$. It is straightforward to show that $s\in \CI(X;E_\twist)$.
\end{proof}

We denote by
\[ \CcI (X; E_\twist) \coloneqq \{ f \in  \CI(X; E_\twist) \setmid  \text{$\supp f$ is compact} \}\,,
\]
the space of compactly supported sections of $E_\twist$. We may use the isomorphism from Lemma~\ref{lem:unraveledsections} to identify this space with 
a subspace of~$\CI(\h;V)_\ssec$. Under the isomorphism from Lemma~\ref{lem:unraveledsections}, the space~$\CcI (X; E_\twist)$ corresponds to
\[
\CcI(\h;V)_\ssec \coloneqq \{ f\in \CI(\h;V)_\ssec \setmid \text{$\overline{\funddom} \cap \supp f$  is compact}\}\,,
\]
where $\funddom$ is any fundamental domain for the action of $\group$ on $\h$. 
We remark that the definition of $\CcI(\h;V)_\ssec$ does not depend on the 
choice of the fundamental domain.

\subsection{Definition of the Laplacian}\label{sec:def_Laplacian}
The goal of this subsection is to define the action of the Laplace operator on $\CcI(X; E_\chi)$, to study its extensions and to choose the one that is suitable for us. We will use the advantage of the fact that our orbibundle, $E_\chi$, is associated to the representation, $\chi$. It is possible to define, e.g., a connection Laplacian on a general orbifold bundle, using the notion of a connection (for a definition of a connection on an orbifold bundle see, e.g., \cite[Definition 4.3.1]{ChenRuan2001}), but we will use a short-cut. 

We choose a basis of~$V$, say $(v_j)_{j=1}^{\dim V}$. To every function, $f \in \CI(\h;V)$, we associate functions $f_j \colon \HH \to \C$ such that 
\[
f(z) = \sum_{j=1}^{\dim V} f_j(z) v_j\,, \quad z \in \HH\,.
\]
The action of the Laplace-type operator, $\Delta^\#$, is given as follows:
\[
\Delta^\# f(z) \coloneqq  \sum_{j=1}^{\dim V} (\Delta_\HH f_j(z)) v_j\,.
\]
Note that the definition does not depend on the basis due to its linearity.
Additionally, $\Delta^\#$ preserves the equivariance \eqref{eq:sec_equiv},
thus it defines an operator, $\Delta_{X,\twist}$, on $\CI(X;\bundle) \cong \CI(\h;V)_\ssec$. 

We now define the appropriate $L^2$-space on which $\Delta^\#$ will be essentially self-adjoint.
We recall that $V$ is a finite-dimensional Hermitian vector space with inner product $(\cdot,\cdot)_V$ and we denote the corresponding norm by $\norm{\cdot}_V$. Since $\twist\colon \group \to U(V)$ is a unitary representation of 
$\group$, we have $(\twist(\group)u, \twist(\group)v)_V = (u,v)_V$ for all 
$u,v \in V$. Hence, $(\cdot,\cdot)_V$ induces a (well-defined) Hermitian bundle metric $(\cdot,\cdot)_{\bundle}$ on $\bundle = \h \times_\twist V$ by
\begin{align*}
( [g,v], [g,w])_{\bundle} = (v,w)_V\,.
\end{align*}
We define a sesquilinear product on sections of $\bundle$ by
\begin{align}\label{eq:bundle-metric-infty}
(u,v)_{L^2(X,\bundle)} \coloneqq \int_X (u,v)_{\bundle} d\mu_X\,.
\end{align}
This inner product defines the space $L^2(X,\bundle)$.
We will drop the subscript from the inner product if it is clear from the context. Moreover, we define the bilinear product $\ang{u,v} \coloneqq (u,\bar{v})$.

\begin{lemma}
There exists a unique self-adjoint extension of \[\Delta_{X,\twist} \colon  \CcI(X,\bundle) \to L^2(X,\bundle)\] with respect to $\ang{\cdot, \cdot }$. 
\end{lemma}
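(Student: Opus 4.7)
The plan is to establish essential self-adjointness of $\LapTwist$ on $\CcI(X, \bundle)$, which will yield both the existence and uniqueness of a self-adjoint extension.

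First I would verify that $\LapTwist$ is symmetric and non-negative on $\CcI(X, \bundle)$. Using the identification $\CcI(X, \bundle) \cong \CcI(\h; V)_\ssec$ from Lemma~\ref{lem:unraveledsections}, this reduces to a Green's identity applied to a fundamental domain $\funddom$ for the action of $\group$ on $\h$: for equivariant $f, g \in \CcI(\h; V)_\ssec$, the boundary contributions from paired sides of $\partial \funddom$ cancel thanks to the unitarity of $\twist$, since the transition from one side to its paired image is implemented by the unitary action of $\twist(g)$ on the fiber; compact support modulo $\group$ removes any contribution at infinity. The same identity with equal arguments yields $\ang{\Delta^\# f, f} = \norm{\nabla f}^2_{L^2(X,\bundle)} \geq 0$.

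Second I would invoke the essential self-adjointness theorem for Laplace-type operators on Hermitian bundles over complete Riemannian (orbi)manifolds, as developed by Chernoff and by Strichartz in the manifold setting. The hyperbolic surface $X$ is geodesically complete since $\h$ is complete and $\group$ acts isometrically; the bundle $\bundle$ carries the natural flat Hermitian connection induced by unitarity of $\twist$; and $\Delta^\#$ coincides with the associated connection Laplacian. The argument uses smooth cutoffs $\phi_R$ supported in geodesic balls of radius $R$, constructed from $\group$-invariant cutoffs on $\h$: for any $u \in L^2(X, \bundle)$ in the purported kernel of $\LapTwist^* \pm i$, elliptic regularity gives $u \in \CI$, and a commutator estimate of the form
\[
\norm{\phi_R u}^2_{L^2} \lesssim \norm{[\Delta^\#, \phi_R]\, u}_{L^2} \norm{\phi_R u}_{L^2}
\]
forces $u = 0$ upon letting $R \to \infty$, because $[\Delta^\#, \phi_R] u$ vanishes in $L^2$ by completeness together with the finiteness of $\norm{u}_{L^2}$.

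The main technical point combines three features of our setting: orbifold singularities, non-compact funnel and cusp ends, and the twist by $\twist$. Each is individually standard. The orbifold structure warrants the most care, but since elliptic points are isolated and confined to the compact core of $X$, lifting to $\group$-equivariant sections on $\h$ reduces the analysis to the manifold category and the classical cutoff argument. The non-compact ends are handled uniformly by completeness, while the twist is absorbed into the bundle connection that arises from the unitarity of $\twist$.
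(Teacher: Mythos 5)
Your proposal is correct in outline but takes a genuinely different route from the paper. The paper does not run the Chernoff--Strichartz cutoff argument at all: it invokes Selberg's Lemma to produce a finite cover $\widetilde{X} = \tilde\group\bs\h$ on which $\tilde\group$ acts without elliptic elements, pulls back $\bundle$ and $\LapTwist$ to the genuine complete manifold $\widetilde{X}$, quotes essential self-adjointness there, and then argues by contradiction: a nonzero element of $\ker(\LapTwist^* \mp i)$ would pull back to a nonzero element of $\ker(\widetilde\Delta^* \mp i)$. You instead prove essential self-adjointness directly on the orbifold, via symmetry from Green's identity on a fundamental domain plus the completeness/cutoff argument. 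Both strategies work, and each shifts the real work to a different place. The paper's covering trick eliminates the orbifold singularities entirely, at the price of having to check that the pulled-back distribution really lies in the domain of the adjoint upstairs (the adjoint on $\widetilde{X}$ is taken against the larger space of test sections, so one needs an averaging argument over the deck group). Your direct approach keeps everything downstairs but concentrates the difficulty exactly at the cone points, and the sentence asserting that ``lifting to $\group$-equivariant sections on $\h$ reduces the analysis to the manifold category'' is doing a lot of unacknowledged work: the lifted sections are not compactly supported on $\h$, so you cannot literally apply the manifold theorem on $\h$; you must run the cutoff argument on $X$ itself, and both the elliptic regularity of elements of $\ker(\LapTwist^*\mp i)$ near a cone point and the integration by parts there have to be justified by passing to a local uniformizing chart and averaging test functions over the finite local group $\group_{z_0}$ (using unitarity of $\twist$). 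This is standard and patches cleanly, but it should be spelled out, since the orbifold points are the only reason the lemma is not an immediate citation. Your Green's identity step and the observation that the cutoffs can be taken identically $1$ near the compact core (hence near all cone points) are fine and, if anything, make the nonnegativity of $\LapTwist$ explicit, which the paper does not bother to record.
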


\begin{proof}
By Selberg's Lemma \cite[Lemma~8]{Selberg_lemma},
there is a finite covering $\widetilde{X} = \tilde\group \bs \h$ of $X$ such that $\tilde{\Gamma}$ has no elliptic fixed points; see also \cite{Cassels_selberglemma} and \cite[Theorem II]{Alperin}.
Define the vector bundle $\widetilde{E}$ over $\widetilde{X}$ as the pull-back of $\bundle$ under the covering map $\widetilde{X} \to X$ and let $\widetilde{\Delta} \colon  \CI(\widetilde{X},\widetilde{E})\to \CI(\widetilde{X},\widetilde{E})$ be the pulled-back Laplacian.
Since $\widetilde{X}$ is complete, the operator $\tilde{\Delta}$ is essentially self-adjoint as an operator on $L^2(\widetilde{X},\widetilde{E})$.

Assume that $\LapTwist$ is not essentially self-adjoint.
Recall that a symmetric operator $A$ is essentially self-adjoint if and only if $\ker(A^* - i) = 0$ and $\ker(A^* + i) = 0$.
Without loss of generality, we have that $\ker(\LapTwist^* - i) \neq 0$, that is, there exists a non-zero $v \in L^2(X,\bundle)$ such that $v \in \ker(\LapTwist^* - i)$.
The pull-back of $v$ to $\widetilde{X}$ is in $L^2(\widetilde{X},\widetilde{E})$ and therefore the kernel of $\tilde{\Delta}^* - i$ is non-trivial, which contradicts the essential self-adjointness of $\tilde{\Delta}$.
Hence, $\LapTwist$ is essentially self-adjoint.
\end{proof}

Henceforth, we will denote the unique self-adjoint extension also by $\LapTwist$.

\subsection{Base-Compactified Bundles and their Sections}
In this section, we define the restriction of vector bundles to the boundary at infinity. 
We continue using the notation from the previous sections. For any $F\subseteq X$ we let
\[
 E_\twist\vert_F \coloneqq \pi_1^{-1}(F)
\]
and denote by $\pi_1\vert_F\colon E_\twist\vert_F \to F$ the restriction of the fiber orbibundle $\pi_1$ to $F$. For any subgroup $\sgroup$ of $\group$ we set $\twist_\sgroup\coloneqq \twist\vert_\sgroup$. If $\sgroup = \ang{\gamma}$ is a cyclic group generated by $\gamma \in \group$, then we also use
\[
 \twist_\gamma \coloneqq \twist_{\ang{\gamma}}\,.
\]

\begin{prop}\label{prop:model-ends}
For each $j \in \{1,\dotsc,n_\bullet\}$ and $\bullet \in \{f,c\}$, we have that
\[
E_\twist\vert_{X_{\bullet,j}}\ \cong\ U\times_{\twist_\gamma} V \subset E_{\twist_\gamma}\,,
\]
where $U$ and $\gamma$ are as in Section~\ref{sec:decomposition} and $E_{\twist_\gamma} = \h \times_{\twist_\gamma} V$ is the vector bundle associated to~$\twist_\gamma$ over the hyperbolic cylinder $\ang{\gamma} \bs \h$.
\end{prop}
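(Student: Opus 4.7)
The plan is to trivialize $E_\twist$ on the chosen $\group$-stable lift $U\subset\h$ of $X_{\bullet,j}$ and then descend by the stabilizer $\ang{\gamma}$. By Section~\ref{sec:decomposition}, for each end $X_{\bullet,j}$ there exists an open connected $\group$-stable set $U\subseteq\h$ with $\pi_\group(U)=X_{\bullet,j}$ and $\group_U=\ang{\gamma}$, where $\gamma$ is primitive hyperbolic if $\bullet=f$ and primitive parabolic if $\bullet=c$. In either case $\gamma$ acts fixed-point freely on $\h$, so $\ang{\gamma}$ acts freely and properly discontinuously on $U$, and the restricted projection $\pi_\group|_U\colon U\to X_{\bullet,j}$ factors through a diffeomorphism $\ang{\gamma}\bs U\xrightarrow{\sim} X_{\bullet,j}$.

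Next I would consider the set-theoretic map
\[
\Phi\colon U\times V\to E_\twist\vert_{X_{\bullet,j}}\,,\qquad (z,v)\mapsto [z,v]\,,
\]
where $[z,v]$ denotes the equivalence class in $E_\twist=\h\times_\twist V$. Since for every $h\in\ang{\gamma}$ one has $[h.z,\twist(h)v]=[z,v]$ in $E_\twist$, the map $\Phi$ is invariant under the $\ang{\gamma}$-action $h.(z,v)=(h.z,\twist_\gamma(h)v)$ and hence descends to a map
\[
\overline\Phi\colon U\times_{\twist_\gamma}V\to E_\twist\vert_{X_{\bullet,j}}\,.
\]
Surjectivity of $\overline\Phi$ is immediate from $\pi_\group(U)=X_{\bullet,j}$. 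For injectivity---the one step that genuinely uses the $\group$-stability of $U$---suppose $[z,v]=[z',v']$ in $E_\twist$ with $z,z'\in U$. Then there exists $g\in\group$ with $z'=g.z$ and $v'=\twist(g)v$, so $z'\in g.U\cap U$; by $\group$-stability this forces $g.U=U$, hence $g\in\group_U=\ang{\gamma}$, and therefore $(z,v)$ and $(z',v')$ represent the same class in $U\times_{\twist_\gamma}V$.

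To conclude, I would promote $\overline\Phi$ to an isomorphism of smooth Hermitian vector bundles: since $\ang{\gamma}$ acts freely on $U$, the quotient $U\times_{\twist_\gamma}V$ carries a canonical structure of a smooth vector bundle over $\ang{\gamma}\bs U\cong X_{\bullet,j}$, with trivializations inherited from local sections of the covering $U\to\ang{\gamma}\bs U$; in matching trivializations $\overline\Phi$ is the identity, so it is a bundle isomorphism. The inclusion $U\times_{\twist_\gamma}V\hookrightarrow E_{\twist_\gamma}=\h\times_{\twist_\gamma}V$ asserted in the statement is in turn induced by the inclusion $U\hookrightarrow\h$ of total spaces. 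I expect the injectivity step in the previous paragraph to be the only non-formal point of the argument; it is precisely where the $\group$-stability hypothesis (rather than just $\pi_\group(U)=X_{\bullet,j}$) from Section~\ref{sec:decomposition} is used in an essential way, while everything else is bookkeeping and a check that the evident smooth structures on both sides coincide.
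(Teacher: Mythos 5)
Your argument is correct and follows essentially the same route as the paper: both reduce the claim to checking that two pairs in $U\times V$ represent the same class in $E_\twist$ if and only if they do so in $U\times_{\twist_\gamma}V$, with surjectivity coming from $\pi_\group(U)=X_{\bullet,j}$ and injectivity from the $\group$-stability of $U$ forcing the identifying element $g$ to lie in $\group_U=\ang{\gamma}$. Your additional remarks on the free action of $\ang{\gamma}$ and the matching smooth structures are a harmless elaboration of what the paper leaves implicit.
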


\begin{proof}
By definition of $U$, we have that $\pi_\group(U) = X_{\bullet,j}$. Therefore,
\[
E_\twist\vert_{X_{\bullet,j}} = E_\twist\vert_{\pi_\group(U)} = \pi_1^{-1}\bigl(\pi_\group(U)\bigr) = \{ [z,v] \in \h\times_\twist V \setmid z\in\group.U\}\,.
\]
For any $[z,v]\in\h\times_\twist V$ with $z \in \group.U$ we obviously find a representative in~$U\times V$. Further, for any $z_1,z_2\in U$ and $v_1,v_2\in V$ we see that $[z_1,v_1] = [z_2,v_2]$ if and only if there exists $g\in\group$ such that
\begin{equation}\label{eq:find_g}
 g.z_1 = z_2 \quad\text{and}\quad \twist(g)v_1 = v_2\,.
\end{equation}
Since $U$ is $\group$-stable, this condition is equivalent to finding $g\in 
\group_U$ satisfying~\eqref{eq:find_g}. Thus, $[z_1,v_1]$ and $[z_2,v_2]$ are 
equal as elements in $E_\twist$ if and only if they are equal as elements in 
$U\times_{\twist_\gamma} V$.
\end{proof}

\begin{remark}
Under the isomorphisms of Section~\ref{sec:ends}, $\pi_{\ang{\gamma}}(U)$ corresponds to the set $F_\ell \cong \{r > 0\} \subset C_\ell$ if we start with a funnel end, and to $F_\infty \cong \{r > 0\} \subset C_\infty$ in the case of a cusp end.
\end{remark}

Parallel to the scalar case, we can define smooth sections on the compactified space. The definition of the corresponding bundle is analogous to the compactification of the base space.

\begin{defi}
The base-compactified bundle of $E_\twist \to X$ is given by
\begin{align*}
\overline E_\twist = \Omega^*(\group) \times_\twist V\,.
\end{align*}
\end{defi}
We have the natural projection $\overline E_\twist \to \overline{X}$.
Note that $\overline E_\twist$ is not compact, since the typical fiber is $V$.
Since $\overline{X}$ is not a manifold (not even an orbifold) is not a differentiable fiber bundle. This also makes the definition of smooth sections more involved.

Let $j \in \{1, \dotsc, n_c\}$. We set 
\[
n_{c,j}^\twist \coloneqq \dim E_1(\twist(\gamma_j))\,,
\]
where $\gamma_j \in \group$ is the unique element (modulo taking inverses) as defined in Proposition~\ref{prop:model-ends}.
Let $\Pi_j$ denote the orthogonal projection onto $E_1(\twist(\gamma_j))$.
Recall that $\rho$ is the boundary boundary defining function as given in \eqref{eq:rho_def}.
We note that in each funnel and cusp, we may use coordinates $(\rho, \phi)$, where $\phi \in \R /2 \pi \Z$.
We define smooth sections $u \in \CI(\overline{X}, \bundle)$ as smooth sections $u \in \CI(X, \bundle)$ such that
\begin{align*}
u\vert_{X_{c,j}}(\rho, \phi) &= \Pi_j u_{0,j}(\rho) + O(\rho^{\infty})\,, \quad (\rho,\phi) \in X_{c,j}\,,\ j \in \{1,\dotsc, n_c\}
&
\intertext{for some $u_{0,j}\in \CI_b(X_{c,j},\bundle)$ not depending on $\phi$, and}
u\vert_{X_{f,j}} &\in \CI(X_{f,j},\bundle) \text{ extends smoothly to $\overline{X_{f,j}}$}\,.
\end{align*}
Let $u \in \CI(\overline{X}, \bundle)$. We define the restriction 
$u|_{\pa_\infty X}$ of $\phi$ to the boundary at infinity $\pa_\infty X$.
Since the boundary at infinity is a disjoint union of funnel and cusp ends, we 
have can consider both cases separately. For $X_{f,j}$, we define
\begin{align*}
u|_{\pa_\infty X_{f,j}}(\phi) &= \lim_{\rho \to 0} u(\rho, \phi)
\end{align*}
in the coordinates as above.
We define the restriction of $u$ to the cusp end as
\begin{equation*}
u|_{\pa_\infty X_{c,j}} = \lim_{\rho \to 0} \Pi_j\, u(\rho, \phi)\,.
\end{equation*}
Since $u(\rho,\phi) = u_0(\rho) + O(\rho^\infty)$, we see that $u|_{\pa_\infty X_{c,j}}(\rho,\phi) = \lim_{\rho\to 0}\Pi_j u_{0,j}(\rho)$ is independent of $\phi$. We define the space
\begin{align*}
\CI(\pa_\infty X, \bundle) = \{ u|_{\pa_\infty X} \setmid u \in \CI(\overline{X}, \bundle) \}\,.
\end{align*}
If we set
\begin{align}\label{eq:nctwist}
n_c^\twist \coloneqq \sum_{j=1}^{n_c} n_{c,j}^\twist\,,
\end{align}
then we have the decomposition
\begin{align}\label{eq:bundle-boundary}
\CI(\pa_\infty X, \bundle) &\cong \CI(\pa_f X, \bundle) \sqcup 
\C^{n_c^\twist} 
\\
&\cong \bigsqcup_{j=1}^{n_f} \CI(\R/2\pi \Z, \bundle) \sqcup 
\C^{n_c^\twist}\,.\nonumber
\end{align}

We consider the sesquilinear product
\begin{align}\label{eq:sesquilinear-product-pa_infty}
(u,v)_{L^2(\pa_\infty X,\bundle)} = \int_{\pa_f X} (u,v)_V \, d\mu_{\pa_f X} + \sum_{j=1}^{n_c^\twist} u_j \overline{v_j}\,.
\end{align}
The $L^2$-space is the completion of $\CI$ with respect to this inner product 
and has the decomposition
\begin{align*}
L^2(\pa_\infty X, \bundle) \cong L^2(\pa_f X, \bundle) \oplus 
\C^{n_c^\twist}\,.
\end{align*}
We further note that the bilinear product is given by
\begin{align}\label{eq:bilinear-product-pa_infty}
\ang{u,v} = (u, \overline{v})\,.
\end{align}

\section{Model Calculations}\label{sec:model}
In this section, we prove meromorphic continuatibility of the twisted resolvents in the case of the hyperbolic cylinder~$C_\ell$ and the parabolic cylinder~$C_\infty$. These will be used later on to prove the meromorphic continuatibility on a general hyperbolic surface~$X$ using the decomposition of~$X$ into a compact core and model ends.

We start with recalling the definition and the properties of the hypergeometric function in Section \ref{sec:hypergeometric_function}. We recall the explicit expression for the resolvent of the Laplacian on $\h$ in Section~\ref{sec:resolv_plane}. In Section~\ref{sec:fourier_cylinders}, we discuss the Fourier expansion of an integral kernel of the resolvent of the Laplace operator on an abstract cylinder. Unlike in the non-twisted case, the integral kernel will not be periodic in the horocycle direction, that is going to affect the Fourier expansion. 

We will give two alternative proofs of the meromorphic continuation of the integral kernel of the resolvent of the Laplace operator on a model cylinder. The first proof, to be found in Section \ref{sec:twisted_resolvent_on_cylinders} (more precisely, in Proposition~\ref{prop:Rhkernel_conv}), uses a variant of the method of images to provide a meromorphic continuation on an abstract cylinder. Unlike in the existing literature, we treat the cases of the parabolic and the hyperbolic cylinder uniformly. The second approach is based on the Fourier expansions for the model resolvents. The method is more involved but it allows to obtain more information about the location of the poles; see Proposition \ref{prop:fourier-cylinder} for the hyperbolic cylinder, Proposition \ref{prop:fourier-funnel} for the funnel and Proposition \ref{prop:fourier-cusp}. Moreover, this method allows to obtain upper and lower bounds for the resonance counting function for the hyperbolic cylinder in Proposition \ref{prop:rescount_hypcyl} and for the funnel in Remark \ref{rem:upper-bound-funnel}. For the cusp, the only resonance is at $s = 1/2$ with multiplicity equal to the dimension of the $1$-eigenspace of the endomorphism $\twist(p)$ with $p\in\Gamma$ being a generator for a realization of the stabilizer group of the considered cusp. For a more extensive discussion of Fourier decompositions in the presence of non-unitary representations, we refer to \cite{FPR}.

\subsection{Hypergeometric Function}\label{sec:hypergeometric_function}

In this section we recall the necessary background information on the hypergeometric function. We refer to~\cite[Section 9.1]{Olver74} and~\cite[Chapter II, Section 2.1.1]{ErdelyiI}.

We recall that the hypergeometric function $\hgfunc(a,b;c;z)$ is defined for 
$a,b\in \C$, $c \in \C \setminus ( -\N_0)$ and $z\in\C$, $|z| < 1$, by the power 
series 
\begin{align*}
\hgfunc(a,b;c;z) \coloneqq \sum_{n=0}^\infty 
\frac{\gammafunc(a+n)\gammafunc(b+n)\gammafunc(c)}{
\gammafunc(a)\gammafunc(b)\gammafunc(c+n)} \cdot \frac{z^n}{n!}\,,
\end{align*}
and the regularized hypergeometric function is defined for $a,b\in \C$, $c \in \C \setminus ( -\N_0)$ and $z\in\C$, $\abs{z} < 1$, by 
\begin{align}\label{eq:def_FF}
\FF(a,b;c;z) \coloneqq&\; \frac{1}{\gammafunc(c)} \cdot \hgfunc(a,b;c;z)
\\
=&\; \sum_{n=0}^\infty 
\frac{\gammafunc(a+n)\gammafunc(b+n)}{\gammafunc(a)\gammafunc(b)}\frac{1}{
\gammafunc(c+n)} \cdot \frac{z^n}{n!}\,.\nonumber
\end{align}
In fact, the right hand side of the previous formula defines $\FF(a,b;c;z)$ not 
only for $c \in \C \setminus ( -\N_0)$, but for $c \in \C$ (see~\cite[Theorem 9.1]{Olver74}). For $x \in (1,\infty)$ and $s\in\C \setminus (-\N_0)$ we define the function $g_s(x)$ by 
\begin{align*}
g_s(x) &\coloneqq \frac{\gammafunc(s)^2}{4\pi} x^{-s} \FF(s,s;2s;x^{-1})\,.
\end{align*}
Since the Gamma function~$\gammafunc$ is meromorphic on~$\C$ and the regularized hypergeometric function is holomorphic in the first three arguments, $g_s(x)$ is meromorphic in $s \in \C$ with poles at $s = -\N_0$ for each fixed~$x$.

\subsection{Resolvent on the Hyperbolic Plane}\label{sec:resolv_plane}
The resolvent of the Laplacian, $\Delta_\h$, on the hyperbolic plane is given by 
(see, e.g., \cite[Proposition~4.2, Eq.~(4.6)]{Borthwick_book})
\begin{equation}\label{eq:def_Rhkernel}
\begin{aligned}
R_\h(s; z,z') &= \frac{\gammafunc(s)^2}{4\pi} \sigma(z,z')^{-s} 
\FF(s,s;2s;\sigma(z,z')^{-1})
\\
&= g_s(\sigma(z,z'))\,,
\end{aligned}
\end{equation}
where $\sigma$ was defined in~\eqref{eq:sigma_coordfree}, $s\in\C\setminus(-\N_0)$, $z,z'\in\h$, $z\not=z'$. If $\sigma(z,z') > 1$ and $s \in \C \setminus ( -\N_0)$, this formula and the Taylor expansion \eqref{eq:gs_taylor} imply
\begin{align}\label{eq:resolv_altform}
R_\h(s; z,z') = \frac{1}{4\pi} \sum_{n=0}^\infty
\frac{\gammafunc(s+n)^2}{n!\gammafunc(2s+n)} \sigma(z,z')^{-(s+n)}\,.
\end{align}

From the explicit formula for the resolvent, we obtain a precise description of the kernel pulled back to the blow-up space $\h \times_0 \h$ (see~\cite[Lemma~2.1]{GuZw97}). 

In the following proposition we establish the meromorphic continuation of the resolvent~$R_\h$ to all of~$\C$. We will now use the blown-up space $\h \times_0 \h$ (see Section~\ref{sec:blow-up}) to describe the free resolvent. This result is well-known (see, e.g, \cite[Section~6.6]{Borthwick_book}). We let $\rho_0, \rho_0'$ be boundary defining functions of the side faces of $\h \times_0 \h$ that are given by $\beta^*(\pa_\infty \h \times \h)$ and $\beta^*(\h \times \pa_\infty \h)$, respectively. We denote by $\mc I_0^m(\h \times_0 \h, \diagon_0)$ the conormal distribution on $\h \times_0 \h$ of order $m \in \R$ conormal with respect to $\diagon_0$ that vanishes to infinite order on the side faces. We note that a family of operators $R_s\colon L^2_\cpt(\h) \to L^2_\loc(\h)$, defined for $s\in\C$, $\Rea s > x_0$ for some $x_0\in\R$, has a meromorphic continuation to all of~$\C$ if and only if for any $\eta \in \CcI(\h)$, the family $\eta R_s \eta\colon L^2(\h) \to L^2(\h)$ has a meromorphic continuation to all of~$\CC$, by the topologies of~$L^2_\cpt(\h)$ and~$L^2_\loc(\h)$.

\begin{prop}\label{prop:continuation-hyperplane}
The resolvent $s \mapsto R_\h(s)$ initially defined for $\Re s > 1/2$ admits a meromorphic continuation to $s \in \C$ as an operator
\[
R_\h(s) \colon  L^2_\cpt(\h) \to L^2_\loc(\h)\,,
\]
where the poles are at $s \in -\N_0$. For $s \in \C \setminus ( -\N_0)$, the integral kernel~$R_\h(s;\cdot,\cdot)$ of~$R_\h(s)$ satisfies
\begin{align}\label{eq:split_pb_kernel}
\beta^* R_\h(s; \cdot) \in \mc I_0^{-2}(\h \times_0 \h, \diagon_0) + (\rho_0 \rho_0')^s \CI(\h \times_0 \h)\,.
\end{align}
\end{prop}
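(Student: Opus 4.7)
The plan is to read both assertions off the explicit formula $R_\h(s;z,z') = g_s(\sigma(z,z'))$ with $g_s(\sigma) = \frac{\gammafunc(s)^2}{4\pi}\sigma^{-s}\FF(s,s;2s;\sigma^{-1})$. For $z\neq z'$ we have $\sigma > 1$, so $\sigma^{-1}$ lies in the open unit disc and $\FF(s,s;2s;\sigma^{-1})$ is entire in $s$. Inspecting the series \eqref{eq:resolv_altform}, at $s = -N$ with $N\in\N_0$ only finitely many terms contribute non-trivially: the double pole of $\gammafunc(s+n)^2$ is compensated by the simple pole of $\gammafunc(2s+n)$ for $n\leq N$, leaving simple poles; the terms with $N<n\leq 2N$ vanish; the remaining tail is holomorphic. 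This yields pointwise meromorphic continuation of the kernel with poles (at worst) at $-\N_0$. To promote this to a meromorphic family of operators $L^2_\cpt(\h)\to L^2_\loc(\h)$, I would sandwich with cutoffs $\eta_1, \eta_2\in\CcI(\h)$ and verify that $\eta_1 R_\h(s)\eta_2$ is locally norm-bounded on $L^2(\h)$ in $s$ away from $-\N_0$, using that $g_s(\sigma(\cdot,\cdot))$ is locally integrable (logarithmic singularity at the diagonal, exponential decay in $d_\h$ for $\Rea s$ large) with control uniform for $s$ in compact subsets of $\C\setminus(-\N_0)$.

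For the decomposition \eqref{eq:split_pb_kernel}, I would work in the chart $\overline{\chart}$ from Section~\ref{sec:blow-up} together with the two further charts obtained by applying the involution $z\mapsto -1/z$ to each factor. In this chart $\beta^*\sigma = (1+2\eta\eta')/(4\eta\eta')$, and $\eta,\eta'$ coincide with the side-face defining functions $\rho_0,\rho_0'$ up to smooth positive factors. Choose a smooth cutoff $\chi\in\CI([1,\infty))$ with $\chi = 1$ near $1$ and $\chi = 0$ on $[2,\infty)$, and split
\begin{equation*}
g_s(\sigma) = \chi(\sigma)\,g_s(\sigma) + \bigl(1-\chi(\sigma)\bigr) g_s(\sigma).
\end{equation*}
The first summand is supported away from the side faces (where $\sigma\to\infty$), and a standard connection formula for $\hgfunc$ at its singular point $w=1$ writes $g_s(\sigma) = -\frac{1}{4\pi}\log(1-\sigma^{-1}) + h(s,\sigma)$ with $h$ jointly smooth in $(s,\sigma)$ for $s\in\C\setminus(-\N_0)$. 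Since $1-\sigma^{-1}$ defines $\diagon_0$ in the interior of $\h\times_0\h$, this identifies $\chi(\sigma)g_s(\sigma)$ as an element of $\mc I_0^{-2}(\h\times_0\h,\diagon_0)$, flat at the side faces by construction of $\chi$. For the second summand, the series \eqref{eq:resolv_altform} converges uniformly on $\{\sigma\geq 3/2\}$; substituting $\sigma^{-1} = 4\eta\eta'/(1+2\eta\eta')$ yields
\begin{equation*}
\sigma^{-(s+n)} = (4\eta\eta')^{s+n}\,(1+2\eta\eta')^{-(s+n)},
\end{equation*}
so $(\eta\eta')^s$ factors out and the remainder is a convergent series in $\eta\eta'$ whose coefficients are smooth in all blown-up coordinates and meromorphic in $s$, with any apparent residual pole at $-\N_0$ cancelled by the matching contribution from the first summand (the sum being the globally defined $g_s(\sigma)$). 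This places $(1-\chi(\sigma))g_s(\sigma)$ in $(\rho_0\rho_0')^s\,\CI(\h\times_0\h)$.

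The main obstacle I anticipate is verifying joint smoothness of the second summand across the front face $\{r=0\}$ and across the overlaps of the three charts. The chart transitions are induced by the isometry $z\mapsto -1/z$ and preserve $\sigma$, so the decomposition is invariant under chart change; smoothness across the front face follows because the series coefficients depend only on the smooth function $\eta\eta'$, which stays bounded and is not forced to degenerate there by the constraint $\eta^2+\eta'^2+\omega^2 = 1$. The remaining subtlety is the identification of the logarithmic singularity of $g_s$ as a conormal distribution of order exactly $-2$ in the Guillop\'e--Zworski convention, which is a standard but notation-sensitive check; everything else reduces to the two elementary expansions (the connection formula at $\sigma=1$ and the power series at $\sigma=\infty$) already encoded in Section~\ref{sec:hypergeometric_function}.
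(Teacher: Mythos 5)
Your argument follows essentially the paper's own route: meromorphy and the location of the poles are read off the explicit kernel $g_s(\sigma)$ and the series \eqref{eq:resolv_altform}, the operator statement is reduced to local $L^2$-bounds on cutoff kernels (the paper invokes the equivalence of weak and strong holomorphy at this point), and the structure \eqref{eq:split_pb_kernel} is obtained by splitting near and away from $\diagon_0$ and using that $\eta,\eta'$ are local defining functions of the side faces; the only real difference is that the paper simply cites Mazzeo--Melrose for the conormality of the near-diagonal piece, whereas you make it explicit via the connection formula at $w=1$.

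One misstatement in that explicit step should be fixed. The connection formula for $\hgfunc(s,s;2s;w)$ at its singular point $w=1$ does \emph{not} yield $g_s(\sigma)=-\tfrac{1}{4\pi}\log(1-\sigma^{-1})+h(s,\sigma)$ with $h$ jointly smooth; it yields an expression of the form $A(s,w)\log(1-w)+B(s,w)$ with $A,B$ analytic in $w$ near $1$ (and $A(s,1)=-\tfrac{1}{4\pi}$ after including the prefactors), so after subtracting a single logarithm the remainder still contains terms $(1-\sigma^{-1})^{n}\log(1-\sigma^{-1})$ for $n\geq 1$, which are only finitely differentiable. This does not damage the proof: a kernel of the form $(\text{smooth})\cdot\log + (\text{smooth})$ in the defining function of $\diagon_0$ is precisely the classical conormal structure needed for membership in $\mc I_0^{-2}(\h\times_0\h,\diagon_0)$, and your cutoff guarantees the required vanishing at the side faces. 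The remaining ingredients --- the cancellation analysis at $s=-N$ showing at worst simple poles on $-\N_0$, and the factorization $\sigma^{-(s+n)}=(4\eta\eta')^{s+n}(1+2\eta\eta')^{-(s+n)}$ placing the far-from-diagonal piece in $(\rho_0\rho_0')^s\CI(\h\times_0\h)$ --- are correct as written.
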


\begin{proof}
The meromorphic continuation of the resolvent kernel follows from \eqref{eq:def_Rhkernel} since the hypergeometric function is meromorphic and the location of the poles can be read off from~\eqref{eq:resolv_altform}. The integral kernel is locally $L^2$-integrable (see, e.g., \cite[Proposition~4.2]{Borthwick_book}) and hence $R_\h(s) \colon  L^2_\cpt \to L^2_\loc$ as a meromorphic family by the equivalence of weak and strong holomorphy in Banach spaces~\cite[Proposition~A.3]{Arendt}. We can write the pull-back of the resolvent to $\h \times_0 \h$ for $\sigma(z,z') \not = 1$ as
\begin{align*}
(\beta^* R_\h)(s; r, \eta, \eta', \omega, \omega') = 
\frac{\gammafunc(s)^2}{4\pi} \left( \frac{4\eta\eta'}{1 + 2\eta \eta'} \right)^s 
\FF\left(s,s;2s;\frac{4\eta\eta'}{1 + 2\eta\eta'}\right)\,,
\end{align*}
where we used~\eqref{eq:def_Rhkernel} and~\eqref{eq:sigma-blownup}. We choose a smooth function $\psi \in \CI(\h \times_0 \h)$ such that $\psi$ vanishes on a neighborhood of $\diagon_0$. Then $(1 - \psi) \beta^* R_\h$ is conormal with respect to $\diagon_0$ by~\cite{MaMe87}. Since $\diagon_0$ does not intersect the support of~$\psi$ and since $\eta$ and $\eta'$ are locally the boundary defining function for the side faces, we obtain
\[
\psi \beta^*R_\h(s;\cdot) \in (\rho_0\rho_0')^s \CI(\h \times_0 \h)\,. 
\]
This completes the proof.
\end{proof}

The structure of the kernel implies that for each $N \in \N$
\begin{align*}
R_\h(s)\colon  \rho^N L^2(\h) \to \rho^{-N}L^2(\h)\,,
\quad \Re(s) > \frac12 - N\,,
\end{align*}
where $\rho$ is a boundary defining function on $\overline\h$.

\subsection{Twisted Resolvent on Cylinders: Structural Considerations}\label{sec:twisted_resolvent_on_cylinders}

We consider the case of a cylinder 
\[
C_g = \ang{g} \bs \h\,,
\]
where $g \in \Isom^+(\h) \setminus\nobreak \{\id\}$, and of any finite-dimensional representation 
\[
\twist\colon \ang{g} \to \GL(V)\,.
\]
Let $\funddom$ be a geodesically convex fundamental domain for~$C_g$ that contains exactly one point from each orbit in $\h$. Formally, $R_{C_g,\twist}(s)$ is given by its Schwartz kernel
\begin{equation} \label{eq:resolvent-generic_cylinder}
\begin{aligned}
R_{C_g,\twist}(s;z,z') &= \sum_{k \in \Z} \twist(g^k) R_\h(s;z,g^k . z')
\\
&= \id_V R_\h(s; z, z') + \sum_{\substack{k\in \Z\\ k\not=0}} \twist(g^k) 
R_\h(s; z, g^k.z')\,.
\end{aligned}
\end{equation}
If $\twist\colon \ang{g} \to \Unit(V)$ is a unitary representation, then it is well-known (see, e.g., \cite[p.~17]{Venkov_book}) that \eqref{eq:resolvent-generic_cylinder} is the Schwartz kernel of the resolvent. In this case, its lift to $\h \times \h$ is a distribution $u_s \in \CmI(\h \times \h, \Unit(V))$
that satisfies the partial differential equation
\begin{equation}\label{eq:resolvent-pde}
\left\{\begin{aligned}
(\Delta_\h - s(1-s)) u_s(z,z') = \delta(z-z') \id_V 
\\
u_s(g.z,z') = \twist(g) u_s(z,z')
\end{aligned}\right.
\end{equation}
and $u_s$ is symmetric, that is $u_s(z,z')^* = u_s(z', z)$. We note that in the case that $\twist$ is not unitary, we did not define a Laplace operator and therefore have no notion of a (functional analytical) resolvent. However, we will show that \eqref{eq:resolvent-generic_cylinder} is also well-defined in this case. 

We now provide an abstract proof of the convergence of the infinite series 
in~\eqref{eq:resolvent-generic_cylinder}. We will see (further below) that the hypotheses of Proposition~\ref{prop:Rhkernel_conv} are satisfied for hyperbolic cylinders as well as for parabolic cylinders. The abstract framework allows us to treat both situations at once. We also note that for elliptic elements~$g$, Proposition~\ref{prop:Rhkernel_conv} does not apply. However, for elliptic elements, the meromorphic continuation is immediate as the series~\eqref{eq:resolvent-generic_cylinder} becomes a finite sum.

\begin{prop}\label{prop:Rhkernel_conv}
Let $\twist\colon \ang{g} \to \GL(V)$ be a representation, $a,b \in (0,\infty)$, 
and $c \in \R$ with $b - c \geq a$. Suppose that there exists an increasing function $f \colon  \N \to (0,\infty)$, such that
\begin{enumerate}[label=$\mathrm{(\roman*)}$, ref=$\mathrm{\roman*}$]
\item \label{assump:series-converges} 
for all $x \in \R$ with $x < c$, the series
\begin{align*}
\sum_{\substack{k\in\Z \\ k\not=0}} e^{f(\abs{k}) x}
\end{align*}
converges;
\item for all $z,z' \in \funddom$, we have
\begin{align}\label{eq:sigma_asymp}
\sigma(z,g^k.z') \asymp e^{f(|k|)a} \qquad\text{as $k\to\pm\infty$}\,,
\end{align}
with implied constants depending continuously on~$z$ and~$z'$;
\item\label{Rhkerneliii} for all $k\in\Z$, $k\not=0$, we have
\begin{equation}\label{eq:chigk_bound}
\norm*{\twist(g^k)} \lesssim e^{f(|k|)b}\,,
\end{equation}
where the implied constant is independent of $k$.
\end{enumerate}
Then $R_{C_g,\twist}(s)$ is well-defined as an operator
\begin{align*}
R_{C_g,\twist}(s) \colon  L^2_\cpt(C_g,\bundle) \to L^2_\loc(C_g,\bundle)
\end{align*}
for $\Re s > \frac{b - c}{a}$ and $s \not \in -\N_0$. 

If, in addition, there exists a discrete set $\mathcal{L} \subset \C$ such 
that 
\begin{itemize}
\item for every $z,z' \in \funddom$, the function
\begin{align*}
s\mapsto H(s;z,z') \coloneqq \sum_{k \in \Z} \twist(g^k) \sigma(z,g^k.z')^{-s}
\end{align*}
admits a meromorphic continuation, denoted by $H(s;z,z')$ as well, to all of~$\C$ with poles contained in $\mathcal{L}$, and
\item for each $s \in \C \setminus \mathcal{L}$, the map 
\[
(z,z') \mapsto H(s;z,z')
\]
is continuous on~$\h\times\h$,
\end{itemize}
then the resolvent admits a meromorphic continuation to $s \in \C$ as an 
operator
\begin{align*}
R_{C_g,\twist}(s) \colon  L^2_\cpt(C_g,\bundle) \to L^2_\loc(C_g,\bundle)
\end{align*}
and the poles are contained in the union of $-\N_0$ and the set of poles of 
$H(s;z,z')$, as a function of~$s$.
\end{prop}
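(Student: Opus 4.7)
\emph{Proof proposal.} The plan is to split~\eqref{eq:resolvent-generic_cylinder} into the $k=0$ term $\id_V\, R_\h(s;z,z')$ and the tail $T_\twist(s;z,z') \coloneqq \sum_{k \neq 0} \twist(g^k)\, R_\h(s; z, g^k.z')$, to bound the tail using hypotheses~(i)--(iii), and then to resum the series in terms of the function~$H$ for the meromorphic continuation. The $n=0$ term of~\eqref{eq:resolv_altform} dominates once $\sigma(z,w)$ is bounded away from~$1$, giving $\norm{R_\h(s;z,w)} \lesssim \abs{\gammafunc(s)^2/\gammafunc(2s)}\, \sigma(z,w)^{-\Rea s}$; combining this with~(ii) and~(iii),
\begin{align*}
\norm*{\twist(g^k)\, R_\h(s;z,g^k.z')} \lesssim e^{f(\abs{k})(b - a\Rea s)}\,,
\end{align*}
with constants locally uniform in~$(z,z')$ by the continuity asserted in~(ii). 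Hypothesis~(i) then yields absolute, locally uniform convergence of~$T_\twist$ off the $\ang{g}$-diagonal precisely when $\Rea s > (b - c)/a$. Because $z$ stays off the discrete orbit $\{g^k.z' \setmid k \neq 0\}$ on compact subsets of $C_g \times C_g$ away from the diagonal, $T_\twist$ defines a smooth, locally bounded kernel there; together with $R_\h(s;z,z')$, whose operator meromorphy on the resolvent half-plane with poles in~$-\N_0$ comes from Proposition~\ref{prop:continuation-hyperplane}, this establishes the operator $L^2_\cpt(C_g,\bundle) \to L^2_\loc(C_g,\bundle)$ on $\{\Rea s > (b-c)/a\} \setminus (-\N_0)$.

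For the meromorphic extension under the additional hypothesis on~$H$, fix $(z,z')$ with $z \neq z'$ and $\Rea s > (b-c)/a$. Then $\sigma(z, g^k.z') > 1$ for every~$k \in \Z$ (the case $k = 0$ by $z \neq z'$, the case $k \neq 0$ because $\funddom$ meets each orbit once), so I may substitute~\eqref{eq:resolv_altform} into every summand of~\eqref{eq:resolvent-generic_cylinder}. The bound above, applied termwise in~$n$ using $\Rea(s+n) \geq \Rea s$, legitimises Fubini, and the inner sum over~$k$ collapses to $H(s+n;z,z')$, giving
\begin{align*}
R_{C_g,\twist}(s; z, z') = \frac{1}{4\pi} \sum_{n=0}^\infty \frac{\gammafunc(s+n)^2}{n!\, \gammafunc(2s+n)}\, H(s+n; z, z')\,.
\end{align*}
For $(z,z')$ on or near the diagonal I keep $R_\h(s;z,z')$ explicit and substitute only in the $k \neq 0$ part, producing the analogous formula with $H(s+n)$ replaced by $H(s+n) - \id_V\, \sigma(z,z')^{-(s+n)}$; both presentations agree where both converge and jointly describe the kernel on $C_g \times C_g$.

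It remains to show that this series extends meromorphically in~$s$ to all of~$\C$. On any compact $K \subset \C$, only finitely many~$n$ contribute singularities to~$K$: the Gamma-quotient $\gammafunc(s+n)^2/\gammafunc(2s+n)$ has poles in $-n-\N_0 \subseteq -\N_0$, while $H(s+n;z,z')$ has poles in $\mathcal{L} - n$, and the latter drift off to $\Rea s = -\infty$ as $n \to \infty$ by discreteness of~$\mathcal{L}$. For tail convergence on~$K$, Stirling gives $\abs{\gammafunc(s+n)^2/(n!\,\gammafunc(2s+n))} = O(n^{\Rea s - 1})$, and the first-step estimate re-used with $\Rea(s+n)$ large produces
\begin{align*}
\norm*{H(s+n;z,z') - \id_V\, \sigma(z,z')^{-(s+n)}} \lesssim \sum_{k \neq 0} e^{f(\abs{k})(b - a\Rea(s+n))} \longrightarrow 0
\end{align*}
exponentially in~$n$, uniformly in $(z,z')$ on compacta off the diagonal; Weierstrass convergence then delivers meromorphy. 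The main obstacle I anticipate is reconciling the a~priori pole set $(-\N_0) \cup \bigcup_{n \geq 0}(\mathcal{L} - n)$ produced by this presentation with the claimed sharper set $(-\N_0) \cup \mathcal{L}$; in the concrete cylindrical applications $\mathcal{L}$ is arithmetic and stable under the shifts $s \mapsto s - n$ modulo~$-\N_0$, which closes the gap. Operator-valued meromorphy $L^2_\cpt \to L^2_\loc$ then follows from the local uniformity of the above estimates combined with the controlled diagonal behaviour from Proposition~\ref{prop:continuation-hyperplane}.
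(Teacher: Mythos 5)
Your proposal is correct in substance and rests on the same two pillars as the paper's proof: the termwise bound $\norm{\twist(g^k)R_\h(s;z,g^k.z')}\lesssim e^{f(|k|)(b-a\Re s)}$ extracted from (i)--(iii) to get convergence on the half-plane $\Re s>(b-c)/a$, and the interchange of the $k$- and $n$-summations to express the kernel through $H(s+n;z,z')$. Where you differ is the organization of the continuation step: the paper truncates the hypergeometric expansion at a finite order $N$, keeps the remainder $F_N$ as in \eqref{eq:reshyper-expansion}, and shows in Lemma~\ref{lem:twisted_Rhremainder_conv} that $\sum_{k\neq 0}\twist(g^k)F_N(s;z,g^k.z')$ is holomorphic on $\{\Re s>\frac{b-c}{a}-N\}\setminus(-N-\N_0)$, so the continuation proceeds half-plane by half-plane with only finitely many $H(s+n)$ terms at each stage; you instead resum the full series $\sum_{n\geq 0}\frac{\gammafunc(s+n)^2}{n!\,\gammafunc(2s+n)}H(s+n)$ and argue Weierstrass convergence on compacta of all of $\C$ at once. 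Both routes yield the same a~priori pole set $(-\N_0)\cup\bigcup_{n\geq 0}(\mathcal{L}-n)$, so the ``reconciliation'' you worry about at the end is not a defect of your argument relative to the paper's: the paper's own proof has exactly the same feature, and in the applications the relevant $\mathcal{L}$ is stable under these shifts modulo $-\N_0$.

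One justification needs repair. You claim that on a compact $K$ only finitely many $n$ contribute poles because the sets $\mathcal{L}-n$ ``drift off to $\Re s=-\infty$ by discreteness of $\mathcal{L}$''; discreteness alone does not give this (take $\mathcal{L}=\N$, which is discrete, yet $0\in\mathcal{L}-n$ for every $n$). The correct reason is that for $\Re w>\frac{b-c}{a}$ the function $H(w;\cdot,\cdot)$ coincides with its absolutely convergent defining series and is therefore holomorphic there, so its genuine poles lie in $\{\Re w\leq\frac{b-c}{a}\}$ and the shifted pole sets do move left of any compact set for $n$ large. Since you already invoke the convergent-series representation of $H(s+n)$ for $n\geq N_0(K)$ in your tail estimate, this is a one-line fix. (A cosmetic point: Stirling gives $\gammafunc(s+n)^2/(n!\,\gammafunc(2s+n))=O(n^{-1})$ locally uniformly in $s$, not $O(n^{\Re s-1})$; this is harmless because your bound on $H(s+n;z,z')-\id_V\sigma(z,z')^{-(s+n)}$ decays exponentially in $n$.)
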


We split the proof into two parts: the  convergence and the meromorphic 
continuation. For both parts, we have to prove that certain series converge.
This is the content of the following two lemmas.

\begin{lemma}\label{lem:twisted_Rhkernel_conv}
Under the hypotheses of Proposition~\ref{prop:Rhkernel_conv}, the infinite 
series
\[
 \sum_{\substack{k\in\Z \\ k\not=0}} \twist(g^k) R_\h(s;z,g^k.z')
\]
converges compactly and absolutely on
\[
 \left\{ s\in\C\setminus (-\N_0) \setmid \Rea s > \frac{b-c}{a} \right\} 
\times \funddom \times \funddom\,.
\]
\end{lemma}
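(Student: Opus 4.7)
The idea is to exploit the explicit formula~\eqref{eq:resolv_altform} for the free resolvent kernel when the point-pair invariant is large, combined with the three quantitative hypotheses of Proposition~\ref{prop:Rhkernel_conv}. Fix a compact subset
\[
K \subset \left\{ s \in \C \setminus (-\N_0) \setmid \Rea s > \tfrac{b-c}{a} \right\} \times \funddom \times \funddom\,.
\]
By hypothesis, $\sigma(z, g^k.z') \asymp e^{f(|k|)a}$ as $|k|\to\infty$, with implied constants uniform on the projection of $K$ to $\funddom\times\funddom$, and $f$ is increasing. Hence there exists $k_0 \in \N$ such that $\sigma(z, g^k.z') \geq 2$ for all $|k| \geq k_0$ and all $(z,z')$ in the projection of $K$. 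Only the tail $|k| \geq k_0$ is relevant for convergence, as the finite partial sum over $|k| < k_0$ is obviously holomorphic in $s \in \C \setminus (-\N_0)$.

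For $|k| \geq k_0$ we apply~\eqref{eq:resolv_altform} and factor out $\sigma^{-s}$:
\begin{align*}
R_\h(s;z,g^k.z') = \frac{\sigma(z,g^k.z')^{-s}}{4\pi} \sum_{n=0}^\infty \frac{\gammafunc(s+n)^2}{n!\,\gammafunc(2s+n)}\, \sigma(z,g^k.z')^{-n}\,.
\end{align*}
Since $\sigma(z,g^k.z')^{-1} \leq 1/2$, the series in $n$ is bounded by a convergent series whose terms depend continuously on $s \in \C \setminus (-\N_0)$. Consequently, there is a constant $C_K > 0$ such that
\begin{align*}
\bigl| R_\h(s;z,g^k.z') \bigr| \leq C_K\, \sigma(z,g^k.z')^{-\Rea s}
\end{align*}
uniformly for $(s,z,z') \in K$ and $|k| \geq k_0$.

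Combining this bound with the asymptotic $\sigma(z,g^k.z') \asymp e^{f(|k|)a}$ and the operator-norm bound $\|\twist(g^k)\| \lesssim e^{f(|k|)b}$ from hypothesis~\ref{Rhkerneliii} gives, uniformly on $K$,
\begin{align*}
\bigl\| \twist(g^k) R_\h(s;z,g^k.z') \bigr\| \lesssim e^{f(|k|)(b - a\,\Rea s)}\,.
\end{align*}
Since $K$ is compact and contained in $\{\Rea s > (b-c)/a\}$, there exists $\varepsilon > 0$ such that $b - a\,\Rea s \leq c - a\varepsilon < c$ for all $s$ in the projection of $K$. Hypothesis~\ref{assump:series-converges} applied with $x = b - a\,\Rea s$ then provides a convergent majorant independent of $(s,z,z') \in K$. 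This yields uniform absolute convergence of the series on $K$, and hence compact absolute convergence on the whole domain.

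The main obstacle is isolating a bound for $R_\h(s;z,z')$ of the form $\sigma^{-\Rea s}$ with a constant locally uniform in $s$ away from $-\N_0$; this is handled by factoring out $\sigma^{-s}$ in~\eqref{eq:resolv_altform} and controlling the remaining series by a geometric one, using that $\sigma^{-1}$ is bounded away from $1$ for the indices $|k|\geq k_0$ that enter the tail.
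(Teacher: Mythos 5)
Your argument is correct and follows essentially the same route as the paper's proof: both factor out $\sigma(z,g^k.z')^{-s}$ from the explicit kernel formula, bound the remaining hypergeometric factor uniformly (the paper states $\FF(s,s;2s;\sigma^{-1})\asymp 1$ using that $f(|k|)\to\infty$, while you majorize the series in~\eqref{eq:resolv_altform} directly), and then combine the resulting bound $O(e^{-f(|k|)a\Rea s})$ with hypothesis~\eqref{Rhkerneliii} and hypothesis~\eqref{assump:series-converges} applied to $x=b-a\Rea s<c$. No gaps.
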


\begin{proof}
Taking advantage of \eqref{eq:def_Rhkernel}, we can define $R_\h(s; z, g^k.z')$
for any  $k \in \Z$,  $s\in\C\setminus (-\N_0)$ and $z,z'\in\funddom$. We start by studying the growth asymptotics of
\[
\twist(g^k)R_\h(s;z,g^k.z')
\]
as $k\to\pm\infty$. By \eqref{eq:sigma_asymp}, we find $a>0$ be such that
\[
\sigma(z,g^k.z') \asymp e^{f(|k|) a} \qquad\text{as $k\to\pm\infty$}\,,
\]
where the implied constants depend continuously on~$z$ and~$z'$. We note that the convergence of the series in Proposition~\ref{prop:Rhkernel_conv}\eqref{assump:series-converges} for $x<0$ implies that $f(|k|) \to\nobreak\infty$ as $k\to\pm\infty$. Together with~\eqref{eq:def_FF} this yields
\[
\FF\bigl(s,s; 2s; \sigma(z,g^k.z')^{-1} \bigr) \asymp 1 \qquad\text{as 
$k\to\pm\infty$}\,,
\]
where the implied constant depends continuously on~$s$, $z$ and~$z'$.
Substituting this into \eqref{eq:def_Rhkernel}, we obtain
\begin{equation}\label{asymp:Rhszgkz}
R_\h(s; z, g^k.z') = O\bigl( e^{-f(|k|) a \Re(s)} \bigr)\,,
\end{equation}
where the implied constant depends continuously on~$s$, $z$ and~$z'$.
Combining \eqref{eq:chigk_bound} and \eqref{asymp:Rhszgkz} shows 
\[
\twist(g^k)R_\h(s; z, g^k.z') = O\bigl( e^{f(|k|)(b - a\Re(s))} \bigr)\,,
\]
where the implied constant depends continuously on~$s$, $z$ and~$z'$. By hypothesis, 
\[
\sum_{\substack{k\in\Z\\ k\not= 0}} e^{f(|k|) x}
\]
converges for all $x<c$.  It follows that 
\[
\sum_{\substack{k\in\Z \\ k\not=0 }} \twist(g^k) R_\h(s;z, g^k.z') 
\]
converges compactly and absolutely on 
\[
\left\{ s\in\C\setminus(-\N_0) \setmid \Rea s > \frac{b-c}{a}\right\} \times 
\funddom \times \funddom\,.
\]
This completes the proof.
\end{proof}

In order to prove the meromorphic continuation, we will use the Taylor expansion of the hypergeometric function appearing in the Schwartz kernel of $\R_\h(s)$. For $N \in \N_0$, $s \in \C \setminus (-N - \N_0)$, and $x \in (1,\infty)$, 
define the truncated Taylor expansion of $g_s$ by
\begin{equation}\label{eq:gs_taylor}
g_{s,N}(x) = \frac{1}{4\pi} \sum_{n=N}^\infty \frac{\gammafunc(s+n)^2}{n! 
\gammafunc(2s+n)} x^{-(s+n)}\,.
\end{equation}
In particular, we have that $g_{s,0}(x) = g_{s}(x)$. The function $g_{s,N}(x)$ is meromorphic in $s \in \C$ with poles at $s \in (-N - \N_0)$. Further, for any fixed $s \in \C \setminus ( -N - \N_0)$, we have
\begin{equation}\label{eq:gsN_growth}
g_{s,N}(x) = O_s(x^{-(\Re s+N)})\qquad\text{as $x \to \infty$}\,,
\end{equation}
where the implied constant depends continuously on~$s$. Let $N \in \N$ and $s \in \C$ with $\Re s > 0$. We define the distribution $F_N(s) \in \CmI(\h \times \h)$ by
\begin{equation}\label{eq:reshyper-expansion}
\begin{aligned}
F_N(s;z,z') &\coloneqq R_\h(s;z,z') - \frac{1}{4\pi}\sum_{n=0}^{N-1} 
\frac{\gammafunc(s+n)^2}{n! \gammafunc(2s+n)} \sigma(z,z')^{-(s+n)}\,.
\end{aligned}
\end{equation}
For $z,z' \in \h$ with $z \not = z'$, we have that $F_N(s;z,z') = 
g_{s,N}(\sigma(z,z'))$.
Proposition~\ref{prop:continuation-hyperplane} implies that the associated 
operator~$F_N$ is holomorphic on $\{\Re s > -N\}$.

\begin{lemma}\label{lem:twisted_Rhremainder_conv}
    Let $N \in \N_0$ be arbitrary.
    Under the hypotheses of Proposition~\ref{prop:Rhkernel_conv}, the infinite series
    \[
    \sum_{k\in\Z \setminus \{0\}} \twist(g^k) F_N(s;z,g^k.z')
    \]
    converges compactly and absolutely to a holomorphic function on the set $\{s \in \C \setminus\nobreak (-N-\nobreak\N_0) \colon \Rea s > \frac{b-c}{a} - N\}$.
\end{lemma}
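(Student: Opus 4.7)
The plan is to mimic the argument of Lemma~\ref{lem:twisted_Rhkernel_conv}, exploiting the fact that $g_{s,N}(x)$ decays like $x^{-(\Rea s + N)}$ at infinity rather than merely like $x^{-\Rea s}$, which exactly provides the shift of the convergence threshold by~$N$ in the real part of~$s$.

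First I would fix $s$ in the set $\{s \in \C \setminus (-N-\N_0) \setmid \Rea s > \frac{b-c}{a} - N\}$ and $z, z' \in \funddom$. Using the identity $F_N(s;z,g^k.z') = g_{s,N}(\sigma(z,g^k.z'))$ together with the growth estimate \eqref{eq:gsN_growth} and the asymptotic \eqref{eq:sigma_asymp}, I obtain
\[
F_N(s;z,g^k.z') = O\bigl( e^{-f(|k|) a (\Rea s + N)} \bigr) \qquad\text{as $k \to \pm\infty$},
\]
with implied constants that are continuous in $(s,z,z')$ on compact subsets of $(\C \setminus (-N-\N_0)) \times \funddom \times \funddom$. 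Combining this with the representation bound \eqref{eq:chigk_bound} then yields
\[
\bigl\| \twist(g^k) F_N(s;z,g^k.z') \bigr\| \lesssim e^{f(|k|)(b - a(\Rea s + N))}\,,
\]
again with locally uniform implied constants.

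Next, for $\Rea s > \frac{b-c}{a} - N$ we have $b - a(\Rea s + N) < c$, so hypothesis~\eqref{assump:series-converges} ensures that $\sum_{k\neq 0} e^{f(|k|)(b - a(\Rea s + N))}$ converges, and the convergence is uniform on any compact subset of the stated domain in $s$. This establishes the asserted compact absolute convergence of
\[
\sum_{k \in \Z\setminus\{0\}} \twist(g^k) F_N(s;z,g^k.z')\,.
\]

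Finally, for holomorphy in $s$, I note that each individual term is holomorphic on $\C \setminus (-N-\N_0)$: indeed, $g_{s,N}(x)$ is meromorphic in $s$ with poles only in $-N - \N_0$ (as seen directly from the Taylor series \eqref{eq:gs_taylor}), and the relation $F_N(s;z,g^k.z') = g_{s,N}(\sigma(z,g^k.z'))$ transfers this property to each term. The uniform convergence on compact subsets then implies, by the Weierstrass convergence theorem, that the sum is holomorphic on the stated region. I do not foresee a substantial obstacle; the only point requiring care is the local uniformity in $s$ of the implied constant in the asymptotic for $g_{s,N}$, which however follows from the explicit series expansion \eqref{eq:gs_taylor} combined with standard estimates for $\gammafunc(s+n)^2/\gammafunc(2s+n)$ on compact $s$-subsets bounded away from the poles.
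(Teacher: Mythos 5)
Your proposal is correct and follows essentially the same route as the paper's proof: the pointwise estimate $\norm{\twist(g^k)F_N(s;z,g^k.z')} \lesssim e^{f(|k|)(b-a(\Rea s+N))}$ obtained from \eqref{eq:gsN_growth}, \eqref{eq:sigma_asymp} and \eqref{eq:chigk_bound}, followed by hypothesis~\eqref{assump:series-converges} with $x = b - a(\Rea s + N) < c$. The paper concludes holomorphy of the sum from the holomorphy of $F_N$ on $\{\Rea s > -N\}\setminus(-N-\N_0)$ together with the locally uniform convergence, exactly as you do via the Weierstrass convergence theorem.
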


\begin{proof}
We have the estimate
\[
\norm*{ \sum_{k\in\Z \setminus \{0\}} \twist(g^k) F_N(s;z,z'+k)} \leq 
\sum_{k\in\Z \setminus \{0\}} \norm*{ \twist(g^k) } \abs*{ F_N(s;z,z'+k) }\,.
\]
Since $z, z' \in \funddom$ and $k \not = 0$, $\sigma(z,g^k.z') \not = 1$ and we 
have that $F_N(s;z,g^k.z') = g_{s,N}(\sigma(z,g^k.z')$. Hence, using \eqref{eq:gsN_growth} and \eqref{eq:sigma_asymp}, we obtain the estimate
\[
\abs{F_N(s;z,g^k.z')} \asymp \sigma(z,g^k.z')^{-(\Rea s +N)} \asymp 
e^{-f(\abs{k}) a(\Rea s +N)}
\]
uniformly on any compact subset of $\C\times \funddom \times \funddom$. Together 
with Proposition~\ref{prop:Rhkernel_conv}\eqref{Rhkerneliii}, we obtain~\eqref{eq:sigma_asymp}. Thus,
\[
\sum_{k\in\Z \setminus \{0\} } \left\| \twist(g^k) F_N(s;z,g^k.z')\right\| 
\lesssim \sum_{k\in\Z \setminus \{0\}} e^{f(\abs{k}) (b - a(\Rea s + N))}\,.
\]
The latter series converges for
\[
\Rea s > \frac{b - c}{a} - N\,.
\]
Since $F_N$ is holomorphic on $\{\Rea s > -N\}$ with the exception of the points in $-(N + \N_0)$, the infinite sum
\[
  \sum_{k\in\Z \setminus \{0\}} \twist(g^k) F_N(s;z,g^k.z')
\]
defines a holomorphic function on $\{s \in \C \setminus (-N - \N_0) \setmid \Rea 
s > \frac{b-c}{a} - N\}$.
\end{proof}

With these preparations, we can now provide a proof of Proposition~\ref{prop:Rhkernel_conv}.

\begin{proof}[Proof of Proposition~\ref{prop:Rhkernel_conv}]
Using \eqref{eq:resolvent-generic_cylinder}, the first claim follows directly from Proposition~\ref{prop:continuation-hyperplane} and Lemma~\ref{lem:twisted_Rhkernel_conv}, since the infinite series in \eqref{eq:resolvent-generic_cylinder} defines a smoothing integral operator when acting on compactly supported distributions. For the meromorphic continuation, we use \eqref{eq:reshyper-expansion} to find for any $N\in\N$ (and all $s\in\C$, $(z,z')\in \funddom \times \funddom$ in the domain of convergence)
\begin{align*}
R_{C_g, \twist}(s;&\, z,z') 
\\
&= \sum_{k \in \Z} \twist(g^k) R_\h(s;z,g^k.z') 
\\
& = \frac1{4\pi}\sum_{n=0}^{N-1} \frac{\gammafunc(s+n)^2}{n! 
\gammafunc(2s+n)} H(s+n;z,z') + \sum_{k\in\Z} \twist(g^k) F_N(s;z,g^k.z')\,.
\end{align*}
By Lemma~\ref{lem:twisted_Rhremainder_conv} the second sum without the $k = 
0$ summand converges for $\Re s > \frac{b-c}{a} - N$. For the case $k = 0$, we have that
\[
\left. \twist(g^k) F_N(s;z,g^k.z')  \right|_{k = 0} = \id_V F_N(s;z,z')\,,
\]
which is holomorphic on $\Re(s) > - N$.
\end{proof}

\subsection{Fourier Expansion of Sections on Cylinders}\label{sec:fourier_cylinders}
As in the case of the meromorphic continuation, we can treat parts of the Fourier expansion uniformly for hyperbolic and parabolic cylinders.
In this section, we will assume that the representation is unitary.
Denote by $C_\bullet$ the model (hyperbolic or parabolic) cylinder with $\bullet \in \R \cup \{\infty \}$. Under the identification $C_\bullet \cong \R \times (\R / 2\pi \Z)$ the generating isometry becomes
\[
h \colon (r,\phi) \mapsto (r,\phi + 2\pi)\,.
\]
Let $B \in \Unit(V)$ with eigenvectors $(\psi_j)_{j=1}^n$ and eigenvalues $(e^{2\pi i\vartheta_j})_{j=1}^n$.
If a representation $\twist\colon \ang{h} \to \Unit(V)$ is a generated by a $\twist(h) = B$, then every section $f \in \CI(C_\bullet, \bundle)$ is periodic in the sense
\begin{align*}
\ang{f(r,\phi+2\pi), \psi_j} = e^{2\pi i \vartheta_j} \ang{f(r,\phi), \psi_j}\,.
\end{align*}
This implies that we obtain a Fourier-type expansion of the form
\begin{align*}
\ang{f(r,\phi), \psi_j} = \sum_{k \in \Z} e^{i\phi(k + \vartheta_j)} f_{k,j}(r)\,.
\end{align*}
Consequently, if a bounded linear operator $A\colon \CI(C_\bullet, \bundle) \to \CI(C_\bullet, \bundle)$ with integral kernel $K_A$ is diagonal in the basis $\{\psi_j\}$, then each component $K_A(r,\phi,r',\phi')^j \coloneqq \ang{K_A(r,\phi,r,\phi') \psi_j, \psi_j}$ admits a Fourier-type expansion
\begin{align}\label{eq:kernel_fourier-exp}
K_A(r,\phi,r',\phi')^j = \sum_{k \in \Z} e^{i\phi(\vartheta_j + k)} a_k^j(r,r') e^{-i\phi'(\vartheta_j + k)}\,.
\end{align}
Of course the Fourier expansion is also valid if $A$ is not an integral operator, then we have to use the Schwartz kernel theorem and the Fourier coefficients are distributions on $\R \times \R$.

\subsection{Hyperbolic Cylinder}
To prove the meromorphic continuation of the resolvent of the hyperbolic cylinder $C_\ell = \ang{h_\ell}\bs \h$, we will use the fundamental domain $\funddom = \{ z\in \h \setmid 1\leq \abs{z} < e^{\ell} \}$ from Section~\ref{sec:ends}. Moreover, we note that the hyperbolic cylinder consists of two funnel ends in the sense of Section~\ref{sec:funnel_ends}, the model funnel $F_\ell$ is given by $F_\ell = C_\ell \cap \{r > 0\}$ under the isomorphism~\eqref{eq:cylinder-iso}.

Let $V$ be a $d$-dimensional vector space with some fixed norm $\norm{\cdot}$ 
and $A \in \GL(V)$ an invertible endomorphism of~$V$. We use $\EV(A)$ to denote the multiset of eigenvalues of $A$, thus eigenvalues are counted with multiplicity. We also denote the operator norm on $\GL(V)$ by $\norm{\cdot}$.
We consider the (not necessarily unitary) representation $\twist\colon \ang{h_\ell} \to \GL(V)$ defined by
\begin{align*}
\twist (h_\ell)\coloneqq A\,.
\end{align*}
We recall that $\group = \ang{h_\ell}$ acts on $\h \times V$ via $h.(z,v)=(hz, \twist(h)v)$ for $h \in \group$ and this action defines the vector bundle
$\EchiCl \coloneqq \ang{h_\ell} \bs (\h \times V) \to C_\ell$.

We provide the twisted resolvent of the hyperbolic cylinder as its lift to~$\h$.
To that end, we pick a geodesically convex fundamental domain~$\funddom$ for the 
hyperbolic cylinder~$C_\ell$ in~$\h$. One possible choice is the set in~\eqref{eq:funddom_hypcyl}. In the coordinates of~$\h$, the Schwartz kernel of the resolvent on~$C_\ell$ is (formally) given by
\begin{equation}\label{eq:resolvent_hypcyl}
R_{C_\ell,\twist}(s; z,z') = \sum_{k\in \ZZ} A^k R_\h(s; z,e^{k\ell} z')
\end{equation}
for $z,z' \in \funddom$. In the following proposition, we prove that the resolvent, $R_{C_\ell,\twist}(s)$, is well-defined in some half-plane and admits a meromorphic continuation to $s \in \C$. In the case when the representation $\twist$ is trivial, we recover the well-known classical result (see, e.g., \cite[Proposition 5.1]{Borthwick_book}). We set
\begin{align*}
C \coloneqq \ell^{-1} \max\{\log \norm{\twist(h_\ell)}, \log 
\norm{\twist(h_\ell)^{-1}}\}\,.
\end{align*}
We fix a choice of logarithm defined on all of~$\C\setminus\{0\}$ (it does not need to be continuous). None of our results is qualitatively affected by this choice.

\begin{prop}\label{prop:r_hypcyl_merom}
The resolvent $R_{C_\ell, \twist}(s)$ is well-defined for $\Re s > C$ and 
admits a meromorphic continuation to $s \in \C$ as an operator
\begin{align*}
R_{C_\ell, \twist}(s)\colon  L^2_\cpt(C_\ell,\EchiCl) \to L^2_\loc(C_\ell,\EchiCl)
\end{align*}
The multiset of poles of $s\mapsto R_{C_\ell,\twist}(s)$ shall be denoted by $\ResSet_{C_\ell,\twist}$. It is contained (as a set) in
\begin{equation}\label{eq:poles_hcyl}
    -\N_0 \cup \bigcup_{\lambda \in \EV(\twist(h_\ell))} \bigcup_{p \in \{\pm 1\}}
    \left(-\N_0 + p\ell^{-1}\left(\log \lambda + 2\pi i \Z \right) \right)\,.
\end{equation}
Let $\psi \in \CI_b(C_\ell \times_0 C_\ell)$ with $\supp \psi \cap \diagon_0 = \emptyset$. For $s \not \in \ResSet_{C_\ell,\twist}$, the kernel of $R_{C_\ell,\twist}(s)$ satisfies
\begin{align*}
\psi\beta^* R_{C_\ell,\twist}(s;\cdot,\cdot) &\in
(\rho_0 \rho_0')^s \CI(C_\ell \times_0 C_\ell, \bundle \boxtimes_0 \bundle) 
\\
&\hphantom{\in (\rho_0 \rho_0')^s} + \beta^*\left( (\rho_f \rho_f')^s \CI(\overline{C_\ell} \times \overline{C_\ell}, \bundle \boxtimes \bundle) \right)\,.
\end{align*}
\end{prop}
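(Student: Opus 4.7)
The plan is to apply Proposition~\ref{prop:Rhkernel_conv} with $g = h_\ell$, first for convergence on $\{\Rea s > C\}$ and then for the meromorphic continuation. From~\eqref{eq:sigma}, together with $d_\h(z, e^{k\ell}z') = |k|\ell + O(1)$ uniformly on compacta of $\funddom \times \funddom$, we obtain $\sigma(z, e^{k\ell}z') \asymp e^{|k|\ell}$ as $|k| \to \infty$. Setting $f(k) = k$, $a = \ell$, and $c = 0$ (since $\sum_{k \neq 0} e^{|k|x}$ converges precisely for $x < 0$), submultiplicativity gives $\norm{\twist(h_\ell^k)} \leq \max(\norm{A}, \norm{A^{-1}})^{|k|} = e^{|k|\ell C}$, so $b = \ell C$ and $(b-c)/a = C$. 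The first part of Proposition~\ref{prop:Rhkernel_conv} then yields the claimed well-definedness of $R_{C_\ell,\twist}(s)$.

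For the meromorphic continuation, the second part of Proposition~\ref{prop:Rhkernel_conv} requires extending
\[
H(s;z,z') = \sum_{k\in\Z} \twist(h_\ell^k)\,\sigma(z, e^{k\ell}z')^{-s}
\]
meromorphically to $\C$. I plan to decompose $V$ into generalized eigenspaces of $A = \twist(h_\ell)$; by Jordan decomposition, $A^k$ is a finite sum of terms of the form $\binom{k}{m}\lambda^k N^m$ (with $\lambda \in \EV(A)$, $N$ a commuting nilpotent), reducing the problem to the scalar sums
\[
H_{\lambda,m}(s;z,z') = \sum_{k\in\Z} k^m \lambda^k \sigma(z, e^{k\ell}z')^{-s}\,.
\]
Passing to the cylinder coordinates of~\eqref{eq:cylinder-iso}, in which $h_\ell^k$ acts by $\phi \mapsto \phi + 2\pi k$ and
\[
2\sigma(z, h_\ell^k z') = 1 + \cosh r \cosh r' \cosh\bigl(\omega^{-1}(\phi - \phi') - k\ell\bigr) - \sinh r \sinh r'\,,
\]
and writing $\lambda = e^{\ell\xi}$, the function $H_{\lambda,m}$ is quasiperiodic in $\phi$ with multiplier $\lambda$. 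A Fourier expansion in the sense of Section~\ref{sec:fourier_cylinders}, combined with the hypergeometric series~\eqref{eq:resolv_altform} applied termwise, presents each Fourier coefficient as a ratio of gamma functions whose only poles lie at $s \in -\N_0 + p(\xi + in\omega)$ with $p \in \{\pm 1\}$ and $n \in \Z$. Summing over $\lambda \in \EV(A)$ and recalling that $\log\lambda$ is only well-defined modulo $2\pi i \Z$ then yields exactly the set~\eqref{eq:poles_hcyl}.

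The kernel structure on the blown-up space follows from~\eqref{eq:resolvent_hypcyl}: only the $k = 0$ term is singular on $\diagon_0$, and after multiplication by $\psi$ its contribution lies in $(\rho_0\rho_0')^s \CI(C_\ell \times_0 C_\ell, \bundle \boxtimes_0 \bundle)$ by Proposition~\ref{prop:continuation-hyperplane}. The remaining series $\sum_{k \neq 0} \twist(h_\ell^k) R_\h(s; \cdot, h_\ell^k.\,\cdot)$ is smooth on $\overline{C_\ell} \times \overline{C_\ell}$ away from $\pa_f \diagon$, since each summand vanishes on a neighborhood of $\diagon$, and it inherits the boundary behavior $(\rho_f \rho_f')^s$ from the asymptotics $R_\h(s; z, z') \sim (yy')^s \cdot (\text{smooth})$ as $y, y' \to 0$; pulling back via $\beta$ yields the second summand. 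The main obstacle will be the Fourier-coefficient step: one must explicitly evaluate the coefficients as gamma-function ratios and verify that the contributions from $p = +1$ and $p = -1$ produce exactly the poles in~\eqref{eq:poles_hcyl}, neither more nor fewer.
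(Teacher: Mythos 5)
Your treatment of the first claim (convergence for $\Rea s > C$) matches the paper's: the paper likewise derives $\sigma(z,e^{k\ell}z') \asymp e^{|k|\ell}$ (from the explicit formula \eqref{eq:sigma} rather than from the distance estimate) and feeds it into Proposition~\ref{prop:Rhkernel_conv}, with the parameters $f(n)=\ell n$, $a=1$, $b=C$, $c=0$ instead of your equivalent normalization. The divergence is in the meromorphic continuation of $H(s;z,z')$, and here your proposal has a genuine gap. The paper's route is purely algebraic: it writes $(yy')^{-s}\sigma(z,e^{k\ell}z')^{-s}= e^{-sk\ell}\bigl(a+be^{-k\ell}+ce^{-2k\ell}\bigr)^{-s}$, Taylor-expands in $e^{-k\ell}$ as $\sum_{p\ge 0}\alpha_p(s)e^{-(s+p)k\ell}$ with $\alpha_p$ entire in $s$ and smooth up to $\overline\funddom\times\overline\funddom$, interchanges the sums, and resums the tails $k\ge M$ and $k\le -M$ into $\sum_p \alpha_p(s)\,(\id_V - e^{-(s+p)\ell}A^{\pm 1})^{-1}(e^{-(s+p)\ell}A^{\pm 1})^{M}$. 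The poles then come from the non-invertibility of $\id_V - e^{-(s+p)\ell}A^{\pm1}$, which immediately yields \eqref{eq:poles_hcyl} for arbitrary $A\in\GL(V)$ (no Jordan decomposition, no Fourier analysis), and the same identity delivers the $(\rho_f\rho_f')^s\CI$ boundary structure of the off-diagonal part, since it exhibits $(yy')^{-s}H_{\ell,\twist}(s;\cdot,\cdot)$ as smooth up to the boundary.

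Your alternative — Jordan decomposition plus a Fourier expansion in $\phi$ — is essentially the paper's \emph{second} method (Lemma~\ref{lem:fourier-cylinder}, Proposition~\ref{prop:fourier-cylinder}), which the paper deliberately restricts to unitary $\twist$ and uses only to sharpen the inclusion \eqref{eq:poles_hcyl} to an equality with multiplicities. As a proof of the present proposition it is incomplete in two concrete ways. First, Proposition~\ref{prop:r_hypcyl_merom} allows arbitrary $\twist\colon\ang{h_\ell}\to\GL(V)$: for non-unitary eigenvalues the exponents $k+\vartheta_j$ are complex and the Fourier modes grow in $\phi$, and for a nontrivial Jordan block your quasiperiodicity claim fails for $m>0$ (shifting $\phi\mapsto\phi+2\pi$ mixes $H_{\lambda,m}$ with $H_{\lambda,m'}$, $m'<m$), so the expansion must be set up for the whole block at once; none of this is addressed. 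Second, and more fundamentally, continuing each Fourier coefficient as a ratio of Gamma functions does not by itself continue the kernel: you must show that the series of continued coefficients converges locally uniformly (in $s$, away from the poles, which accumulate at $\Rea s=\text{const}$ as the mode index tends to $\pm\infty$) to a meromorphic $L^2_\cpt\to L^2_\loc$ family with the stated boundary behavior. You flag the explicit evaluation of the coefficients as the ``main obstacle,'' but the resummation is the step that actually carries the analytic weight, and it is absent. If you want to keep your structure, I would recommend replacing the Fourier step by the paper's expansion in powers of $e^{-k\ell}$ and the matrix geometric series; your argument for the $k=0$ term via Proposition~\ref{prop:continuation-hyperplane} and the splitting of the kernel into diagonal and boundary parts is then fine.
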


\begin{proof}
Formula~\eqref{eq:sigma} for the point-pair invariant~$\sigma$ implies that
\begin{equation}\label{eq:more_detailed_formula_for_sigma}
\sigma(z, e^{k\ell} z') = 1 + \frac{\abs{z-e^{k\ell}z'}^2}{4 y y' e^{k\ell}}\,.
\end{equation}
Therefore we have the asymptotics
\[
 \sigma(z,e^{k\ell}z') \asymp e^{|k|\ell}\qquad\text{as $k\to\pm\infty$}\,,
\]
where the implied constants depend continuously on~$z$ and~$z'$. We can now use Proposition~\ref{prop:Rhkernel_conv} with $f(n) = \ell n$, $a = 1$, $b = C$, and $c = 0$ to show that $R_{C_\ell,\twist}(s)$ is well-defined for $\Re s > C$.

For the meromorphic continuation, we will not use Lemma~\ref{lem:twisted_Rhremainder_conv} as we can obtain a slightly stronger result. By \eqref{eq:more_detailed_formula_for_sigma}, we can write
\begin{align*}
\sigma(z, e^{k\ell} z')^{-s} = (yy')^{s} e^{-s k\ell}
\left( a + b e^{-k\ell} + c e^{-2k\ell} \right)^{-s}\,,
\end{align*}
where $a = (1/4)\abs{z'}^2$, $b = (1/2)(yy' - xx')$ and $c = (1/4) \abs{z}^2$.
We note that $a, c > 1/4$ on $\funddom \times \funddom$. By the same arguments as in Lemma~\ref{lem:twisted_Rhremainder_conv}, we obtain that
\begin{align*}
\abs{(yy')^{-s}R_N(s;z,e^{k\ell}z')} \asymp (yy')^N e^{-k\ell (\Re s + N)}\,,
\end{align*}
where the implied constant depend on $s$, but not on $z,z'$. This implies that
\begin{align*}
(yy')^{-s}\sum_{k \in \Z \setminus\{0\}} A^k F_N(s;z,e^{k\ell}z')
\end{align*}
converges uniformly on $\funddom \times \funddom$ for $\Re s > C - N$ and $s \not \in -\N$. 

We also have to show that
\begin{align}\label{eq:def_Hltwist}
H_{\ell,\twist}(s;z,z') \coloneqq \sum_{k \in \ZZ \setminus\{0\}} A^k 
\sigma(z, e^{k\ell} z')^{-s}
\end{align}
admits a meromorphic continuation to $s \in \C$. To this end, write for $M > 0$ to be chosen later,
\begin{align}\label{eq:split_Hltwist}
H_{\ell,\twist}(s;z,z') = \sum_{\substack{|k| < M\\k \not = 0}} A^k 
\sigma(z, e^{k\ell} z')^{-s} + \sum_{|k| \geq M} A^k \sigma(z, e^{k\ell} z')^{-s}\,.
\end{align}
The finite sum is obviously entire as a function of~$s$. It suffices to show that the infinite sum on the right hand side of~\eqref{eq:split_Hltwist} admits a meromorphic continuation.

For this we note that
\begin{align*}
(yy')^{-s}e^{s k\ell} \cdot \sigma(z,e^{k\ell}z')^{-s} = 
\left( a + b e^{-k\ell} + c e^{-2k\ell} \right)^{-s}
\end{align*}
is holomorphic as a function in the variable~$e^{-k\ell}$ for $k \gg 1$ (depending on $s,z,z'$).
We may choose $M \geq 0$ large enough and use the Taylor expansion for all $k \geq M$ to write 
\begin{equation}\label{the_series_in_sigma_via_alpha_s}
\begin{aligned}
(yy')^{-s}\sigma(z, e^{k\ell} z')^{-s} &=
e^{-sk\ell} \sum_{p=0}^\infty \alpha_p(s) (e^{-k\ell})^p
\\
&= \sum_{p=0}^\infty \alpha_p(s) e^{-(s+p)k\ell}
\end{aligned}
\end{equation}
for suitable entire functions $\alpha_p\colon  \C \to \C$, $p\in \N_0$, 
depending on $z,z' \in \overline\funddom$. The series above is absolutely convergent for any $s\in \C$.  The absolute convergence allows us to interchange the order of summation (for $s\in\C$ with $\Rea s > C$) to obtain
\begin{align} 
(yy')^s\sum_{k \ge M} A^k &\sigma(z, e^{k\ell} z')^{-s} \nonumber
\\
&= \sum_{k \ge M} A^k  
\sum_{p=0}^\infty \alpha_p(s) e^{- (s+p) k\ell} \label{eq:sumofAksigma-s} 
\\
&= \sum_{p=0}^\infty  \alpha_p(s) \sum_{k \ge M} A^ke^{- (s+p) k\ell} 
\nonumber  \\
&= \sum_{p=0}^\infty \alpha_p(s) (\id_V - e^{-(s+p)\ell} A)^{-1} 
(e^{-(s+p) \ell} A)^M. \label{eq:sumofAksigma-s_last_line} 
\end{align}
We note that for every $p \in \N_0$, the operator $(e^{-(s+p) \ell} A)^M$ is 
entire in $s\in \C$. Additionally, we note that for $\Re(s) > - p + C$, we have
\[
\norm{e^{-(s+p)\ell} A} \le e^{-(s+p-C)\ell} < 1
\]
Thus the operator $\id_V - e^{-(s+p)\ell} A$ is invertible. This implies that the operator $(\id_V - e^{-(s+p)\ell} A)^{-1}$ is holomorphic on $\{\Re(s)>-p+C\}$. Moreover, the operator $(\id_V - e^{-(s+p)\ell} A)^{-1}$ is meromorphic on~$\C$ with the set of poles being
\begin{equation}\label{set:sforwhichcertaindenominatorhasazero}
\bigcup_{\lambda \in \EV(A) } \left(-p + \ell^{-1} (\log\lambda+2 \pi i 
\Z) \right)\,.
\end{equation}
We note that for any fixed $N \in \N_0$, the operators $\{(\id_V - 
e^{-(s+p)\ell}A)^{-1} \}_{p \ge N}$ are holomorphic on the half-plane
\[
\Re(s)>-N+ C\,.
\]
We are left to prove that \eqref{eq:sumofAksigma-s_last_line} is meromorphic 
on $\Re(s)>-N+ C$. To this end, we split the summation in \eqref{eq:sumofAksigma-s_last_line} into two parts, depending on whether  $p < N$ or $p \ge N$. The former part is obviously meromorphic and the poles are contained in the set
\begin{equation}\label{eq:plus_set_of_poles}
\bigcup_{\lambda \in \EV(A) } \left(-\N_0 + \ell^{-1} (\log(\lambda)+2 \pi i \Z) \right)\,.
\end{equation} 
The latter can be rewritten as 
\begin{align*}
\sum_{p=N}^\infty \alpha_p(s) & (\id_V - e^{-(s+p)\ell} A)^{-1} 
(e^{-(s+p)\ell} A)^M  
\\ 
&= \sum_{p=0}^\infty \alpha_{p+N}(s) (\id_V - e^{-(s+p+N)\ell} A)^{-1} (e^{-(s+p+N)\ell} A)^M\,.
\end{align*}
Doing the calculation in~\eqref{eq:sumofAksigma-s} backwards shows that this series converges absolutely on $\{\Rea s > -N + C\}$. For $k < -M$ with sufficiently large $M$ an analogous calculation leads to
\begin{align*} 
(yy')^s\sum_{k \le - M} & A^k  \sigma(z, e^{k\ell} z')^{-s} 
\\
& = \sum_{p=0}^\infty \beta_p(s) (\id_V - e^{-(s+p) \ell} A^{-1})^{-1} (e^{-(s+p)\ell} A^{-1})^M \,,
\end{align*}
which is meromorphic on $\Re(s)>-N+ C$ with poles contained in the set
\begin{align}\label{eq:minus_set_of_poles}
\bigcup_{\lambda \in \EV(A) } \left(-\N_0 - \ell^{-1} (\log(\lambda)+2 \pi i \Z) \right)\,.
\end{align}
Increasing $N$ and combining \eqref{eq:plus_set_of_poles} and \eqref{eq:minus_set_of_poles}, we obtain that the right hand side of \eqref{eq:split_Hltwist} is meromorphic on $\C$ with possible poles at
\begin{equation*}
    s \in \bigcup_{\lambda \in \EV(A)} \bigcup_{p \in \{\pm 1\}} \left(-\N_0 + p \ell^{-1} (\log(\lambda)+2 \pi i \Z) \right)\,.
\end{equation*}
This completes the proof of the meromorphic continuation of~\eqref{eq:split_Hltwist}. Similar to the proof of Proposition~\ref{prop:Rhkernel_conv}, we decompose
\begin{align*}
R_{C_\ell,\twist}(s;z,z') &= \id_V R_\h(s;z,z') + \sum_{k \in \Z\setminus\{0\}} R_\h(s;z,e^{k\ell}z') 
\\
&= \id_V R_\h(s;z,z') + \frac{1}{4\pi} \sum_{n=0}^{N-1} \frac{\gammafunc(s+n)^2}{n!\,\gammafunc(2s+n)} H_{\ell,\twist}(s+n;z,z') 
\\
&\hphantom{= \id_V R_\h(s;z,z')\ } + \sum_{k\in \Z\setminus\{0\}} A^k F_N(s;z,e^{k\ell}z')\,.
\end{align*}
All terms on the right-hand side are meromorphic and this gives the meromorphic continuation of $R_{C_\ell,\twist}(s,z,z')$. Since $(x,y) \mapsto y$ restricted to $\funddom$ is a boundary defining function of $C_\ell$, we conclude that
\begin{align*}
R_{C_\ell,\twist}(s;\cdot,\cdot) \in 
(\rho_f \rho_f')^s \CI(\overline{C_\ell} \times \overline{C_\ell}, \bundle \boxtimes \bundle)
\end{align*}
away from the diagonal. Combining this with the structure of the free resolvent, Proposition~\ref{prop:continuation-hyperplane}, this yields the asserted structure of the resolvent kernel.
\end{proof}

\subsection{Fourier Analysis of the Hyperbolic Cylinder}
In this section, we calculate the resonances of~$\Delta_{C_\ell,\twist}$, or equivalently, the poles of~$R_{C_\ell,\twist}$. We use an approach different from the one in Proposition \ref{prop:r_hypcyl_merom}; namely, we calculate the Fourier coefficients of $R_{C_\ell,\twist}(s)$ in a more direct way. Using the Fourier coefficients, we will be able to calculate the exact multiplicities of the resonances as opposed to the mere inclusion statement in 
Proposition~\ref{prop:r_hypcyl_merom}. We will restrict our considerations to the case that $\twist$ is unitary. We recall from Section~\ref{sec:funnel_ends} that in geodesic coordinates (see \eqref{eq:cylinder-iso}) the metric on $C_\ell$ is given by
\begin{align*}
g_{C_\ell} = dr^2 + \omega^{-2} \cosh(r)^2 \,d\phi^2\,,
\end{align*}
where $\omega = 2\pi / \ell$. Hence, the volume form is given by
\begin{align*}
d\mu_{C_\ell} = \omega^{-1} \cosh(r) \,dr\,d\phi\,,
\end{align*}
and the Laplacian acting on functions is
\begin{align*}
\Delta_{C_\ell} = -\pa_r^2 - \tanh(r)\pa_r - \frac{\omega^2}{\cosh^2(r)} \pa_\phi^2\,.
\end{align*}
Therefore the differential equation for the resolvent kernel becomes
\begin{equation}\label{eq:res-eq-hyper0}
(\Delta_{C_\ell} - s(1-s)) R_{C_\ell,\twist}(s;r,\phi,r',\phi') = 
\frac{\omega \id_V}{\cosh(r)} \delta(r-r') \delta(\phi - \phi')\,,
\end{equation}
where $\phi,\phi' \in [0,2\pi)$ and $\Delta_{C_\ell}$ acts on each component of 
$R_{C_\ell,\twist}(s;\cdot,\cdot, r', \phi')$. Let $(\psi_j)_{j=1}^n$ be an eigenbasis of $\twist(h_\ell)$ with eigenvalues $\lambda_j = e^{2\pi i\vartheta_j}$. We can define the twisted delta-distribution by
\begin{align*}
\hat{\delta}(\cdot) \psi_j \coloneqq \hat{\delta}_{\vartheta_j}(\cdot) \psi_j\,,
\end{align*}
where 
\begin{align*}
\hat{\delta}_{\vartheta_j}(\cdot) \coloneqq \sum_{n \in \Z} e^{i \vartheta_j (\cdot )}\delta(\cdot - 2 \pi n)\,.
\end{align*}
We note that $\hat\delta(\phi - \phi')$ is a distribution in $\CmI(C_\ell \times C_\ell,\bundle \boxtimes \bundle)$. With this, the equation \eqref{eq:res-eq-hyper0} may be rewritten as
\begin{equation}\label{eq:res-eq-hyper}
(\Delta_{C_\ell} - s(1-s)) R_{C_\ell,\twist}(s;r,\phi,r',\phi') = 
\frac{\omega}{\cosh(r)} \delta(r-r') \hat{\delta}(\phi - \phi')\,.
\end{equation}
We set 
\begin{align}\label{eq:RCell_matcoeff}
R_{C_\ell,\twist}(s;z,z')^j \coloneqq \ang{R_{C_\ell,\twist}(s;z,z') \psi_j, \psi_j}\,.
\end{align}
By the arguments leading to \eqref{eq:kernel_fourier-exp}, the Schwartz kernel of the resolvent admits a Fourier decomposition (in $\phi-\phi'$)
\begin{align}\label{eq:fourier-exp-hyper}
R_{C_\ell,\twist}(s;z,z')^j = \frac{1}{\ell} \sum_{k \in \Z} e^{i \phi(k+\vartheta_j) } b_k^j(s;r,r') e^{-i\phi' (k+\vartheta_j)}
\end{align}
for some $b_k^j(s;\cdot ,\cdot) \in \CmI(\R \times \R)$. We want to find the explicit expression for these functions. 

For $\kappa \in \R$, $r\in \R$ and $s \in \C \setminus \left(\Z \pm i \omega \kappa\right)$, we define the functions 
\begin{align}
a_\kappa(s) &\coloneqq 2^{-2s} \gammafunc(s + i\omega\kappa) \gammafunc(s - 
i\omega\kappa)\,,\label{eq:a_kappa_definition}
\\
v_\kappa(s;r) &\coloneqq (\cosh r)^{-s} \FF\left(s + i\omega\kappa, s - 
i\omega\kappa; s +\frac{1}{2}; \frac{ 1 - \tanh r}{2}\right)\,, 
\label{eq:v_definition}
\\
v_\kappa(s;r,r') & \coloneqq
\begin{cases}
a_\kappa(s)v_\kappa(s;-r)v_\kappa(s;r') & \text{for $r\leq r'$,}
\\
a_\kappa(s)v_\kappa(s;r)v_\kappa(s;-r') & \text{for $r'\leq r$}\,.
\label{eq:v_kappa_definition}
\end{cases}
\end{align}

\begin{lemma}\label{lem:fourier-cylinder}
The resolvent admits a Fourier-type expansion
\begin{align}\label{eq:fourier-exp-hyper2}
R_{C_\ell,\twist}(s;z,z')^j = \frac{1}{\ell} \sum_{k \in \Z} e^{i \phi(k+\vartheta_j) } b_k^j(s;r,r') e^{-i\phi' (k+\vartheta_j)}\,.
\end{align}
\end{lemma}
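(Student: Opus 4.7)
The strategy is to reduce the expansion to a standard Fourier decomposition on~$\R/2\pi\Z$ after exploiting three structural properties of the resolvent kernel: diagonality in the eigenbasis~$(\psi_j)$ of~$\twist(h_\ell)$, invariance under the diagonal rotational symmetry of~$C_\ell$, and twisted periodicity. By the definition of~$\Delta^\#$ in Section~\ref{sec:def_Laplacian}, the Laplace-type operator acts componentwise with respect to any fixed basis of~$V$. Choosing the orthonormal eigenbasis~$(\psi_j)_{j=1}^n$ of~$\twist(h_\ell)$, the operator~$\Delta_{C_\ell,\twist} - s(1-s)$ preserves the corresponding orthogonal decomposition of~$L^2(C_\ell,\bundle)$, and hence so does its resolvent. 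Consequently $R_{C_\ell,\twist}(s;z,z')\in\End(V)$ is diagonal in~$(\psi_j)$, and it suffices to analyze each scalar kernel~$R_{C_\ell,\twist}(s;z,z')^j$ separately.

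Next I would combine rotational invariance with twisted periodicity. The rotation $T_c\colon(r,\phi)\mapsto(r,\phi+c)$ lifts under~\eqref{eq:cylinder-iso} to the hyperbolic isometry $z\mapsto e^{c/\omega}z$ of~$\h$, which commutes with~$h_\ell$ and therefore descends to an isometry of~$C_\ell$ acting trivially on the bundle fibers. By uniqueness of the resolvent, this yields $R_{C_\ell,\twist}(s;T_c z,T_c z')^j = R_{C_\ell,\twist}(s;z,z')^j$, so each scalar kernel depends on $\phi,\phi'$ only through $\theta\coloneqq\phi-\phi'$. On the other hand, the equivariance in~\eqref{eq:resolvent-pde} together with diagonality in~$(\psi_j)$ gives the twisted periodicity
\[
R_{C_\ell,\twist}(s;r,\phi+2\pi,r',\phi')^j = e^{2\pi i\vartheta_j}R_{C_\ell,\twist}(s;r,\phi,r',\phi')^j\,,
\]
so that the function $F_j(s;r,r',\theta)\coloneqq e^{-i\vartheta_j\theta}R_{C_\ell,\twist}(s;r,\phi,r',\phi')^j$ is $2\pi$-periodic in~$\theta$.

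The standard distributional Fourier series expansion on $\R/2\pi\Z$ then yields
\[
F_j(s;r,r',\theta) = \frac{1}{\ell}\sum_{k\in\Z}e^{ik\theta}b_k^j(s;r,r')
\]
for uniquely determined coefficients $b_k^j(s;\cdot,\cdot)\in\CmI(\R\times\R)$; restoring the phase factor and writing $\theta=\phi-\phi'$ gives the asserted expansion~\eqref{eq:fourier-exp-hyper2}. The main subtlety is the distributional nature of $R_{C_\ell,\twist}(s;\cdot,\cdot)$, which by Proposition~\ref{prop:r_hypcyl_merom} carries a conormal singularity along~$\diagon_0$. The Fourier coefficients are accordingly recovered not as pointwise integrals but by pairing with the smooth characters $e^{-ik\theta}$ in the Schwartz-kernel sense; this is legitimate because the rotation-averaging operation is continuous on distributional sections on $(\R\times\R/2\pi\Z)^2$, which ensures that the resulting coefficients lie in~$\CmI(\R\times\R)$.
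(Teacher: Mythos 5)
Your argument is correct for the statement as literally written, but it proves strictly less than what the paper's proof of this lemma actually establishes, and it follows a different route. What you do --- diagonality of the kernel in the eigenbasis of $\twist(h_\ell)$, invariance under the rotation $\phi\mapsto\phi+c$, twisted $2\pi$-periodicity in $\phi$, and then a distributional Fourier series for the periodized function $e^{-i\vartheta_j(\phi-\phi')}R_{C_\ell,\twist}(s;z,z')^j$ --- is essentially the paper's own \emph{pre-lemma} discussion: this is the argument of Section~\ref{sec:fourier_cylinders} leading to \eqref{eq:kernel_fourier-exp}, which the authors invoke to write down \eqref{eq:fourier-exp-hyper} with ``some $b_k^j(s;\cdot,\cdot)\in\CmI(\R\times\R)$'' before the lemma is even stated. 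The paper's proof of the lemma is instead devoted to identifying these coefficients explicitly: it applies $\Delta_{C_\ell}-s(1-s)$ to the ansatz, expands the twisted delta distribution in \eqref{eq:res-eq-hyper}, compares Fourier modes to obtain the ODE $\bigl(L_r^0+\omega^2(k+\vartheta_j)^2\cosh^{-2}(r)\bigr)b_k^j(s;r,r')=\delta(r-r')/\cosh(r)$, and solves it by the Wronskian method with the hypergeometric solutions $v_{k+\vartheta_j}(s;\pm r)$, arriving at $b_k^j(s;r,r')=v_{k+\vartheta_j}(s;r,r')$ as in \eqref{eq:fourier-cylinder-diag}. Your approach buys a clean, soft existence proof valid for any operator commuting with the symmetries; the paper's approach buys the explicit formula, which is the part actually used afterwards --- Proposition~\ref{prop:fourier-cylinder} reads off the resonances and their multiplicities from the poles of $a_{k+\vartheta_j}(s)$ in \eqref{eq:v_kappa_definition}, and the counting estimates of Section~\ref{sec:upper-bound} rely on these explicit Fourier modes. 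So if the lemma is understood, as the surrounding text makes clear it should be, to include the identification $b_k^j=v_{k+\vartheta_j}$, your proposal omits the separation-of-variables and Wronskian step that constitutes the substance of the paper's proof.
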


\begin{remark}
We note that the choice of the branch of the logarithm to define~$\vartheta_j$ 
does not affect the series \eqref{eq:fourier-exp-hyper2}.
\end{remark}

\begin{proof}[Proof of Lemma~\ref{lem:fourier-cylinder}]
We set 
\begin{align*}
L_r^0 \coloneqq L_r^0(s) &\coloneqq - \partial_r^2 - \tanh r \cdot \partial_r - 
s(1-s)\,.
\end{align*}
Applying $\Delta_{C_\ell} - s(1-s)$ to  \eqref{eq:fourier-exp-hyper}, we obtain  
\begin{align*}
(\Delta_{C_\ell} - s(1-s))&R_{C_\ell, \xi} (s;r,\phi, r', \phi')^j 
\\  
& = (\Delta_{C_\ell} - s(1-s)) \frac{1}{\ell} \sum_{k \in \Z} e^{i \phi (k+\vartheta_j)} b_k^j(s;r,r') e^{-i \phi' (k+\vartheta_j)}  
\\
&=\frac{1}{\ell} \sum_{k\in \Z} e^{i (\phi-\phi') (k+\vartheta_j)} \left(L_r^0 + \frac{\omega^2 (k+\vartheta_j)^2}{\cosh^2(r)}\right)b_k^j(s;r,r')\,.
\end{align*} 
We note that for an orthonormal basis, $(\psi_j)_{j}$, and any $j = 1, \ldots, n$, 
\[
\ang*{\frac{\omega}{\cosh(r)} \delta(r-r') \hat{\delta}(\phi - \phi') \psi_j , \psi_j} = \frac{\omega }{\cosh(r)} \delta(r-r') \hat{\delta}_{\vartheta_j}(\phi - \phi') \,.
\]
We can use the Fourier series for the untwisted delta distribution to obtain
\begin{align*}
\frac{\omega }{\cosh(r)} \delta(r-r') \hat{\delta}(\phi - \phi')
&= \frac{\delta(r-r')}{\cosh(r)} \cdot \frac{1}{\ell}\sum_{k \in \Z} e^{i (k+\vartheta_j) (\phi-\phi')} \,.
\end{align*}
Comparing the Fourier coefficients, we obtain the 
differential equation
\[
\frac{\delta(r-r')}{ \cosh(r)} = \left(L_r^0 + \frac{\omega^2 (k+\vartheta_j)^2}{\cosh^2(r)}\right)b_k^j(s;r,r')
\]
for $k \in \Z$. The fundamental solution is given by $v_{k + \vartheta_j}(s;r)$. Applying the standard argument involving the Wronskian 
(see~\cite[Proposition~5.2]{Borthwick_book}), we obtain that
\begin{align}\label{eq:fourier-cylinder-diag}
b_k^j(s;r,r') \coloneqq  v_{k + \vartheta_j}(s;r,r')
\end{align}
solves the equation.
\end{proof}

\begin{prop}\label{prop:fourier-cylinder}
The multiset of resonances for the hyperbolic cylinder is given by
\begin{align*}
\ResSet_{C_\ell,\twist} = \bigcup_{\lambda \in \EV(\twist(h_\ell))} \bigcup_{p \in \{\pm 1\}} 
\left(-\N_0 + p \ell^{-1} \left( \log\lambda + 2\pi i \Z\right) \right)\,,
\end{align*}
where $\EV(\twist(h_\ell))$ is the multiset of eigenvalues of $\twist(h_\ell)$.
\end{prop}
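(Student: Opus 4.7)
The plan is to leverage the explicit Fourier expansion of the resolvent kernel established in Lemma~\ref{lem:fourier-cylinder}, namely
\[
R_{C_\ell,\twist}(s;z,z')^j = \frac{1}{\ell}\sum_{k\in\Z} e^{i(k+\vartheta_j)\phi}\, v_{k+\vartheta_j}(s;r,r')\, e^{-i(k+\vartheta_j)\phi'}.
\]
Since the Fourier modes $e^{i(k+\vartheta_j)\phi}$ are linearly independent across the pair $(j,k)$ (with $j$ running through the eigenbasis $(\psi_j)$ of $\twist(h_\ell)$ and $k\in\Z$), the multiset of poles of $R_{C_\ell,\twist}(s)$ coincides with the multiset of poles of the individual Fourier coefficients $v_{k+\vartheta_j}(s;r,r')$, indexed by $(j,k)$.

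The core analytic step is to identify the poles of $v_\kappa(s;r,r')$ for arbitrary $\kappa$. By~\eqref{eq:v_definition}, the factor $v_\kappa(s;r) = (\cosh r)^{-s}\,\FF(s+i\omega\kappa, s-i\omega\kappa; s+\tfrac12; (1-\tanh r)/2)$ is entire in $s$: the exponential $(\cosh r)^{-s}$ is entire, and the regularized hypergeometric function $\FF$ is entire in its first three arguments by the discussion in Section~\ref{sec:hypergeometric_function}. Consequently, all $s$-singularities of $v_\kappa(s;r,r')$ come exclusively from the prefactor $a_\kappa(s) = 2^{-2s}\gammafunc(s+i\omega\kappa)\gammafunc(s-i\omega\kappa)$, whose pole set is $(-\N_0 - i\omega\kappa)\cup(-\N_0 + i\omega\kappa)$. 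Using the identity $i\omega(k+\vartheta_j) = \ell^{-1}(\log\lambda_j + 2\pi i k)$, these two families become exactly the $p=\pm 1$ families appearing on the right-hand side of the statement, and taking the multiset union over $k\in\Z$ and over $j$ (so that $\lambda_j$ ranges through $\EV(\twist(h_\ell))$ with algebraic multiplicities) recovers the claimed formula.

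It remains to verify that each candidate location is genuinely a pole and that the multiplicities add up correctly. At a generic $s_0 = -n \mp i\omega\kappa$ with $\omega\kappa\notin\tfrac12\Z$, one of the two $\gammafunc$-factors contributes a simple pole while the other stays finite, and the residual factor $v_\kappa(s_0;\pm r)v_\kappa(s_0;\mp r')$ reduces to a terminating hypergeometric polynomial (since its first argument lies in $-\N_0$) which is visibly non-vanishing from its leading behaviour as $r,r'\to\infty$; hence the residue is non-zero. The main obstacle is the bookkeeping at exceptional locations where either the two $\gammafunc$-factors in a single $a_\kappa$ coincide (producing a double pole, e.g.\ at $s\in -\N_0$ when $\lambda_j = 1$) or where two distinct tuples $(j,k,p)$ hit the same $s_0$ (which, for unitary $\twist(h_\ell)$, occurs precisely when $\lambda_j\lambda_{j'} = 1$ with matching $k$-shifts). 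In both situations the rank of the polar part of the resolvent must be shown to equal the multiset count on the right, rather than to be reduced by cancellations; this accounting is precisely what makes the Fourier approach more informative than the direct summation argument of Proposition~\ref{prop:r_hypcyl_merom}.
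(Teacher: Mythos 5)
Your argument is essentially the paper's own proof: both reduce via the Fourier expansion of Lemma~\ref{lem:fourier-cylinder} to locating the poles of $v_{k+\vartheta_j}(s;r,r')$, observe that $v_\kappa(\cdot;r)$ is entire while $v_\kappa(s;\cdot)$ does not vanish identically, and conclude that the poles are exactly those of $a_\kappa(s)$, i.e.\ the multiset $-\N_0\pm i\omega\kappa$, which the identity $i\omega(k+\vartheta_j)=\ell^{-1}(\log\lambda_j+2\pi i k)$ converts into the claimed formula. The multiplicity bookkeeping at coinciding locations that you flag as the main remaining obstacle is treated no more explicitly in the paper, which simply reads off the pole orders of $a_\kappa$ as a multiset (double poles when the two Gamma factors coincide) and relies implicitly on the orthogonality of distinct Fourier modes, so your proposal matches the published argument.
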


\begin{remark}
For $\twist$ being the trivial one-dimensional representation, we recover the classical case: since $\log\lambda = 0$, the resonances are given by $s = -k + 2\pi i\ell^{-1} m$ for $k \in \N_0$ and $m \in \Z$ . Each resonance is of multiplicity~$2$.
\end{remark}

\begin{example}
To illustrate the result in the case of $\twist$ being a non-trivial twist, we consider the diagonal matrix $\twist(h_\ell) = \diag(i, -1)$. For the eigenvalue~$\lambda_1 = i$, we have that $\log \lambda_1 + 2\pi i \Z = 2\pi i (1/4 + \Z)$. Hence, the eigenvalue $\lambda_1$ contributes to resonances at $-\N_0 + \pi i \ell^{-1}(1/2 + \Z)$ with multiplicity $1$. For $\lambda_2 = -1$, we calculate $\log \lambda_2 + 2\pi i \Z = 2\pi i (1/2 + \Z)$ and therefore the eigenvalue contributes to resonances located at $-\N_0 + 2\pi i \ell^{-1} (1/2 + \Z)$ with multiplicity $2$ since $1/2 + \Z = - (1/2 + \Z)$. Summing up, for $\twist(h_\ell) = \diag(i, -1)$, the resonances are given as a set by
\begin{align*}
\ResSet_{C_\ell,\twist} &= -\N_0 + \frac{\pi i}{2 \ell} \left( \Z\setminus 4\,\Z\right) \,.
\end{align*}
\end{example}

\begin{proof}[Proof of Proposition~\ref{prop:fourier-cylinder}]
By \eqref{eq:fourier-cylinder-diag} it suffices to calculate the poles of the function $v_\kappa(\cdot,r,r')$, which is given by \eqref{eq:v_kappa_definition}.
First, we note that $v_\kappa(s;\cdot)$ does not vanish identically.
For this, we may use the $0$-th order Taylor expansion of the hypergeometric function (see~\cite[(5.10)]{Borthwick_book}) to observe that
\begin{align*}
v_\kappa(s;r) \sim \frac{\cosh(r)^{-s}}{\gammafunc(s+1/2)} \sim \frac{2^s e^{-rs}}{\gammafunc(s+1/2)}\,,\quad r \to +\infty\,.
\end{align*}
Moreover, the function $v_\kappa(\cdot;r)$ is entire. Therefore the poles of $v_\kappa(\cdot, r, r')$ are exactly poles of $a_\kappa$ defined in \eqref{eq:a_kappa_definition}.
The poles of $a_\kappa(s)$ are given by the multiset $-\N_0 \pm 2\pi i\ell^{-1}\kappa$.
\end{proof}

Proposition~\ref{prop:fourier-cylinder} implies upper and lower bounds on the 
number of resonances of the same (polynomial) order as in the untwisted 
situation.

\begin{prop}\label{prop:rescount_hypcyl}
Denote by $C_\ell = \ang{h_\ell} \bs \h$ the hyperbolic cylinder and let $\twist\colon \ang{h_\ell} \to \Unit(V)$ be a unitary representation. The resonance counting function satisfies
\begin{align*}
N_{C_\ell,\twist}(r) \asymp r^2 \qquad\text{as $r\to\infty$}\,.
\end{align*}
\end{prop}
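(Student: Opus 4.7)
The plan is to plug the explicit multiset description from Proposition~\ref{prop:fourier-cylinder} into the definition of $N_{C_\ell,\twist}(r)$ and reduce the problem to a standard lattice-point count in the disk of radius $r$.

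First I would unpack the resonance formula. Since $\twist(h_\ell)$ is unitary, every eigenvalue $\lambda \in \EV(\twist(h_\ell))$ has modulus one, so I may write $\log\lambda = 2\pi i\vartheta_\lambda$ for some $\vartheta_\lambda \in \R$. For each pair $(\lambda,p)$ with $p \in \{\pm 1\}$, the contribution to $\ResSet_{C_\ell,\twist}$ is then the half-lattice
\[
L_{\lambda,p} \coloneqq \Bigl\{ -n + \tfrac{2\pi i}{\ell}(m + p\vartheta_\lambda) \setmid n \in \N_0,\ m \in \Z \Bigr\}\,,
\]
which lies in the closed left half-plane and has fundamental cell of area $2\pi/\ell$ inside it. By Proposition~\ref{prop:fourier-cylinder}, the multiset $\ResSet_{C_\ell,\twist}$ is the multiset union of these $2\dim V$ half-lattices, eigenvalues counted with algebraic multiplicity.

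Next I would count the elements of each $L_{\lambda,p}$ inside the open disk $\{|s|<r\}$. The condition $|s|<r$ reads
\[
n^2 + \tfrac{4\pi^2}{\ell^2}(m + p\vartheta_\lambda)^2 < r^2\,,
\]
so the count equals the number of lattice points of $\N_0 \times \Z$ inside a half-ellipse. Comparing with the area via a Gauss-type argument yields
\[
\#\{s \in L_{\lambda,p} \setmid |s| < r\} = \tfrac{\ell}{4}\, r^2 + O(r)\,,
\]
with the error uniform in $(\lambda,p)$. Summing over the $2\dim V$ half-lattices immediately produces the upper bound $N_{C_\ell,\twist}(r) \lesssim r^2$. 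For the lower bound, since multiplicities are additive in the multiset union, fixing any single pair $(\lambda,p)$ already gives $N_{C_\ell,\twist}(r) \gtrsim r^2$, which combined with the upper bound gives $N_{C_\ell,\twist}(r) \asymp r^2$.

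The only step that requires actual work is the Gauss-type count in the half-ellipse, which is entirely routine: estimate $\sum_{0 \leq n \leq r} \bigl( 1 + 2 \lfloor (\ell/(2\pi))\sqrt{r^2-n^2}\rfloor \bigr)$ against the Riemann integral $(\ell/\pi)\int_0^r\sqrt{r^2-x^2}\,dx = \ell r^2/4$, picking up an $O(r)$ defect from the boundary strips. I expect the written-out proof to be only a few lines, and no ingredients beyond Proposition~\ref{prop:fourier-cylinder} are needed. The observation that unitarity forces $\log\lambda$ to be purely imaginary, so that the different $L_{\lambda,p}$ all sit in the strip $\{\Rea s \leq 0\}$ with the same vertical spacing, is what prevents the twist from altering the $r^2$ order.
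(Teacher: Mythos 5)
Your argument is correct and is exactly the route the paper intends: the paper gives no written proof, simply asserting that Proposition~\ref{prop:fourier-cylinder} implies the two-sided bound, and your lattice-point count in the half-ellipse is the standard way to flesh that out. The observation that unitarity makes $\log\lambda$ purely imaginary, so each of the $2\dim V$ half-lattices contributes $\tfrac{\ell}{4}r^2+O(r)$ points, is precisely what is needed.
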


\subsection{Model Funnel}\label{sec:model-funnel}
We consider the model funnel 
\[
F_\ell \coloneqq C_\ell \cap \{(r,\phi) \in C_\ell \setmid r \geq 0\}\,.
\]
We restrict the Laplacian on $C_\ell$ to $F_\ell$ and impose Dirichlet boundary conditions at $r = 0$. This gives a self-adjoint operator, which we denote by $\Delta_{F_\ell,\twist}$. We denote its resolvent by $R_{F_\ell,\twist}(s)$. Let $(\psi_j)_{j=1}^n$ be a set of eigenvectors of $\twist(h_\ell)$ with respective eigenvalues $e^{2 \pi i \vartheta_j}$ and $\vartheta_j \in [0,1)$. We define
\begin{align*}
R_{F_\ell,\twist}(s;z,z')^j \coloneqq \ang{R_{F_\ell,\twist}(s;z,z') \psi_j, \psi_j}\,.
\end{align*}
As in the untwisted case (see, e.g., \cite[Section 5.2]{Borthwick_book}), the Fourier decomposition of $R_{F_\ell,\twist}(s;z,z')$ can be calculated using the method of images, 
\begin{align}\label{eq:images}
R_{F_\ell,\twist}(s;z,z') = R_{C_\ell,\twist}(s;z,z') - R_{C_\ell,\twist}(s;z,-\overline{z'})\,,
\end{align}
where $z,z' \in \funddom = \{ z \in \h \colon 1 \leq \abs{z} < e^\ell\}$.
Hence, $R_{F_\ell,\twist}(s)$ admits a meromorphic continuation to $s \in \C$ and with the multiset of poles denoted by~$\ResSet_{F_\ell,\twist}$.
Recall $\omega = 2 \pi/ \ell$.
For $r, r' \ge 0$, $k \in \Z$, and $\kappa \in \R$, the Fourier-type expansion of~$R_{F_\ell,\twist}(s;z,z')$ can be formulated with the help of the functions
\begin{align}\label{eq:vtilde-kappa}
\tilde{v}_\kappa(s;r,r') \coloneqq 
\begin{cases}
\beta_{\kappa}(s) v_{\kappa}^0(s;r) v_{\kappa}(s;r') & \text{for $0 \leq r \leq r'$}\,,
\\
\beta_{\kappa}(s) v_{\kappa}(s;r) v_{\kappa}^0(s;r') & \text{for $0 \leq r' \leq r$}\,,
\end{cases}
\end{align}
where for $s \in \C \setminus \left( -1 - 2 \N_0 \pm i \omega \kappa \right)$,
\begin{align*}
\beta_\kappa(s) & \coloneqq \frac{1}{2} \gammafunc(\frac{s + i \omega \kappa + 1}{2}) \gammafunc(\frac{s - i \omega \kappa + 1}{2})\,,
\\
v_\kappa^0(s;r) & \coloneqq \tanh(r) (\cosh(r))^{-s}
\FF( \frac{s + i \omega \kappa + 1}{2}, \frac{s - i \omega \kappa + 1}{2}; \frac32; \tanh(r)^2)
\end{align*}
and $v_\kappa(s;r)$ is as in \eqref{eq:v_definition}. Therefore, we obtain the same Fourier-type expansion for the resolvent as in the case of the hyperbolic cylinder,
\begin{align}\label{eq:fourier-exp-funnel}
R_{F_\ell,\twist}(s;z,z')^j = \frac{1}{\ell} \sum_{k \in \Z} e^{i \phi(k+\vartheta_j) }
\tilde{v}_{k + \vartheta_j}(s;r,r') e^{-i\phi' (k+\vartheta_j)}\,.
\end{align}

\begin{prop}\label{prop:fourier-funnel}
The resolvent of $\Delta_{F_\ell, \twist}$ admits a meromorphic continuation to $\C$ as an operator
\begin{align*}
R_{F_\ell,\twist}(s) \colon  L^2_\cpt(F_\ell,\bundle) \to L^2_\loc(F_\ell,\bundle)\,.
\end{align*}
Let $\psi \in \CI_b(F_\ell \times_0 F_\ell)$ with $\supp \psi \cap \diagon_0 = \emptyset$ and $s \in \C$ not a pole of the resolvent. We have
\begin{align*}
\psi \beta^* R_{F_\ell,\twist}(s; \cdot, \cdot ) &\in (\rho_0 \rho_0')^{s} \CI(F_\ell \times_0 F_\ell, \bundle \boxtimes_0 \bundle)
\\
&\hphantom{\in \rho_0 \rho_0')^{s}} + \beta^*\left( (\rho_f \rho_f')^{s}\CI(\overline{F_\ell} \times \overline{F_\ell}, \bundle \boxtimes \bundle) \right)\,.
\end{align*}
Moreover, the multiset of resonances $\ResSet_{F_\ell,\twist}$ is given by
\begin{align*}
\ResSet_{F_\ell,\twist} = \bigcup_{\lambda \in \EV(\twist(h_\ell))} \bigcup_{p \in \{\pm 1\}}
\left( -(1+2\N_0) + p \ell^{-1} \left( \log\lambda + 2\pi i \Z\right)\right)\,,
\end{align*}
where $\EV(\twist(h_\ell))$ is the multiset of eigenvalues of $\twist(h_\ell)$.
\end{prop}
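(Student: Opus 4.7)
The plan is to deduce all three assertions from the method of images identity \eqref{eq:images} together with the structural results for the hyperbolic cylinder (Proposition~\ref{prop:r_hypcyl_merom}) and the explicit Fourier expansion in \eqref{eq:fourier-exp-funnel}. Since the reflection $z'\mapsto -\overline{z'}$ maps $\funddom$ to a set disjoint from~$\funddom$, the second summand on the right hand side of \eqref{eq:images} is smooth across the diagonal. Meromorphic continuation of $R_{F_\ell,\twist}(s)$ to all of~$\C$ therefore follows immediately from the corresponding statement for $R_{C_\ell,\twist}(s)$; in particular $R_{F_\ell,\twist}(s)$ inherits a meromorphic family of bounded operators $L^2_\cpt(F_\ell,\bundle)\to L^2_\loc(F_\ell,\bundle)$.

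For the kernel structure on the blown-up space, I would observe that only $R_{C_\ell,\twist}(s;z,z')$ contributes a conormal singularity along~$\diagon_0$, while the reflected term $R_{C_\ell,\twist}(s;z,-\overline{z'})$ pulls back to a smooth section of $\bundle\boxtimes\bundle$ on $\overline{F_\ell}\times\overline{F_\ell}$ with the asserted boundary vanishing $(\rho_f\rho_f')^s$. The description of $\psi\beta^*R_{F_\ell,\twist}(s;\cdot,\cdot)$ is then obtained by subtracting the two structures provided by Proposition~\ref{prop:r_hypcyl_merom}.

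For the identification of the resonance set I would argue via the Fourier-type expansion \eqref{eq:fourier-exp-funnel}: because the characters $e^{i\phi(k+\vartheta_j)}$ are linearly independent, the poles of $R_{F_\ell,\twist}(s)$ coincide (with multiplicity) with the union over $(j,k)$ of the poles of $\tilde{v}_{k+\vartheta_j}(s;r,r')$. By \eqref{eq:vtilde-kappa} the only possible source of singularities in $s$ is the prefactor $\beta_\kappa(s)$, since both $v_\kappa^0(s;r)$ and $v_\kappa(s;r)$ are entire in~$s$ thanks to the regularized hypergeometric function. A short asymptotic analysis, parallel to the one used for~$v_\kappa$ in the proof of Proposition~\ref{prop:fourier-cylinder}, shows that $v_\kappa^0(s;\cdot)$ is non-vanishing as $r\to 0^+$ (coming from the $\tanh(r)$ factor) and $v_\kappa(s;\cdot)$ is non-vanishing as $r\to\infty$, so no cancellation occurs. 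The poles of $\beta_\kappa(s) = \frac12\gammafunc\bigl(\tfrac{s+i\omega\kappa+1}{2}\bigr)\gammafunc\bigl(\tfrac{s-i\omega\kappa+1}{2}\bigr)$ are located exactly at $s\in -(1+2\N_0)\pm i\omega\kappa$. Running $\kappa = k+\vartheta_j$ over $k\in\Z$, $j\in\{1,\dots,n\}$, and using $\omega = 2\pi/\ell$ together with $\log\lambda_j = 2\pi i\vartheta_j$, these sets rearrange to
\begin{equation*}
\bigcup_{\lambda\in\EV(\twist(h_\ell))}\bigcup_{p\in\{\pm1\}}\bigl(-(1+2\N_0) + p\ell^{-1}(\log\lambda + 2\pi i\Z)\bigr)\,,
\end{equation*}
as claimed.

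The main subtlety will lie in the bookkeeping of multiplicities: one must be careful that each eigenvalue $\lambda$ of $\twist(h_\ell)$ is counted with its algebraic multiplicity on the right hand side, matching the number of indices~$j$ such that $e^{2\pi i\vartheta_j} = \lambda$, and that potential coincidences of $+i\omega(k+\vartheta_j)$ with $-i\omega(k'+\vartheta_{j'})$ for distinct $(j,k)\ne(j',k')$ either split or combine the local multiplicity at a given~$s$ in the expected way. Verifying that no zeros of $v_{k+\vartheta_j}^0$ or $v_{k+\vartheta_j}$ ever cancel the simple poles of $\beta_{k+\vartheta_j}$ is the only analytic input beyond formal manipulation, and I expect this non-vanishing check to be the technical bottleneck.
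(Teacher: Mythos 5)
Your proposal is correct and takes essentially the same approach as the paper, whose entire proof consists of the two remarks that the structure follows from Proposition~\ref{prop:r_hypcyl_merom} together with \eqref{eq:images} and that the resonances can be read off from the Fourier coefficients; you have simply supplied the details (the poles of $\beta_\kappa$ at $-(1+2\N_0)\pm i\omega\kappa$, the non-vanishing of $v^0_\kappa$ and $v_\kappa$, and the identification $\pm i\omega(k+\vartheta_j)=\pm\ell^{-1}(\log\lambda_j+2\pi i k)$), exactly parallel to the paper's proof of Proposition~\ref{prop:fourier-cylinder}. One phrasing slip worth fixing: $z'\mapsto-\overline{z'}$ preserves $\abs{z'}$ and hence maps $\funddom=\{1\leq\abs{z}<e^\ell\}$ essentially to itself; the statement you actually need (and implicitly use) is that it maps the funnel half $\{\Re z>0\}$ of $\funddom$ to the opposite half, so that $z\neq-\overline{z'}$ for $z,z'$ in the funnel and the reflected kernel is smooth across the diagonal.
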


\begin{proof}
The structure of the resolvent follows from Proposition~\ref{prop:r_hypcyl_merom} and \eqref{eq:images}. The resonances can be read off from the Fourier coefficients.
\end{proof}

\begin{remark}\label{rem:upper-bound-funnel}
As in the case of the hyperbolic cylinder this gives upper and lower bounds 
on the number of resonances:
\[
N_{F_\ell,\twist}(r) \asymp r^2 \qquad\text{as $r \to \infty$}\,.
\]
\end{remark}

We would like to define the Poisson operator and the scattering matrix for the model funnel. For this we note that the boundary at infinity of the model funnel is given by
\begin{align*}
\pa_\infty F_\ell &= \R / 2\pi \Z\,,
\\
g_\pa & \coloneqq \rho^2 g_{F_\ell}|_{\pa_\infty F_\ell} =
\omega^{-2} d\phi^2\,.
\end{align*}
Hence, the induced Riemannian volume is 
\[
d\mu_{\pa X} = \frac{d\phi}{\omega}\,.
\]
Proposition~\ref{prop:fourier-funnel} implies that the function
\begin{align}\label{eq:poisson-model-funnel}
E_{F_\ell,\twist}(s; r, \phi, \phi') \coloneqq \lim_{r' \to \infty} 
(\rho_f'(r'))^{-s} R_{F_\ell,\twist}(s; r, \phi, r', \phi')
\end{align}
is well-defined. This is the Schwartz kernel corresponding to the operator 
\begin{align*}
E_{F_\ell,\twist}(s) &\colon \CI(\pa_\infty F_\ell, \bundle|_{\pa_\infty F_\ell}) \to \CI(F_\ell, \bundle)\,,
\\
(E_{F_\ell,\twist}(s) f)(r,\phi) &\coloneqq \frac{1}{\omega} \int_0^{2\pi} 
E_{F_\ell,\twist}(s;r,\phi,\phi') f(\phi') \, d\phi'\,.
\end{align*}
Since $\rho_f'(r') = \cosh(r')^{-1}$, we can read off the Fourier expansion of 
the integral kernel of $E_{F_\ell,\twist}(s)$ from the Fourier expansion of the 
integral kernel of $R_{F_\ell,\twist}(s)$. As above, let $(\psi_j)_{j=1}^n$ be an eigenbasis of $\twist(h_\ell)$ with eigenvalues $\lambda_j = e^{2\pi i\vartheta_j}$. With the help of \eqref{eq:fourier-exp-funnel} we obtain that
\begin{align*}
E_{F_\ell,\twist}(s;r,\phi,\phi') \psi_j
= \sum_{k\in\Z} E_{F_\ell,\twist}(s;r)_k^j \psi_j e^{ik(\phi-\phi')}\,,
\end{align*}
where 
\begin{align*}
E_{F_\ell,\twist}(s;r)_k^j \coloneqq
\frac{1}{\ell} \frac{\beta_{k + \vartheta_j}(s) v_{k + \vartheta_j}^0(s;r)}{\gammafunc(s+1/2)}\,.
\end{align*}

The following lemma is a generalization of \cite[Proposition 7.9]{Borthwick_book} for the funnel end.

\begin{lemma}\label{lem:funnel-res-diff}
For $s\in \C$ other than a pole of $R_{F_\ell,\twist}(s)$ or $R_{F_\ell,\twist}(1-s)$, we have that
\begin{align*}
R_{F_\ell,\twist}(s) - R_{F_\ell,\twist}(1-s) = (1-2s) E_{F_\ell,\twist}(s)E^T_{F_\ell,\twist}(1-s)\,.
\end{align*}
Here, 
\[
E^T_{F_\ell,\twist}(s) \colon  L^2(F_\ell,\bundle) \to L^2(\pa_\infty F_\ell, \bundle|_{\pa_\infty F_\ell})
\]
denotes the transpose of the operator \[E_{F_\ell,\twist}(s) \colon  L^2(\pa_\infty F_\ell, \bundle|_{\pa_\infty F_\ell}) \to L^2(F_\ell,\bundle)\,.\]
\end{lemma}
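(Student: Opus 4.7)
The plan is to derive the identity via Green's formula on the truncated funnel $F_\ell \cap \{0 \le r \le R\}$ combined with the asymptotic structure of the resolvent kernel at the geodesic boundary, followed by passing to the limit $R \to \infty$. Since $s(1-s) = (1-s)s$, both $R_{F_\ell,\twist}(s;z,\cdot)$ and $R_{F_\ell,\twist}(1-s;z,\cdot)$ are fundamental solutions for the \emph{same} operator $\Delta_{F_\ell,\twist} - s(1-s)$, with Dirichlet conditions at $r = 0$. Fixing distinct $z, z' \in F_\ell$ and applying Green's identity to $R_{F_\ell,\twist}(s;z,\cdot)$ and $R_{F_\ell,\twist}(1-s;\cdot,z')$ --- with the bundle-valued pairing made precise through the fiberwise bilinear pairing from~\eqref{eq:bilinear-product-pa_infty} together with the symmetry $R_{F_\ell,\twist}(t;z,w)^T = R_{F_\ell,\twist}(t;w,z)$ --- the bulk integrals collapse to $R_{F_\ell,\twist}(s;z,z') - R_{F_\ell,\twist}(1-s;z,z')$, while the boundary contribution at $r = 0$ vanishes by the Dirichlet condition.

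The boundary term at $r = R$ is analysed using the asymptotic expansion $R_{F_\ell,\twist}(s;z,w) = \rho_f(w)^s E_{F_\ell,\twist}(s;z,\phi_w) + O(\rho_f(w)^{s+1})$ as $r_w \to \infty$, uniformly in $\phi_w$, which follows from~\eqref{eq:poisson-model-funnel} together with the conormal description of $\beta^* R_{F_\ell,\twist}(s)$ provided in Proposition~\ref{prop:fourier-funnel}. Since $\rho_f = \cosh(r)^{-1} \sim 2 e^{-r}$ we have $\partial_r \rho_f^s \sim -s\,\rho_f^s$, and together with the volume form $\omega^{-1}\cosh(r)\,d\phi \sim \omega^{-1}\rho_f^{-1}\,d\phi$ on the slice $\{r = R\}$, the Wronskian combination $R_{F_\ell,\twist}(1-s)\,\partial_r R_{F_\ell,\twist}(s) - R_{F_\ell,\twist}(s)\,\partial_r R_{F_\ell,\twist}(1-s)$ produces a finite limit equal to $(1-2s)\int_0^{2\pi} E_{F_\ell,\twist}(s;z,\phi)\,E_{F_\ell,\twist}(1-s;z',\phi)\,\omega^{-1}\,d\phi$, which is precisely the Schwartz kernel of $(1-2s)\,E_{F_\ell,\twist}(s)\,E^T_{F_\ell,\twist}(1-s)$.

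The main obstacle is to justify rigorously the passage $R \to \infty$: one must show that the subleading terms in the Poisson-type expansion of $R_{F_\ell,\twist}(s;z,w)$ and their first $r$-derivatives decay uniformly in $\phi_w$. This can be accomplished either by sharpening the conormal description in Proposition~\ref{prop:fourier-funnel} to a polyhomogeneous expansion at the side face, or by invoking the Fourier expansion~\eqref{eq:fourier-exp-funnel} and controlling the decay in the Fourier mode $k$ via the integral representation of the hypergeometric function. A more direct alternative, which bypasses these analytic subtleties altogether, is to verify the identity mode by mode in the joint decomposition given by the eigenbasis $(\psi_j)$ of $\twist(h_\ell)$ and the angular Fourier series: using~\eqref{eq:fourier-exp-funnel} and the explicit formula for $E_{F_\ell,\twist}(s;r)_k^j$, the identity reduces on each mode $(k,j)$ with $\kappa = k + \vartheta_j$ to a scalar Wronskian identity for $v_\kappa(s;\cdot)$ and $v^0_\kappa(s;\cdot)$ at $s$ and $1-s$, which is a standard consequence of the fact that $v_\kappa(s;\cdot)$ and $v_\kappa(1-s;\cdot)$ satisfy the same hypergeometric ODE and may be evaluated explicitly via standard connection formulas for the hypergeometric function.
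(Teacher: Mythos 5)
Your primary argument---Green's identity on the truncated funnel, the Dirichlet boundary killing the term at $r=0$, insertion of the Poisson asymptotics $R_{F_\ell,\twist}(s;z,w)=\rho_f(w)^s E_{F_\ell,\twist}(s;z,\phi_w)+O(\rho_f(w)^{s+1})$ into the Wronskian, and passage to the limit using $\rho'\,d\mu_{F_\ell}\to\omega^{-1}d\phi'$---is exactly the proof given in the paper, which cuts off at $\{\rho(z')=\eps\}$ rather than $\{r=R\}$ but is otherwise identical, including the sign bookkeeping that yields the factor $1-2s$. The uniform control of the remainder that you flag as the main obstacle is handled in the paper simply by absorbing it into an $O(\rho')$ term inside the boundary integral, and your alternative mode-by-mode Wronskian verification via \eqref{eq:fourier-exp-funnel} is a valid but different route that the paper does not pursue.
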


\begin{proof}
We choose any basis $(e_j)_{j=1}^n$ of $V$ and set, for each $1 \le j, k \le n$, 
\begin{align*}
R_{jk}(s;z,w) & \coloneqq \ang{R_{F_\ell,\twist}(s;z,w)e_j,e_k} \,.
\end{align*}
Denote by $R_{F_\ell,\twist}^T(s;z,w)$ the integral kernel of $R_{F_\ell,\twist}(s)^T$ and let
\begin{align*}
R^T_{jk}(s;z,w) \coloneqq \ang{R_{F_\ell,\twist}^T(s;z,w)e_j,e_k}\,.
\end{align*}
It follows that
\begin{align*}
R^T_{jk}(s;z,w) = R_{kj}(s;w,z)\,.
\end{align*}
At least formally, 
\begin{align*}
R_{F_\ell,\twist}(s) & - R_{F_\ell,\twist}(1-s) 
\\
& = R_{F_\ell,\twist}(s) (\Delta_{F_\ell,\twist} - s(1-s)) R_{F_\ell,\twist}(1-s) \\
& \phantom{= R_{F_\ell,\twist}(s)} - (\Delta_{F_\ell,\twist} - s(1-s)) R_{F_\ell,\twist}(s) 
R_{F_\ell,\twist}(1-s) \\
&= R_{F_\ell,\twist}(s) \Delta_{F_\ell,\twist}  R_{F_\ell,\twist}(1-s) - 
\Delta_{F_\ell,\twist}  R_{F_\ell,\twist}(s) R_{F_\ell,\twist}(1-s)\,.
\end{align*}
We can make the above calculation rigorous by considering the Schwartz kernel, 
\begin{align*}
R_{jk}(s;z,w) & - R_{jk}(1-s;z,w) 
\\
& =
\lim_{\eps \to 0} \int_{\rho(z') > \eps} \sum_{m=1}^n \bigg(R_{jm}(s;z,z') \Delta_{F_\ell} R_{mk}(1-s;z',w)
\\
& \qquad - \Delta_{F_\ell}  R_{jm}(s;z,z') R_{mk}(1-s;z',w) \bigg)\, d\mu_{F_\ell}(z') 
\\
& =  \lim_{\eps \to 0} \int_{\rho(z') > \eps} \sum_{m=1}^n \bigg(R_{jm}(s;z,z') \Delta_{F_\ell} R^T_{km}(1-s;w,z')
\\
& \qquad - \Delta_{F_\ell}  R_{jm}(s;z,z') R^T_{km}(1-s;w,z') \bigg)\, d\mu_{F_\ell}(z') \,.
\end{align*}
Here, $\Delta_{F_\ell}$ acts on the $z'$ variable. We apply the Green formula to obtain  
\begin{equation}\label{eq:number}
\begin{aligned}
R_{jk}&(s;z,w)  - R_{jk}(1-s;z,w) 
\\
& = \lim_{\eps \to 0} \int_{\rho(z') = \eps} \sum_{m=1}^n
\bigg(-R_{jm}(s;z,z') \pa_\nu R^T_{km}(1-s;w,z')
\\
& \hphantom{\lim_{\eps \to 0} \int_{\rho(z') = \eps} \sum_{m=1}^n}  + \pa_\nu R_{jm}(s;z,z') R^T_{km}(1-s;w,z') \bigg)\, d\mu_{F_\ell}(z')\,,
\end{aligned}
\end{equation}
where $\nu$ is the outward pointing unit normal vector to the set $\{\rho(z') > \eps\}$. We use the coordinates $z' = (\rho', \phi')$, where $\rho' = \rho(z')$ and $\phi'$ is as in \eqref{eq:cylinder-iso}. Since $g_{F_\ell}(\pa_\rho, \pa_\rho) = \rho^2$, we have $\pa_\nu = - \rho'\pa_{\rho'}$. Inserting
\begin{align*}
R_{F_\ell,\twist}(s;z,z') & =
(\rho')^s E_{F_\ell,\twist}(s;z,\phi') + O( (\rho')^{s+1})
\intertext{and}
\pa_\nu R_{F_\ell,\twist}(s;z,z') & =
-s (\rho')^s E_{F_\ell,\twist}(s;z,\phi') + O( (\rho')^{s+1})
\end{align*}
in \eqref{eq:number}, we obtain
\begin{align*}
& R_{jk}(s;z,w)  - R_{jk}(1-s;z,w)
\\
&\quad = (1-2s) \lim_{\rho' \to 0} \int_0^{2\pi} \bigg( \sum_{m=1}^n E_{jm}(s;z,\phi') E^T_{km}(1-s;w,\phi') 
\\ 
&\hphantom{ \quad = (1-2s) \lim_{\rho' \to 0} \int_0^{2\pi} \bigg( \sum_{m=1}^n E_{jm}(s;z,\phi')} + O(\rho')\bigg)\, \rho' d\mu_{F_\ell}(\rho',\phi')\,.
\end{align*}
Now taking $\rho' = \eps \to 0$ and using that 
\[
\lim_{\rho' \to 0} \rho' d\mu_{F_\ell}(\rho',\phi') = d\mu_{\pa_\infty F_\ell}(\phi') = \frac{1}{\omega}\,d\phi'
\]
yields the assertion.
\end{proof}

Our next step is to define the scattering matrix 
\[
S_{F_\ell,\twist}(s)\colon \CI(\pa_\infty F_\ell, \bundle|_{\pa_\infty F_\ell}) \to \CI(\pa_\infty F_\ell, \bundle|_{\pa_\infty F_\ell})\,.
\]
We use an orthonormal basis of eigenvectors $(\psi_j)_{j=1}^n$ of $\twist(h_\ell)$ with eigenvalues $\lambda_j = e^{2\pi i\vartheta_j}$, where $\vartheta_j \in [0,1)$, to define (see \cite[Lemma 4.2]{GuZw95})
\begin{align*}
S_{F_\ell,\twist}(s):  [\psi_j e^{i k \phi}] \mapsto S_{F_\ell,\twist}(s)_k^j \psi_j e^{i k \phi}\,,
\end{align*}
where 
\begin{align}\label{eq:smatrix-funnel}
S_{F_\ell,\twist}(s)_k^j \coloneqq \frac{\gammafunc\left(\frac{1}{2}-s\right) 
\gammafunc\left(\frac{s + i \omega (k+\vartheta_j) + 1}{2}\right) \gammafunc\left(\frac{s - i \omega (k+\vartheta_j) + 1}{2}\right)}{ \gammafunc\left(s-\frac{1}{2}\right) \gammafunc\left(\frac{2-s + i 
\omega (k+\vartheta_j)}{2}\right) \gammafunc\left(\frac{2-s - i \omega 
(k+\vartheta_j)}{2}\right)}\,.
\end{align}
Using a Kummer connection formula (see \cite[\S 2.9, (33)]{ErdelyiI}) for 
the function $v_{k+\vartheta_j}^0(s;r)$, we obtain the asymptotic expansion
\begin{align}\label{eq:poisson-asymptotics-funnel}
(2s-1) E_{F_\ell,\twist}(s;r) f \sim \sum_{m=0}^\infty \rho_f^{1-s + 2m} 
a_m(s) + \sum_{m=0}^\infty \rho_f^{s +2m} b_m(s)\,,
\end{align}
where $a_0(s) = f$ and $b_0(s) = S_{F_\ell,\twist}(s) f$ (see~\cite[Proposition~5.6]{Borthwick_book}). Hence for $\Re s < 1/2$, we have that
\begin{align*}
S_{F_\ell,\twist}(s) = (2s-1) (\rho \rho')^{-s} 
R_{F_\ell,\twist}(s)|_{\pa_\infty F_\ell \times \pa_\infty F_\ell}\,.
\end{align*}
In the same way as in~\cite[pp.~92--93]{Borthwick_book} we obtain that
\begin{align*}
E_{F_\ell,\twist}(1-s) S_{F_\ell,\twist}(s) = - E_{F_\ell,\twist}(s)\,.
\end{align*}
Using this and Lemma~\ref{lem:funnel-res-diff}, we arrive at the identity
\begin{align}\label{eq:resolvent-scattering-matrix}
R_{F_\ell,\twist}(s) - R_{F_\ell,\twist}(1-s) = (2s-1) 
E_{F_\ell,\twist}(1-s) S_{F_\ell,\twist}(s) E_{F_\ell,\twist}(1-s)^T\,.
\end{align}

\subsection{Parabolic Cylinder}

We consider the parabolic cylinder $C_\infty$ as the quotient
\begin{align*}
C_\infty \coloneqq \ang{T} \bs \h \cong \R / \Z \times (0,\infty)\,,
\end{align*}
where $T.z = z+1$. Let $V$ be a finite-dimensional real vector space and $B \in \GL(V)$. We define the representation $\twist$ by
\begin{align*}
\twist(T) \coloneqq B\,.
\end{align*}
We denote by $R_{C_\infty,\twist}(s)$ the resolvent of the Laplace operator on 
the model parabolic cylinder. For meromorphic continuation of the $R_{C_\infty,\twist}(s)$ it suffices to assume that~$\twist$ has \emph{non-expanding cusp monodromy}, that is all eigenvalues of $B$ have modulus $1$. As for the hyperbolic cylinder, weakening the assumption of $\twist$ being unitary to non-expanding cusp monodromy does not complicate the proof. As before, the resolvent is given by
\begin{equation}\label{eq:resolvent_parcyl}
R_{C_\infty,\twist}(s; z,z') = \id_V R_\h(s;z,z') + \sum_{k\in \ZZ\setminus\{0\}} B^k R_\h(s; z,z'+k)\,,
\end{equation}
where $z, z' \in \funddom = \{z \in \h \setmid \Re z \in [0,1)\}$. Before we can prove the meromorphic continuation of $R_{C_\infty,\twist}(s)$, we have to extend a statement in \cite[Lemma~1.4]{Guillope}. For $\xi \in \C$ with $\abs{\xi} = 1$, $a\in \R$, $b \in (0,\infty)$ and $\Re s > 1/2$, define the function 
\[ 
S_\xi(s;a,b) \coloneqq \sum_{k\in \Z} \xi^k (\abs{k + a}^2 + b^2)^{-s}\,.
\]

\begin{lemma}\label{lemma:holomorphicity_of_Sxisab}
For every $m \in \N_0$, the function $\pa_\xi^m S_\xi(s;a,b)$ admits a 
meromorphic continuation to $s \in \C$ with possible poles at 
\begin{equation}\label{lemma:holomorphicity_of_Sxisabploes}
s \in m + 1/2 - \N_0
\end{equation} 
if $\xi = 1$. If $\xi \not = 1$, then $\pa_\xi^m S_\xi(s;a,b)$ is holomorphic.
\end{lemma}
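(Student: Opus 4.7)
The plan is to combine the Mellin--Barnes identity with Poisson summation. For $\Re s > (m+1)/2$, differentiating the defining series termwise gives
\begin{equation*}
\pa_\xi^m S_\xi(s;a,b) = \sum_{k\in\Z} P_m(k)\,\xi^{k-m}\bigl((k+a)^2 + b^2\bigr)^{-s},
\end{equation*}
where $P_m(k) \coloneqq k(k-1)\cdots(k-m+1)$. Inserting $X^{-s} = \gammafunc(s)^{-1}\int_0^\infty t^{s-1}e^{-tX}\,dt$ with $X=(k+a)^2+b^2$ and interchanging sum and integral yields
\begin{equation*}
\pa_\xi^m S_\xi(s;a,b) = \frac{\xi^{-m}}{\gammafunc(s)}\int_0^\infty t^{s-1}e^{-tb^2}\Theta_m(t;\xi,a)\,dt,
\end{equation*}
where $\Theta_m(t;\xi,a) \coloneqq \sum_{k\in\Z} P_m(k)\,\xi^k e^{-t(k+a)^2}$. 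I would split the $t$-integral at $t=1$: the tail $\int_1^\infty$ is entire in $s$ because $\Theta_m$ decays super-exponentially as $t\to\infty$, uniformly on compact $s$-sets, so every pole must come from the head integral over $(0,1]$.

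The head integral is handled by Poisson summation. Writing $\xi = e^{2\pi i \theta}$ and applying Poisson summation to $x \mapsto P_m(x)\,e^{2\pi i \theta x}\,e^{-t(x+a)^2}$, then substituting $y = x + a$ and using the Gaussian Fourier identities obtained by differentiating $\int e^{-ty^2 - 2\pi i u y}\,dy = \sqrt{\pi/t}\,e^{-\pi^2 u^2/t}$ in $u$, one arrives at
\begin{equation*}
\Theta_m(t;\xi,a) = \sqrt{\pi/t}\sum_{n\in\Z} e^{2\pi i a(n-\theta)}\, e^{-\pi^2 (n-\theta)^2/t}\, Q_m(n-\theta;t,a),
\end{equation*}
where $Q_m(u;t,a)$ is a polynomial in $u$ of degree at most $m$ whose coefficients are polynomials in $t^{-1/2}$. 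If $\xi \neq 1$, then $\theta \notin \Z$ and $|n-\theta| \geq \operatorname{dist}(\theta,\Z) > 0$ for every $n$, so $e^{-\pi^2(n-\theta)^2/t}$ beats every polynomial in $t^{-1/2}$ as $t \to 0^+$. Hence $\Theta_m(t;\xi,a) = O(t^N)$ for each $N \in \N_0$, the head integral is entire in $s$, and since $1/\gammafunc(s)$ is entire, $\pa_\xi^m S_\xi(s;a,b)$ is entire.

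If $\xi = 1$, only the $n = 0$ term fails to decay as $t \to 0^+$; it contributes $\sqrt{\pi/t}\,Q_m(0;t,a) = \int P_m(y-a)\,e^{-ty^2}\,dy$. Expanding $P_m(y-a)$ as a polynomial in $y$, only even powers survive the Gaussian integral, producing a finite sum $\sum_{j \text{ even},\, 0 \le j \le m} \gamma_j(a)\,t^{-(j+1)/2}$ for explicit constants $\gamma_j(a)$. Substituting into the head integral and Taylor-expanding $e^{-tb^2}$, each summand contributes a simple pole at $s = (j+1)/2 - \ell$ for $\ell \in \N_0$; these are half-integers bounded above by $(m+1)/2 \le m + 1/2$ and therefore all lie in $m + 1/2 - \N_0$. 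Since $1/\gammafunc(s)$ vanishes only at non-positive integers, it neither introduces nor cancels any of these half-integer poles, and the stated bound is preserved. The main obstacle will be the bookkeeping of $Q_m$: one must carefully track which powers of $t^{-1/2}$ arise from the Hermite-type polynomials produced by differentiating $e^{-\pi^2 u^2/t}$ at $u = 0$, and verify that the $n = 0$ evaluation has the claimed polynomial-in-$t^{-1/2}$ structure. Once this is established, the meromorphic continuation and the pole location follow mechanically.
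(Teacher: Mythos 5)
Your proposal is correct and takes essentially the same route as the paper: a Gamma-integral representation combined with Poisson summation, with the continuation governed by the single non-decaying zero mode of the resulting theta-type sum; the only difference is that you differentiate in $\xi$ before Poisson-summing, whereas the paper differentiates the already Poisson-summed representation. One cosmetic remark: for $a\in\Z$ the sum $\Theta_m$ does not itself decay as $t\to\infty$ (the $k=-a$ term is constant), but the factor $e^{-tb^2}$ with $b>0$ still renders the tail integral entire.
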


\begin{proof}
We write $\xi = e^{2\pi i \lambda}$ with $\lambda \in [0,1)$. For $\Re s > 1/2$, we use the definition of the Gamma-function and the Poisson summation formula to obtain 
\begin{align*}
\gammafunc(s) S_\xi(s;a,b) &= \sum_{k \in \Z} \int_0^\infty \xi^k e^{-(\abs{a+k}^2 + b^2)t} t^s \frac{dt}{t} 
\\
&= \sum_{k \in \Z} \int_{-\infty}^{\infty} e^{-2 \pi i k y} \left( \int_0^\infty \xi^y e^{-(\abs{a+y}^2 + b^2)t} t^s \frac{dt}{t}  \right) dy 
\\
&= \sqrt{\pi } \sum_{k \in \Z} \int_0^\infty e^{-tb^2} t^{s-1/2} e^{-\frac{\pi^2  (\lambda +k)^ 2}{t} + 2 i a \pi (\lambda +k) } \frac{dt}{t} 
\\
&= \sqrt{\pi} b^{1-2s} \int_0^\infty e^{-u} u^{s-1/2} \left( \sum_{k \in \Z} e^{-\frac{\pi^2 b^2 (k + \lambda)^2}{u} + 2\pi i a (k + \lambda)}\right) \frac{du}u\,.
\end{align*}
For $u > 0$, we write
\begin{align*}
f(u) &\coloneqq \sum_{k \in \Z \setminus \{0\}} e^{-\pi^2 b^2 (k + \lambda)^2 u^{-1} + 2\pi i a (k + \lambda)} 
\\
&\ = \sum_{k=1}^\infty e^{-\pi^2 b^2 (k + \lambda)^2 u^{-1} + 2\pi i a (k + \lambda)} + \sum_{k=1}^\infty e^{-\pi^2 b^2 (k - \lambda)^2 u^{-1} - 2\pi i a (k - \lambda)}\,.
\end{align*}
Since $x \mapsto e^{-x^2}$ is monotonically decreasing, we may use the integral convergence test to compare the two series to the integrals
\begin{align*}
\int_1^\infty e^{-\pi^2 b^2 (x \pm \lambda)^2 u^{-1}} \,dx &= \sqrt{\frac{u}{\pi b^2}} \erfc\left( \frac{\pi b (1 \pm \lambda)}{\sqrt{u}} \right)\,,
\end{align*}
where $\erfc$ denotes the complementary error function defined as 
\[
\erfc(x) \coloneqq 1 - \erf(x)\,.
\]
It is well-known that $\erfc(x) = O(x^{-\infty})$ as
$x \to \infty$ (see \cite[8.254]{Gradshteyn}). Since $1 \pm \lambda \not = 0$, we obtain that $f(u) = O(u^{-\infty})$ as $u \to 0$. For $u > 0$ sufficiently large, we have that $f(u) = O(1)$. 

The estimate allows us to conclude that the integral and the series converge absolutely, and hence we can differentiate under the integral and sum with respect to $\xi$. The only term that might prohibit the convergence of the sum and the integral comes from $e^{- \pi^2 b^2 (k+\lambda)^2 u^{-1}}$ with respect to $\lambda$. For differentiating $m$-times this yields the terms $u^{-m}$ and $k^m$. The convergence of the integral is not affected due to the presence of the exponential factor $e^{-u}$ and the sum is still converging since for large $k$ we have $k + \lambda \not = 0$. Moreover, we see that if $\xi \not = 1$ we can take $\lambda \in (0,1)$ and therefore the sum and the integral converge for arbitrary $s \in \C$. For $k + \lambda = 0$, we obtain terms of form $\gammafunc(s-m-1/2)$, which are meromorphic and have poles for $s$ as in \eqref{lemma:holomorphicity_of_Sxisabploes}.
\end{proof}

We denote by $d_B$ is the length of the longest Jordan chain of $\twist(T) = B$.

\begin{prop}\label{prop:resolv-cont-cusp}
The resolvent $R_{C_\infty,\twist}(s)$ is well-defined for $\Re s > d_B/2$
and admits a meromorphic continuation to $\C$ with possible poles at
\begin{align*}
\frac{d_B - \N_0}{2}\,.
\end{align*}
\end{prop}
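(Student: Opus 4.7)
The plan is to adapt the Taylor-expansion strategy of Proposition~\ref{prop:r_hypcyl_merom}, replacing the exponential bookkeeping of the hyperbolic case with the polynomial-growth bookkeeping natural to the parabolic setting. For convergence, non-expanding cusp monodromy together with the Jordan normal form of $B$ yields $\|B^k\| \lesssim (1+|k|)^{d_B-1}$ as $|k|\to\infty$. On the fundamental strip $\funddom$, formula~\eqref{eq:sigma} gives $\sigma(z,z'+k) \asymp k^2$, so \eqref{eq:def_Rhkernel} yields $R_\h(s;z,z'+k) = O(|k|^{-2\Re s})$ uniformly on $\funddom \times \funddom$. Consequently $\|B^k R_\h(s;z,z'+k)\| \lesssim |k|^{d_B-1-2\Re s}$, which is summable over $k \in \Z \setminus \{0\}$ precisely when $\Re s > d_B/2$; this proves the first claim.

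For the meromorphic continuation, substituting the truncated Taylor expansion~\eqref{eq:reshyper-expansion} of $R_\h$ into~\eqref{eq:resolvent_parcyl} gives
\begin{align*}
R_{C_\infty,\twist}(s;z,z') &= \id_V R_\h(s;z,z') + \frac{1}{4\pi}\sum_{n=0}^{N-1} \frac{\gammafunc(s+n)^2}{n!\,\gammafunc(2s+n)}\, H_\infty(s+n;z,z') \\
&\quad + \sum_{k\in\Z\setminus\{0\}} B^k F_N(s;z,z'+k),
\end{align*}
where $H_\infty(s;z,z') \coloneqq \sum_{k\neq 0} B^k \sigma(z,z'+k)^{-s}$. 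The decay $F_N(s;z,z'+k) = O(|k|^{-2(\Re s + N)})$ furnished by~\eqref{eq:gsN_growth}, together with the polynomial bound on $\|B^k\|$, shows that the remainder series is holomorphic on $\{\Re s > d_B/2 - N\}$, in direct analogy with Lemma~\ref{lem:twisted_Rhremainder_conv}. Since $N$ is arbitrary, it suffices to continue $H_\infty$ and its shifts $H_\infty(s+n;\cdot)$.

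To handle $H_\infty$ I would use the Jordan decomposition of $B$: each Jordan block, of some size $d \leq d_B$ with eigenvalue $\xi$ of modulus $1$ and nilpotent part $N$ satisfying $N^d = 0$, contributes $\sum_{m=0}^{d-1}\binom{k}{m}\xi^{k-m}N^m$ to $B^k$. Using the identity $\binom{k}{m}\xi^k = \tfrac{\xi^m}{m!}\partial_\xi^m\xi^k$ and interchanging sum and derivative within the domain of absolute convergence, the contribution of each block to $(4yy')^{-s}H_\infty(s;z,z')$ becomes a finite linear combination of $\partial_\xi^m S_\xi(s;x'-x,y+y')$ evaluated at $\xi$ equal to the block's eigenvalue, with $m \leq d_B - 1$. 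By Lemma~\ref{lemma:holomorphicity_of_Sxisab}, each such term is entire when the eigenvalue is not $1$ and meromorphic with possible poles in $m + \tfrac12 - \N_0$ otherwise.

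The main obstacle is confining the poles to $(d_B - \N_0)/2$: the lemma's raw bound $m + \tfrac12 - \N_0$ with $m = d_B - 1$ admits the value $d_B - \tfrac12$, which for $d_B \geq 2$ lies strictly to the right of $d_B/2$ and therefore cannot actually be a pole of a function holomorphic on $\{\Re s > d_B/2\}$. To eliminate these spurious candidates I would refine the Poisson-summation argument underlying Lemma~\ref{lemma:holomorphicity_of_Sxisab}: only the $k = 0$ summand of the Poisson-summed formula contributes poles, and its $m$-th $\lambda$-derivative at $\lambda = 0$ is a polynomial in $1/u$ of degree at most $\lfloor m/2 \rfloor$, since odd powers of $\lambda$ in the Taylor expansion of $e^{-\pi^2 b^2 \lambda^2/u + 2\pi i a\lambda}$ vanish upon evaluation at $\lambda=0$. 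Integrating against $u^{s-1/2}e^{-u}\,du/u$ then yields poles only at $s \in \tfrac12 + \lfloor m/2 \rfloor - \N_0$. Combined with the Taylor shifts by $n$, the $-\N_0$ poles of $R_\h$, and the inequality $\tfrac12 + \lfloor m/2 \rfloor \leq d_B/2$ for $m \leq d_B - 1$, every pole of $R_{C_\infty,\twist}$ is placed inside $(d_B - \N_0)/2$.
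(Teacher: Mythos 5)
Your proposal follows essentially the same route as the paper's proof: the same convergence bookkeeping ($\|B^k\|\lesssim |k|^{d_B-1}$ against $\sigma(z,z'+k)\asymp k^2$, giving summability exactly for $\Re s>d_B/2$), the same split of the kernel into the shifts of $H_\infty$ plus the Taylor remainder $F_N$, and the same reduction via Jordan blocks to the derivatives $\partial_\xi^m S_\xi$ handled by Lemma~\ref{lemma:holomorphicity_of_Sxisab}. Your last paragraph is in fact more careful than the paper, which applies the lemma's raw pole bound $m+\tfrac12-\N_0$ without comment and implicitly relies on the already-established holomorphy on $\{\Re s>d_B/2\}$ to discard the candidates in $(d_B/2,\,d_B-\tfrac12]$; your refinement of the Poisson-summation step (only the $k=0$ term contributes poles, and its $m$-th $\lambda$-derivative at $\lambda=0$ has degree at most $\lfloor m/2\rfloor$ in $1/u$, whence poles only at $s\in\tfrac12+\lfloor m/2\rfloor-\N_0\subseteq (d_B-\N_0)/2$) closes that small gap directly and is correct.
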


\begin{proof}
Formula~\eqref{eq:sigma} implies that
\begin{equation}\label{eq:sigma_parab}
\sigma(z,z'+k) \asymp k^2 \qquad\text{as $k\to\pm\infty$}
\end{equation}
with implied constants depending continuously on $z$ and $z'$. Hence, taking $f(n) = \log(n)$, we can use Proposition~\ref{prop:Rhkernel_conv} with $a = 2$, $b = d_B - 1$, and $c = 0$ to obtain that $R_{C_\infty,\twist}$ is well-defined for $\Re s > d_B /2$. For the meromorphic continuation, we formally define the function 
\begin{align}\label{eq:Hinfty}
H_\infty(s;z,z') \coloneqq \sum_{k\in\Z} B^k \sigma(z,z'+k)^{-s}\,.
\end{align}
For $\Re s > 1$, $H_\infty(s;z,z')$ is well-defined by \eqref{eq:sigma_parab}.
To use the second part of Proposition~\ref{prop:Rhkernel_conv}, we have to 
prove that $H_\infty(s;z,z')$ admits a meromorphic continuation in~$s$ to all of~$\C$. We note that
\begin{align*}
\sigma(z,z'+k)^{-s} = (4yy')^s \left( (x - x' - k)^2 + (y + y')^2 
\right)^{-s}\,.
\end{align*}
Conjugating $B$ to the Jordan normal form, $B = U^{-1} \tilde{B} U$, where $U$ 
is unitary, we have 
\begin{align}\label{eq:Hinfty_with_paranth}
H_\infty = (4yy')^s U^{-1} \left(\sum_{k\in \Z} \tilde{B}^k \left( (x - x' - 
k)^2 + (y + y')^2 \right)^{-s} \right) U\,.
\end{align}
For a Jordan block $J_\lambda$ of length $d_\lambda$, we have
\begin{align*}
J_\lambda^k = 
\begin{pmatrix}
\lambda^k & \pa_\lambda \lambda^k & \dotsc & 
\frac{1}{(d_\lambda-1)!}\pa_\lambda^{d_\lambda-1} \lambda^k
\\
& \lambda^k &  & \frac{1}{(d_\lambda-2)!}\pa_\lambda^{d_\lambda-2} 
\lambda^k 
\\
& & \ddots & \vdots
\\
& & & \lambda^k
\end{pmatrix}\,.
\end{align*}
Therefore, the sum in parentheses in \eqref{eq:Hinfty_with_paranth} can be written as a matrix with entries 
\[
\frac1{k!}\pa_\lambda^k S_\lambda(s;-x+x',y+y')\,,
\]
where $k = 0, \dotsc, d_\lambda-1$. Hence, the meromorphic continuation of 
\eqref{eq:Hinfty} follows from Lemma \ref{lemma:holomorphicity_of_Sxisab}.
\end{proof}

\subsection{Fourier Analysis of the Parabolic Cylinder}\label{sec:parab_cyl}
Recall that $C_\infty = \ang{T} \bs \h$, where $T.z = z + 1$, and let $\twist\colon  \ang{T} \to \Unit(V)$ be a finite-dimensional unitary representation. The goal of this section is to calculate the structure of the resolvent on the parabolic cylinder similar to the case of the hyperbolic cylinder in Proposition~\ref{prop:fourier-cylinder}. The statement is slightly different since the Schwartz kernel of the resolvent is not smooth on the product space, due to the very restrictive nature of smooth functions in cusp ends.

\begin{prop}\label{prop:fourier-cusp}
Let $\psi \in \CI(\overline{C_\infty})$ be supported away from $\{y = 0\}$.
The meromorphically continued resolvent $R_{C_\infty,\twist}(s)$ defines a continuous map
\begin{align*}
\psi R_{C_\infty,\twist}(s) \colon  \CcI(C_\infty,\bundle) \to 
\rho_c^{s-1} \CI(\overline{C_\infty}, \bundle)
\end{align*}
provided that $s \not = 1/2$. Here, $\psi$ denotes the multiplication 
operator with the function $\psi$ on $\CI(\overline{C_\infty},\bundle)$.
The only pole of $R_{C_\infty,\twist}(s)$ is at the point $s=1/2$.
Its multiplicity is equal to the (algebraic) multiplicity of the eigenvalue~$1$ of $\twist(T)$.
\end{prop}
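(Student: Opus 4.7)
My approach mirrors the Fourier-analytic proof of Proposition~\ref{prop:fourier-cylinder} for the hyperbolic cylinder, specialized to the parabolic case. Since $\twist(T)$ is unitary, I fix an orthonormal eigenbasis $(\psi_j)_{j=1}^{\dim V}$ of $V$ with eigenvalues $e^{2\pi i \vartheta_j}$, $\vartheta_j \in [0,1)$, and let $P$ denote the orthogonal projection onto the span of the $\psi_j$ with $\vartheta_j = 0$, which is precisely $\ker(\twist(T) - \id_V)$. In horocyclic coordinates $z = x + iy$ with fundamental domain $\{x \in [0,1)\}$, the diagonal component $R_{C_\infty,\twist}(s;z,z_0)^j \coloneqq \ang{R_{C_\infty,\twist}(s;z,z_0)\psi_j, \psi_j}$ admits, in analogy with Section~\ref{sec:fourier_cylinders}, a Fourier-type expansion
\begin{align*}
R_{C_\infty,\twist}(s; z, z_0)^j = \sum_{k \in \Z} a_{k,j}(s; y, y_0)\, e^{2\pi i(k+\vartheta_j)(x-x_0)}.
\end{align*}
Inserting this into the defining equation $(\Delta_\h - s(1-s))R = \id_V\, \delta$ and matching Fourier modes reduces, for each frequency $\kappa = k+\vartheta_j$, to the ODE
\begin{align*}
\bigl( -y^2 \partial_y^2 + 4\pi^2\kappa^2 y^2 - s(1-s) \bigr) a_\kappa(s;y,y_0) = y_0^2\, \delta(y - y_0).
\end{align*}

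For $\kappa \neq 0$ the two homogeneous solutions are $\sqrt{y}\, I_{s-1/2}(2\pi|\kappa|y)$ (bounded near $y=0$) and $\sqrt{y}\, K_{s-1/2}(2\pi|\kappa|y)$ (decaying near $y=\infty$); both are entire in $s$ and have constant non-vanishing Wronskian, so the standard Green's function construction gives
\begin{align*}
a_\kappa(s;y,y_0) = \sqrt{y y_0}\, I_{s-1/2}\bigl(2\pi|\kappa|y_<\bigr)\, K_{s-1/2}\bigl(2\pi|\kappa|y_>\bigr),
\end{align*}
which is entire in $s$ and, via $K_{s-1/2}(z) \sim \sqrt{\pi/(2z)}\,e^{-z}$ as $z \to \infty$, is rapidly decreasing in $y_>$. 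The mode $\kappa = 0$ occurs only for the indices $j$ with $\vartheta_j = 0$; there the homogeneous solutions are the power functions $y^s$ and $y^{1-s}$, yielding
\begin{align*}
a_0(s;y,y_0) = \frac{y_<^s\, y_>^{1-s}}{2s-1},
\end{align*}
whose sole singularity is a simple pole at $s = 1/2$. Assembling the Fourier series, the resolvent kernel is therefore meromorphic on $\C$ with a single simple pole at $s = 1/2$, and its residue is proportional to $\sqrt{y y_0}\, P$. As an operator, this residue has rank $\dim \ker(\twist(T) - \id_V)$, which by the unitarity of $\twist(T)$ equals the algebraic multiplicity of the eigenvalue~$1$.

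For the mapping property, I fix $f \in \CcI(C_\infty, \bundle)$ and examine $\psi R_{C_\infty,\twist}(s)f$ as $\rho_c = 1/y \to 0$. Because $f$ has compact support and $\psi$ is supported away from $\{y = 0\}$, no growth from $y \to 0$ enters. Near the cusp, each $\kappa \neq 0$ mode contributes an $O(\rho_c^\infty)$ correction thanks to the exponential decay of $K_{s-1/2}(2\pi|\kappa|y)$, while the $\kappa = 0$ modes collectively produce a term of the form $\rho_c^{s-1}\, P\, u_0(\rho_c)$ with $u_0$ smooth up to $\rho_c = 0$. This matches the cusp asymptotics built into the definition of $\CI(\overline{C_\infty}, \bundle)$, and the continuity of $\psi R_{C_\infty,\twist}(s)$ into $\rho_c^{s-1}\CI(\overline{C_\infty}, \bundle)$ follows from the absolute convergence (including $\phi$-derivatives) of the Fourier series on compacts of $\C\setminus\{1/2\}$. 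The main technical obstacle is making this term-by-term differentiation and summation rigorous up to the compactified cusp: one needs uniform bounds on $I_{s-1/2}$ and $K_{s-1/2}$ in both the index and the argument, combined with rapid decay in $k$ of the Fourier coefficients of $f$, in order to reduce the smoothness verification to the scalar case already handled in the untwisted literature.
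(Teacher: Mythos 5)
Your proposal is correct and follows essentially the same route as the paper: the separation into Fourier modes with twisted frequencies $k+\vartheta_j$, the explicit Green's functions $\sqrt{yy'}\,I_{s-1/2}K_{s-1/2}$ for $\kappa\neq 0$ and $(2s-1)^{-1}y_<^s y_>^{1-s}$ for $\kappa=0$, and the identification of the unique pole at $s=1/2$ with rank equal to $\dim\ker(\twist(T)-\id_V)$ are exactly the content of Lemma~\ref{lem:fourier-cusp} and the paper's proof of Proposition~\ref{prop:fourier-cusp}. The mapping property is likewise argued identically, via the exponential decay of the nonzero modes and the explicit $\rho_c^{s-1}$ behavior of the zero mode.
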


We denote by $R_{C_\infty,\twist}(s; z, z')$ the integral kernel of $R_{C_\infty,\twist}(s)$. We choose an eigenbasis $(\psi_j)_{j=1}^n$ of $\twist(T)$ with eigenvalues $\lambda_j = e^{2\pi i \vartheta_j}$. We may take $\vartheta_j \in [0,1)$, see also Remark~\ref{rem:vartheta_change} below. Set
\begin{align}\label{RCinftychiszz}
R_{C_\infty,\twist}(s; z, z')^j \coloneqq \ang{R_{C_\infty,\twist}(s; z, z') \psi_j, \psi_j}\,.
\end{align}
The resolvent kernel solves the equation
\begin{equation*}
\left(-y^2(\pa_y^2 + \pa_x^2) - s(1-s)\right) R_{C_\infty,\twist}(s;x,y,x',y')
= y^2 \id_V \delta(x-x') \delta(y-y')\,,
\end{equation*}
since $-y^2(\pa_y^2 + \pa_x^2)$ is the Laplacian of the half-plane
model~$\h$ for the hyperbolic plane. We set
\begin{align}\label{eq:ukappa_parab}
u_\kappa(s;y,y') \coloneqq \begin{cases}
\sqrt{yy'} I_{s-1/2}(\abs{\kappa}y) K_{s-1/2}(\abs{\kappa}y') & \text{for $y\leq 
y'$}\,,
\\
\sqrt{yy'} K_{s-1/2}(\abs{\kappa}y) I_{s-1/2}(\abs{\kappa}y') & \text{for $y > y'$}
\end{cases}
\end{align}
for $\kappa \in \R \setminus\{0\}$ and
\begin{align}\label{eq:uzero_parab}
u_0(s;y,y') \coloneqq \frac{1}{2s-1} 
\begin{cases}
y^s (y')^{1-s} & \text{for $y \leq y'$}\,,
\\
y^{1-s} (y')^{s} & \text{for $y > y'$}\,.
\end{cases}
\end{align}

\begin{remark}
Let $s \not = 1/2$ and $y,y'>0$. From the asymptotic expansion of the modified Bessel functions~$I_{s-1/2}(x)$ and~$K_{s-1/2}(x)$ as $x \to 0+$ (see \cite[p.77(2), p.78(6-7)]{Watson44}), we obtain
\begin{align*}
u_\kappa(s;y,y') \to u_0(s;y,y')\,, \quad \kappa \to 0\,.
\end{align*}
\end{remark}

\begin{lemma}\label{lem:fourier-cusp}
The integral kernel of the resolvent for the parabolic cylinder $R_{C_\infty,\twist}(s)$ admits a Fourier decomposition.
The Fourier decomposition of the non-vanishing matrix coefficients are given by, for any $j\in\{1,\ldots, n\}$,
\begin{equation}\label{eq:fourier-exp-par}
R_{C_\infty,\twist}(s; z, z')^j = \sum_{k \in \Z} e^{2 \pi i (k +  
\vartheta_j) x} b_k^j(s;y, y') e^{-2\pi i (k + \vartheta_j) x'}\,,
\end{equation}
where
\begin{align*}
b_k^j(s;y,y') = u_{2\pi(k + \vartheta_j)}(s;y,y')\,.
\end{align*}
\end{lemma}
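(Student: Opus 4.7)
The plan is to adapt the strategy used for the hyperbolic cylinder in Lemma~\ref{lem:fourier-cylinder} to the parabolic setting. Starting from the summation formula \eqref{eq:resolvent_parcyl} together with $\twist(T^n)\psi_j = e^{2\pi i n\vartheta_j}\psi_j$ and the horizontal translation-invariance of $R_\h$, I obtain that the matrix coefficient $R_{C_\infty,\twist}(s;z,z')^j$ depends on $(z,z')$ only through $y$, $y'$, and $\xi\coloneqq x-x'$, and satisfies the twisted periodicity $F(\xi+1)=e^{2\pi i\vartheta_j}F(\xi)$ with $F(\xi)\coloneqq R_{C_\infty,\twist}(s;z,z')^j$. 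Consequently $e^{-2\pi i\vartheta_j\xi}F(\xi)$ is $1$-periodic and admits an ordinary Fourier series in $\xi$; re-inserting the factor $e^{2\pi i\vartheta_j\xi}$ and splitting $e^{2\pi i(k+\vartheta_j)\xi}=e^{2\pi i(k+\vartheta_j)x}\,e^{-2\pi i(k+\vartheta_j)x'}$ produces the claimed decomposition \eqref{eq:fourier-exp-par}, with coefficients $b_k^j(s;y,y')$ still to be identified.

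To identify these coefficients I would substitute the Fourier ansatz into the resolvent PDE on $\h\times\h$. On the right-hand side, the ``twisted delta'' obeys the Poisson-type identity
\begin{equation*}
\sum_{n\in\Z} e^{2\pi i n\vartheta_j}\,\delta(x-x'-n)=\sum_{k\in\Z} e^{2\pi i(k+\vartheta_j)(x-x')},
\end{equation*}
so comparing Fourier modes (using that $-y^2\pa_x^2$ acts on $e^{2\pi i(k+\vartheta_j)x}$ as multiplication by $4\pi^2(k+\vartheta_j)^2 y^2$) yields the mode-by-mode ODE
\begin{equation*}
\Bigl[-\pa_y^2+4\pi^2(k+\vartheta_j)^2-\tfrac{s(1-s)}{y^2}\Bigr]b_k^j(s;y,y')=\delta(y-y').
\end{equation*}
For $\kappa\coloneqq 2\pi(k+\vartheta_j)\neq 0$, the substitution $b=\sqrt{y}\,w(|\kappa|y)$ reduces the homogeneous equation to the modified Bessel equation of order $\nu=s-\tfrac12$, with fundamental pair $\sqrt{y}\,I_{s-1/2}(|\kappa|y)$ (regular at $y=0$) and $\sqrt{y}\,K_{s-1/2}(|\kappa|y)$ (decaying at infinity). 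The classical Wronskian identity $I_\nu(x)K'_\nu(x)-I'_\nu(x)K_\nu(x)=-x^{-1}$ then gives a pair Wronskian equal to $-1$, so that the Green's function matching these boundary conditions is exactly $u_\kappa(s;y,y')$ of \eqref{eq:ukappa_parab}. The degenerate case $\kappa=0$ occurs precisely when $k=0$ and $\vartheta_j=0$ (i.e.\ in the trivial-monodromy subspace); then the equation reduces to $(-\pa_y^2-s(1-s)/y^2)\,b_0^j=\delta(y-y')$ with homogeneous solutions $y^s,y^{1-s}$ and Wronskian $1-2s$, recovering $u_0(s;y,y')$ of \eqref{eq:uzero_parab}.

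The main obstacle is to pin down which Green's function of the ODE actually equals $b_k^j$, since a priori the ODE admits a two-parameter family of fundamental solutions. I would resolve this by appealing to Proposition~\ref{prop:resolv-cont-cusp}: in the half-plane $\Re s > d_B/2$ where $R_{C_\infty,\twist}(s)$ is a bona fide $L^2$-resolvent, its Schwartz kernel lifts to a locally $L^2$ distribution on $\h\times\h$, which forces each $b_k^j(s;\cdot,y')$ to be integrable at $y=0^+$ (ruling out the $K$-solution near zero) and to remain controlled as $y\to\infty$ (ruling out the $I$-solution at infinity). These two conditions single out the Green's function $u_\kappa(s;y,y')$ built in the previous paragraph, and by the meromorphic continuation also established in Proposition~\ref{prop:resolv-cont-cusp} the identification extends to all admissible $s\in\C$, completing the proof of Lemma~\ref{lem:fourier-cusp}.
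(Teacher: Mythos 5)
Your proof is correct and follows essentially the same route as the paper: justify the twisted Fourier expansion, compare Fourier modes of the resolvent equation to get the ODE $\bigl(-\pa_y^2+4\pi^2(k+\vartheta_j)^2-s(1-s)y^{-2}\bigr)b_k^j=\delta(y-y')$, and identify $b_k^j$ with the Green's function $u_{2\pi(k+\vartheta_j)}$; you merely spell out the Bessel/Wronskian computation and the boundary-condition selection that the paper delegates to Borthwick. One minor caveat: the selection of the correct fundamental pair should be justified by the global $L^2\to L^2$ mapping property of $R_{C_\infty,\twist}(s)$ for $\Re s$ large (local $L^2$-integrability of the kernel on $\h\times\h$ alone does not constrain the behavior as $y\to0$ or $y\to\infty$), which is clearly the condition you intend.
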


\begin{proof}
It is straightforward to see that the coefficients $b_k^j$ have to satisfy 
the equation
\begin{align*}
\left(-y^2 \pa_y^2 + y^2(2\pi (k+\vartheta_j))^2 - s(1-s)\right) 
b_k^j(y,y') &= y^2 \delta(y-y')\,.
\end{align*}
From \cite[p.~96]{Borthwick_book} we see that $b_k^j$ is given by
\begin{align*}
b_k^j(s,y,y') = u_{2\pi(k+\vartheta_j)}(s,y,y')\,.
\end{align*}
\end{proof}

\begin{remark}\label{rem:vartheta_change}
We note that replacing $\vartheta_j \mapsto \vartheta_j + 1$ is absorbed in 
the Fourier expansion by shifting the index.
\end{remark}

\begin{proof}[Proof of Proposition~\ref{prop:fourier-cusp}]
For the mapping properties of $\psi R_{C_\infty,\twist}(s)$, we note that the boundary defining function is given by $\rho_c(y) = 1/y$. For $\rho_c' =  1/y'$ supported in a compact set away from $0$, the zero Fourier mode is given by \[u_0(s;y,y') = \frac{\rho_c^{s-1} (\rho_c')^{-s}}{2s-1}\] for $\rho_c > 0$ small enough. Moreover, the non-zero Fourier modes are rapidly decaying as $\rho_c \to 0$. Hence for $u \in \CcI(C_\infty, \bundle)$,
\begin{align*}
\psi R_{C_\infty,\twist}(s) u \in \frac{\rho_c^{s-1}}{2s-1} \cdot 
\CI(\overline{C_\infty}, \bundle)\,.
\end{align*}
For the resonances, we observe that $u_\kappa(\cdot, y,y')$ is entire for $\kappa \not = 0$. Further, we have that $\kappa = 2\pi (\vartheta_j + k) = 0$ if and only if $k = \vartheta_j = 0$ by using that $\vartheta_j \in [0,1)$.
\end{proof}

\subsection{Estimating the Resolvent}
Let $X \in \{F_\ell, C_\ell, C_\infty\}$, where $\ell \in (0,\infty)$.
Set $g = h_\ell$ if $X \in \{F_\ell, C_\ell\}$ and $g = T$ if $X = C_\infty$
and let $\twist\colon \group = \ang{g} \to \Unit(V)$ be a unitary finite-dimensional representation. We choose $\psi_0,\psi_1 \in \CI(\overline{X})$ such that $\psi_0,\psi_1$ extend smoothly up to the boundary $\{r = 0\}$ (if $X = F_\ell$)
and are supported away from the boundary at infinity. Moreover we choose $\tilde\psi_0 \in \CI(\overline{X})$ with $\supp \nabla_X \tilde\psi_0 \subset \{\psi_0 = 1\}$ and $\Omega \subset \C$.

\begin{prop}\label{prop:bound-resolvent-determinant}
If there exists $C_1 = C_1(\Omega)$ and $\tau \in \R$ such that 
\begin{align*}
\norm{\psi_0 R_{X,\twist}(s) \psi_1} \le C_1 \ang{s}^\tau\,,\quad s \in \Omega\,,
\end{align*}
then there exist $C_2 = C_2(\Omega) > 0$ such that
\begin{align*}
\det\left( \I + c \abs{K_{\tilde\psi_0,\psi_1}(s)} \right) \lesssim e^{C_2\, \ang{s}^{(5+\tau)/2}}\,,\quad s \in \Omega\,,
\end{align*}
where $K_{\tilde\psi_0,\psi_1}(s) \coloneqq [\LapTwist, \tilde\psi_0] R_{X,\twist}(s) \psi_1$.
\end{prop}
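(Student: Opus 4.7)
The plan is to follow the Weyl-type determinant estimate of Guillop\'e--Zworski. I would first bound the singular values of $K_{\tilde\psi_0,\psi_1}(s)$ in two complementary ways---by the operator norm (polynomial in $\langle s\rangle$) and, via an elliptic bootstrap, with polynomial decay in the index $j$---and then insert these bounds into the Weyl product inequality
\[
\det(\I+c\abs{A})\le\prod_{j=1}^\infty(1+c\mu_j(A))\,.
\]

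The first step is to exploit the hypothesis $\supp\nabla_X\tilde\psi_0\subset\{\psi_0=1\}$: this ensures that the first-order differential operator $[\LapTwist,\tilde\psi_0]$ has coefficients supported in $\{\psi_0=1\}$, so that
\[
K_{\tilde\psi_0,\psi_1}(s)=[\LapTwist,\tilde\psi_0]\,\psi_0 R_{X,\twist}(s)\,\psi_1\,.
\]
In particular the Schwartz kernel of $K_{\tilde\psi_0,\psi_1}(s)$ is supported in a fixed compact subset of $X\times X$. Using $\LapTwist R_{X,\twist}(s)=\I+s(1-s)R_{X,\twist}(s)$ together with a telescoping family of cutoffs and iterated commutator estimates, I would bootstrap the hypothesis $\norm{\psi_0 R_{X,\twist}(s)\psi_1}\lesssim\langle s\rangle^\tau$ to Sobolev bounds of the form
\[
\norm{\psi_0 R_{X,\twist}(s)\psi_1}_{L^2\to H^{2N}_\cpt}\lesssim \langle s\rangle^{\tau+2N}\,,\qquad N\in\N_0\,,
\]
and hence $\norm{K_{\tilde\psi_0,\psi_1}(s)}_{L^2\to H^{2N-1}_\cpt}\lesssim \langle s\rangle^{\tau+2N}$, using that $[\LapTwist,\tilde\psi_0]$ is a compactly-supported first-order operator that is uniformly bounded in $s$.

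Next, since the kernel of $K_{\tilde\psi_0,\psi_1}(s)$ is compactly supported in a two-dimensional region, Weyl's law in that region translates the Sobolev bound into the singular value estimates
\[
\mu_j(K_{\tilde\psi_0,\psi_1}(s))\le C_N\,\langle s\rangle^{\tau+2N}\,j^{-(2N-1)/2}\qquad (j,N\in\N)\,,
\]
which hold in parallel with the uniform bound $\mu_j(K_{\tilde\psi_0,\psi_1}(s))\le\norm{K_{\tilde\psi_0,\psi_1}(s)}\lesssim\langle s\rangle^{\tau+1}$. Finally, taking logarithms in Weyl's product inequality and splitting the sum over $j$ at the crossover index $j_0(s)\sim\langle s\rangle^{2}$ at which the two bounds balance, and optimizing over $N$, one arrives at $\log\det(\I+c\abs{K_{\tilde\psi_0,\psi_1}(s)})\lesssim\langle s\rangle^{(5+\tau)/2}$.

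The hard part will be the bootstrap step: one must carefully verify that each enlargement of cutoffs and each commutator application contributes only polynomially in $\langle s\rangle$, so that the rate $\langle s\rangle^{\tau+2N}$ is genuinely achieved for every $N$. A secondary but delicate point is the final optimization, where the precise exponent $(5+\tau)/2$ arises from interpolating between the uniform operator-norm bound and the high-regularity singular-value decay rather than from either one used in isolation; this matches the classical Guillop\'e--Zworski exponent in the untwisted setting and is what will later allow the resonance counting function to be controlled by $O(r^2)$.
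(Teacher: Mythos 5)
Your proposal is correct and takes essentially the same route as the paper's proof: one bootstraps the hypothesis to $\norm{\LapTwist^m[\LapTwist,\tilde\psi_0]R_{X,\twist}(s)\psi_1}\lesssim\ang{s}^{\tau+2m+1}$ by iterated commutator/integration-by-parts estimates, converts this into singular-value decay $\mu_k\lesssim k^{-m}\ang{s}^{\tau+2m+1}$ via the Dirichlet Weyl law on a bounded region containing the supports, and feeds the result into the Weyl product inequality for the determinant. The only (cosmetic) difference is at the end: the paper simply fixes $m=2$ and evaluates $\prod_k\bigl(1+C^2\ang{s}^{5+\tau}k^{-2}\bigr)$ with the $\sinh$ infinite-product identity, so the exponent $(5+\tau)/2$ already comes from that single bound, with no need to interpolate against the operator-norm bound or optimize over $N$.
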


In order to prove this proposition, we need the following two lemmas.

\begin{lemma}\label{lem:bound-cylinder-resolvent-sobolev} 
Let $\Omega \subset \C$ open and $\tau \in \R$. Suppose that there exists $C_\Omega > 0$ such that for all $s \in \Omega$
\begin{align*}
\norm*{ \psi_0 \ResTwist(s) \psi_1 } \leq C_\Omega \ang{s}^\tau\,.
\end{align*}
Then for all $m \in \N$ there exists $C_{\Omega,m} > 0$ such that
\begin{align*}
\norm*{\LapTwist^m \psi_0 \ResTwist(s) \psi_1} \leq C_{\Omega,m} \ang{s}^{\tau+2m}
\end{align*}
and
\begin{align*}
\norm*{\LapTwist^m [\LapTwist, \tilde\psi_0] \ResTwist(s) \psi_1} \leq C_{\Omega,m} \ang{s}^{\tau+2m+1}\,.
\end{align*}
\end{lemma}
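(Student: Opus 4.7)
The plan is to prove both bounds by induction on $m$, with the second inequality following from the first via Sobolev interpolation. The base case $m = 0$ of the first bound is exactly the hypothesis.

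For the inductive step of the first inequality, I combine the resolvent equation $\LapTwist \ResTwist(s) = \I + s(1-s) \ResTwist(s)$ with the commutator identity $\LapTwist \psi_0 = \psi_0 \LapTwist + [\LapTwist, \psi_0]$ to obtain
\begin{align*}
\LapTwist^{m+1} \psi_0 \ResTwist(s) \psi_1 = \LapTwist^m \Bigl( s(1-s) \psi_0 \ResTwist(s) \psi_1 + \psi_0 \psi_1 + [\LapTwist, \psi_0] \ResTwist(s) \psi_1 \Bigr).
\end{align*}
The first term has operator norm $\leq \abs{s(1-s)} \cdot C_{\Omega,m} \ang{s}^{\tau + 2m} \lesssim \ang{s}^{\tau + 2m + 2}$ by the induction hypothesis. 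The middle term $\LapTwist^m (\psi_0 \psi_1)$ is bounded independently of $s$ since $\psi_0 \psi_1$ is smooth and compactly supported in the interior (and it vanishes identically when the supports of $\psi_0$ and $\psi_1$ are disjoint). The commutator term is the main obstacle: $[\LapTwist, \psi_0]$ is a first-order differential operator whose coefficients are smooth and supported in a compact interior subset of $\overline{X}$. I handle it by using the PDE $(\LapTwist - s(1-s)) \ResTwist(s) \psi_1 f = \psi_1 f$ to trade powers of $\LapTwist$ acting on $\ResTwist(s) \psi_1 f$ for powers of $s(1-s)$ modulo terms supported inside $\supp \psi_1$, and then invoking interior elliptic regularity to convert the resulting derivative norms on $\supp \nabla \psi_0$ into $L^2$ norms that are controlled by the hypothesis (applied, if needed, with a slightly enlarged cutoff). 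Each application of $\LapTwist$ thereby contributes a factor of $\ang{s}^2$, yielding the bound $\ang{s}^{\tau + 2m + 2}$ at level $m+1$.

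For the second inequality, I combine the first bound with the elliptic equivalence $\norm{v}_{H^{2k}} \asymp \norm{\LapTwist^k v} + \norm{v}$, valid for $v$ supported in a fixed compact interior subset of $\overline{X}$. Since $\psi_0 \ResTwist(s) \psi_1 f$ has support in $\supp \psi_0$, the first inequality with $m$ and $m+1$ implies that the operator $\psi_0 \ResTwist(s) \psi_1 \colon L^2(X,\bundle) \to H^{2m+2}(X,\bundle)$ has norm $O\bigl(\ang{s}^{\tau + 2m + 2}\bigr)$. Sobolev interpolation between $H^{2m}$ and $H^{2m+2}$ then yields the $H^{2m+1}$ bound $O\bigl(\ang{s}^{\tau + 2m + 1}\bigr)$. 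The condition $\supp \nabla_X \tilde\psi_0 \subset \{\psi_0 = 1\}$ means that $\psi_0 \equiv 1$ on an open neighborhood of $\supp \nabla_X \tilde\psi_0$, whence
\begin{align*}
[\LapTwist, \tilde\psi_0] \ResTwist(s) \psi_1 = [\LapTwist, \tilde\psi_0] \bigl(\psi_0 \ResTwist(s) \psi_1\bigr).
\end{align*}
Because $[\LapTwist, \tilde\psi_0]$ is a first-order differential operator with smooth compactly supported coefficients, the composition $\LapTwist^m [\LapTwist, \tilde\psi_0]$ maps $H^{2m+1}$-bounded compactly supported functions continuously into $L^2$, and hence
\begin{align*}
\bigl\| \LapTwist^m [\LapTwist, \tilde\psi_0] \psi_0 \ResTwist(s) \psi_1 \bigr\| \lesssim \norm{\psi_0 \ResTwist(s) \psi_1}_{L^2 \to H^{2m+1}} \lesssim \ang{s}^{\tau + 2m + 1}.
\end{align*}

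The hard part is the commutator analysis in the inductive step of the first bound: the hypothesis only controls the $L^2 \to L^2$ norm of $\psi_0 \ResTwist(s) \psi_1$, and first derivatives of the resolvent kernel on $\supp \nabla \psi_0$ are not directly accessible from it. The delicate point is combining interior elliptic regularity with (implicitly) a nested family of cutoff functions within the class covered by the hypothesis, so that derivative norms can be converted to $L^2$ norms that are dominated by $\ang{s}^\tau$ up to polynomial factors in $\abs{s(1-s)}$.
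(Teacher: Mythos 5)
Your proposal is correct and follows essentially the same route as the paper, namely the Guillop\'e--Zworski induction scheme (the paper proves the $m=0$ commutator estimate by an explicit integration by parts and then cites \cite[Lemma~3.2]{GuZw95} for exactly the inductive step you write out): the resolvent identity drives the induction, and the commutator term is controlled by the fact that $(\LapTwist - s(1-s))\ResTwist(s)\psi_1 f$ vanishes off $\supp\psi_1$ combined with an interior energy/elliptic estimate, with the extra half-order gain obtained by Sobolev interpolation in your version and by Cauchy--Schwarz within the integration by parts in the paper's. The caveat you flag --- that the hypothesis must be available for slightly enlarged cutoffs, since $\norm{f\,\nabla\psi_0}_{L^2}$ is not dominated by $\norm{\psi_0 f}_{L^2}$ alone --- is equally (and silently) needed in the paper's own integration-by-parts step, so it is a shared imprecision rather than a gap specific to your argument.
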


\begin{proof}
We start with proving the lemma for $m=0$. Let $u \in L^2(X,\bundle)$, we have to show that
\begin{align*}
\norm*{[\LapTwist, \tilde\psi_0] \ResTwist(s) \psi_1 u}_{L^2} \leq C_{\Omega,m} \ang{s}^{\tau+2m+1} \norm{u}_{L^2}\,.
\end{align*}
We choose an orthonormal basis $\{e_j\}_{j=1}^{\dim(V)}$ of $V$ that diagonalizes $\twist(g) \in \Unit(V)$. Denote by $v \in L^2_\loc(\h,V)$ the lift of $u$ to $\h$. We write
\begin{align*}
v = \sum_{j=1}^{\dim(V)} v_j e_j\,.
\end{align*}
From \eqref{eq:resolvent-generic_cylinder} (and \eqref{eq:images} in the case $X = F_\ell$), we see that $\ResTwist(s)$ acts as
\begin{align*}
\ResTwist(s) v = \sum_{j=1}^{\dim(V)} e_j R_j(s) v_j \,,
\end{align*}
where $R_j(s) = \ang{ R_{X,\twist}(s) e_j, e_j}$ and $\ang{\cdot,\cdot}$ is the inner product on $V$. We have
\begin{align*}
[\Delta_{X,\twist}, \tilde\psi_0] R_{X,\twist}(s) \psi_1 e_j =
e_j (\Delta_{X} \tilde\psi_0) R_j(s) \psi_1 + e_j \cdot g_\h(\nabla_\h \tilde\psi_0, \nabla_\h R_j(s) \psi_1)\,,
\end{align*}
where $g_\h$ is the hyperbolic metric on $\h$ and $\nabla_\h$ denotes the corresponding gradient.

The first summand is bounded by hypothesis. For the second summand, we set $f_j \coloneqq R_j(s) v_j$. We can estimate the $L^2$-norm by
\begin{align*}
\norm*{ \ang{\nabla_X \tilde\psi_0, \nabla_{X,\twist} R_{X,\twist}(s) \psi_1 u} }_{L^2(X,\bundle)}^2
&\leq \sum_j \int_\funddom \abs*{ g_\h(\nabla_\h \tilde\psi_0, \nabla_\h f_j(s)) }^2 \,d\mu_\h 
\\
&\leq C \sum_j \int_\funddom \abs*{ \psi_0 \nabla_\h f_j(s)}^2 \,d\mu_\h \,,
\end{align*}
where $\funddom$ is a fundamental domain of $X$. Since $\twist(g)$ is unitary, we can integrate by parts to obtain
\begin{align*}
\int \abs*{ \psi_0 \nabla_\h f_j(s)}^2 \,d\mu_\h &= - \int \psi_0^2 f_j(s) \Delta_\h f_j(s)\,d\mu_\h 
\\
&\hphantom{=  - \int} - 2 \int g_\h( \nabla_\h \psi_0, \nabla_\h f_j(s) ) \psi_0 f_j(s) \,d\mu_\h\,.
\end{align*}
We note that $f_j(s) = R_j(s) u_j$ solves the equation $(\Delta_X - s(1-s))f_j(s) = u_j$ and, since the supports of $\psi_0$ and $\psi_1$
are disjoint, we have $f_j \psi_0 = 0$. Therefore the first term is bounded by
\begin{align*}
\abs*{ \int \psi_0^2 f_j(s) \Delta_\h f_j(s)\,d\mu_\h } \leq \ang{s}^2 \, \norm{u}^2_{L^2(X,\bundle)}\,.
\end{align*}
The second term can be estimated by
\begin{align*}
\left| \int \ang{\nabla_\h \psi_0, \nabla_X f_j(s)}\right.&\left.\!\!\psi_0 f_j(s) \,d\mu_\h \vphantom{\int}\right|
\\
&\leq \int \abs{ \nabla_\h \psi_0 } \abs{ \nabla_\h f_j(s) } \abs{\psi_0} \abs{f_j(s)} \,d\mu_\h 
\\
&\leq \frac{1}{2\eps} \int \abs{ f_j(s) \nabla_\h \psi_0 }^2 \,d\mu_\h + \frac{\eps}{2} \int \abs{ \psi_0 \nabla_X f_j(s) }^2 \,d\mu_\h
\end{align*}
for any $\eps > 0$, using Young's inequality. Taking $\eps = 2$ we conclude that
\begin{align*}
\int \abs*{ \psi_0 \nabla_\h f_j(s)}^2 \,d\mu_\h \leq C \int \abs{ f_j(s) \nabla_\h \psi_0 }^2 \,d\mu_\h\,.
\end{align*}
Combining the two estimates proves the case $m=0$. Now we can use the same induction argument as in~\cite[Lemma~3.2]{GuZw95}.
\end{proof}

As in~\cite[p. 189]{Borthwick_book}, it suffices to bound the $L^2$-norm of the cutoff resolvent to obtain bounds on the singular values.

\begin{lemma}\label{lem:L2-to-trace}
We let $Z\subset X$  be an open bounded subset of $X$ with smooth boundary. Then for any bounded operator 
\[
B \colon  L^2_\cpt(Z,\bundle|_Z) \to L^2_\cpt(Z,\bundle|_Z)
\]
there exists $C = C(Z, \twist)$ such that for any $k, m \in \N$ we have
\begin{align*}
\mu_k(B) \leq (C k)^{-m} \, \norm*{\Delta_{X,\twist}^m B}\,.
\end{align*}
\end{lemma}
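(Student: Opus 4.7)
My plan is to factor $B$ through the inverse of a Dirichlet Laplacian on a slightly enlarged compact domain and then invoke Weyl's law to control the singular-value decay.

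First, I would fix a relatively compact open set $Z'\subset X$ with smooth boundary such that $\overline{Z}\subset Z'$ and $\operatorname{dist}(Z,\pa Z')>0$. Let $\Delta_{Z',\twist}$ denote the Dirichlet realization of the twisted Laplacian on $L^2(Z',\bundle|_{Z'})$; it is positive, self-adjoint, with compact resolvent, so $\Delta_{Z',\twist}^{-m}$ is a well-defined bounded operator. By the Weyl law for second-order elliptic self-adjoint operators acting on sections of vector (orbi)bundles over compact $2$-dimensional orbifolds with smooth boundary, its eigenvalues $\lambda_j$ satisfy $\lambda_j\geq c\,j$ with some $c=c(Z',\twist)>0$, hence $\mu_k(\Delta_{Z',\twist}^{-m})=\lambda_k^{-m}\leq (ck)^{-m}$.

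Second, I may assume that $\LapTwist^m B$ is bounded, as otherwise the stated inequality is trivially satisfied. For $u\in L^2_\cpt(Z,\bundle|_Z)$, the section $Bu$ is supported in $\overline{Z}$ and therefore vanishes in a neighborhood of $\pa Z'$. Combined with $\LapTwist^m Bu\in L^2$ and interior elliptic regularity, this places $Bu$ into $\mathrm{Dom}(\Delta_{Z',\twist}^m)$ (all Dirichlet-type boundary conditions are satisfied trivially because $Bu$ is identically zero near $\pa Z'$) and identifies $\Delta_{Z',\twist}^m Bu=\LapTwist^m Bu$, since the two Laplacians agree in the interior of $Z'$. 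Consequently,
\[
B\;=\;\Delta_{Z',\twist}^{-m}\,\LapTwist^m B
\]
as operators from $L^2_\cpt(Z,\bundle|_Z)$ to $L^2(Z',\bundle|_{Z'})$.

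Third, the standard singular-value inequality $\mu_k(AT)\leq \mu_k(A)\,\norm{T}$ (valid for compact $A$ and bounded $T$) gives
\[
\mu_k(B)\;\leq\;\mu_k\!\bigl(\Delta_{Z',\twist}^{-m}\bigr)\,\norm{\LapTwist^m B}\;\leq\;(ck)^{-m}\,\norm{\LapTwist^m B},
\]
proving the lemma with $C=c$. The only subtle step is the identification $\Delta_{Z',\twist}^m Bu=\LapTwist^m Bu$ together with the membership in the Dirichlet-power domain; this is precisely the reason for choosing $Z'$ strictly larger than $Z$ with positive separation between $Z$ and $\pa Z'$. Orbifold singularities located inside $Z'$ pose no additional difficulty, as Weyl's law carries over to compact $2$-orbifolds with smooth boundary with the same linear growth $\lambda_j\asymp j$.
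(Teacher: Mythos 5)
Your proof is correct and follows essentially the same route as the paper: factor $B$ through the inverse of a Dirichlet Laplacian, bound the singular values of $\Delta^{-m}$ via the two-dimensional Weyl law, and conclude with $\mu_k(AT)\leq\mu_k(A)\,\norm{T}$. The only cosmetic difference is that you enlarge the domain to $Z'$ before imposing Dirichlet conditions, whereas the paper works with $Z$ itself (which already suffices since the image of $B$ consists of sections compactly supported in the open set $Z$).
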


\begin{proof}
Denote by $\Delta_{Z,\twist}$ the Laplace operator  on $\bundle|_{Z}$ with Dirichlet boundary conditions. Since $B$ acts on functions that are compactly supported inside~$Z$, we have that for any $m \in \N$, $\Delta^m_{Z,\twist} B = \Delta_{X,\twist}^m B$. The operator $\Delta_{Z,\twist}$ is invertible, that allows us to write
\begin{align*}
B = \Delta_{Z,\twist}^{-m} \Delta_{Z,\twist}^{m} B\,.
\end{align*}
Since we can diagonalize the representation and therefore the Laplacian, we have a Weyl law for the operator $\Delta_{Z,\twist}$. Hence, we obtain the singular value bound $\mu_k(\Delta_{Z,\twist}) \leq C k$ for some $C > 0$ depending on $Z$ and $\twist$. The claim follows from the estimate
\begin{align*}
\mu_k(B) \leq \mu_k(\Delta_{Z,\twist}^{-m}) \, \norm*{\Delta_{Z,\twist}^{m} B}\,.
\end{align*}
\end{proof}

We are now prepared to prove Proposition~\ref{prop:bound-resolvent-determinant}.

\begin{proof}[Proof of Proposition~\ref{prop:bound-resolvent-determinant}]
Let $m \in \N$. Lemmas~\ref{lem:L2-to-trace} and \ref{lem:bound-cylinder-resolvent-sobolev} imply that for all $k \in \N$,
\begin{align*}
\mu_k(K_{\tilde\psi_0,\psi_1}(s)) & \le (C k)^{-m} \| \Delta_{Z,\twist}^{m} K_{\tilde\psi_0,\psi_1}(s) \|
\\
& \le (C k)^{-m} C_{\Omega, m} \ang{s}^{2m+\tau + 1}\,.
\end{align*}
Substituting $m=2$, we have by \cite[p. 375]{GuZw95} that 
\begin{align*}
\abs*{\det( \I + c\abs{K_{\tilde\psi_0,\psi_1}(s)} ) }
& \leq \prod_{k=1}^{\infty} \left(1 + c \mu_k\left(K_{\tilde\psi_0, \psi_2}(s)\right)\right)
\\
& \leq \prod_{k=1}^\infty \left(1 + \frac{C_3^2 \ang{s}^{5+\tau}}{k^2}\right)\,,
\end{align*}
where $C_2 \coloneqq \sqrt{ C^{-2} C_{\Omega, 2} c}$. We recall from~\cite[1.431(2)]{Gradshteyn} that, for all $z \in \C$, the hyperbolic sine has the infinite product representation
\begin{align*}
\sinh(z) = z \prod_{k=1}^\infty \left( 1 + \frac{z^2}{(\pi k)^2} \right)\,.
\end{align*}
This implies that
\[
\sinh(C_2\ang{s}^{(5+\tau)/2}) = C_2\ang{s}^{(5+\tau)/2} \prod_{k=1}^\infty \left( 1 + \frac{C_2^2 \ang{s}^{5+\tau}}{(\pi k)^2} \right)\,.
\]
Hence, we obtain the estimate
\begin{align*}
\abs*{ \det( \I + c\abs{K_{\tilde\psi_0,\psi_1}(s)} ) }
&\leq  \abs*{  \frac{\sinh(C_2\ang{s}^{(5+\tau)/2})}{C_2 \ang{s}^{(5+\tau)/2}} } \\
&\leq C_3 e^{C_2\ang{s}^{(5+\tau)/2}}
\end{align*}
for some $C_3 = C_3(\Omega) > 0$.
\end{proof}

\section{Meromorphic Continuation of the Twisted Resolvent}\label{sec:resolvent}
The goal of this section is to prove Theorem~\ref{thm:resolv_merom}, the meromorphic continuation of the resolvent $\ResTwist(s)$.

\begin{prop}\label{prop:resolvent-estimate}
The resolvent
\[
\ResTwist(s) = (\LapTwist - s(1-s))^{-1}\colon L^2(X, \bundle) \to L^2(X,\bundle)
\]
is well-defined and holomorphic in $\Re s > 1/2$ and $s \not \in [1/2, 1]$. We have the estimate
\begin{align}\label{est:resolvent-twist}
\norm{\ResTwist(s)}_{L^2 \to L^2} \leq \frac{2}{2\Re s - 1}
\end{align}
for $\Re s > 1/2$ and $\abs{\Im s} > 1/4$ as well as for $\Re s > 2$ (and no restriction on~$\Ima s$). Moreover, if $\LapTwist$ has no spectrum in the interval $[0,1/4)$ then for any $\eps > 0$ there exists $C > 0$ such that
\begin{align}\label{est:resolvent-twist2}
\norm{\ResTwist(s)}_{L^2 \to L^2} \leq C \ang{s}^{-1}
\end{align}
for $\Re s > 1/2 + \eps$.
\end{prop}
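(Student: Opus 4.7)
The proposition contains three assertions: well-definedness and holomorphy of $\ResTwist(s)$ on $\{\Re s > 1/2\} \setminus [1/2, 1]$, the norm bound~\eqref{est:resolvent-twist}, and the sharper bound~\eqref{est:resolvent-twist2} under a spectral-gap hypothesis. All three rest on the fact, established in Section~\ref{sec:def_Laplacian}, that $\LapTwist$ is positive and essentially self-adjoint on $L^2(X,\bundle)$, so its spectrum is contained in $[0,\infty)$, together with the spectral-theorem identity $\|(\LapTwist - \lambda)^{-1}\|_{L^2 \to L^2} = 1/d_\C(\lambda, \mathrm{spec}(\LapTwist))$ that holds on the resolvent set.

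The plan is to first dispose of well-definedness and holomorphy by checking that $s(1-s) \notin [0,\infty)$ throughout the relevant region. Writing $s = \sigma + i\tau$, a short case analysis suffices: $\Im(s(1-s)) = -\tau(2\sigma - 1)$ is non-zero whenever $\tau \neq 0$ and $\sigma > 1/2$, while on the real line the exclusion of $[1/2,1]$ forces $\sigma > 1$ and hence $s(1-s) = \sigma(1-\sigma) < 0$. Thus $s(1-s)$ belongs to the resolvent set of $\LapTwist$, and holomorphy of $s \mapsto \ResTwist(s)$ follows by composing $s \mapsto s(1-s)$ with the holomorphy of the resolvent of a closed operator on its resolvent set.

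For the bound~\eqref{est:resolvent-twist}, I would estimate $d_\C(s(1-s), [0,\infty))$ from below. The general inequality $d_\C(z, [0,\infty)) \geq |\Im z|$, combined with $|\Im(s(1-s))| = |\Im s|(2\Re s - 1)$, handles the range $|\Im s| > 1/4$ directly. For the regime $\Re s > 2$ with $\Im s$ unrestricted, I would split on the sign of $\Re(s(1-s)) = \sigma(1-\sigma) + \tau^2$: when this quantity is non-negative the imaginary-part bound still applies; when it is negative (which forces $\sigma > 1$) one has $d_\C(s(1-s), [0,\infty)) = |s(1-s)| \geq \sigma(\sigma-1) - \tau^2$, and an elementary quadratic comparison using $\sigma > 2$ yields the required lower bound $(2\Re s - 1)/2$.

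The strengthened bound~\eqref{est:resolvent-twist2} is the step requiring a different reduction, but presents no substantial new obstacle. Letting $P \coloneqq \LapTwist - 1/4$ and $w \coloneqq s - 1/2$, the hypothesis ensures $\mathrm{spec}(P) \subset [0,\infty)$, while $\LapTwist - s(1-s) = P + w^2$, so $\|\ResTwist(s)\| \leq 1/d_\C(-w^2, [0,\infty))$. Writing $u \coloneqq \Re w \geq \epsilon$ and $v \coloneqq \Im w$, I would split on $|v| \geq u$ versus $|v| < u$: in the first case $\Re(-w^2) = v^2 - u^2 \geq 0$ so the distance equals $|\Im(-w^2)| = 2u|v|$, which is comparable to $\ang{s}$ up to a factor depending on $\epsilon$; in the second case $\Re(-w^2) < 0$, so the distance equals $|w|^2 = u^2 + v^2$, which is comparable to $\ang{s}^2$ and a fortiori at least a constant multiple of $\ang{s}$ since $u \geq \epsilon$. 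The only real care needed is to track how the constants depend on $\epsilon$ across the two regimes.
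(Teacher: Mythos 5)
Your proposal is correct and follows essentially the same route as the paper: self-adjointness and positivity of $\LapTwist$ plus the spectral theorem reduce everything to lower bounds on the distance from $s(1-s)$ to $[0,\infty)$ (respectively to $[1/4,\infty)$ for \eqref{est:resolvent-twist2}), followed by elementary case analyses that differ from the paper's only in how the regions are split. One small remark: for $1/4<\abs{\Im s}\leq 1/2$ your bound $d_\C(s(1-s),[0,\infty))\geq\abs{\Im s}\,(2\Re s-1)$ only yields the constant $4$ rather than $2$ in \eqref{est:resolvent-twist}, but the paper's own proof has the identical limitation (its first case assumes $\abs{\Im s}>1/2$), and the precise constant is immaterial for the later applications.
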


\begin{proof}
Since the Laplacian is self-adjoint and non-negative, we obtain for $\Re s > 1/2$ and $s \not \in [1/2,1]$ the resolvent bound
\begin{align*}
\norm{\ResTwist(s)}_{L^2 \to L^2} \leq \frac{1}{d_\C(s(1-s), \R_+)}\,.
\end{align*}
We write $s = a + b i$ with $a,b \in \R$. Then
\begin{align*}
s(1-s) = -a(a-1) + b^2 + (1-2a)b i\,.
\end{align*}
We consider two cases.
\paragraph{First case: $b^2 > 1/4$}
We have that
\begin{align*}
d_\C(s(1-s)), \R_+) &\geq \abs{1 - 2a} \abs{b} \geq \frac{1}{2}\abs{1 - 2\Re(s)}\,.
\end{align*}
\paragraph{Second case: $b^2 \geq 1/4$}
For any $\lambda \in \R$, we have the trivial estimate
\begin{align*}
d_\C(s(1-s)), \R_+) &\geq \abs{ a(1-a) + b^2 - \lambda^2}\,.
\end{align*}
For $a > 2$, we have that $a(1-a) < -1/4$ and therefore we obtain
\begin{align*}
\abs{ a(1-a) + b^2 - \lambda^2} &= a(a-1) - b^2 + \lambda^2 
\\
&\geq a(a-1) - \frac{1}{4} 
\\
&\geq \frac{1}{2}\abs{1 - 2\Re(s)}\,.
\end{align*}
This proves the first resolvent estimate.

For the second estimate we consider the two cases $\abs{b} > \abs{2 a - 1}/4$ and $\abs{b} \leq \abs{2 a - 1}/4$. In the first case, we note that
\begin{align*}
\abs{s} \leq \abs{a} + \abs{b} \leq 3 \abs{b} + \frac{1}{2}
\end{align*}
and we have that
\begin{align*}
d_\C(s(1-s)), [1/4,\infty)) &\geq \abs{2 a - 1} \abs{b} \geq \frac{\eps}{3} (2 \abs{s} - 1)\,.
\end{align*}
In the second case, applying the same arguments we obtain that
\begin{align*}
\abs{s} \leq \frac{3}{4} \abs{2a - 1} + \frac{1}{2}
\end{align*}
and for any $\lambda \in \R$ we have
\begin{align*}
d_\C(s(1-s)), [1/4,\infty)) &\geq \abs*{\frac{1}{4} - \frac{1}{4} \abs{2a - 1}^2 + b^2 - \left(\lambda^2 + \frac{1}{4}\right)} 
\\
&= \frac{1}{4} \abs{2 a - 1}^2 - b^2 + \lambda^2 
\\
&\geq \frac{3}{16} \abs{2 a - 1}^2\,.
\end{align*}
Consequently, we have that
\begin{align*}
d_\C(s(1-s), [1/4, \infty)) \geq C^{-1} \ang{s}
\end{align*}
for some $C > 0$ depending on $\eps > 0$.
\end{proof}

To prove Theorem~\ref{thm:resolv_merom}, we will follow the arguments of \cite[Theorem 6.8]{Borthwick_book} and use the explicit structure of the model resolvents from Section~\ref{sec:model}. As before, we have the decomposition $X = K \sqcup X_f \sqcup X_c$, where $K$ is compact and $X_f$ and~$X_c$ are a finite collection of funnels and cusps, respectively. For $\bullet \in \{f,c\}$ and $r \in [0,\infty)$, we choose a cutoff function $\eta_{\bullet,r} \in \CI(X)$ such that
\begin{align}\label{eq:etaend}
\eta_{\bullet,r}(x) = 
\begin{cases} 
1 & \text{if $d(X \setminus X_\bullet,x) < r$}\,,
\\ 
0 & \text{if $d(X\setminus X_\bullet,x) > r+1/2$}\,.
\end{cases}
\end{align}
We remark that $\eta_{f,r}$ vanishes ``far'' in funnels and $\eta_{c,r}$ vanishes ``far'' in cusps, i.e., in the parametrization of funnels from Section~\ref{sec:funnel_ends},  $\eta_{f,r}$ vanishes on $(r_0,\infty)\times \R/2\pi\Z$ for $r_0$ sufficiently large, and analogously for cusps. By Proposition~\ref{prop:model-ends}, for each $X_{f,j}$ ($X_{c,j}$) there exists a hyperbolic (parabolic) element $\gamma_j \in \group$ such that 
\begin{align*}
\bundle|_{X_{\bullet,j}} = (\h \times_{\twist_j} V)|_{X_{\bullet,j}}\,,
\end{align*}
where $\twist_j \coloneqq \twist|_{\ang{\gamma_j}}\colon \ang{\gamma_j} \to \Unit(V)$  and $\bullet \in \{f,c\}$. Let $R_{X_{\bullet,j},\twist_j}(s)$ be the resolvent of the model funnel (cusp) with the representation $\twist_j$ as in Section~\ref{sec:model}. For $s \in \C$ with $\Re s > 1/2$, we set
\begin{equation}\label{eq:res-X_f}
\begin{gathered}
R_{X_f,\twist}(s)\colon L^2(X_f,\bundle) \to L^2(X_f,\bundle)\,,
\\
R_{X_f,\twist}(s) \coloneqq  R_{X_{f,1},\twist_1}(s) \oplus \dotsc \oplus R_{X_{f,{n_f}},\twist_{n_f}}(s)
\end{gathered}
\end{equation}
and 
\begin{equation}\label{eq:res-X_c}
\begin{gathered}
R_{X_c,\twist}(s)\colon L^2(X_c,\bundle) \to L^2(X_c,\bundle)\,,
\\
R_{X_c,\twist}(s) \coloneqq R_{X_{c,1},\twist_1}(s) \oplus \dotsc \oplus R_{X_{c,{n_c}},\twist_{n_c}}(s)\,.
\end{gathered}
\end{equation}
We define the parametrices in the ends by 
\begin{align}
M_f(s) &\coloneqq (1 - \eta_{f,0}) R_{X_f,\twist}(s) ( 1 - \eta_{f,1})\,,\label{eq:resolvent-mf}
\intertext{and}
M_c(s) &\coloneqq (1 - \eta_{c,0}) R_{X_c,\twist}(s) ( 1 - \eta_{c,1})\,.\label{eq:resolvent-mc}
\end{align}
We now consider $s_0 \in \C$ with $\Re s_0 > 1$. We define the interior parametrix by 
\begin{align*}
M_i(s_0) \coloneqq \eta_{f,2}\eta_{c,2} \ResTwist(s_0) \eta_{f,1}\eta_{c,1}\,.
\end{align*}
We set $M(s,s_0) \coloneqq M_i(s_0)  + M_f(s) + M_c(s)$ and define 
\begin{align}\label{eq:parametrix-error}
L(s,s_0) \coloneqq L_i(s,s_0) + L_f(s) + L_c(s)\,,
\end{align}
where 
\begin{align}
L_i(s,s_0) & \coloneqq -[\LapTwist, \eta_{f,2}\eta_{c,2}] \ResTwist(s_0) \eta_{f,1}\eta_{c,1}\label{eq:resolvent-li} 
\\
&\hphantom{\coloneqq - \LapTwist} + (s(1-s)- s_0(1-s_0)) M_i(s_0)\,, \nonumber
\\
L_f(s) &\coloneqq [\LapTwist, \eta_{f,0}] R_{X_f,\twist}(s) (1 - \eta_{f,1})\,,\label{eq:resolvent-lf} 
\\
L_c(s) &\coloneqq [\LapTwist, \eta_{c,0}] R_{X_c,\twist}(s) (1 - \eta_{c,1})\,,\label{eq:resolvent-lc}
\end{align}
where $[\cdot,\cdot]$ is the commutator. It follows that
\begin{align}\label{eq:parametrix-applied}
(\LapTwist - s(1-s)) M(s,s_0)= \I - L(s,s_0)\,.
\end{align}

\begin{proof}[Proof of Theorem \ref{thm:resolv_merom}]
We have to check that for $s_0$ with $\Re(s_0)$ large enough, the operator $L(s,s_0)$ admits a meromorphic continuation as a function of $s \in \C$.
Moreover, we want to show that for any $s \in \C$ except for the poles of $L(s,s_0)$, the operator $L(s,s_0)$ is compact. We will treat each term in \eqref{eq:parametrix-error} separately.

\smallskip

\paragraph{\textbf{Interior term}}
The operator $L_i(s,s_0)$ is trivially holomorphic in $s \in \C$, since it is polynomial in $s$. Using Selberg's lemma, we find a finite cover $\tilde{X}$ of $X$ where $\tilde{X}$ is a smooth manifold and and we can lift $L_i(s,s_0)$ to an operator
\begin{equation}\label{eq:resolvent-tli}
\begin{aligned}
\widetilde{L}_i(s,s_0) & \coloneqq -[\LapTwist, \eta_{f,2}\eta_{c,2}] \widetilde{\ResTwist}(s_0) \eta_{f,1}\eta_{c,1} 
\\
&\hphantom{\coloneqq -\LapTwist} + (s(1-s)- s_0(1-s_0)) \widetilde{M}_i(s_0)
\end{aligned}
\end{equation}
on $\tilde{X}$. Here, $\widetilde{\ResTwist}(s_0)$ and $\widetilde{M}_i(s_0)$ denote the lifts of $\ResTwist(s_0)$ and $M_i(s_0)$ to $\tilde{X}$, respectively. We note that the Schwartz kernel of $\widetilde{L}_i(s, s_0)$ is compactly supported in $\tilde{X} \times \tilde{X}$. Moreover, the supports of $[\LapTwist, \eta_{f,2}\eta_{c,2}]$ and $\eta_{f,1}\eta_{c,1}$ are disjoint,
and $\widetilde{\ResTwist}(s_0)$ is pseudo-local (i.e., the singular support does not change). Hence the first summand in \eqref{eq:resolvent-tli} is smoothing.
By elliptic regularity, $\widetilde{M}_i(s_0) \colon L^2(\tilde{X}, \widetilde\bundle) \to H_\cpt^2(\tilde{X}, \widetilde\bundle)$. This implies that $\widetilde{L}_i(s,s_0)$ is a compact operator on $L^2(\tilde{X}, \widetilde\bundle)$ for all $s \in \CC$. Consequently, $L_i(s,s_0)$ is a compact operator on $L^2(\tilde{X}, \bundle)$.

\smallskip

\paragraph{\textbf{Funnel term}}
We note that the meromorphic continuation of $L_f(s)$ follows from explicit description of model on the funnel from Proposition~\ref{prop:fourier-funnel}.
As in the case of the interior term, we see that $[\LapTwist, \eta_{f,0}]$ and $(1 - \eta_{f,1})$ have disjoint supports and $R_{X_f,\twist}(s)$ is pseudo-local, hence $L_f(s)$ is a smoothing operator. To prove compactness, we have to consider the explicit form of the model resolvent. As before, we denote by $R_{X_f,\twist}(s;\cdot,\cdot)$ the Schwartz kernel of the operator $R_{X_f,\twist}(s)$. By Proposition~\ref{prop:fourier-funnel}, we have that for  $s  \in \C$ not a resonance of the twisted funnel ends  
\begin{align*}
R_{X_f,\twist}(s; \cdot, \cdot) \in \rho_f^s(\rho_f')^s \CI(\overline{X_f} \times \overline{X_f}, \bundle \boxtimes \bundle')
\end{align*}
away from the diagonal. We have that the image of $[\LapTwist, (1- \eta_{f,0})]$ belongs to compactly supported sections of~$\bundle$. We note that $L_f(s)$ can be written as $[\LapTwist, (1 -\nobreak\eta_{f,0})] R_{X_f,\twist}(s) \eta_{f,1}$. Hence its integral kernel satisfies
\begin{align*}
L_f(s;\cdot,\cdot) \in \rho_f^\infty (\rho_f')^s \CI(\overline{X_f} \times \overline{X_f}, \bundle \boxtimes \bundle')\,.
\end{align*}
This implies that for any $N,N' \in \N$ and $\Re s > 1/2 - N$, the operator
\[L_f(s)\colon \rho_f^N L^2(X_f,\bundle) \to \rho_f^{-N'} L^2(X_f,\bundle)\]
is compact.

\smallskip

\paragraph{\textbf{Cusp term}}
As in the previous case, $L_c(s)$ is meromorphic since the cusp model resolvents are meromorphic by Proposition~\ref{prop:resolv-cont-cusp}. The operator $L_c(s)$ is smoothing since the supports of $[\LapTwist, \eta_{c,0}]$ and $(1 - \eta_{c,1})$ are disjoint and $R_{X_c,\twist}(s)$ is pseudo-local. For compactness, we will show that $L_c(s)$ maps $\rho_c^s(X_c,\bundle)$ to $\CI(X_c, \bundle)$ for $\Re s > 1/2$. We choose $f \in \CmI_c(X_c, \bundle')$ and denote the transpose of $L_c(s)$ with respect to the (real) inner product $\ang{\cdot,\cdot}$ by $L_c(s)^T$. Since $L_c(s)$ is smoothing, so is $L_c(s)^T$ and hence $L_c(s)^T f$ is well-defined. To evaluate $L_c(s)^T f$, we can use the Fourier analysis of Section~\ref{sec:parab_cyl}. Since $f$ is compactly supported, the non-zero Fourier modes in \eqref{eq:ukappa_parab} are asymptotically given by $y^{1/2} K_{s-1/2}(\abs{\kappa}y)$. By \cite[7.23(1)]{Watson44}, the non-zero Fourier modes decay exponentially. The zero Fourier modes contribute terms $\rho_c^{s-1} \CI(\overline{X_c})$. Hence, we have for any $N \in \N$ and  $\Re s > 1/2 - N$ that
\begin{align*}
L_c(s)^T f \in \rho_c^{s-1} \CI(\overline{X_c}, \bundle) \subset \rho_c^{-N} L^2(X_c, \bundle)\,.
\end{align*}
By duality, we have that $L_c(s)\colon \rho_c^N L^2(X_c, \bundle) \to \CI(X_c, \bundle)$. Together with the property that the image of $L_c(s)$ belongs to compactly supported sections of~$\bundle$, we obtain for $\Re s > 1/2 - N$,
\begin{align*}
L_c(s)\colon \rho_c^N L^2(X_c, \bundle) \to \CcI(X_c,\bundle)\,,
\end{align*}
and $L_c(s)\colon \rho_c^N L^2(X_c, \bundle) \to \rho_c^{-N'} L^2(X_c, \bundle)$ is compact for all $N' \in \N$.

\smallskip

\paragraph{\textbf{Application of the analytic Fredholm theorem}}
Using the explicit formula for $L(s,s_0)$, we see that there exists $C > 0$ such that
\begin{align*}
\norm{L_i(s,s_0)} &\leq C \left(1 + \abs{s(1-s) - s_0(1-s_0)} \right)\norm{\ResTwist(s_0)}
\intertext{as well as}
\norm{L_f(s)} &\leq C \norm{ R_{X_f,\twist}(s)} \qquad\text{and}\qquad 
\norm{L_c(s)} \leq C \norm{ R_{X_c,\twist}(s)}\,.
\end{align*}
Thus, for any $s_0 \in \C$ with sufficiently large real part, there exists $s_1 \in \C$ that is not a pole of $R_{X_f,\twist}(s)$ and $R_{X_c,\twist}(s)$ and $s_1 - s_0$ is small such that
\begin{align*}
\norm{L(s_1,s_0)} < 1
\end{align*}
by using the resolvent estimate~\eqref{est:resolvent-twist}. We may invert $\I - L(s_1,s_0)$ using the Neumann series. We can apply the analytic Fredholm theorem to $\I - L(s,s_0)$ to obtain that for any $N \in \N$,
\[
(\I - L(s,s_0))^{-1}\colon \rho^N L^2(X, \bundle) \to \rho^{-N} L^2(X, \bundle)\]
extends to a meromorphic family of operators for $\Re s > \frac{1}{2} - N$.
The poles of $(\I - L(s,s_0))^{-1}$ are of finite multiplicity. The resolvent can now be written as
\begin{align}\label{eq:R_expressed_via_Q_and_L}
\ResTwist(s) = M(s,s_0) (\I - L(s,s_0))^{-1}
\end{align}
and since both factors on the right hand side are meromorphic in $s\in \C$, we conclude that $s \mapsto \ResTwist(s)$ is meromorphic as well.
\end{proof}

\section{Resonance Counting}\label{sec:upper-bound}
Let $z_0 \in \C$ be a resonance.
The \emph{multiplicity} of the resonance $z_0$ is the number
\begin{align*}
m_{X,\twist}(z_0) \coloneqq \rank \int_{\gamma_{\eps,z_0}} \ResTwist(s) \, ds\,,
\end{align*}
where  $\eps > 0$ is chosen such that the path 
\[
\gamma_{\eps,z_0}\colon [0,1] \to \C\,,\qquad \gamma_{\eps,z_0}(t) \coloneqq z_0 + \eps e^{2\pi i t}\,,
\]
encloses exactly one resonance. We define the \emph{multiset of resonances} as
\begin{align}\label{eq:resset}
\ResSet_{X,\twist} \coloneqq \{ (z_0, m) \in \C \times \N \setmid  \text{$z_0$ is a resonance, $m = m_{X,\twist}(z_0)$}\}\,.
\end{align}
We define the \emph{resonance counting function}, $N_{X,\twist}(r)$, as
\begin{align}\label{eq:resonance_counting_function}
N_{X,\twist}(r) \coloneqq \sum_{\substack{(s,m) \in \ResSet_{X,\twist} \\ \abs{s} < r}} m\,.
\end{align}
The goal of this section is to prove Theorem~\ref{thm:upper-bound}, that is, to show 
\[
N_{X,\twist}(r) = O(r^2)\,, \quad r \to \infty\,.
\]

Let $L(s) \coloneqq L(s,s_0)$ be as in \eqref{eq:parametrix-error}. In what follows, we suppress the dependence on $s_0$ for the reader's convenience.

\begin{lemma}\label{lem:zeros-determinant} 
The multiset of resonances of $\ResTwist$ is contained in the union of three copies of $\ResSet_{X_f,\twist} \cup \{(1/2,1)\}$ and the multiset of zeros of
\begin{align*}
D(s) \coloneqq  \det(\I - L_3(s)^3)\,, 
\end{align*}
where $L_3(s) \coloneqq L(s)\eta_{f,3}\eta_{c,3}$. The function $D$ is meromorphic and does not vanish identically on $\C$.
\end{lemma}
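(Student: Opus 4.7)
The plan is to leverage the factorization $\ResTwist(s) = M(s,s_0)(\I - L(s,s_0))^{-1}$ from \eqref{eq:R_expressed_via_Q_and_L}. By the explicit descriptions in Propositions~\ref{prop:fourier-funnel} and~\ref{prop:fourier-cusp}, the poles of $M(s,s_0)$ as a function of~$s$ are contained in $\ResSet_{X_f,\twist} \cup \{(1/2,1)\}$, so outside this exceptional set a pole of $\ResTwist$ must come from a point where $\I - L(s,s_0)$ fails to be invertible. The goal is then to trap these points among the zeros of $D(s) = \det(\I - L_3(s)^3)$.

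The first step is a support analysis of $L(s) = L_i(s,s_0) + L_f(s) + L_c(s)$. Each summand involves a commutator $[\LapTwist, \eta_{\bullet,\ast}]$, whose Schwartz kernel is compactly supported in the output variable inside a region where $\eta_{f,3}\eta_{c,3} \equiv 1$. Consequently, for any $u$ in the range of $L(s)$, one has $\eta_{f,3}\eta_{c,3} u = u$. This yields the kernel identity $\ker(\I - L(s)) = \ker(\I - L_3(s))$: if $u = L(s) u$, then $u = \eta_{f,3}\eta_{c,3} u$, so $L_3(s) u = L(s)\eta_{f,3}\eta_{c,3} u = L(s) u = u$, and the converse direction works analogously. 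Combined with the algebraic identity $\I - L_3(s)^3 = (\I - L_3(s))(\I + L_3(s) + L_3(s)^2)$, invertibility of $\I - L_3(s)^3$ implies invertibility of $\I - L_3(s)$, and hence of $\I - L(s)$.

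Next I would show that $L_3(s)^3$ is of trace class on $L^2(X,\bundle)$, so that $D$ is meromorphic on $\C$ with possible poles (of order at most three) located only at model resonances. The smoothing structure of each summand of $L(s)$ (already extracted in the proof of Theorem~\ref{thm:resolv_merom}), combined with the fact that $L_3(s) = L(s)\eta_{f,3}\eta_{c,3}$ has Schwartz kernel compactly supported in $X \times X$, shows that $L_3(s)$ is a smoothing operator supported in a fixed compact set; standard Sobolev embedding (lifted through a Selberg smooth finite cover if needed) then places $L_3(s)^3$ in the trace ideal. For non-triviality of~$D$, I would argue exactly as in the Fredholm step in the proof of Theorem~\ref{thm:resolv_merom}: for $s_0$ of sufficiently large real part and $s_1$ close to $s_0$ but not a model pole, the resolvent bounds of Proposition~\ref{prop:resolvent-estimate} give $\|L(s_1,s_0)\| < 1$, hence $\|L_3(s_1)^3\| < 1$, so $\I - L_3(s_1)^3$ is invertible by a Neumann series and $D(s_1) \neq 0$.

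Putting this together, any pole $z_0$ of $\ResTwist$ outside $\ResSet_{X_f,\twist} \cup \{(1/2,1)\}$ forces $\I - L(z_0)$ to be non-invertible, and hence (by the kernel identity and the algebraic factorization) forces $D(z_0) = 0$. The multiplicity statement involving the three copies is obtained by comparing the pole order of $(\I - L_3(s))^{-1}$ at $z_0$ to the zero order of $D(s)$: the factorization $\I - L_3(s)^3 = \prod_{k=0}^{2}(\I - \zeta^k L_3(s))$ with $\zeta = e^{2\pi i/3}$, together with a Gohberg--Sigal-type comparison of pole and zero orders, gives a factor of at most three between them, while the three copies of model resonances absorb the possible triple poles of $D$ at those points. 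The main technical obstacle is precisely this multiplicity bookkeeping, since the individual factors $\I - \zeta^k L_3$ need not carry genuine Fredholm determinants of their own (the operator $L_3$ itself may fail to be trace class while its cube is); the comparison must be carried out using the characteristic values of $L_3^3$ as a whole rather than factor by factor.
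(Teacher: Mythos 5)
Your architecture coincides with the paper's: both rest on the parametrix identity $(\LapTwist - s(1-s))M(s,s_0) = \I - L(s)$, the support observation $\eta_{f,3}\eta_{c,3}L(s) = L(s)$, passing to $L_3(s) = L(s)\eta_{f,3}\eta_{c,3}$ and its cube to reach the trace class, and a Neumann-series argument at a point of large real part for the non-vanishing of $D$. At the level of sets your argument is complete and correct. Two caveats are worth recording.

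First, your claim that $L_3(s)$ is a \emph{smoothing} operator is false: the summand $(s(1-s)-s_0(1-s_0))M_i(s_0)$ of $L_i(s,s_0)$ is only a compactly supported parametrix of order $-2$ (it maps $L^2$ to $H^2_{\cpt}$, not to $\CI$), which is exactly why the paper cubes $L_3$ rather than using $L_3$ itself. The conclusion you need survives — a compactly supported pseudodifferential operator of order $\leq -1$ on a surface has trace-class cube — but the stated reason should be corrected. Second, and more substantively, your mechanism for the multiplicity statement is off target. The paper obtains the three copies by writing
\begin{align*}
\ResTwist(s)\,\eta_{f,3}\eta_{c,3} = M(s,s_0)\,\eta_{f,3}\eta_{c,3}\,\bigl(\I + L_3(s) + L_3(s)^2\bigr)\bigl(\I - L_3(s)^3\bigr)^{-1}
\end{align*}
and then invoking the rank-counting lemma of Vodev's appendix: one copy of $\ResSet_{X_f,\twist}\cup\{(1/2,1)\}$ comes from the poles of $M$, and one each from the poles of $L_3$ and of $L_3^2$; the remaining singularities are controlled by the zero order of $D$. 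Your proposal instead compares the pole order of $(\I - L_3(s))^{-1}$ directly with the zero order of $D$ and asserts that the three copies ``absorb the possible triple poles of $D$'' — but $D$ has no zeros to absorb at model resonances (it may have poles there), and at such points $\I - L_3$ is itself singular, so a Gohberg--Sigal comparison applied to $\I - L_3$ alone does not close the argument. If you route the multiplicities through the displayed factorization (so that the Gohberg--Sigal count is applied only to $\I - L_3^3$, whose determinant you actually control), the bookkeeping becomes the paper's and the proof is complete.
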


\begin{proof}
By \eqref{eq:parametrix-error} we have that $L(s) = \eta_{f,3}\eta_{c,3} L(s)$, from which we obtain that
\begin{align*}
Q(s) \eta_{f,3}\eta_{c,3} &= \ResTwist(s) \eta_{f,3}\eta_{c,3} (\I - L(s) \eta_{f,3}\eta_{c,3})
\\
&= \ResTwist(s) \eta_{f,3}\eta_{c,3} (\I - L_3(s))\,.
\end{align*}
The operator $L_3(s)$ is a compactly supported pseudo-differential operator of order $-1$. Therefore, the operator $L_3(s)^3$ is trace-class and
\begin{align*}
\ResTwist(s) \eta_{f,3} \eta_{c,3} &= Q(s) \eta_{f,3}\eta_{c,3} \,(\I - L_3(s))^{-1} 
\\
&= Q(s) \eta_{f,3}\eta_{c,3} \left( \I + L_3(s) + L_3(s)^2\right) (\I - L_3(s)^3)^{-1}\,.
\end{align*}
Now the first claim follows from \cite[Appendix]{Vodev94}. 

For the second claim, we can use the same arguments as for $L(s)$ to show that $\I - L_3(s)^3$ is invertible as a meromorphic family of operators $\rho^N L^2 \to \rho^{-N} L^2$ for $\Re s > 1/2 - N$. It is well-known that the Fredholm determinant is non-zero if and only if $\I - L_3(s)^3$ is boundedly invertible.
\end{proof}

The previous lemma together with the following proposition will prove Theorem~\ref{thm:upper-bound}.

\begin{prop}\label{prop:zeros-determinant}
We denote by $\mathcal{L} \subset \C \times \N$ the multiset of zeros of $s\mapsto D(s)$. We have that
\begin{align*}
\sum_{\substack{(s,m) \in \mathcal{L}\\\abs{s} < r}} m = O(r^2)
\end{align*}
as $r \to \infty$. 
\end{prop}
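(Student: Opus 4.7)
The plan is to show that $D(s)$ extends to a meromorphic function on $\C$ of order at most $2$, and then to read off the quadratic bound on its zeros via Jensen's formula.

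First, I would establish the growth bound $|D(s)| \leq \exp(C\langle s\rangle^2)$ away from a controlled exceptional set. Writing $L_3(s) = \bigl(L_i(s,s_0) + L_f(s) + L_c(s)\bigr)\eta_{f,3}\eta_{c,3}$, each summand has precisely the commutator-times-cutoff-resolvent structure $[\LapTwist,\tilde\psi_0]R(s)\psi_1$ to which Proposition~\ref{prop:bound-resolvent-determinant} applies, with $R(s)$ being either $\ResTwist(s_0)$ (for the interior term, which is $s$-polynomial) or one of the model resolvents $R_{X_f,\twist}(s)$, $R_{X_c,\twist}(s)$. For the latter, the explicit Fourier expansions in Propositions~\ref{prop:fourier-funnel} and~\ref{prop:fourier-cusp}, combined with the self-adjoint resolvent estimate of Proposition~\ref{prop:resolvent-estimate} applied separately on each eigenspace of $\twist(\gamma_j)$, yield $\|\psi_0 R_{\bullet,\twist}(s)\psi_1\| \lesssim \langle s\rangle^{-1}$ outside fixed $\varepsilon$-neighborhoods of the explicit pole sets. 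Plugging $\tau=-1$ into Proposition~\ref{prop:bound-resolvent-determinant} gives $\det(\I+c|L_3(s)|) \lesssim \exp(C\langle s\rangle^{2})$, and Weyl's majorization inequality $\det(\I - L_3(s)^3) \leq \det(\I + |L_3(s)|)^3$ transfers this to $|D(s)|$.

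Second, I would remove the poles. By Propositions~\ref{prop:fourier-funnel} and~\ref{prop:fourier-cusp}, the poles of $L_3(s)$ are inherited from the model resolvents and lie in the explicit multiset
\[
\mathcal P \subseteq \{1/2\} \cup \bigcup_{j=1}^{n_f}\bigcup_{\lambda\in \EV(\twist(\gamma_j))}\bigcup_{p\in\{\pm 1\}}\Bigl(-(1+2\N_0) + p\ell_j^{-1}(\log\lambda+2\pi i\Z)\Bigr).
\]
Remark~\ref{rem:upper-bound-funnel} (together with the single cusp pole) implies that the multiplicity-counted cardinality of $\mathcal P$ in $|s|<r$ is $O(r^2)$. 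Since all poles lie in finitely many horizontal arithmetic progressions in a left half-plane (plus the point $1/2$), I would build a Hadamard canonical product $P(s)$ of genus $\leq 2$ whose zero multiset matches $\mathcal P$. Elementary estimates give $|P(s)| \lesssim \exp(C\langle s\rangle^2)$ on $\C$ together with a minimum-modulus bound $|P(s)| \gtrsim \exp(-C'\langle s\rangle^2)$ on a sequence of circles of radii $r_n\to\infty$ chosen to avoid a uniform $\varepsilon$-neighborhood of $\mathcal P$. Hence $\widetilde D(s) \coloneqq D(s)P(s)$ is entire, and satisfies $|\widetilde D(s)| \lesssim \exp(C''\langle s\rangle^2)$ on the circles $|s|=r_n$.

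Third, I would apply Jensen's formula to $\widetilde D$ on the disk $|s|<r_n$ (using $\widetilde D(s_1)\neq 0$ at a fixed regular point $s_1$, provided by the invertibility of $\I-L(s_1,s_0)$ used in the proof of Theorem~\ref{thm:resolv_merom}): the number of zeros of $\widetilde D$ in $|s|<r_n$ is $O(r_n^2)$, and monotonicity extends this to all $r$. Every zero of $D$ is a zero of $\widetilde D$ with at least the same multiplicity, which gives the asserted $O(r^2)$ bound on $\mathcal L$. The main obstacle in executing this plan is the minimum-modulus control on $P$ and on $\widetilde D$ needed to apply Jensen on circles, which is what forces the careful choice of the radii $r_n$; everything else is bookkeeping with the known location of the model-end poles.
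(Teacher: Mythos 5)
Your overall architecture (bound $\abs{D(s)}$ by $e^{C\ang{s}^2}$, multiply by a canonical product over the model-end poles to get an entire function, apply Jensen) is the same as the paper's. The genuine gap is in your first step, specifically in the region $\Rea s \ll 0$: you claim that the cutoff model resolvents satisfy $\norm{\psi_0 R_{\bullet,\twist}(s)\psi_1} \lesssim \ang{s}^{-1}$ away from $\eps$-neighborhoods of the poles, and then feed $\tau=-1$ into Proposition~\ref{prop:bound-resolvent-determinant}. This is false in the left half-plane. The reflection identity \eqref{eq:resolvent-scattering-matrix} shows that for $\Rea s \le 1/2-\eps$ the cutoff funnel resolvent is governed by the scattering matrix, whose Fourier coefficients contain $\gammafunc(1/2-s)^2/(f(s+i\kappa\omega)f(s-i\kappa\omega))$ and grow like $e^{C\ang{s}}\ang{s}^{1-2\Rea s}$ (see \eqref{eq:smatrix-numerator} and Lemma~\ref{lem:svalues-smatrix}); as $\Rea s\to-\infty$ this is of size $e^{c\ang{s}\log\ang{s}}$, not polynomial, even far from any pole. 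An analogous failure occurs for the cusp term via the $\cos(\pi s)K_{s-1/2}K_{s-1/2}$ reflection terms. Consequently no fixed $\tau$ makes the hypothesis of Proposition~\ref{prop:bound-resolvent-determinant} hold on $\Omega_3$, and note moreover that the conclusion of that proposition is $e^{C\ang{s}^{(5+\tau)/2}}$, so any $\tau>-1$ would already spoil the quadratic exponent. The quadratic determinant bound in the left half-plane is saved not by an operator-norm bound but by the rapid decay of the \emph{singular values} beyond index $\sim\ang{s}$ (Lemmas~\ref{lem:svalues-smatrix} and~\ref{lem:svalues-poisson}), so that only about $\ang{s}$ factors of size $e^{C\ang{s}\log\ang{s}}$ contribute to $\det(\I+c\abs{K_f(s)})$. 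This singular-value analysis is the core of the argument and is entirely absent from your plan.

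Two smaller points. In the critical strip $\Omega_2$ the bound $\tau=-1$ is also too optimistic: near the real axis one only gets $\tau=0$ for the funnel and $\tau=1$ for the cusp, hence exponents $5/2$ and $3$, and one must invoke Phragm\'en--Lindel\"of to recover the exponent $2$ there; your proposal does not account for this. Second, your statement that ``elementary estimates'' give $\abs{P(s)}\lesssim e^{C\ang{s}^2}$ for the genus-$\le 2$ canonical product over $\mathcal P$ glosses over the fact that an order-$2$ canonical product with zero counting function $\asymp r^2$ need not be of finite type; the paper secures finite type by symmetrizing the zero set with its rotation by $i$ so that $\sum\lambda^{-2}=0$ and Lindel\"of's theorem applies. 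Your alternative of working only on a sequence of circles $r_n$ avoiding the poles is legitimate in principle for Jensen, but it does not dispense with either of the above issues.
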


The remainder of this section is devoted to the proof of this proposition. We write
\begin{align*}
L_3(s) = L_i(s) + K_f(s) + K_c(s)\,,
\end{align*}
where $K_\bullet(s) \coloneqq L_\bullet(s) \eta_{f,3}\eta_{c,3}$ for $\bullet \in \{f,c\}$. The funnel term $K_f(s)$ is given by
\begin{align}
K_f(s) &= L_f(s) \eta_{f,3}\eta_{c,3}\nonumber
\\
&= [\LapTwist, \eta_{f,0}] R_{X_f,\twist}(s) (1 - \eta_{f,1}) \eta_{f,3}\eta_{c,3}\,,\nonumber
\\
&= [\LapTwist, \eta_{f,0}] R_{X_f,\twist}(s) (\eta_{f,3} - \eta_{f,1})\,.\label{eq:K_funnel}
\end{align}
Similarly, we have
\begin{align}\label{eq:K_cusp}
K_c(s) &= [\LapTwist, \eta_{c,0}] R_{X_c,\twist_c}(s) (\eta_{c,3} - \eta_{c,1})\,.
\end{align}
The terms $L_i(s)^3, K_f(s)^3$, and $K_c(s)^3$ are all trace-class. Therefore we can apply \cite[Lemma 6.1]{GuZw95} to obtain the inequality 
\begin{equation}\label{eq:determinant-into-model}
\begin{aligned}
\abs{D(s)} &= \abs*{ \det\left( \I - (L_i(s) + K_f(s) + K_c(s))^3 \right) }
\\
&\leq \left[ \det( \I + 9\abs{L_i(s)}^3) \det( \I + 9\abs{K_f(s)}^3) \det( \I + 9\abs{K_c(s)}^3)\right]^9\,.
\end{aligned}
\end{equation}
In what follows, we estimate \eqref{eq:determinant-into-model} termwise. It is straight-forward to estimate the first term, $\det( \I + 9\abs{L_i(s)}^3)$.

\begin{lemma}\label{lem:interior-determinant}
For any $c > 0$ there exists $C > 0$ such that for all~$s\in\C$ we have
\begin{align*}
\abs*{\det \left( \I + c \abs{L_i(s)}^3 \right)} \leq e^{C \ang{s}^2}\,.
\end{align*}
\end{lemma}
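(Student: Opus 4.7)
The plan is to obtain a singular value bound $\mu_k(L_i(s)) = O(k^{-1}\ang{s}^2)$ via Lemma~\ref{lem:L2-to-trace} and then to control the resulting infinite product by an integral comparison. Recall that
\[
L_i(s) = -[\LapTwist,\eta_{f,2}\eta_{c,2}]\,\ResTwist(s_0)\,\eta_{f,1}\eta_{c,1} + (s(1-s) - s_0(1-s_0))\,M_i(s_0)
\]
is a polynomial of degree $2$ in $s$ whose operator coefficients have Schwartz kernels compactly supported in $Z \times Z$ for some fixed bounded open $Z \subset X$.

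First I would show $\|\LapTwist L_i(s)\|_{L^2\to L^2} \leq C\ang{s}^2$. The resolvent identity $\LapTwist\,\ResTwist(s_0) = \I + s_0(1-s_0)\,\ResTwist(s_0)$, combined with the fact that $\eta_{f,2}\eta_{c,2} \equiv 1$ on $\supp(\eta_{f,1}\eta_{c,1})$, yields
\[
\LapTwist M_i(s_0) = [\LapTwist,\eta_{f,2}\eta_{c,2}]\,\ResTwist(s_0)\,\eta_{f,1}\eta_{c,1} + \eta_{f,1}\eta_{c,1} + s_0(1-s_0)\,M_i(s_0)\,,
\]
which is a sum of $L^2$-bounded operators. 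The commutator term $[\LapTwist,\eta_{f,2}\eta_{c,2}]\,\ResTwist(s_0)\,\eta_{f,1}\eta_{c,1}$ in $L_i(s)$ is itself smoothing with compact support (by pseudo-locality of $\ResTwist(s_0)$ and the disjointness of $\supp\nabla(\eta_{f,2}\eta_{c,2})$ and $\supp(\eta_{f,1}\eta_{c,1})$), so $\LapTwist$ applied to it is also $L^2$-bounded. Applying Lemma~\ref{lem:L2-to-trace} to $Z$ with $m = 1$ then gives $\mu_k(L_i(s)) \leq C' k^{-1}\ang{s}^2$, whence
\[
\log\abs{\det(\I + c\abs{L_i(s)}^3)} = \sum_{k=1}^\infty\log(1 + c\mu_k(L_i(s))^3) \leq \sum_{k=1}^\infty\log(1 + A/k^3)
\]
with $A = cC'^3\ang{s}^6$.

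The remaining step is the estimate $\sum_{k=1}^\infty\log(1 + A/k^3) \leq CA^{1/3}$. Since $x\mapsto\log(1 + A/x^3)$ is decreasing, the sum is bounded by $\log(1+A) + \int_1^\infty\log(1 + A/x^3)\,dx$, and the substitution $x = A^{1/3}u$ converts the integral into $A^{1/3}\int_{A^{-1/3}}^\infty\log(1 + u^{-3})\,du \leq A^{1/3}\int_0^\infty\log(1 + u^{-3})\,du$. The last integral is finite (the integrand is $\sim -3\log u$ near $0$ and $\sim u^{-3}$ at infinity), and $\log(1+A) = O(A^{1/3})$, so the bound $CA^{1/3} = O(\ang{s}^2)$ follows.

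The main conceptual obstacle is that the naive bound $\log(1 + c\mu_k^3) \leq c\mu_k^3$ combined with $\mu_k \leq C'k^{-1}\ang{s}^2$ gives only $\sum c\mu_k^3 \leq cC'^3\zeta(3)\ang{s}^6$, hence $\log\abs{\det} = O(\ang{s}^6)$ -- far too weak for the $O(r^2)$ counting target of Proposition~\ref{prop:zeros-determinant}. Capturing the correct exponent requires exploiting $\log(1+x) \asymp \log x$ for large $x$ through the natural scaling $k \sim A^{1/3}$ in the integral comparison.
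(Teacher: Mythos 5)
Your proof is correct, and it reaches the paper's singular value bound $\mu_k(L_i(s))\lesssim \ang{s}^2 k^{-1}$ by a different route. The paper first splits off the $s$-independent commutator term using the determinant inequality of Guillop\'e--Zworski (their Lemma~6.1, the same tool used in \eqref{eq:determinant-into-model}), reducing to $\det(\I + c'\abs{(s(1-s)-s_0(1-s_0))M_i(s_0)}^3)$, and then quotes the Weyl law for $M_i(s_0)$ as a vector-valued pseudodifferential operator of order $-2$ on a compact set to get $\mu_k(M_i(s_0)^3)=O(k^{-3})$; the final product $\prod_k(1+C\ang{s}^6/k^3)\leq e^{C\ang{s}^2}$ is delegated to the argument of Borthwick's Lemma~9.7. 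You instead estimate the singular values of the full operator $L_i(s)$ at once via the paper's own Lemma~\ref{lem:L2-to-trace} with $m=1$, which requires the additional (correct) verification that $\LapTwist L_i(s)$ is $L^2$-bounded with norm $O(\ang{s}^2)$; your use of the resolvent identity and of $\eta_{f,2}\eta_{c,2}\equiv 1$ on $\supp(\eta_{f,1}\eta_{c,1})$ for this is sound, as is the observation that the commutator term has a smooth compactly supported kernel by pseudo-locality. Your approach buys a self-contained argument that avoids both the external determinant-splitting lemma and the pseudodifferential Weyl law, at the cost of the extra elliptic bookkeeping; it also makes the crucial product estimate $\sum_k\log(1+A/k^3)\leq CA^{1/3}$ explicit via integral comparison rather than by reference, and your closing remark correctly identifies why the naive bound $\log(1+x)\leq x$ would lose the theorem.
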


\begin{proof}
The proof follows the lines of \cite[Lemma~9.7]{Borthwick_book}. However, our setup requires a different explanation for the singular value estimates. By \eqref{eq:resolvent-li} and \cite[Lemma 6.1]{GuZw95}, we have that
\begin{align*}
\det \left( \I + c \abs{L_i(s)}^3 \right) \leq C_1 \det\left( \I + 4c^2 \abs*{ (s(1-s) - s_0(1-s_0) ) M_i(s_0)}^3 \right)^6\,.
\end{align*}
The operator $M_i(s_0)$ is a vector-valued pseudo-differential operator of order $-2$. The corresponding Weyl law (see~\cite{GrubbFunctionalCalculus}) implies the bound
\begin{align*}
\mu_k(M_i(s_0)^3) = O(k^{-3})\,.
\end{align*}
Hence, we have that
\begin{align*}
\det \left( \I + c \abs{L_i(s)}^3 \right) \leq \prod_{k=1}^\infty \left( 1 + C \frac{\ang{s}^6}{k^3} \right)\,.
\end{align*}
By the same arguments as in the proof of \cite[Lemma~9.7]{Borthwick_book}, we obtain the claimed estimate.
\end{proof}

To estimate $\det( \I + 9\abs{K_f(s)}^3) \det( \I + 9\abs{K_c(s)}^3)$, we have to consider three distinct cases for $s \in \C$. For $\eps \in (0,1/2)$, we define 
\begin{align*}
\Omega_1 & \coloneqq \{s \in \C \setmid \Re s \in [1/2 + \eps,\infty) \}\,,
\\
\Omega_2 & \coloneqq \{s \in \C \setmid \Re s \in (1/2-\eps,1/2+\eps) \}\,,
\\
\Omega_3 & \coloneqq \{s \in \C \setmid \Re s \in (-\infty,1/2-\eps] \}\,.
\end{align*}
For $s \in \Omega_1$, we use the resolvent estimate~\eqref{est:resolvent-twist2}. For $s \in \Omega_2$, we use the model calculations from Section~\ref{sec:model}. And for $s \in \Omega_3$, we obtain the estimate by a  reflection argument using the Fourier decomposition of the resolvent and the case $s \in \Omega_1$.

\subsection{The Cusp Term}\label{sec:cusp-term}
The operators $K_f(s)$ and $K_c(s)$ need not be holomorphic in $s \in \C$ as there might exist isolated singularities coming from the resonances. In the case of $K_c(s)$, we know from Proposition~\ref{prop:fourier-cusp} that there is at most a single resonance at $s = 1/2$, namely with multiplicity $n_c^\twist$ as defined in \eqref{eq:nctwist}. Motivated by this, we define
\begin{equation}\label{eq:def_gc}
g_c(s) \coloneqq (2s - 1)^{n^\twist_c}\,.
\end{equation}
It suffices to estimate $g_c(s) \det(\I + c_1 \abs{K_c(s)})$ for $c_1 > 0$. Without loss of generality, we may assume that we have only one cusp, that is $X_c = F_\infty$. We recall that $C_\infty = \ang{T} \bs \h$, where $T.z = z+1$
and the restriction of $\twist$ to $X_c$ corresponds to a unitary representation $\ang{T} \to \Unit(V)$, which we also denote by $\twist$. The number $n^\twist_c$ is then given by the dimension of the $1$-eigenspace of $\twist(T)$.

\begin{prop}\label{prop:cusp-determinant}
For any $c_1 > 0$ there exists $C > 0$ such that
\begin{align*}
\abs*{g_c(s) \det( \I + c_1 \abs{K_c(s)}) } \leq e^{C \ang{s}^2}
\end{align*}
for all $s \in \C$.
\end{prop}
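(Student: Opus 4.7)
The plan is to establish the bound $\lvert g_c(s) \det(I + c_1 |K_c(s)|)\rvert \le e^{C\langle s\rangle^2}$ separately on each of the three strips $\Omega_1, \Omega_2, \Omega_3$ introduced above. On $\Omega_1$, the model cusp Laplacian $\Delta_{X_c,\chi_c}$ is a direct sum of self-adjoint non-negative operators whose $L^2$-spectrum is bounded below by a positive constant, as one reads off from the Fourier--Bessel decomposition: the zero mode contributes continuous spectrum $[1/4,\infty)$, while the non-zero modes have purely point spectrum above a positive threshold. Hence the estimate~\eqref{est:resolvent-twist2} applies to the model and gives $\lVert\psi_0 R_{X_c,\chi_c}(s)\psi_1\rVert \lesssim \langle s\rangle^{-1}$ on $\Omega_1$ for any bounded cutoffs $\psi_0,\psi_1$ of compact support. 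Choosing $\psi_0$ to equal $1$ on $\supp[\LapTwist,\eta_{c,0}]$ and $\psi_1 = \eta_{c,3}-\eta_{c,1}$, Proposition~\ref{prop:bound-resolvent-determinant} applied with $\tau=-1$ produces $\det(I+c_1|K_c(s)|)\lesssim e^{C\langle s\rangle^2}$ on $\Omega_1$, and since $g_c$ is a polynomial this yields the claim.

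On $\Omega_2$, a thin neighborhood of the line $\Re s = 1/2$, I would rely directly on the Fourier--Bessel expansion of Lemma~\ref{lem:fourier-cusp}. Reducing without loss of generality to a single cusp $X_c \cong F_\infty$, the cutoffs $[\LapTwist,\eta_{c,0}]$ and $\eta_{c,3}-\eta_{c,1}$ restrict $(y,y')$ to a fixed compact set bounded away from $\infty$. For each $|\kappa|\ge 1$ the arguments $\kappa y, \kappa y'$ of the Bessel functions in $u_\kappa(s;y,y')$ lie in a compact subset of $(0,\infty)$, and the large-$|\kappa|$ asymptotics of $I_{s-1/2}$ and $K_{s-1/2}$ give exponential decay of $u_\kappa(s;y,y')$ in $|\kappa|$ together with only polynomial $s$-dependence on $\Omega_2$ (since $\Re s$ remains bounded there). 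The zero Fourier modes associated with the eigenvalue $1$ of $\chi(T)$ contribute a rank-$n_c^\chi$ operator with a simple pole at $s=1/2$ coming from the factor $1/(2s-1)$ in $u_0(s;y,y')$; multiplication by $g_c(s)=(2s-1)^{n_c^\chi}$ cancels these poles exactly. Summing Schatten-norm contributions over the effectively finitely many surviving Fourier modes and applying the singular-value estimate of Lemma~\ref{lem:L2-to-trace} then yields the bound on $\Omega_2$.

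On $\Omega_3$, I would use a reflection across $\Re s = 1/2$. The identities $K_{-\nu}=K_\nu$ and $I_\nu - I_{-\nu} = \tfrac{2\sin(\nu\pi)}{\pi}K_\nu$ imply that for each non-zero Fourier mode the difference $u_\kappa(s;y,y')-u_\kappa(1-s;y,y')$ equals $K_{s-1/2}(|\kappa|y)K_{s-1/2}(|\kappa|y')$ times an explicit trigonometric factor in $s$, i.e., a rank-one kernel on that Fourier component; the zero-mode differences are explicit rational expressions that, after multiplication by $g_c(s)$, are holomorphic at $s=1/2$. Summing over the cutoff-surviving Fourier modes yields a cusp analogue of Lemma~\ref{lem:funnel-res-diff}: $g_c(s)(K_c(s)-K_c(1-s))$ is a trace-class operator whose trace norm is controlled on $\Omega_3$. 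The Weyl-type inequality for Fredholm determinants from~\cite[Lemma~6.1]{GuZw95}, combined with the $\Omega_1$-estimate applied at $1-s \in \Omega_1$, then delivers the desired bound on $\Omega_3$. The hardest step will be the precise quantification of the Bessel-function asymptotics uniformly in $s$ on $\Omega_2\cup\Omega_3$, together with the verification that the Fourier series for $K_c(s)$ converges in trace class rather than only distributionally, so that the reflection identity transfers from individual Fourier modes to Schatten norms and hence to Fredholm determinants.
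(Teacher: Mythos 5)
Your overall strategy---splitting $\C$ into the three strips $\Omega_1,\Omega_2,\Omega_3$, using the resolvent bound~\eqref{est:resolvent-twist2} plus Proposition~\ref{prop:bound-resolvent-determinant} on $\Omega_1$, the Fourier--Bessel expansion on $\Omega_2$, and a reflection $R_{C_\infty,\twist}(s)=R_{C_\infty,\twist}(1-s)+A(1-s)$ with $A$ a sum of mutually orthogonal finite-rank Bessel-kernel operators on $\Omega_3$---is exactly the paper's, and the $\Omega_1$ and $\Omega_3$ parts are essentially correct as sketched.

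There is, however, a genuine gap on $\Omega_2$. The machinery you invoke there cannot produce the exponent $2$. On $\Omega_2$ the best $L^2$ bound available from the Fourier modes is $\norm{\psi_0 R_{C_\infty,\twist}(s)\psi_1}\lesssim\ang{s}$ (the non-zero modes contribute $\ang{s}e^{-C\abs{\kappa}}$, cf.\ Lemma~\ref{lem:est-uparab_kappa}), so Proposition~\ref{prop:bound-resolvent-determinant} (equivalently, Lemma~\ref{lem:L2-to-trace} fed by the Sobolev bounds of Lemma~\ref{lem:bound-cylinder-resolvent-sobolev}, which cost a factor $\ang{s}^{2m}$ per power of the Laplacian) yields only $\det(\I+c_1\abs{K_c(s)})\leq e^{C\ang{s}^{(5+\tau)/2}}=e^{C\ang{s}^3}$ on the middle strip, not $e^{C\ang{s}^2}$. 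The paper accepts this weaker bound on $\Omega_2$ and then recovers the quadratic exponent everywhere by a Phragm\'en--Lindel\"of interpolation between the two outer strips; that final step is entirely absent from your proposal, and without it (or a substantially sharper mode-by-mode singular value estimate on $\Omega_2$ that you do not carry out) the claimed uniform bound $e^{C\ang{s}^2}$ is not established. Two smaller points: your spectral description of the non-zero Fourier modes on $\Omega_1$ (``purely point spectrum above a positive threshold'') is inaccurate---they still carry essential spectrum in $[1/4,\infty)$; what you actually need, and what the paper uses, is merely the absence of spectrum below $1/4$, which follows from the absence of resonances in $\Re s>1/2$. And on $\Omega_3$ a plain trace-norm bound on $g_c(s)(K_c(s)-K_c(1-s))$ is not quite enough as stated, since the individual singular values grow like $\ang{s}^{\Re(1-s)}$ for deep $\Omega_3$; one needs the full decreasing rearrangement $\mu_m\lesssim e^{C\ang{s}}(\ang{s}/m)^{\Re(1-s)}e^{-cm}$ before taking the determinant, as in the paper's Lemma~\ref{lem:cusp-a3}.
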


We start by proving Proposition~\ref{prop:cusp-determinant} in the case that $s \in \Omega_1$.

\begin{lemma}\label{lem:cusp-a1}
For any $c_1 > 0$ there exists $C > 0$ such that
\begin{align*}
\abs*{\det( \I + c_1 \abs{K_c(s)}) } \leq e^{C \ang{s}^2}
\end{align*}
for all $s \in \Omega_1$.
\end{lemma}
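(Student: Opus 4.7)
The plan is to combine the resolvent estimate \eqref{est:resolvent-twist2} from Proposition~\ref{prop:resolvent-estimate} with the determinant bound of Proposition~\ref{prop:bound-resolvent-determinant}. Comparing \eqref{eq:K_cusp} with the setup of that proposition, $K_c(s)$ has precisely the form $K_{\tilde\psi_0,\psi_1}(s)$ on the model cusp, with $\tilde\psi_0 = \eta_{c,0}$ and $\psi_1 = \eta_{c,3} - \eta_{c,1}$. I will pick an auxiliary cutoff $\psi_0 \in \CI(\overline{X_c})$ so that $\psi_0 \equiv 1$ on $\supp \nabla \tilde\psi_0$, $\psi_0 \psi_1 \equiv 0$, and all three functions are supported away from the boundary at infinity; such a $\psi_0$ exists because the supports of $\nabla \eta_{c,0}$ and of $\eta_{c,3} - \eta_{c,1}$ are disjoint by construction \eqref{eq:etaend}.

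The first step will be to verify that $\spec(\Delta_{X_c,\twist}) \subset [1/4, \infty)$, which is what allows \eqref{est:resolvent-twist2} to be invoked. I would diagonalise $\twist(T)$ on each cusp end and apply the Fourier-type decomposition from Section~\ref{sec:parab_cyl} to reduce each eigen-sector to a direct sum of one-dimensional operators: the modes with $k + \vartheta_j \neq 0$ reduce to $-y^2\pa_y^2 + y^2(2\pi(k + \vartheta_j))^2$ on $(0,\infty)$, whose quadratic form is bounded below by $1/4$; the zero mode (which occurs only if some $\vartheta_j = 0$) reduces to $-y^2\pa_y^2$, whose only formal solutions $y^s, y^{1-s}$ fail to lie in $L^2$ simultaneously for $\Rea s \in (0,1)$, so it contributes no $L^2$-spectrum below $1/4$ either.

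Granted this spectral gap, \eqref{est:resolvent-twist2} applied to $\Delta_{X_c,\twist}$ will yield
\[
\norm{\psi_0 R_{X_c,\twist}(s) \psi_1}_{L^2 \to L^2} \leq \norm{R_{X_c,\twist}(s)}_{L^2 \to L^2} \leq C \ang{s}^{-1}
\]
uniformly for $s \in \Omega_1$. Feeding this into Proposition~\ref{prop:bound-resolvent-determinant} with $\tau = -1$ gives the determinant estimate
\[
\det\bigl( \I + c_1 \abs{K_c(s)} \bigr) \leq e^{C \ang{s}^{(5 + \tau)/2}} = e^{C \ang{s}^2}\,,
\]
which is exactly the claim.

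The main hurdle I expect is confirming the spectral gap in the twisted setting cleanly. If diagonalising the representation and separating variables should turn out fiddlier than sketched, the spectrum computation can be bypassed: by Proposition~\ref{prop:fourier-cusp} the only pole of $R_{X_c,\twist}$ in the half-plane $\{\Rea s > 0\}$ is at $s = 1/2$, which lies outside $\Omega_1$, so $R_{X_c,\twist}$ is already holomorphic there; combined with self-adjointness of $\Delta_{X_c,\twist}$ and the elementary inequality $d_\C(s(1-s), [1/4,\infty)) \gtrsim \ang{s}$ for $\Rea s \geq 1/2 + \eps$ (analogous to the second half of the proof of Proposition~\ref{prop:resolvent-estimate}), the functional calculus will deliver the required $\ang{s}^{-1}$ resolvent bound directly.
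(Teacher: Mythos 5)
Your proposal is correct and follows essentially the same route as the paper: identify $K_c(s)$ with $K_{\tilde\psi_0,\psi_1}(s)$ for the model cusp, establish $\spec(\Delta_{C_\infty,\twist})\subset[1/4,\infty)$ (the paper does this via the absence of resonances with $\Rea s>1/2$, which is your fallback argument), invoke the resolvent bound \eqref{est:resolvent-twist2} to get $\norm{R_{C_\infty,\twist}(s)}\lesssim\ang{s}^{-1}$ on $\Omega_1$, and apply Proposition~\ref{prop:bound-resolvent-determinant} with $\tau=-1$.
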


\begin{proof}
The operator $\Delta_{C_\infty,\twist}$ does not have any discrete eigenvalues, since there are no resonances with $\Re(s) > 1/2$. Hence its spectrum is contained in $[1/4,\infty)$. Therefore the resolvent estimate~\eqref{est:resolvent-twist2} holds for $s \in \Omega_1$ and we obtain the $L^2$-bound
\begin{align*}
\norm{R_{C_\infty,\twist}(s)}_{L^2 \to L^2} \lesssim \ang{s}^{-1}
\end{align*}
for $\Re s \geq 1/2 + \eps$. Applying Proposition~\ref{prop:bound-resolvent-determinant} with $\tau = - 1$ yields the claimed bound on the determinant.
\end{proof}

For the remaining regions, we use the Fourier coefficients that we provided in Section~\ref{sec:parab_cyl}. We fix an eigenbasis $(\psi_j)_{j=1}^{\dim V}$ of $\twist(T) \in \Unit(V)$ with respective eigenvalues $e^{2\pi i \vartheta_j}$, $\vartheta_j \in [0, 1)$, and let $R_{C_\infty,\twist}(s;z,z')^j$ be as in \eqref{RCinftychiszz}. Lemma~\ref{lem:fourier-cusp} implies that we have the Fourier expansion 
\begin{align*}
R_{C_\infty,\twist}(s;z,z')^j = \sum_{k \in \Z} e^{2\pi i(k + \vartheta_j) x} u_{2\pi (k + \vartheta_j)}(s;y,y') e^{-2\pi i(k + \vartheta_j) x'}\,,
\end{align*}
where $u_\kappa(s;y,y')$ is given by \eqref{eq:ukappa_parab} and \eqref{eq:uzero_parab}. We distinguish the two cases: 
\[
\vartheta_j + k = 0\qquad\text{and}\qquad \vartheta_j + k \not = 0\,.
\]
In the former case, we have that $k = \vartheta_j = 0$.

\begin{lemma}\label{lem:est-uparab_zero}
Let $\varphi_0,\varphi_1 \in \CcI( \R_+)$ with disjoint supports. For $s \in \Omega_2$, $\abs{\Im s} > 1/2$, we have that
\begin{align*}
\sup_{y,y' \in \R_+} \,\abs*{ \varphi_0(y) u_0(s;y,y') \varphi_1(y')} \lesssim \ang{s}^{-1}\,.
\end{align*}
The implicit constant may depend on the support of $\varphi_0$ and $\varphi_1$.
\end{lemma}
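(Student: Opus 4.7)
The strategy is entirely elementary: unpack the explicit formula for $u_0(s;y,y')$ given in~\eqref{eq:uzero_parab}, observe that the product $y^{s}(y')^{1-s}$ (or $y^{1-s}(y')^{s}$) is harmless on the compact supports of $\varphi_0,\varphi_1$, and read the decay $\langle s\rangle^{-1}$ off the prefactor $(2s-1)^{-1}$.

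In detail, I would first note that since $\varphi_0$ and $\varphi_1$ have compact support in $\R_+$, there exist constants $0<c<C<\infty$ such that $y,y'\in[c,C]$ whenever the product $\varphi_0(y)\varphi_1(y')$ is non-zero. Next, for $s\in\Omega_2$ we have $\Re s\in(1/2-\eps,1/2+\eps)$, so that
\[
|y^{s}| = y^{\Re s}\,,\qquad |(y')^{1-s}| = (y')^{1-\Re s}\,,
\]
and both quantities are uniformly bounded on $[c,C]$ by a constant depending only on $c,C,\eps$ (and similarly if the roles of $y$ and $y'$ are exchanged). Hence the product $|y^{s}(y')^{1-s}|$ (or its symmetric counterpart) contributes only a bounded factor.

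The decay will come solely from the prefactor in~\eqref{eq:uzero_parab}: for $s=\sigma+i\tau$ with $\sigma\in(1/2-\eps,1/2+\eps)$ and $|\tau|>1/2$, we estimate
\[
|2s-1| \;=\; \bigl((2\sigma-1)^{2}+4\tau^{2}\bigr)^{1/2} \;\geq\; 2|\tau|\,.
\]
Since $\Re s$ is bounded on $\Omega_2$, we also have $\langle s\rangle\asymp |\tau|$ uniformly for $|\tau|>1/2$, so that $|2s-1|^{-1}\lesssim \langle s\rangle^{-1}$. Combining the bounded contribution of $y^{s}(y')^{1-s}$ on the compact supports with this bound on $(2s-1)^{-1}$ yields the claim; no step is subtle, and the only thing to watch is that the implicit constant is allowed to depend on the supports of $\varphi_0$ and $\varphi_1$, which is exactly what the statement permits.
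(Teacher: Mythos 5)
Your proof is correct and follows exactly the route the paper takes: the paper simply writes out formula \eqref{eq:uzero_parab} and states that it "directly gives the estimate," and your argument supplies precisely the missing details (boundedness of $y^{\Re s}(y')^{1-\Re s}$ on the compact supports, and $|2s-1|\geq 2|\Im s|\gtrsim\ang{s}$ on $\Omega_2$ with $|\Im s|>1/2$). Nothing further is needed.
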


\begin{proof}
By~\eqref{eq:uzero_parab}, we have that 
\begin{align*}
u_0(s;y,y') \coloneqq \frac{1}{2s-1} 
\begin{cases}
y^s (y')^{1-s} & \text{for $y \leq y'$}\,,
\\
y^{1-s} (y')^{s} & \text{for $y > y'$}\,.
\end{cases}
\end{align*}
This directly gives the estimate.
\end{proof}

\begin{lemma}\label{lem:est-uparab_kappa}
    Let $\kappa \not = 0$ and $\varphi_0,\varphi_1$ as in Lemma~\ref{lem:est-uparab_zero}. There exists $C > 0$ such that for $s \in \Omega_2$, we have
    \begin{align*}
    \sup_{y,y' \in \R_+} \,\abs*{ \varphi_0(y) u_\kappa(s;y,y') \varphi_1(y')} \leq \ang{s} e^{-C \abs{\kappa}}\,.
    \end{align*}
\end{lemma}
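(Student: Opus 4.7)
Since $\varphi_0$ and $\varphi_1$ have disjoint compact supports in $\R_+$, I may assume without loss of generality that $\supp\varphi_0\subseteq[a,b]$ and $\supp\varphi_1\subseteq[c,d]$ with $b<c$, so on the support of $\varphi_0(y)\varphi_1(y')$ we have $y<y'$ and $y'-y\geq\delta\coloneqq c-b>0$ (the other case $b>d$ being symmetric). Writing $\nu\coloneqq s-1/2$, so $|\Re\nu|<\eps$, formula \eqref{eq:ukappa_parab} reduces on this region to $u_\kappa(s;y,y')=\sqrt{yy'}\,I_{\nu}(|\kappa|y)\,K_{\nu}(|\kappa|y')$, and it remains to bound this uniformly in $s\in\Omega_2$ and $\kappa\neq 0$.

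The easy half is the control of $K_\nu$: the integral representation $K_\nu(x)=\int_0^\infty e^{-x\cosh t}\cosh(\nu t)\,dt$, combined with the elementary identity $|\cosh(\nu t)|^2=\cos^2(\Im\nu\cdot t)+\sinh^2(\Re\nu\cdot t)\leq\cosh^2(\Re\nu\cdot t)$, gives the $\Im s$-independent bound $|K_{\nu}(x)|\lesssim e^{-x}/\sqrt{x}$ for $x\geq x_0>0$, with constants depending only on $\eps$ and $x_0$. The analogous naive argument for $I_\nu$ (via the Schl\"afli integral or the connection formula involving $\sin(\nu\pi)$) produces an additional factor $e^{|\Im s|\pi/2}$, which must be shown to cancel against compensating decay in $K_\nu$ not visible in the crude bound.

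To extract this cancellation, I would split according to whether $|\kappa|\geq C_0|s|$ or $|\kappa|<C_0|s|$ for a large fixed $C_0$. In the first regime, the standard large-argument asymptotics $I_\nu(x)\sim e^x/\sqrt{2\pi x}$ and $K_\nu(x)\sim\sqrt{\pi/(2x)}\,e^{-x}$ hold uniformly in $|\Re\nu|<\eps$, yielding $\sqrt{yy'}\,|I_\nu(|\kappa|y)K_\nu(|\kappa|y')|\lesssim e^{-|\kappa|\delta}/|\kappa|$. In the second regime, I would use Olver's uniform asymptotic expansion with Debye exponent $\eta(z)=\sqrt{1+z^2}+\log(z/(1+\sqrt{1+z^2}))$ evaluated at $z_j=|\kappa|y_j/\nu$; the exponentials from $I_\nu$ and $K_\nu$ combine to $\exp(\nu(\eta(z_1)-\eta(z_2)))$, whose real part, computed along the worst-case contour $\nu=iT$, is bounded above by $-c|\kappa|(y'-y)\leq -c|\kappa|\delta$. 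The polynomial corrections in Olver's expansion contribute at most $O(\ang{s})$, giving the claimed bound.

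The hard part will be the caustic regime $|z_j|\asymp 1$ (i.e., $|\kappa|y\asymp|\Im s|$), where Olver's expansion degenerates and must be replaced by the uniform Airy-type expansion; it is precisely these Airy-type corrections that force the factor $\ang{s}$ (rather than $O(1)$) on the right-hand side. The constant $C$ in the final estimate can be chosen as any positive number strictly less than~$\delta$.
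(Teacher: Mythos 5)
You have correctly located the real difficulty: on $\Omega_2$ the order $\nu = s-1/2$ has bounded real part but unbounded imaginary part, $\abs{I_\nu}$ admits bounds growing like $e^{\pi\abs{\Im s}/2}$ while $\abs{K_\nu}$ decays at the same rate, and the proof must exhibit this cancellation. However, your proposal does not carry the argument through. The decisive steps --- the claim that $\Re\bigl(\nu(\eta(z_1)-\eta(z_2))\bigr)\leq -c\abs{\kappa}(y'-y)$ along $\nu = iT$, and the treatment of the transition regime $\abs{\kappa}y\asymp\abs{\Im s}$ by Airy-type expansions --- are announced but not performed, and you yourself flag the latter as ``the hard part.'' Uniform asymptotics for Bessel functions of large purely imaginary order are a genuinely delicate piece of special-function theory (the turning-point structure differs from the real-order case), and without executing it the proposal is a plan, not a proof. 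There is also a quantitative slip in your ``easy'' regime: the Poincar\'e large-argument expansion of $I_\nu(x)$ has relative error $O(\abs{\nu}^2/x)$, so requiring only $\abs{\kappa}\geq C_0\abs{s}$ (hence $\abs{\kappa}y\asymp\abs{s}$ on the compact supports) does not place you in the range where $I_\nu(x)\sim e^x/\sqrt{2\pi x}$ holds uniformly in $\nu$; you would need $\abs{\kappa}\gg\abs{s}^2$, which enlarges the region in which the uniform theory is unavoidable.

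The paper's proof sidesteps all of this. It quotes the matched elementary bounds (Borthwick, pp.~202--203), valid for $\Re s\geq 1/2-\eps$, $w>0$ and any $\delta\in(0,1)$,
\[
\abs{I_{s-1/2}(w)}\lesssim\abs{\gammafunc(s)}^{-1}(w/2)^{\Re s-1/2}e^{w},
\qquad
\abs{K_{s-1/2}(w)}\lesssim\abs{s\gammafunc(s)}(w/2)^{-\Re s-1/2}e^{-\delta w},
\]
in which the exponential dependence on $\Im s$ is packaged into reciprocal $\gammafunc$-factors. In the product defining $u_\kappa$ these factors cancel identically, leaving $\abs{u_\kappa(s;y,y')}\lesssim\abs{s}\abs{\kappa}^{-1}(y/y')^{\Re s}e^{\abs{\kappa}(y-\delta y')}$ (for $y<y'$ on the supports, say); choosing $\delta$ so close to $1$ that $y-\delta y'\leq -C/(2\pi)<0$ on the disjoint compact supports gives the claim in two lines. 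If you wish to keep your own route, the minimal repair is to replace the asymptotic-expansion program by such a matched pair of global bounds whose $\Im s$-dependence cancels exactly.
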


\begin{proof}
By \eqref{eq:ukappa_parab}, we have that
\begin{align*}
u_\kappa(s;y,y') = \begin{cases}
\sqrt{yy'} I_{s-1/2}(\abs{\kappa}y) K_{s-1/2}(\abs{\kappa}y') & \text{for $y\leq 
y'$}\,,
\\
\sqrt{yy'} K_{s-1/2}(\abs{\kappa}y) I_{s-1/2}(\abs{\kappa}y') & \text{for $y > y'$}\,,
\end{cases}
\end{align*}
where $I_\nu$ and $K_\nu$ are the modified Bessel functions. For $\Re s \geq 1/2 - \eps$, $w > 0$, and $\delta \in (0,1)$, we have the estimates (see \cite[p.~202--203]{Borthwick_book})
\begin{align}
\abs{I_{s-1/2}(w)} \lesssim \abs{\gammafunc(s)}^{-1} (w/2)^{\Re s - 1/2} e^w\,,\label{eq:I-bessel-bound}
\\
\abs{K_{s-1/2}(w)} \lesssim \abs{s\gammafunc(s)} (w/2)^{-\Re s - 1/2} e^{-\delta w}\,. \label{eq:K-bessel-bound}
\end{align}
Without loss of generality, we may suppose that $y < y'$ on $\supp (\varphi_0 \otimes \varphi_1)$. Then the function $u_\kappa$ becomes
\begin{align*}
u_\kappa(s;y,y') = \sqrt{yy'} I_{s-1/2}(\abs{\kappa}y) K_{s-1/2}(\abs{\kappa}y')
\end{align*}
There exists $C_1 > 0$ such that for any $\delta \in (0,1)$, the function $u_k$ can be estimated by
\begin{align*}
\abs{u_\kappa(s,y,y')} \leq C_1 \frac{\abs{s}}{\abs{\kappa}} \left(\frac{y}{y'}\right)^{\Re s} e^{\abs{\kappa} (y-\delta y')}\,.
\end{align*}
Let $\delta \in (0,1)$ and $C > 0$ such that $y - \delta y' \leq -\frac{C}{2\pi} < 0$ on  $\supp(\varphi_0 \otimes \varphi_1)$. We obtain the bound
\begin{align*}
\abs{u_\kappa(s; y,y')} \lesssim \ang{s} e^{-C \abs{\kappa}}\,.
\end{align*}
\end{proof}

\begin{lemma}\label{lem:cusp-a2}
For any $c_1 > 0$ there exists $C > 0$ such that
\begin{align*}
\abs*{(2s-1)^{n_c^\twist} \det( \I + c_1 \abs{K_c(s)}) } \leq e^{C \ang{s}^3}
\end{align*}
for all $s \in \Omega_2$.
\end{lemma}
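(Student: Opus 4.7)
The plan is to mirror the strategy of Lemma~\ref{lem:cusp-a1} but replace the global $L^2$ resolvent estimate, which fails on the critical strip, by the explicit Fourier decomposition from Lemma~\ref{lem:fourier-cusp} and deal with the pole at $s=1/2$ via the compensating factor $g_c(s)$. Concretely, I would split
\[
R_{C_\infty,\twist}(s) = R^0(s) + R^{\neq 0}(s),
\]
where $R^0(s)$ collects the Fourier summands with $k+\vartheta_j=0$ (equivalently $k=0$ and $\vartheta_j=0$) and $R^{\neq 0}(s)$ the remaining modes, and correspondingly write $K_c(s) = K_c^0(s) + K_c^{\neq 0}(s)$ with $K_c^\bullet(s) \coloneqq [\LapTwist,\eta_{c,0}]R^\bullet(s)(\eta_{c,3}-\eta_{c,1})$.

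For $K_c^{\neq 0}(s)$ the bound from Lemma~\ref{lem:est-uparab_kappa} gives $\lesssim \ang{s}\,e^{-C\abs{\kappa}}$ on each Fourier coefficient, and summing the resulting geometric series yields a sup-norm bound $\lesssim \ang{s}$ for the integral kernel of $\psi_0 R^{\neq 0}(s)\psi_1$. Since the cutoff supports are bounded, this translates into the $L^2$-operator norm bound $\norm{\psi_0 R^{\neq 0}(s)\psi_1} \lesssim \ang{s}$ on $\Omega_2$. Because $R^{\neq 0}$ satisfies $(\LapTwist - s(1-s))R^{\neq 0} = \I - P_0$, where $P_0$ is the (bounded, finite-rank after cutoffs) projection onto the zero-mode subspace, the Sobolev estimates of Lemma~\ref{lem:bound-cylinder-resolvent-sobolev} and the singular-value bound of Lemma~\ref{lem:L2-to-trace} carry over essentially unchanged, so the proof of Proposition~\ref{prop:bound-resolvent-determinant} applies with $\tau=1$ and yields
\[
\det(\I + 2c_1 \abs{K_c^{\neq 0}(s)}) \leq e^{C\ang{s}^3}.
\]

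For $K_c^0(s)$, the key structural observation is that it has finite rank at most $n_c^\twist$: each cusp contributes $n_{c,j}^\twist$ rank-one summands, one per eigenvector of $\twist(T)$ with eigenvalue $1$. From the explicit expression $u_0(s;y,y') = (2s-1)^{-1}\, y^s (y')^{1-s}$ on $y\leq y'$ one sees that $(2s-1)K_c^0(s)$ is entire with polynomial operator-norm growth $\ang{s}^{O(1)}$ on $\Omega_2$, the polynomial coming from the first-order commutator $[\LapTwist,\eta_{c,0}]$. Applying the rank-$N$ determinant inequality $\det(\I + 2c_1\abs{A}) \leq (1 + 2c_1\norm{A})^N$ gives
\[
\abs{g_c(s)} \det(\I + 2c_1\abs{K_c^0(s)}) \leq \abs{2s-1}^{n_c^\twist}\bigl(1 + 2c_1\norm{K_c^0(s)}\bigr)^{n_c^\twist} \lesssim \ang{s}^{p}
\]
for a suitable integer $p$, since the prefactor $\abs{2s-1}^{n_c^\twist}$ cancels the $\abs{2s-1}^{-n_c^\twist}$ coming from the norm.

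Finally I would combine the two pieces via the sub-multiplicative inequality
\[
\det(\I + c_1\abs{K_c^0 + K_c^{\neq 0}}) \leq \det(\I + 2c_1\abs{K_c^0})\det(\I + 2c_1\abs{K_c^{\neq 0}})
\]
of \cite[Lemma~6.1]{GuZw95}, absorbing the polynomial factor into the exponential to obtain the claimed bound $e^{C\ang{s}^3}$. The main obstacle is the transfer of Proposition~\ref{prop:bound-resolvent-determinant} from the full resolvent to the projected operator $R^{\neq 0}$; if verifying this directly via the identity $(\LapTwist - s(1-s))R^{\neq 0} = \I - P_0$ proves awkward, an alternative is to apply the proposition to the entire family $(2s-1)^{n_c^\twist} R_{C_\infty,\twist}(s)$ and treat the finite-rank correction through the identity $R^{\neq 0} = R - R^0$.
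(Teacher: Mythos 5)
Your argument is correct and rests on the same key ingredients as the paper's proof --- the kernel bound of Lemma~\ref{lem:est-uparab_kappa}, Proposition~\ref{prop:bound-resolvent-determinant} with $\tau=1$, and the observation that the pole at $s=1/2$ enters only through the at most $n_c^\twist$ zero Fourier modes, each carrying a factor $(2s-1)^{-1}$ that is cancelled by $(2s-1)^{n_c^\twist}$ --- but it organizes them differently. You split the \emph{operator} into zero and non-zero modes uniformly on $\Omega_2$, whereas the paper splits the \emph{region}: for $\abs{\Im s}>1/2$ it applies Proposition~\ref{prop:bound-resolvent-determinant} to the full model resolvent, using Lemma~\ref{lem:est-uparab_zero} (which your route does not need) to control the zero mode there, and for $\abs{\Im s}\le 1/2$ it simply notes that $(2s-1)^{n_c^\twist}\det(\I+c_1\abs{K_c(s)})$ is bounded on that compact set. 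The price of your cleaner uniform statement is exactly the obstacle you flag: Proposition~\ref{prop:bound-resolvent-determinant} and Lemma~\ref{lem:bound-cylinder-resolvent-sobolev} are proved for the actual resolvent, and transferring them to $R^{\neq 0}$ means rerunning the integration-by-parts argument with $(\LapTwist-s(1-s))R^{\neq 0}\psi_1 u=(\I-P_0)\psi_1 u$; this does go through, since $P_0$ commutes with multiplication by the cutoffs (which depend on $y$ only), so the extra term $\psi_0 P_0\psi_1 u$ vanishes, but it has to be carried out rather than merely asserted --- the paper's region split avoids the issue entirely. Your finite-rank treatment of $K_c^0(s)$ is sound and in fact more explicit than the paper's one-line remark: after the cutoffs the supports force $y<y'$, so each zero mode contributes the rank-one kernel $(2s-1)^{-1}y^s(y')^{1-s}\ang{\psi_j,\cdot}\psi_j$, and the rank-$n_c^\twist$ determinant bound combined with submultiplicativity yields the claim (even with a polynomial rather than exponential bound for that factor).
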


\begin{proof}
Lemma~\ref{lem:est-uparab_zero} and Lemma~\ref{lem:est-uparab_kappa} together with Proposition~\ref{prop:bound-resolvent-determinant} taking $\tau = 1$ imply the bound of $\det( \I + c_1 \abs{K_c(s)})$ for $s \in \Omega_2$ and $\abs{\Im s > 1/2}$. For $s \in \Omega_2$ and $\abs{\Im s \leq 1/2}$, we note that the number of $u_0$ Fourier modes is at most $n_c^\twist$. Therefore each prefactor $(2s-1)^{-1}$ in $u_0$ is canceled out by $(2s - 1)^{n_c^\twist}$. Consequently, $(2s-1)^{n_c^\twist} \det( \I + c_1 \abs{K_c(s)})$ is bounded.
\end{proof}

\begin{lemma}\label{lem:cusp-a3}
For any $c_1 > 0$ there exists $C > 0$ such that
\begin{align*}
\abs*{\det( \I + c_1 \abs{K_c(s)}) } \leq e^{C \ang{s}^2}
\end{align*}
for all $s \in \Omega_3$.
\end{lemma}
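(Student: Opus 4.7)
The plan is to exploit the reflection $s\mapsto 1-s$, which maps $\Omega_3$ into $\Omega_1$, so that the bound of Lemma~\ref{lem:cusp-a1} can be transported from $1-s$ back to $s$; the discrepancy between the meromorphic continuation $R_{C_\infty,\twist}(s)$ at $s\in\Omega_3$ and at $1-s\in\Omega_1$ is an explicit ``scattering'' correction that I will estimate directly.

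First, I would derive the reflection identity at the level of the Fourier coefficients $u_\kappa(s;y,y')$ supplied by Lemma~\ref{lem:fourier-cusp}. The classical Bessel identity $I_\nu(z)-I_{-\nu}(z) = -\tfrac{2\sin(\pi\nu)}{\pi}K_\nu(z)$ together with $K_\nu = K_{-\nu}$ yields, for $\kappa\neq 0$,
\[
u_\kappa(s;y,y') - u_\kappa(1-s;y,y') \;=\; \frac{2\cos(\pi s)}{\pi}\sqrt{yy'}\,K_{1/2-s}(|\kappa|y)\,K_{1/2-s}(|\kappa|y'),
\]
and a direct calculation from \eqref{eq:uzero_parab} gives
\[
u_0(s;y,y') - u_0(1-s;y,y') \;=\; \frac{y^s(y')^{1-s} + y^{1-s}(y')^s}{2s-1}.
\]
Summed over the Fourier modes $k$ and the eigenvectors $\psi_j$, these identities define an explicit operator $N(s)$ with $R_{C_\infty,\twist}(s) = R_{C_\infty,\twist}(1-s) + N(s)$, and therefore $K_c(s) = K_c(1-s) + M(s)$ where $M(s)\coloneqq[\LapTwist,\eta_{c,0}]\,N(s)\,(\eta_{c,3}-\eta_{c,1})$.

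Since $s\in\Omega_3$ forces $1-s\in\Omega_1$, Lemma~\ref{lem:cusp-a1} yields $\det(\I+c_1|K_c(1-s)|)\le e^{C\ang{1-s}^2}\lesssim e^{C\ang{s}^2}$. The correction $M(s)$ splits further into a zero-mode part of rank at most $2n_c^\twist$, whose operator norm grows at most exponentially in $\ang{s}$ on the bounded support of the cutoffs and which therefore contributes at most a factor $e^{C\ang{s}}$ to the determinant, and a non-zero-mode part that by Fourier orthogonality decomposes as an orthogonal sum of rank-one operators indexed by the non-zero Fourier modes $\kappa_k$. I would estimate their singular values via the uniform Debye asymptotics $K_\nu(w)\sim(2\nu/(ew))^\nu$ for $\nu=1/2-s$ large, paired with the Stirling cancellation $|\cos(\pi s)\gammafunc(1-s)^2|\lesssim \ang{s}^{1-2\Re s}$. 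A careful count then shows that only $O(\ang{s})$ of these singular values exceed $1$, each with $\log\mu_k\lesssim \ang{s}\log(\ang{s}/k)$, and Stirling's identity $\sum_{k=1}^{\nu}\log(\nu/k)\sim\nu$ gives $\log\det(\I+c_1|M(s)|)\lesssim \ang{s}^2$.

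Combining the two determinant bounds via the Weyl-type singular-value inequality $\mu_{j+k-1}(A+B)\le \mu_j(A)+\mu_k(B)$ (or equivalently via a splitting in the spirit of \cite[Lemma 6.1]{GuZw95} applied to $A=K_c(1-s)$ and $B=M(s)$) produces the claimed bound $\det(\I+c_1|K_c(s)|)\le e^{C\ang{s}^2}$ on $\Omega_3$. The main obstacle is the singular-value analysis of the non-zero-mode part of $M(s)$: the crude Bessel estimate~\eqref{eq:K-bessel-bound} alone forces only $\log\det\lesssim \ang{s}^2\log\ang{s}$, and one must work in the Debye regime so that the $\gammafunc(1-s)^2$-growth cancels the $(|\kappa|y)^{-\nu}$-decay and the sharp quadratic exponent is recovered.
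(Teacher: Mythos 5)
Your proposal is correct and follows essentially the same route as the paper: the reflection identity $R_{C_\infty,\twist}(s)=R_{C_\infty,\twist}(1-s)+A(1-s)$ obtained from the Bessel connection formulas, the bound of Lemma~\ref{lem:cusp-a1} applied at $1-s\in\Omega_1$, and an orthogonal rank-one (rank-two for the zero mode) decomposition of the correction whose singular values are controlled and multiplied over the $O(\ang{s})$ relevant modes using Stirling's formula. The only inaccuracy is your closing claim that the estimate \eqref{eq:K-bessel-bound} alone would lose a factor $\log\ang{s}$: since that bound already carries the factors $\abs{s\gammafunc(s)}(w/2)^{-\Re s-1/2}e^{-\delta w}$, the paper's computation with it together with the same Stirling summation yields $e^{C\ang{s}^2}$ directly, so no separate Debye-regime refinement is needed.
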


\begin{proof}
For $s \in \Omega_3$, we write
\begin{align}\label{eq:Rcinftys1-s}
R_{C_\infty,\twist}(s) = R_{C_\infty,\twist}(1-s) + A(1-s)\,,
\end{align}
where
\begin{align*}
A(s) & \coloneqq \sum_{j=1}^{\dim(V)} \sum_{k \in \Z} A_k^j(s;z,z')
\intertext{with}
A_k^j(s;z,z') & \coloneqq e^{2\pi i (\vartheta_j + k)(x-x')} a_k^j(s;y,y') \ang{\psi_j, \cdot} \psi_j
\intertext{and}
a_k^j(s;y,y') & \coloneqq u_{2\pi (k + \vartheta_j)}(1-s;y,y') - u_{2\pi (k + \vartheta_j)}(s;y,y')\,.
\end{align*}
We use \eqref{eq:Rcinftys1-s} to investigate the behavior of $R_{C_\infty,\twist}(s)$ for $s \in \Omega_3$. It is sufficient to estimate $A(s)$ and use the result of Lemma \ref{lem:cusp-a1} to estimate $R_{C_\infty,\twist}(s)$ for $s \in \Omega_1$ (that is, for $1-s \in \Omega_3$). As before, we distinguish between the cases $k + \vartheta_j = 0$ and $k + \vartheta_j \not = 0$.

For the first case, we have that
\begin{align*}
a_0^j(s;y,y') = \frac{1}{2\pi (1 - 2s)} (y^{1-s} (y')^s - y^s (y')^{1-s})
\end{align*}
and hence for any fixed $\ell \in \N_0$, 
\begin{align*}
\abs*{\pa_y^\ell a_0^j(s;y,y')} \lesssim e^{C\Re s}
\end{align*}
for $s \in \Omega_1$ and any $j=1, \ldots, \dim(V)$ and the implicit constant and $C$ are independent of $s$. 

The definition of the modified Bessel function of the second kind (\cite[p. 78, (6-7)]{Watson44} and \cite[p. 79, (8)]{Watson44}) implies that 
\begin{align}
I_{-\nu}(w) &= I_\nu(w) + \frac{2}{\pi} \sin(\pi \nu) K_\nu(w)\,, \label{eq:I-bessel-connection} 
\\ 
K_{-\nu}(w) &= K_\nu(w)\,.\label{eq:K-bessel-connection}
\end{align}
Moreover, we need \cite[p.79, (4)]{Watson44}
\begin{align}
\pa_w K_\nu(w) &= -K_{\nu + 1}(w) + \frac{\nu}{w}K_\nu(w)\,.\label{eq:K-bessel-deriv}
\end{align}
For $k + \vartheta_j \not = 0$, we observe that similarly as in \cite[p. 204]{Borthwick_book}, we have that
\begin{align*}
a_k^j(s;y,y') = - \frac{2}{\pi} \cos(\pi s) \sqrt{yy'} K_{s-1/2}(2\pi \abs{k + \vartheta_j} y) K_{s-1/2}(2\pi \abs{k + \vartheta_j} y')\,,
\end{align*}
which is a direct consequence of \eqref{eq:K-bessel-connection} and \eqref{eq:I-bessel-connection}. For any fixed $\ell \in \N_0$ we obtain the following bound on the function $a_k^j$ by using \eqref{eq:K-bessel-bound} and \eqref{eq:K-bessel-deriv}:
\begin{align*}
\abs*{\pa_y^\ell a_k^j(s;y,y')} \lesssim e^{C\ang{s}} \left( \frac{\ang{s}}{\abs{k+\vartheta_j}}\right)^{\Re s} e^{-c\,\abs{k + \vartheta_j}}\,,
\end{align*}
where the constants are independent of $k+\vartheta_j$ and $s$. This gives an estimate
\begin{align*}
& \norm*{ [\LapTwist,  \eta_{c,0}] A_k^j(s) (\eta_{c,3} - \eta_{c,1})}
\\
& \hphantom{ [\LapTwist,  \eta_{c,0}] A_k^j(s) } \lesssim
\begin{cases}
e^{C\Re s} & \text{if $k+\vartheta_j = 0$}\,,
\\
e^{C\ang{s}} \left( \frac{\ang{s}}{\abs{k+\vartheta_j}}\right)^{\Re s} e^{-c\abs{k + \vartheta_j}} & \text{if $k + \vartheta_j \not = 0$}\,.
\end{cases}
\end{align*}
The operator $A_k^j(s)$ has rank $1$ for $k + \vartheta_j \not = 0$. For $k + \vartheta_j = 0$, its rank is at most $2$. The operators $A_k^j(s)$ and $A_{k'}^{j'}(s)$ are orthogonal if $j \neq j'$ or $k \neq k'$. Since $\eta_{c,\bullet}$ is independent of $x$, we have
\[
[\LapTwist,\eta_{c,0}] = - y^2 \left(\pa_y\eta_{c,0} \pa_y + \pa_y^2 \eta_{c,0}\right)\,.
\]
Therefore the orthogonality continues to hold for $[\LapTwist, \eta_{c,0}] A_k^j(s) (\eta_{c,3} - \eta_{c,1})$ and $[\LapTwist, \eta_{c,0}] A_{k'}^{j'}(s) (\eta_{c,3} - \eta_{c,1})$. Since we decomposed the operator into a sum of finitely many rank $2$ operators and infinitely many rank $1$ operators,
we can use the min-max characterization of the singular values to estimate the singular values by the norm of the rank $1$ operators. Thus, for any $m \in \N_{+}$ we have
\begin{align*}
\mu_m\left([\LapTwist, \eta_{c,0}] A(s) (\eta_{c,3} - \eta_{c,1})\right) \leq e^{C\ang{s}} \left( \frac{\ang{s}}{m} \right)^{\Re s} e^{-cm}
\end{align*}
for some constants $C,c > 0$ independent of $s$ and $m$. For $s \in \Omega_1$, using the elementary inequality $\mu_m(A + B) \leq \mu_1(A) + \mu_m(B)$ and $\mu_1(A) = \norm{A}$, we obtain
\begin{align*}
\mu_m(K_c(1-s)) \leq \norm{K_c(s)} + \mu_m([\LapTwist, \eta_{c,0}] A(s) (\eta_{c,3} - \eta_{c,1}))\,.
\end{align*}
This implies that
\begin{align*}
\abs*{\det(\I + c_1\abs{K_c(1-s)})} &\lesssim \prod_{m=1}^\infty \left( 1 + \mu_m( [\LapTwist, \eta_{c,0}] A(s) (\eta_{c,3} - \eta_{c,1}) ) \right)
\\
&\lesssim \prod_{m = 1}^\infty e^{C\ang{s}} \left( \frac{\ang{s}}{m} \right)^{\Re s} e^{-c m}
\\
&\lesssim e^{C\ang{s}^2}\,.
\end{align*}
This completes the proof.
\end{proof}

We have now estimates for $\det(\I + c_1 \abs{K_c(s)})$ for all $s \in \C$ and hence can prove Proposition~\ref{prop:cusp-determinant}.

\begin{proof}[Proof of Proposition~\ref{prop:cusp-determinant}]
Combining the estimates for the three regions, Lemma~\ref{lem:cusp-a1}, Lemma~\ref{lem:cusp-a2} and Lemma~\ref{lem:cusp-a3}, we obtain
\begin{align*}
\abs*{g_{c}(s) \det( \I + c_1 \abs{K_c(s)}) } \leq
\begin{cases}
e^{C\ang{s}^2}\,, & \abs{\Im s - 1/2} \geq \eps\,,
\\
e^{C\ang{s}^3}\,, & \abs{\Im s - 1/2} \leq \eps\,.
\end{cases}
\end{align*}
Using the Phragm\'en--Lindelöf theorem (see for instance~\cite[Sect.~XII, Theorem 6.1]{LangComplexAnalysis}), we obtain the claimed bound.
\end{proof}

\subsection{The Funnel Term}
As in the case of the cusp term, without loss of generality, we suppose without loss of generality that ${X_f = F_\ell}$. That is, we treat only one funnel end $F_\ell$, which is half of the hyperbolic cylinder $C_\ell = \ang{h_\ell} \bs \h$ with length $\ell \in \R_+$. Let $(\psi_j)_{j=1}^{\dim V}$ be an eigenbasis of $\twist(h_\ell)$ with respective eigenvalues $e^{2\pi i \vartheta_j}$ and let $R_{F_\ell,\twist}(s;z,z')^j$ be as in \eqref{eq:RCell_matcoeff}. We recall from \eqref{eq:fourier-exp-funnel} that the resolvent admits a Fourier expansion,
\begin{align*}
R_{F_\ell,\twist}(s;z,z')^j = \frac{1}{\ell} \sum_{k \in \Z} e^{i \phi(k+\vartheta_j) } \tilde{v}_{k + \vartheta_j}(s;r,r') e^{-i\phi' (k+\vartheta_j)}\,,
\end{align*}
where $\tilde{v}_\kappa(s;r,r')$ is as in \eqref{eq:vtilde-kappa}. To estimate the determinant $\det(\I + c \abs{K_f(s)})$, we have to cancel the poles coming from the model resolvent. Recall that $\ResSet_{F_\ell,\twist}$, defined in Proposition \ref{prop:fourier-funnel}, is the multiset of resonances of the resolvent $R_{F_\ell,\twist}(s)$. We also define the set $\widetilde{\ResSet_{F_\ell,\twist}}$ as 
\[
 \widetilde{\ResSet_{F_\ell,\twist}} \coloneqq \mc \ResSet_{F_\ell,\twist} \cup i\mc \ResSet_{F_\ell,\twist}\,,
\]
counted with multiplicities. The Weierstrass product of~$\widetilde{\ResSet_{F_\ell,\twist}}$ is then given by 
\begin{align}\label{def:gFellchi}
g_{F_\ell,\twist}(s) \coloneqq \mc P_{F_\ell,\twist}(s) \cdot \mc P_{F_\ell,\twist}(-is)\,,
\end{align}
where $\mc P_{F_\ell,\twist}(s)$ denotes the Weierstrass product of $\ResSet_{F_\ell,\twist}$. By the upper bound on the resonances for the model funnel, Remark~\ref{rem:upper-bound-funnel}, we see that the convergence exponent of the Weierstrass product $\mc P_{F_\ell,\twist}(s)$ is $2$, so by \cite[Theorem~2.6.2]{Boas_entire} the function $g_{F_\ell,\twist}$ is entire and of order $2$. Due to the cancellation of zeros
\begin{align*}
\sum_{\lambda \in\widetilde{\ResSet_{F_\ell,\twist}}\,, \abs{\lambda} < r} \lambda^{-2} = 0\,.
\end{align*}
Lindelöf's theorem~\cite[Theorem~2.10.1]{Boas_entire} then implies that the function $g_{F_\ell,\twist}$ is of finite type, which means that
\begin{align}\label{eq:gf-bound}
\abs{g_{F_\ell,\twist}(s)} \leq e^{C\ang{s}^2}\,.
\end{align}

\begin{lemma}\label{lem:funnel-determinant}
For any $c > 0$ there exists $C > 0$ such that
\begin{align*}
\abs*{g_{F_\ell,\twist}(s) \det( \I + c \abs{K_f(s)}) } \leq e^{C \ang{s}^2}
\end{align*}
for all $s \in \C$.
\end{lemma}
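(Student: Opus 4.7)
The plan is to follow the three-region strategy used in Proposition~\ref{prop:cusp-determinant} verbatim, replacing the Bessel-function analysis of the cusp by the hypergeometric analysis of Section~\ref{sec:model-funnel}, and to close by Phragmén-Lindelöf. Writing $\C = \Omega_1 \cup \Omega_2 \cup \Omega_3$ as before, the factor $g_{F_\ell,\twist}$ cancels by construction every pole of $\det(\I+c\abs{K_f(s)})$ coming from $\ResSet_{F_\ell,\twist}$, so the product is entire; combined with \eqref{eq:gf-bound} it therefore suffices to bound $\det(\I+c\abs{K_f(s)})$ on each region separately.

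On $\Omega_1$, I would argue as in Lemma~\ref{lem:cusp-a1}. By Proposition~\ref{prop:fourier-funnel} every resonance in $\ResSet_{F_\ell,\twist}$ has real part at most $-1$, since $\twist(h_\ell)$ is unitary forces $\log\lambda$ to be purely imaginary; hence $\Delta_{F_\ell,\twist}$ has no $L^2$-eigenvalue in $[0,1/4)$. Proposition~\ref{prop:resolvent-estimate} then gives $\norm{R_{F_\ell,\twist}(s)}\lesssim \ang{s}^{-1}$ on $\Omega_1$, and Proposition~\ref{prop:bound-resolvent-determinant} with $\tau=-1$ yields the bound $e^{C\ang{s}^2}$.

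On $\Omega_2$, I would work mode by mode from \eqref{eq:fourier-exp-funnel}. Diagonalising $\twist(h_\ell)$ makes the Fourier modes $\psi_j\otimes e^{i(k+\vartheta_j)\phi}$ orthogonal, so $K_f(s)$ splits as an orthogonal direct sum of rank-one operators whose norms, after the cutoff $(\eta_{f,3}-\eta_{f,1})$ restricts $(r,r')$ to a compact set bounded away from $\{r=0\}$, are controlled by finitely many derivatives of $\widetilde v_{k+\vartheta_j}(s;r,r')$. Using Kummer connection formulas for $\FF$, Stirling's asymptotic for $\beta_{k+\vartheta_j}(s)$, and the large-$r$ decay of $v_\kappa^0(s;r)$, I expect a bound of the form $\ang{s}^N e^{-c\abs{k+\vartheta_j}}$ for the large modes, while the finitely many small modes produce exactly the Gamma-function poles contained in $\mc P_{F_\ell,\twist}(s)$. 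Summing singular values via \cite[Lemma~6.1]{GuZw95} as in Lemma~\ref{lem:cusp-a2} gives
\[
\abs{g_{F_\ell,\twist}(s)\det(\I+c\abs{K_f(s)})}\lesssim e^{C\ang{s}^3}
\]
on $\Omega_2$. On $\Omega_3$, I would apply the reflection identity \eqref{eq:resolvent-scattering-matrix} to write $R_{F_\ell,\twist}(s) = R_{F_\ell,\twist}(1-s) + (2s-1)E_{F_\ell,\twist}(1-s)S_{F_\ell,\twist}(s)E_{F_\ell,\twist}(1-s)^T$, reducing the problem to the $\Omega_1$-bound on the first summand plus an analysis of the scattering remainder. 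Decomposing the remainder modewise and using the explicit formula \eqref{eq:smatrix-funnel} together with the asymptotics \eqref{eq:poisson-asymptotics-funnel}, each rank-one summand has norm $\lesssim \ang{s}^N e^{-c\abs{k+\vartheta_j}}$ up to Gamma-function poles that are precisely the zeros of $\mc P_{F_\ell,\twist}(-is)$ in this region—this is the reason for including the second factor in the definition \eqref{def:gFellchi} of $g_{F_\ell,\twist}$. A min-max argument mirroring Lemma~\ref{lem:cusp-a3} then gives the bound $e^{C\ang{s}^2}$ on $\Omega_3$, and Phragmén-Lindelöf applied to the entire function $g_{F_\ell,\twist}\cdot\det(\I+c\abs{K_f(\,\cdot\,)})$ on the vertical strip $\Omega_2$ promotes the $e^{C\ang{s}^3}$ estimate to $e^{C\ang{s}^2}$.

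The main obstacle will be the $\Omega_2$ analysis: obtaining uniform bounds on $\widetilde v_\kappa(s;r,r')$ and its first few $r$-derivatives jointly in $s \in \Omega_2$ and in the continuous parameter $\kappa=k+\vartheta_j$, and verifying that the Gamma-function poles produced by $\beta_\kappa$ match exactly those absorbed by $\mc P_{F_\ell,\twist}(s)$ (and, in the $\Omega_3$ step, by $\mc P_{F_\ell,\twist}(-is)$), is considerably more delicate than the Bessel-function argument in the cusp because the relevant hypergeometric parameters move with $k$.
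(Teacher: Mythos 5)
Your overall architecture --- the three-strip decomposition, the resolvent bound on $\Omega_1$, the reflection identity plus scattering-matrix singular-value analysis on $\Omega_3$, and the final Phragm\'en--Lindel\"of promotion on the critical strip --- coincides with the paper's proof, which assembles Lemmas~\ref{lem:funnel-a1}, \ref{lem:funnel-a2} and~\ref{lem:funnel-a3}. The one place where you genuinely diverge is $\Omega_2$: the paper does no Fourier-mode analysis there. Instead it uses the Euler integral representation of the hypergeometric function to write the cylinder kernel as $\frac{1}{4\pi}\sum_{k}\twist(h_\ell)^k\int_0^1 (t(1-t))^{s-1}(\sigma(z,e^{k\ell}z')-t)^{-s}\,dt$ and the elementary lower bound $\sigma(z,e^{k\ell}z')-1\ge e^{c_1\abs{k\ell}-c_2}$ on the disjoint supports of the cutoffs to obtain a \emph{uniform} operator-norm bound $\norm{\varphi_0 R_{F_\ell,\twist}(s)\varphi_1}\le C$ on $\Omega_2$, i.e.\ $\tau=0$ in Proposition~\ref{prop:bound-resolvent-determinant}, giving $e^{C\ang{s}^{5/2}}$ there. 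Your modewise route through $\widetilde v_\kappa$, Kummer connection formulas and Stirling can be made to work, but, as you note yourself, controlling the hypergeometric functions uniformly in $s$ and $\kappa=k+\vartheta_j$ is considerably more delicate; since any polynomial exponent $\tau$ is repaired by the final Phragm\'en--Lindel\"of step, the paper's cruder but uniform bound is the cheaper choice. Two small inaccuracies are worth flagging. First, by Proposition~\ref{prop:fourier-funnel} all funnel resonances have real part in $-(1+2\N_0)$, so $K_f(s)$ has no poles in $\Omega_2$ and there is nothing for $\mc P_{F_\ell,\twist}$ to cancel there. Second, the factor $\mc P_{F_\ell,\twist}(-is)$ in~\eqref{def:gFellchi} is not needed to absorb poles in $\Omega_3$ --- those are all accounted for by $\mc P_{F_\ell,\twist}(s)$ through the quantity $d_{\ResSet_{F_\ell,\twist}}(s)$ in the proof of Lemma~\ref{lem:funnel-a3} --- but rather to enforce $\sum_{\lambda\in\widetilde{\ResSet_{F_\ell,\twist}},\,\abs{\lambda}<r}\lambda^{-2}=0$, so that Lindel\"of's theorem makes $g_{F_\ell,\twist}$ entire of order $2$ and finite type; this is precisely the bound~\eqref{eq:gf-bound} that your argument relies on.
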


As in the case of the cusps, we split $\C = \Omega_1 \cup \Omega_2 \cup \Omega_3$, where
\begin{align*}
\Omega_1 &= \{s \in \C \setmid \Re s \in [1/2 + \eps,\infty) \}\,,
\\
\Omega_2 &= \{s \in \C \setmid \Re s \in (1/2-\eps,1/2+\eps) \}\,,
\\
\Omega_3 &= \{s \in \C \setmid \Re s \in (-\infty,1/2-\eps] \}
\end{align*}
and $\eps \in (0,1/2)$. For $s \in \Omega_1$, we have the following estimate. 

\begin{lemma}\label{lem:funnel-a1}
Let $c > 0$, then there exists $C>0$ such that for all $s \in \Omega_1$,
\begin{align*}
\det ( \I + c \abs{K_f(s)}) \leq e^{C\ang{s}^2}\,.
\end{align*}
\end{lemma}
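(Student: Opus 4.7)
The plan is to mirror the cusp argument from Lemma~\ref{lem:cusp-a1}: reduce the determinant bound to a polynomial resolvent bound, then apply Proposition~\ref{prop:bound-resolvent-determinant} with $\tau = -1$, for which $(5+\tau)/2 = 2$ delivers exactly the exponent claimed.

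First I would rule out discrete spectrum of $\Delta_{F_\ell,\twist}$ in $[0,1/4)$. Since $\twist$ is unitary, every eigenvalue $\lambda$ of $\twist(h_\ell)$ has $\abs{\lambda} = 1$ and hence $\log\lambda \in i\R$. By Proposition~\ref{prop:fourier-funnel} this forces every element of $\ResSet_{F_\ell,\twist}$ to satisfy $\Re s \in -(1 + 2\N_0)$, hence $\Re s \leq -1$. In particular $R_{F_\ell,\twist}(s)$ is holomorphic on the half-plane $\Re s > 1/2$, so $\Delta_{F_\ell,\twist}$ has no $L^2$-eigenvalues in $[0,1/4)$.

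Next I would transfer the resolvent bound \eqref{est:resolvent-twist2} of Proposition~\ref{prop:resolvent-estimate} to the model funnel. The proof of that proposition uses only self-adjointness, non-negativity and the absence of spectrum in $[0,1/4)$, all of which hold for $\Delta_{F_\ell,\twist}$ (with Dirichlet boundary conditions at $\{r=0\}$); hence it yields
\[
\norm{R_{F_\ell,\twist}(s)}_{L^2 \to L^2} \lesssim \ang{s}^{-1}
\qquad \text{for } s \in \Omega_1.
\]
In the notation of Proposition~\ref{prop:bound-resolvent-determinant}, the operator $K_f(s)$ of \eqref{eq:K_funnel} has the form $K_{\tilde\psi_0,\psi_1}(s)$ with $\tilde\psi_0 = \eta_{f,0}$ and $\psi_1 = \eta_{f,3} - \eta_{f,1}$; choosing any $\psi_0 \in \CI(\overline{X})$ supported away from the boundary at infinity and equal to $1$ on $\supp \nabla_X \tilde\psi_0$, the above bound implies $\norm{\psi_0 R_{F_\ell,\twist}(s) \psi_1} \lesssim \ang{s}^{-1}$. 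Applying Proposition~\ref{prop:bound-resolvent-determinant} with $\tau = -1$ then gives the desired $e^{C\ang{s}^2}$.

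I do not anticipate a genuine obstacle: the only point that requires slight care is the fact that the funnel has a geodesic boundary $\{r=0\}$, but the cutoffs $\eta_{f,0}$ and $\eta_{f,3}-\eta_{f,1}$ are supported in the open interior of the funnel end (away from both $\{r=0\}$ and the boundary at infinity), so Green's identity and the integration-by-parts steps used inside the proof of Proposition~\ref{prop:bound-resolvent-determinant} go through verbatim. The reason the cusp case needed an extra prefactor $(2s-1)^{n_c^\twist}$ and a three-strip argument was the resonance of $R_{C_\infty,\twist}$ at $s=1/2$; since $R_{F_\ell,\twist}$ has no resonance in $\Omega_1$, no such compensating factor is required here.
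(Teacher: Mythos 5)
Your proposal is correct and follows essentially the same route as the paper, which literally proves this lemma by the remark ``the proof is the same as for Lemma~\ref{lem:cusp-a1} with $C_\infty$ replaced by $F_\ell$'': absence of resonances of the model funnel in $\Re s>1/2$ (hence no spectrum in $[0,1/4)$), the resolvent bound \eqref{est:resolvent-twist2}, and Proposition~\ref{prop:bound-resolvent-determinant} with $\tau=-1$. Your write-up merely makes explicit the details the paper leaves implicit.
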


\begin{proof}
The proof is the same as for Lemma~\ref{lem:cusp-a1} with $C_\infty$ replaced by $F_\ell$.
\end{proof}

The estimate for $s \in \Omega_2$ is as follows.

\begin{lemma}\label{lem:funnel-a2}
Let $c > 0$, then there exists $C>0$ such that for all $s \in \Omega_2$,
\begin{align*}
\det ( \I + c \abs{K_f(s)}) \leq e^{C\ang{s}^{5/2}}.
\end{align*}
\end{lemma}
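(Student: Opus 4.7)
The plan is to invoke Proposition~\ref{prop:bound-resolvent-determinant}. Since $(5+\tau)/2 = 5/2$ forces $\tau = 0$, the task reduces to a uniform cutoff resolvent estimate
\[
\norm{\psi_0 R_{F_\ell,\twist}(s)\psi_1}_{L^2\to L^2} \lesssim 1 \qquad (s \in \Omega_2)
\]
for the radial cutoffs $\psi_0, \psi_1$ compactly supported in $r \in (0,\infty)$ that enter $K_f(s) = [\LapTwist,\eta_{f,0}]\,R_{F_\ell,\twist}(s)\,(\eta_{f,3}-\eta_{f,1})$ via \eqref{eq:K_funnel}. A structural feature to exploit is that $\Omega_2$ is resonance-free: Proposition~\ref{prop:fourier-funnel} places $\ResSet_{F_\ell,\twist} \subset \{\Re s \leq -1\}$, so, in contrast to the cusp case of Lemma~\ref{lem:cusp-a2}, no Weierstrass cancellation factor analogous to $g_c(s)$ is needed on $\Omega_2$.

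To establish the cutoff bound, I would use the Fourier expansion \eqref{eq:fourier-exp-funnel}. Because $\psi_0, \psi_1$ depend only on $r$ and $\twist(h_\ell)$ is diagonal in the orthonormal basis $(\psi_j)_{j=1}^{\dim V}$, the cutoff resolvent decouples into an orthogonal direct sum of one-dimensional mode operators on $L^2((0,\infty), \omega^{-1}\cosh r\, dr)$ with kernels $\psi_0(r)\,\tilde v_{k+\vartheta_j}(s;r,r')\,\psi_1(r')$. Hence
\[
\norm{\psi_0 R_{F_\ell,\twist}(s)\psi_1}_{L^2\to L^2} = \sup_{k,\,j}\,\norm{\psi_0(r)\,\tilde v_{k+\vartheta_j}(s;r,r')\,\psi_1(r')}_{\mathrm{op}},
\]
and since each mode kernel is supported in a fixed compact rectangle $K \times K' \subset (0,\infty)^2$, its operator norm is controlled by its sup norm. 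The problem reduces to showing
\[
\sup_{\kappa \in \R}\ \sup_{(r,r') \in K \times K'} \abs{\tilde v_\kappa(s;r,r')} \lesssim 1 \qquad (s \in \Omega_2).
\]

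Using the factorization $\tilde v_\kappa(s;r,r') = \beta_\kappa(s)\,v_\kappa^0(s;\min(r,r'))\,v_\kappa(s;\max(r,r'))$ of \eqref{eq:vtilde-kappa}, I would treat two regimes. For $\abs{\omega\kappa} \geq 2(1+\abs{\Im s})$, the Taylor series of the entire regularized hypergeometric functions $\FF$ defining $v_\kappa$ and $v_\kappa^0$, combined with Stirling's formula applied to $\beta_\kappa(s)$ and the fact that $\tanh^2 r$ and $(1-\tanh r)/2$ stay strictly inside $(0,1)$ on $K \cup K'$, yield uniform exponential decay in $\abs{\kappa}$. For $\abs{\omega\kappa} \leq 2(1+\abs{\Im s})$, Stirling's formula gives
\[
\abs{\beta_\kappa(s)} \lesssim \ang{s}^N\, e^{-\pi(\abs{\Im s + \omega\kappa}+\abs{\Im s - \omega\kappa})/4},
\]
while an Euler integral representation of $\FF$ together with the Kummer connection formula of \cite[\S 2.9]{ErdelyiI} shows that $v_\kappa(s;r)$ and $v_\kappa^0(s;r)$ grow at most like $\ang{s}^{N'}\,e^{\pi(\abs{\Im s + \omega\kappa}+\abs{\Im s - \omega\kappa})/4}$ for $r \in K \cup K'$. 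The exponential factors cancel exactly; the polynomial remainder can then be absorbed by a Phragmén--Lindelöf argument against the $O(\ang{s}^{-1})$ resolvent bound available on $\Omega_1$ (from the proof of Lemma~\ref{lem:funnel-a1}), transferred to the edge $\{\Re s = 1/2 - \eps\}$ via the scattering identity \eqref{eq:resolvent-scattering-matrix}.

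The principal obstacle is precisely this delicate exponential cancellation between the Stirling decay of $\beta_\kappa(s)$ and the growth of the regularized hypergeometric functions with large imaginary parameters. Unlike Lemma~\ref{lem:cusp-a2}, where the classical bounds \eqref{eq:I-bessel-bound}--\eqref{eq:K-bessel-bound} on modified Bessel functions are sharp and the only pole at $s = 1/2$ is handled by $g_c(s)$, the Legendre/conical functions encoded in $v_\kappa, v_\kappa^0$ demand the combined Stirling--connection-formula analysis outlined above. Once the pointwise bound on $\tilde v_\kappa$ is secured, Proposition~\ref{prop:bound-resolvent-determinant} with $\tau = 0$ closes the argument.
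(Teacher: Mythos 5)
Your reduction of the lemma to Proposition~\ref{prop:bound-resolvent-determinant} with $\tau=0$, i.e.\ to the uniform cutoff bound $\norm{\psi_0 R_{F_\ell,\twist}(s)\psi_1}\lesssim 1$ on $\Omega_2$, is exactly the paper's strategy, and your observation that no Weierstrass cancellation factor is needed on $\Omega_2$ (all funnel resonances have $\Re s\le -1$ by Proposition~\ref{prop:fourier-funnel}) is correct. The divergence is in how the cutoff bound is obtained, and there your argument has a genuine gap. The paper does not touch the Fourier modes here: it writes the cylinder resolvent kernel via the Euler integral representation of the hypergeometric function,
\[
R_{C_\ell,\twist}(s;z,z') = \frac{1}{4\pi}\sum_{k\in\Z}\twist(h_\ell)^k\int_0^1 \frac{(t(1-t))^{s-1}}{(\sigma(z,e^{k\ell}z')-t)^s}\,dt\,,
\]
uses the lower bound $\sigma(z,e^{k\ell}z')-1\ge e^{c_1\abs{k\ell}-c_2}$ on the disjoint supports and the unitarity of $\twist(h_\ell)$, and notes that on $\Omega_2$ the real part of $s$ is bounded and bounded away from $0$, so both the Beta integral $\int_0^1(t(1-t))^{\Re s-1}\,dt$ and the resulting geometric sum over $k$ are uniformly bounded. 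This yields $\tau=0$ in a few lines and the lemma follows at once.

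By contrast, the crux of your route --- the exact cancellation between the Stirling decay of $\beta_\kappa(s)$ and the growth of the regularized hypergeometric functions $v_\kappa$, $v_\kappa^0$ in the regime $\abs{\omega\kappa}\asymp\abs{\Im s}$ --- is asserted rather than proved, and you flag it yourself as the principal obstacle. Uniform asymptotics of $\hgfunc(a,b;c;\cdot)$ when all parameters have large imaginary part cannot be dispatched by the power series plus Stirling alone, and unlike the cusp case there are no off-the-shelf bounds analogous to \eqref{eq:I-bessel-bound}--\eqref{eq:K-bessel-bound} quoted in the paper. The concluding patch is also not a valid substitute: if the mode analysis only yields $\norm{\psi_0 R_{F_\ell,\twist}(s)\psi_1}\lesssim\ang{s}^N$ with $N>0$, then Proposition~\ref{prop:bound-resolvent-determinant} gives only $e^{C\ang{s}^{(5+N)/2}}$, and a Phragm\'en--Lindel\"of argument would have to be applied to the scalar determinant on the full strip with controlled bounds on \emph{both} edges $\Re s=1/2\pm\eps$; importing the left edge via the scattering identity \eqref{eq:resolvent-scattering-matrix} drags in the $\Omega_3$ scattering-matrix estimates and is circular at this stage. (In the paper, Phragm\'en--Lindel\"of is used only once, in Lemma~\ref{lem:funnel-determinant}, after all three regions have been bounded separately.) Replacing the Fourier-mode analysis by the Euler-integral argument above closes the gap.
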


\begin{proof}
Suppose that $\varphi_0,\varphi_1 \in \CcI(F_\ell)$ have disjoint supports and let $s \in \Omega_2$. We begin by showing that
\begin{align}\label{eq:norm-funnel-a2}
\norm{\varphi_0 R_{F_\ell,\twist}(s) \varphi_1} \leq C
\end{align}
for some $C > 0$ only depending on $\eps$. It suffices to show the bound for the hyperbolic cylinder $C_\ell$. By \eqref{eq:def_Rhkernel}, \eqref{eq:resolvent_hypcyl} and \cite[2.1.3]{ErdelyiI}, we have the explicit formula for the integral kernel of the resolvent
\begin{align*}
R_{C_\ell,\twist}(s;z,z') = \frac{1}{4\pi} \sum_{k \in \Z} \twist(h_ \ell)^k \int_0^1 \frac{(t(1-t))^{s-1}}{(\sigma(z,e^{k\ell}z') - t)^s} \,dt
\end{align*}
for $\Re s > 0$. By  \cite[p. 193]{Borthwick_book}, we have the following bound for $z \in \supp \varphi_0$ and $z' \in \supp \varphi_1$:
\[
\sigma(z,e^{k\ell}z') - 1 \geq e^{c_1 \abs{k\ell} - c_2}
\]
for some $c_1,c_2 > 0$ depending only on the minimum of $d_\h(z,z')$ on the set $\supp\varphi_0 \times \supp \varphi_1$. Together with the fact that $\twist(h_ \ell)$ is unitary, we arrive at the bound
\begin{align*}
\norm*{\varphi_0(z) R_{C_\ell,\twist}(s;z,z') \varphi_1(z')}_{\mathcal{L}(V,V)} & \leq \\  \frac{1}{4\pi} \sum_{k\in \Z} e^{(c_2-c_1\abs{k\ell})\Re s} &  \int_0^1 (t(1-t))^{\Re s - 1}\, dt\,,
\end{align*}
which proves \eqref{eq:norm-funnel-a2}. To finish the proof of the lemma, we apply Proposition~\ref{prop:bound-resolvent-determinant} with $\tau = 0$.
\end{proof}

Finally, for $s\in\Omega_3$, we obtain the following estimate.

\begin{lemma}\label{lem:funnel-a3}
Let $c > 0$, then there exists $C>0$ such that for all $s \in \Omega_3$,
\begin{align*}
\abs*{ g_{F_\ell,\twist}(s)\det ( \I + c \abs{K_f(s)}) } \leq e^{C\ang{s}^2}\,.
\end{align*}
\end{lemma}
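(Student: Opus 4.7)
For $s \in \Omega_3$ we have $1-s \in \Omega_1$, so the plan is to reflect $s\mapsto 1-s$ in order to reduce to the regime already handled in Lemma~\ref{lem:funnel-a1}, using the scattering identity~\eqref{eq:resolvent-scattering-matrix} and letting the Weierstrass factor $g_{F_\ell,\twist}$ absorb the poles that this reflection introduces. Concretely, \eqref{eq:resolvent-scattering-matrix} gives $K_f(s) = K_f(1-s) + T(s)$, where
\[
T(s) \coloneqq (2s-1)\,[\LapTwist,\eta_{f,0}]\,E_{F_\ell,\twist}(1-s)\,S_{F_\ell,\twist}(s)\,E_{F_\ell,\twist}(1-s)^T\,(\eta_{f,3}-\eta_{f,1})\,,
\]
and applying \cite[Lemma 6.1]{GuZw95} as in~\eqref{eq:determinant-into-model} yields
\[
\det(\I+c\abs{K_f(s)}) \;\leq\; \det(\I+2c\abs{K_f(1-s)})\cdot \det(\I+2c\abs{T(s)})\,.
\]
By Lemma~\ref{lem:funnel-a1} applied at $1-s\in\Omega_1$, the first factor is bounded by $e^{C\ang{s}^2}$, so the task reduces to proving
\[
\abs{g_{F_\ell,\twist}(s)} \cdot \det(\I+2c\abs{T(s)}) \;\leq\; e^{C\ang{s}^2}\qquad (s\in\Omega_3)\,.
\]

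Next, the Fourier decomposition of Section~\ref{sec:model-funnel} diagonalizes $T(s)$: in the eigenbasis $(\psi_j)_{j=1}^{\dim V}$ of $\twist(h_\ell)$ with eigenvalues $e^{2\pi i \vartheta_j}$, one has $T(s) = \sum_{k\in\Z}\sum_{j=1}^{\dim V} T_k^j(s)$, a sum of mutually orthogonal rank-one operators whose unique non-zero singular value satisfies
\[
\mu_1\bigl(T_k^j(s)\bigr) \;\lesssim\; \abs{2s-1}\cdot\abs{S_{F_\ell,\twist}(s)_k^j}\cdot \norm{[\LapTwist,\eta_{f,0}]\,E_{F_\ell,\twist}(1-s)_k^j}\cdot \norm{E_{F_\ell,\twist}(1-s)_k^j\,(\eta_{f,3}-\eta_{f,1})}\,.
\]
Since the cutoffs $[\LapTwist,\eta_{f,0}]$ and $(\eta_{f,3}-\eta_{f,1})$ are supported on disjoint compact subsets of $F_\ell$ bounded away from the boundary, a direct analysis of the hypergeometric function $v^0_{k+\vartheta_j}(1-s;r)$ and of $\beta_{k+\vartheta_j}(1-s)$ appearing in the Poisson kernel shows that the two norms on the right decay exponentially in $\abs{k+\vartheta_j}$, with at most polynomial growth in~$\ang{s}$.

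For the scattering-matrix entries, Stirling asymptotics applied to the gamma quotient~\eqref{eq:smatrix-funnel}, together with the fact that every pole of every $S_{F_\ell,\twist}(s)_k^j$ lies in $\ResSet_{F_\ell,\twist}$ by Proposition~\ref{prop:fourier-funnel} and is therefore cancelled by $g_{F_\ell,\twist}$, yield an entire bound of the form
\[
\abs{g_{F_\ell,\twist}(s)\,S_{F_\ell,\twist}(s)_k^j} \;\lesssim\; e^{C\ang{s}^2}\,(\ang{s}+\abs{k})^{N}\,,
\]
uniformly in $k$ and $j$, for some exponent $N$ independent of $s,k,j$. Combining this polynomial-in-$k$ growth with the exponential-in-$\abs{k}$ decay of the Poisson factors, and reindexing the singular values in decreasing order, gives
\[
\mu_m\bigl(g_{F_\ell,\twist}(s)\,T(s)\bigr) \;\lesssim\; e^{C\ang{s}^2}\,e^{-c\,m}\,,
\]
from which $\abs{g_{F_\ell,\twist}(s)}\,\det(\I+2c\abs{T(s)}) \leq e^{C'\ang{s}^2}$ follows via the $\sinh$-product estimate used at the end of the proof of Proposition~\ref{prop:bound-resolvent-determinant}.

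The main technical obstacle is the uniformity in the Fourier index~$k$: the single entire factor $g_{F_\ell,\twist}$ must simultaneously cancel the poles of \emph{every} entry $S_{F_\ell,\twist}(s)_k^j$ while leaving a remainder that is at most polynomial in~$\abs{k}$ and bounded by $e^{C\ang{s}^2}$ in~$s$. This is precisely the role of the two-sided product in~\eqref{def:gFellchi}: building $g_{F_\ell,\twist}$ from both $\ResSet_{F_\ell,\twist}$ and $i\,\ResSet_{F_\ell,\twist}$ forces the Newton sum $\sum \lambda^{-2}=0$, which via Lindel\"of's theorem keeps $g_{F_\ell,\twist}$ of finite type~\eqref{eq:gf-bound} and makes the $k$-uniform bound feasible.
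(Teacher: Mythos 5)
Your overall route is the same as the paper's: reflect $s\mapsto 1-s$ via \eqref{eq:resolvent-scattering-matrix}, control the reflected term $K_f(1-s)$ by the $\Omega_1$ estimate, decompose the correction term into Fourier modes, estimate the scattering--matrix entries by Stirling and the Poisson factors by exponential decay in the Fourier index, and let $g_{F_\ell,\twist}$ absorb the poles. The gap is in the last step, and it is twofold. First, the quantitative form of your singular value bound is too weak: from $\mu_m\bigl(g_{F_\ell,\twist}(s)T(s)\bigr)\lesssim e^{C\ang{s}^2}e^{-cm}$ one cannot conclude $\det(\I+2c\abs{T(s)})\cdot\abs{g_{F_\ell,\twist}(s)}\leq e^{C'\ang{s}^2}$. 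Even ignoring the prefactor issue, a bound $\mu_m\lesssim e^{C\ang{s}^2}e^{-cm}$ on the singular values themselves only yields $\prod_m(1+c\mu_m)\leq e^{C\ang{s}^4}$, since roughly $\ang{s}^2$ factors can each be as large as $e^{C\ang{s}^2}$. To get $e^{C\ang{s}^2}$ one needs, as in the paper, a per-factor bound of the form $e^{C\ang{s}}(\ang{s}/n)^{1-2\Re s}$ valid for $n\leq a\ang{s}$ (with only $O(\ang{s})$ such factors, combined via Stirling) together with $\mu_n\leq e^{-b\ang{s}}$ for $n>a\ang{s}$; the loss from $e^{C\ang{s}}$ to $e^{C\ang{s}^2}$ per singular value is fatal.

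Second, the pole cancellation does not work the way you invoke it. The single scalar factor $\abs{g_{F_\ell,\twist}(s)}$ multiplies the determinant once, but near a resonance $s_0$ of multiplicity $m$ (e.g.\ $2m_0$ for $\vartheta=0$, already $2$ in the untwisted case because the modes $\pm k$ coincide) there are $m$ singular values of $T(s)$ each blowing up like $\dist(s,s_0)^{-1}$, so $\det(\I+2c\abs{T(s)})$ blows up like $\dist(s,s_0)^{-m}$. Writing $\abs{g}\prod_m(1+2c\mu_m(gT)/\abs{g})$ and appealing to the $\sinh$ estimate does not distribute the zero of $g$ over the $m$ offending factors. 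What is needed is exactly the bookkeeping the paper performs with the functions $d_{n,\vartheta}(s)$ and $d_n(s)$ in \eqref{eq:def_dks}: one records that only the first $\max\{m_0,2m_{\vartheta_j}\}$ singular values carry the factor $\dist(s,\mathcal R_\vartheta)^{-1}$, verifies that the total pole order of $\prod_n d_n(s)=d_{\ResSet_{F_\ell,\twist}}(s)$ equals the zero order of $g_{F_\ell,\twist}$ at each resonance, and only then bounds $\abs{d_{\ResSet_{F_\ell,\twist}}(s)\,g_{F_\ell,\twist}(s)}\leq e^{C\ang{s}^2}$. Relatedly, your claimed entry bound $\abs{g_{F_\ell,\twist}(s)S_{F_\ell,\twist}(s)_k^j}\lesssim e^{C\ang{s}^2}(\ang{s}+\abs{k})^N$ with $N$ independent of $s$ is not what Stirling gives: the true exponent on $\ang{s}^2/\abs{(\Im s)^2-(k+\vartheta_j)^2\omega^2}$ is $1/2-\Re s$, which grows linearly in $\abs{s}$ on $\Omega_3$, and near points where $\abs{\Im s}=\omega\abs{k+\vartheta_j}$ but $s$ is far from $\mathcal R_{\vartheta_j}$ one must switch to the separate estimate \eqref{eq:estimate_smatrix_near_resonance}. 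Your concluding remark about the role of the two-sided Weierstrass product \eqref{def:gFellchi} and Lindel\"of is correct, but it addresses the growth of $g_{F_\ell,\twist}$, not the multiplicity matching that the argument actually hinges on.
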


We use the similar reflection argument as for the proof of Lemma~\ref{lem:cusp-a3}. From \eqref{eq:resolvent-scattering-matrix}, we recall that
\begin{align*}
R_{F_\ell,\twist}(s) = R_{F_\ell,\twist}(1-s) + (2s-1) E_{F_\ell,\twist}(1-s) S_{F_\ell,\twist}(s) E_{F_\ell,\twist}(1-s)^T\, .
\end{align*}
In order to prove Lemma~\ref{lem:funnel-a3}, we need to estimate $S_{F_\ell,\twist}(s)$ for $s \in \Omega_3$ and $E_{F_\ell,\twist}(s)$ for $s \in \Omega_1$. We introduce the sets
\begin{align*}
\tilde{\mathcal{R}}_0 & \coloneqq 1 - 2\N_0\,,
\\
\mathcal{R}_0 & \coloneqq 1 - 2\N_0 + i\omega \Z\setminus\{0\}\,,
\\
\mathcal{R}_{1/2} & \coloneqq 1 - 2\N_0 + i \omega (1/2 + \Z)\,,
\\
\mathcal{R}_{\vartheta} & \coloneqq \bigcup_{p \in \{\pm 1\}}
\left( 1 - 2\N_0 + ip\hspace{1pt}\omega (\vartheta + \Z)\right),\quad \vartheta \not \in \{0,1/2\}\,,
\end{align*}
where we denote by $m_{\vartheta}$ the multiplicity of an eigenvalue $\lambda = e^{2\pi i\vartheta}$ of $\twist(h_\ell)$ and we recall that $\omega = 2\pi / \ell$. We note that
\begin{align*}
\ResSet_{F_\ell,\twist} = \tilde{\mathcal{R}}_0^{2m_0}
\cup \mathcal{R}_0^{2m_0}
\cup \mathcal{R}_{1/2}^{2m_{1/2}}
\cup \bigcup_{\vartheta_j \not = \{0,1/2\}} \mathcal{R}_{\vartheta_j}^{m_{\vartheta_j}}
\end{align*}
with multiplicities. For $n \in \N_{+}$ we define $d_{n,\vartheta}(s)$ as follows: for $\vartheta \in (0,1)$, $\vartheta\not=1/2$, we set
\begin{align*}
d_{n,\vartheta}(s) \coloneqq 
\begin{cases}
\dist(s,\mathcal{R}_{\vartheta})^{-1} & \text{if $n \leq m_{\vartheta}$}\,,
\\
1 & \text{if $n > m_{\vartheta}$}\,.
\end{cases}
\end{align*}
For $\vartheta \in \{0,1/2\}$, we set
\begin{align*}
d_{n,\vartheta}(s) \coloneqq 
\begin{cases}
\dist(s,\mathcal{R}_{\vartheta})^{-1} & \text{if $n \leq 2m_{\vartheta}$}\,,
\\
1 & \text{if $n > 2m_{\vartheta}$}\,.
\end{cases}
\end{align*}
Moreover, we define the function $\tilde{d}_{k,0}$ by
\begin{align*}
\tilde{d}_{n,0}(s) \coloneqq 
\begin{cases}
\dist(s,\tilde{\mathcal{R}}_{0})^{-2} & \text{if $n \leq m_0$}\,,
\\
1 & \text{if $n > m_0$}\,.
\end{cases}
\end{align*}
Lastly, we define the product
\begin{align}\label{eq:def_dks}
d_n(s) \coloneqq \tilde{d}_{n,0}(s) \cdot \prod_{\vartheta} d_{n,\vartheta}(s)\,.
\end{align}
We remark that this product is well-defined even if we consider it as an infinite product over the uncountable index set $[0,1)$. In this case,  almost all factors are equal to $1$ because for $e^{2\pi i\vartheta}$, $\vartheta\in[0,1)$, not being an eigenvalue of~$\twist(h_\ell)$ we have set $m_\vartheta = 0$.
For later use we also define
\begin{align*}
d_{\ResSet_{F_\ell,\twist}}(s) &\coloneqq \prod_n d_n(s) 
\\
&= \dist(s,\tilde{\mathcal{R}}_0)^{-2m_0} \dist(s,\mathcal{R}_0)^{-2m_0}
\dist(s,\mathcal{R}_{1/2})^{-2m_{1/2}} 
\\
&\hphantom{= \quad} \times 
\prod_{\vartheta_j \not \in \{0,1/2\}} \dist(s,\mathcal{R}_{\vartheta_j})^{-m_{\vartheta_j}}\,.
\end{align*}

\begin{lemma}\label{lem:svalues-smatrix}
For $s \in \Omega_3$, we have
\begin{align*}
\mu_n(S_{X_f,\twist}) \leq e^{C\ang{s}} \ang{s}^{1 - 2\Re s} \times  \begin{cases}
d_n(s) & \text{for $n \leq \max \{m_0, 2m_{\vartheta_j}\}$}\,,
\\
n^{2\Re s - 1} & \text{for $n > \max \{m_0, 2m_{\vartheta_j}\}$}\,.
\end{cases}
\end{align*}
\end{lemma}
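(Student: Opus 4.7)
The plan is to use the explicit diagonal structure of the scattering matrix. By \eqref{eq:smatrix-funnel}, $S_{F_\ell,\twist}(s)$ is diagonal in the orthonormal basis $\{\ell^{-1/2}\psi_j e^{ik\phi}\}_{j,k}$ of $L^2(\pa_\infty F_\ell,\bundle|_{\pa_\infty F_\ell})$, so its singular values $\mu_n(S_{F_\ell,\twist}(s))$ are precisely the numbers $|S_{F_\ell,\twist}(s)_k^j|$ re-indexed in decreasing order. The task therefore reduces to bounding each diagonal entry and then performing the correct reordering.

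To bound $|S_k^j|$ for $s=x+iy\in\Omega_3$ and $\kappa=k+\vartheta_j$, I would apply Stirling's asymptotics $|\gammafunc(a+ib)|\asymp \ang{b}^{a-1/2}e^{-\pi|b|/2}$ to each of the six Gamma factors in \eqref{eq:smatrix-funnel}. Rewriting the prefactor via reflection as $\gammafunc(\tfrac12-s)/\gammafunc(s-\tfrac12)=\pi(s-\tfrac12)/(\cos(\pi s)\gammafunc(s+\tfrac12)^2)$ yields a contribution of size $\ang{s}^{1-2\Re s}$ up to an $e^{C\ang{s}}$ factor, where the exponential absorbs the proximity of $s$ to the poles of $1/\cos(\pi s)$. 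The remaining ratio telescopes: the exponential Stirling factors associated with $\pm i\omega\kappa$ cancel exactly between numerator and denominator (since $|y\pm\omega\kappa|$ and $|\omega\kappa\mp y|$ coincide), leaving
\begin{equation*}
\left|\frac{\gammafunc((s+1+i\omega\kappa)/2)\gammafunc((s+1-i\omega\kappa)/2)}{\gammafunc((2-s+i\omega\kappa)/2)\gammafunc((2-s-i\omega\kappa)/2)}\right|\ \asymp\ \bigl(\ang{y+\omega\kappa}\ang{y-\omega\kappa}\bigr)^{\Re s - 1/2},
\end{equation*}
again up to a multiplicative $e^{C\ang{s}}$ tolerating bounded distance to the numerator poles.

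Combining these estimates gives, for any diagonal entry,
\begin{equation*}
|S_k^j|\ \lesssim\ e^{C\ang{s}}\ang{s}^{1-2\Re s}\bigl(\ang{y+\omega\kappa}\ang{y-\omega\kappa}\bigr)^{\Re s-1/2},
\end{equation*}
with the convention that proximity of $s$ to a pole at $-(1+2\N_0)\pm i\omega\kappa$ is encoded by $\operatorname{dist}(s,\mathcal R_{\vartheta_j})^{-1}$. For large $n$, the $n$-th largest entry is achieved by $(k,j)$ with $|\omega\kappa|\asymp n$, in which case both $\ang{y\pm\omega\kappa}\gtrsim n$ and, using $\Re s - 1/2 <0$, one obtains $\mu_n\lesssim e^{C\ang{s}}\ang{s}^{1-2\Re s}n^{2\Re s - 1}$. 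For small $n$, the largest entries come precisely from those $(k,j)$ for which $y+\omega\kappa$ or $y-\omega\kappa$ is minimized—these correspond to the pole sets $\mathcal R_{\vartheta_j}$ (and $\tilde{\mathcal R}_0$, $\mathcal R_0$, $\mathcal R_{1/2}$ for the special cases $\vartheta_j\in\{0,1/2\}$)—and the bound is controlled by $d_n(s)$ from \eqref{eq:def_dks}.

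The main obstacle will be the bookkeeping for the ordering in step three. One has to verify that enumerating the diagonal entries by decreasing magnitude matches the multiplicity-weighted product structure of $d_n(s)$: the exponents $2m_0,\,2m_{1/2},\,m_{\vartheta_j}$ reflect that for real $\vartheta\in\{0,1/2\}$ the two pole rays at $\pm i\omega\kappa$ coalesce (so each $k$ contributes twice in $\pm$), whereas for $\vartheta\notin\{0,1/2\}$ the pole rays at $\pm i\omega(k+\vartheta_j)$ are distinct. The extra factor $\tilde d_{n,0}(s)=\operatorname{dist}(s,\tilde{\mathcal R}_0)^{-2}$ at $\vartheta=0$ accounts for the second-order pole produced when $k=0=\vartheta_j$ makes both $(s+1\pm i\omega\kappa)/2$ simultaneously hit $-\N_0$. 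Checking that each top-$n$ diagonal entry contributes exactly the factor $d_n(s)$, and that the transitional regime where $|y|\asymp|\omega\kappa|$ is absorbed in the exponential $e^{C\ang{s}}$, is the delicate combinatorial step; once it is in place, the estimate follows.
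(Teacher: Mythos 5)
Your proposal is correct and follows essentially the same route as the paper's proof: diagonality of $S_{F_\ell,\twist}(s)$ in the eigenbasis, Stirling/reflection estimates on the Gamma factors split into a near-resonance case (yielding the distance factors collected in $d_n(s)$, with the choice of a gap $\delta$ ensuring at most one mode is close to a pole) and an away-from-resonance case, followed by a decreasing-rearrangement argument giving $n^{2\Re s-1}$ for large $n$. The only cosmetic difference is that the paper packages the reflection formula into the function $f(z)=\cos(\pi z/2)\gammafunc(1-z)$ and bounds $\abs{f}^{-1}$ in the two regimes, and it controls the large-$n$ regime via the rearrangement of the products $\abs{(\Im s)^2-\omega^2(k+\vartheta_j)^2}$ (which is $\gtrsim n^2$) rather than requiring both factors $\ang{y\pm\omega\kappa}$ to be individually $\gtrsim n$.
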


\begin{proof}
The proof is similar to~\cite[Lemma 4.2]{GuZw95} and~\cite[Lemma 9.15]{Borthwick_book}, but we have to be more careful when estimating the singular values due to the multiplicities of the eigenvalues of~$\twist(h_\ell)$. By definition \eqref{eq:smatrix-funnel}, the Fourier coefficients $S_{F_\ell,\twist}(s)^j_k$ of the twisted scattering matrix are closely related to the untwisted Fourier coefficients. Thus, 
\begin{align*}
S_{F_\ell,\twist}(s)_k^j = \frac{4^{-s} (s-1/2) \cos(\pi s) \gammafunc(1/2 - s)^2}{f(s+i(k + \vartheta_j)\omega) f(s - i(k + \vartheta_j)\omega)}\,,
\end{align*}
where $\omega = \frac{2\pi}{\ell}$, and
\begin{align*}
f(z) = \cos(z\cdot \pi/2) \gammafunc(1 - z)\,.
\end{align*}
We use Stirling's formula to estimate both the numerator and the denominator. We note that  
\begin{align}\label{eq:smatrix-numerator}
\abs*{ 4^{-s} (s-1/2) \cos(\pi s) \gammafunc(1/2-s)^2} \leq e^{C\ang{s}} \ang{s}^{1-2\Re s}\,.
\end{align}
Moreover, for any $\tau > 0$ there exists $C > 0$ such that
\begin{align}\label{eq:smatrix-denominator}
\abs{f(z)}^{-1} \leq C 
\begin{cases}
\dist(z, 2\N_0 - 1)^{-1} & \text{if $\abs{\Im z} < \tau$}\,,
\\
e^{-\Rea z} \abs{\Ima z}^{\Rea z - 1/2} & \text{if $\abs{\Im z} \geq \tau$}
\end{cases}
\end{align}
for $\Re z \leq 1/2 - \eps$. If all eigenvalues $\lambda_j = e^{2 \pi i\vartheta_j}$ of $\twist(h_\ell)$ are equal, we set $\delta \coloneqq \omega/2$, otherwise
\begin{align*}
\delta \coloneqq \frac{\omega}{2}\,\min_{\substack{\vartheta_j \not = \vartheta_k\\ m \in \Z}} \abs{\vartheta_j - \vartheta_k + m}\,.
\end{align*}
This implies that for any $s \in \C$ and $p \in \{\pm 1\}$ there is at most one $\vartheta_j$ and $k \in \Z$ such that $\abs{\Im s + p\hspace{1pt}\omega(\vartheta_j + k)} < \delta$. On the set 
\[
\left\{s \in \C \setmid \abs{\Im s + p\hspace{1pt}\omega(\vartheta_j + k)} < \delta\right\}\,,
\] 
we can estimate the Fourier coefficients of the scattering matrix using \eqref{eq:smatrix-numerator} and \eqref{eq:smatrix-denominator} as follows:
\begin{align}\label{eq:estimate_smatrix_near_resonance}
\abs*{S_{F_\ell,\twist}(s)^j_k} \leq e^{C\ang{s}} \ang{s}^{1-2\Re s} \begin{cases}
\dist(s, \tilde{\mathcal{R}}_0)^{-2} & \text{if $\vartheta_j + k = 0$}\,,
\\
\dist(s, \mathcal{R}_{\vartheta_j})^{-1} & \text{if $\vartheta_j + k \not = 0$}\,.
\end{cases}
\end{align}
For $s \in \C$ with $\abs{\Im s + p\hspace{1pt}\omega(\vartheta_j + k)} \geq \delta$, we have
\begin{align}\label{eq:estimate_smatrix_away_resonance}
\abs*{S_{F_\ell,\twist}(s)^j_k} \leq e^{C\ang{s}} \left( \frac{\ang{s}^2}{\abs{ (\Im s)^2 - (k+\vartheta_j)^2 \omega^2}} \right)^{1/2-\Re s}\,.
\end{align}
For $n \leq \max \{m_0, 2m_{\vartheta_j}\}$, the estimate \eqref{eq:estimate_smatrix_near_resonance} always dominates \eqref{eq:estimate_smatrix_away_resonance}. Taking into account the multiplicities of the eigenvalues of $\twist(h_\ell)$, we conclude that
\begin{align*}
\mu_n(S_{F_\ell,\twist}(s)) \leq  e^{C\ang{s}} \ang{s}^{1-2\Re s} d_n(s)\,.
\end{align*}
For $n > \max \{m_0, 2m_{\vartheta_j}\}$, we can bound $\mu_k(S_{F_\ell,\twist}(s))$ by a decreasing rearrangement of \eqref{eq:estimate_smatrix_away_resonance}. The $n$-th element of the increasing rearrangement of the set
\[
\left\{\abs{(\Im s)^2 - \omega^2(k+\vartheta_j)^2} \right\}_{k \in \Z, j = 1,\dotsc,\dim(V)}
\]
can be bounded by $C n^2$ for some $C > 0$ independent of $s$, but depending on $\twist(h_\ell)$. Thus, we arrive at the estimate
\begin{align*}
\mu_n(S_{F_\ell,\twist}(s)) \leq  e^{C\ang{s}} \left(\frac{\ang{s}}{n}\right)^{1-2\Re s}\,.
\end{align*}
This completes the proof. 
\end{proof}

\begin{lemma}\label{lem:svalues-poisson}
For $\varphi \in \CcI(F_\ell)$ and $s \in \Omega_1$, we have
\begin{align*}
\mu_n(\varphi E_{F_\ell,\twist}(s)) \leq e^{C \ang{s} - c n}\,.
\end{align*}
\end{lemma}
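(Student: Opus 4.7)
The plan is to exploit the Fourier-mode decomposition of the Poisson kernel $E_{F_\ell,\twist}(s)$ from Section~\ref{sec:model-funnel}, which makes the operator diagonal in a natural orthonormal basis, and then estimate each diagonal entry via the exponential decay of the Gamma-function factor $\beta_\kappa(s)$ at large imaginary argument.

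First, I would fix an orthonormal eigenbasis $(\psi_j)_{j=1}^{\dim V}$ of $\twist(h_\ell)$ with eigenvalues $e^{2\pi i\vartheta_j}$, $\vartheta_j\in[0,1)$. Since $\supp\varphi$ is compact in $F_\ell$, there exists $\psi\in\CcI([0,\infty))$ with $\supp\psi\subset[0,R]$ for some $R>0$ such that $\abs{\varphi(r,\phi)}\leq \psi(r)$ pointwise; then $E_{F_\ell,\twist}(s)^*\bigl(\psi^2-\abs{\varphi}^2\bigr)E_{F_\ell,\twist}(s)\geq 0$, so $\mu_n(\varphi E_{F_\ell,\twist}(s))\leq \mu_n(T)$, where $T\coloneqq \psi\cdot E_{F_\ell,\twist}(s)$. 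Because $\psi$ depends only on $r$, the operator $T$ is diagonal in the orthonormal basis $\{\ell^{-1/2}e^{-ik\phi'}\psi_j\}_{(k,j)\in\Z\times\{1,\dotsc,\dim V\}}$ of $L^2(\pa_\infty F_\ell,\bundle|_{\pa_\infty F_\ell})$. A direct computation from the Fourier expansion of $E_{F_\ell,\twist}(s)$ gives that its singular values are
\[
\mu(s)_{k,j} = \frac{\abs{\beta_{k+\vartheta_j}(s)}}{\abs{\gammafunc(s+1/2)}}\left(\int_0^R\psi(r)^2\,\abs{v^0_{k+\vartheta_j}(s;r)}^2\cosh(r)\,dr\right)^{1/2},
\]
and $\mu_n(T)$ is the $n$-th largest of these.

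Second, I would estimate each $\mu(s)_{k,j}$ by combining two ingredients. Stirling's asymptotic for $\gammafunc$ along vertical lines yields
\[
\abs{\beta_\kappa(s)} \leq e^{C\ang{s}}\ang{\kappa}^{\Re s}\,e^{-\pi\omega\abs{\kappa}/2}
\qquad\text{as $\abs{\kappa}\to\infty$},
\]
which provides the basic exponential-decay rate $c_0\coloneqq \pi\omega/2$ in $\abs{k}$. Since $\tanh^2 r\leq\tanh^2 R<1$ on $\supp\psi$, the regularized hypergeometric factor $\FF(\tfrac{s+1+i\omega\kappa}{2},\tfrac{s+1-i\omega\kappa}{2};\tfrac32;\tanh^2 r)$ grows at most polynomially in $\abs{\kappa}$ and at most exponentially in $\ang{s}$, uniformly in $r\in[0,R]$; this follows either from the Euler integral representation of $\FF$ after deforming the contour so that $\abs{1-z\tau}$ remains bounded away from zero, or from the Kummer connection formula reducing $v^0_\kappa$ at bounded $r$ to an oscillatory combination of modified Bessel functions at argument of order $\omega\abs{\kappa}r$. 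Combined with the trivial bound $\abs{\gammafunc(s+1/2)}^{-1}\leq e^{C\ang{s}}$ on $\Omega_1$ (Stirling again), we obtain
\[
\mu(s)_{k,j}\leq e^{C\ang{s}}\,e^{-c\abs{k}}
\]
for any fixed $c\in(0,\pi\omega/2)$, after absorbing the polynomial factors into the exponential.

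Third, since each level $\abs{k}$ carries at most $2\dim V$ singular values, arranging the $\mu(s)_{k,j}$ in decreasing order yields $\mu_n(T)\leq e^{C\ang{s}}e^{-cn/(2\dim V)}$, which is the claimed bound after relabeling $c$. The main obstacle is the second step: establishing the polynomial-in-$\abs{\kappa}$ bound on $v^0_\kappa(s;r)$ uniformly for $r$ in a compact subset of $[0,\infty)$ and $s$ in $\Omega_1$, with constants depending only on $R$. This is precisely where compactness of $\supp\varphi$ (bounded away from the ideal boundary) is used essentially; the decay rate $c$ is otherwise determined by the geometry of $F_\ell$ through $\omega=2\pi/\ell$, and is insensitive to the multiplicities of the eigenvalues of $\twist(h_\ell)$.
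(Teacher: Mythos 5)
Your overall strategy --- majorize $\varphi$ by a radial cutoff $\psi(r)$, so that $\mu_n(\varphi E_{F_\ell,\twist}(s))\leq\mu_n(\psi E_{F_\ell,\twist}(s))$ via $E^*(\psi^2-\abs{\varphi}^2)E\geq 0$, and then read off the singular values of the Fourier-diagonal operator from the coefficients $\beta_{k+\vartheta_j}(s)\,v^0_{k+\vartheta_j}(s;r)/\gammafunc(s+1/2)$ --- is a legitimate alternative route. But the step you yourself flag as the main obstacle is not merely unproven; it is false as stated. The factor $v^0_\kappa(s;r)$ does \emph{not} grow polynomially in $\abs{\kappa}$ for $r$ in a compact set: the Fourier-mode ODE $\bigl(L_r^0+\omega^2\kappa^2\cosh(r)^{-2}\bigr)u=0$ is, for large $\abs{\kappa}$, in the classically forbidden region on all of $[0,R]$, and the Dirichlet solution $v^0_\kappa$ grows like $\exp\bigl(\omega\abs{\kappa}\int_0^r\frac{dt}{\cosh t}\bigr)=\exp\bigl(\omega\abs{\kappa}\arcsin(\tanh r)\bigr)$ up to lower-order factors. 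Consequently the decay rate of $\mu(s)_{k,j}$ is not an arbitrary $c<\pi\omega/2$ but rather $\omega\bigl(\pi/2-\arcsin(\tanh R)\bigr)$, which depends essentially on $\supp\varphi$ and degenerates as $R\to\infty$ --- consistent with \eqref{eq:poisson-asymptotics-funnel}, where the boundary value of $(2s-1)E_{F_\ell,\twist}(s)f$ is $f$ itself, whose Fourier coefficients do not decay. The conclusion of the lemma survives along your route only because $\arcsin(\tanh R)<\pi/2$ strictly, but then the real content of the proof is a uniform exponential (not polynomial) upper bound on $v^0_\kappa$ together with the verification that it is strictly beaten by the $e^{-\pi\omega\abs{\kappa}/2}$ coming from $\beta_\kappa$; that is exactly what is missing. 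A secondary issue: the bounds $\abs{\beta_\kappa(s)}\leq e^{C\ang{s}}\ang{\kappa}^{\Re s}e^{-\pi\omega\abs{\kappa}/2}$ and $\abs{\gammafunc(s+1/2)}^{-1}\leq e^{C\ang{s}}$ cannot be applied factorwise uniformly over the unbounded region $\Omega_1$, since the Gamma factors contribute $e^{C\ang{s}\log\ang{s}}$ in the real direction; only the ratio $\beta_\kappa(s)/\gammafunc(s+1/2)$ is controlled by $e^{C\ang{s}}$, so the Stirling estimates must be combined.

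For comparison, the paper's proof sidesteps the hypergeometric analysis entirely: it writes $E_{F_\ell,\twist}(s;z,\phi')$ by the method of images as an explicit convergent sum over $k\in\Z$, deduces the derivative bounds $\norm{\pa_{\phi'}^p\varphi(z)E_{F_\ell,\twist}(s;z,\phi')}\leq C^pp!\,e^{C\ang{s}}$ (uniform analyticity in $\phi'$), and then factors the operator through $(\Delta_{\pa_\infty F_\ell}+\I)^{-m}$, whose singular values are $O(n^{-2m})$; optimizing $m\asymp n$ via Stirling's formula yields $e^{C\ang{s}-cn}$ directly.
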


\begin{proof}
As in \cite[Lemma 9.16]{Borthwick_book}, we have an explicit formula for the Poisson operator:
\begin{align*}
E_{F_\ell,\twist}(s;z,\phi') = \frac{1}{4\pi} \frac{\gammafunc(s)^2}{\gammafunc(2s)} \left( h_s(x,y,\phi') + h_s(-x,y,\phi') \right)\,,
\end{align*}
where
\begin{align*}
h_s(x,y,\phi') = \sum_{k \in \Z} \twist(h_\ell)^k
\left( \frac{4ye^{\ell (\frac{\phi'}{2\pi} + k)}}{\left(x-e^{\ell(\frac{\phi'}{2\pi} + k)}\right)^2 + y^2}\right)^s\,,
\end{align*}
where $z=x+iy \in \funddom$, where $\funddom$ is given by \eqref{eq:funddom_hypcyl}.
Since $\norm{\twist(h_\ell)} = 1$, for every $p \in \N_0$ we have the bound (see \cite[(9.48)]{Borthwick_book})
\begin{align}\label{eq:EFell_bound}
\norm*{\pa_{\phi'}^p \varphi(z) E_{F_\ell,\twist}(s;z,\phi')} \leq C^p p!\, e^{C\ang{s}}\,.
\end{align}
Using the flat Laplacian $\Delta_{\pa_\infty F_\ell}$ on $L^2(\pa_\infty F_\ell, \bundle|_{\pa_\infty F_\ell})$, we have the singular value bound
\begin{align*}
\mu_n(\varphi E_{F_\ell,\twist}) &\leq \mu_n( (\Delta_{\pa_\infty F_\ell} + \I)^{-m}) \, \norm*{ (\Delta_{\pa_\infty F_\ell} + \I)^m E_{F_\ell,\twist}^T \varphi}
\\
&\leq C^{2m} n^{-2m} (2m)!\, e^{C\ang{s}}\,.
\end{align*}
Note that $\Delta_{\pa_\infty F_\ell}$ has eigenvalues $k^2$ with multiplicity $\dim V$, therefore $\mu_n( (\Delta_{\pa_\infty F_\ell} + \I)^{-m}) \lesssim n^{-2m}$.
Choosing $m \in \N$ such that $\frac{2 C m}{n} = 1 - \eps$ for some $\eps \in [0,1)$ and applying Stirling's formula, the desired estimate follows.
\end{proof}

We are now ready to prove the estimate 
\[
\abs*{ g_{F_\ell,\twist}(s)\det ( \I + c \abs{K_f(s)})} \lesssim e^{C\ang{s}^2}\]
for $s \in \Omega_3$.

\begin{proof}[Proof of Lemma~\ref{lem:funnel-a3}]
Let $s \in \Omega_3$. From \eqref{eq:resolvent-scattering-matrix} and \eqref{eq:K_funnel}, we have that
\begin{align*}
K_f(s) &- K_f(1-s)
\\
& = (2s-1) [\Delta_{F_\ell,\twist}, \eta_{f,0}] E_{F_\ell,\twist}(1-s) S_{F_\ell,\twist}(s) E_{F_\ell,\twist}(1-s)^T (\eta_{f,3} - \eta_{f,1})\,.
\end{align*}
The resolvent estimate \eqref{est:resolvent-twist2} and Lemma~\ref{lem:bound-cylinder-resolvent-sobolev} imply that $\norm{K_f(1-s)}$ is bounded independently of $s \in \Omega_3$. From \eqref{eq:EFell_bound}, we obtain that for any compactly supported vector field $W$ on $F_\ell$, we have that
\begin{align*}
\norm*{W E_{F_\ell,\twist}(s;z,\phi')} \leq e^{C\ang{s}}\,.
\end{align*}
Combining these two estimates with $W = [\Delta_{F_\ell,\twist}, \eta_{f,0}]$ yields
\begin{align*}
\mu_{n_1+n_2+1}(K_f(s)) &\leq e^{C\ang{s}} \mu_{n_1}(S_{F_\ell,\twist}(s)) \mu_{n_2}( (\eta_{f,3} - \eta_{f,1}) E_{F_\ell,\twist}(1-s) )\,.
\end{align*}
If $n = n_1 + n_2 + 1 > \ang{s}$, then we can take $n_1 = \floor{\ang{s} + 1}$, so that $n_2 \sim n - \ang{s}$. Hence for sufficiently large $a > \max\{m_0, 2 m_{\vartheta_j}\} > 0$ and $n > a \ang{s}$, we have by Lemma~\ref{lem:svalues-smatrix} and Lemma~\ref{lem:svalues-poisson} that
\begin{align*}
\mu_{n_1+n_2+1}(K_f(s)) \leq e^{-b \ang{s}}
\end{align*}
for some $b > 0$. We obtain that
\begin{align*}
\det(\I + c\abs{K_f(s)}) \leq C \prod_{n \leq a\ang{s}} \left(1 + c\mu_n (K_f(s)) \right)\,.
\end{align*}
Taking $n_2 = 1$ and $n_1 = n$, we obtain
\begin{align*}
\det(\I + c\abs{K_f(s)}) &\leq d_{\ResSet_{X,\twist}}(s) \prod_{n \leq a\ang{s}} e^{C\ang{s}} \left(\frac{\ang{s}}{n} \right)^{1-2\Re(s)}
\\
&\leq d_{\ResSet_{X,\twist}}(s) e^{C\ang{s}^2} \left(\frac{\ang{s}^{a\ang{s}}}{\gammafunc(a \ang{s})} \right)^{1-2\Re(s)}
\\
&\leq d_{\ResSet_{X,\twist}}(s) e^{C_1\ang{s}^2}\,,
\end{align*}
where we have used Stirling's formula to estimate the term in parenthesis by $e^{C\ang{s}}$. Since the zero set of $g_{F_\ell,\twist}$ includes the poles with correct multiplicities and using \eqref{eq:gf-bound}, we find 
\begin{align*}
\abs*{ d_{\ResSet_{F_\ell,\twist}}(s) g_{F_\ell,\twist}(s) } \leq e^{C\ang{s}^2}\,.
\end{align*}
\end{proof}

\begin{proof}[Proof of Lemma~\ref{lem:funnel-determinant}]
Combining the estimates of the funnel determinant for all three regions, i.e., Lemmas~\ref{lem:funnel-a1}, \ref{lem:funnel-a2} and~\ref{lem:funnel-a3}, we obtain the estimate
\begin{align*}
\abs*{g_{F_\ell,\twist}(s) \det( \I + c \abs{K_f(s)}) } \leq
\begin{cases}
e^{C\ang{s}^2}\,, & \abs{\Im s - 1/2} \geq \eps\,,
\\
e^{C\ang{s}^3}\,, & \abs{\Im s - 1/2} \leq \eps\,.
\end{cases}
\end{align*}
An application of the Phragm\'en--Lindelöf theorem shows the claim.
\end{proof}

\subsection{The Endgame}
We are now ready to prove Proposition~\ref{prop:zeros-determinant} and to deduce Theorem~\ref{thm:upper-bound}.

\begin{proof}[Proof of Proposition~\ref{prop:zeros-determinant}]
Recall that $g_c(s) = (2s-1)^{n^\twist_c}$, where $n^\twist_c$ is given by \eqref{eq:nctwist}. Moreover, let $g_f(s)$ be the product of all $g_{X_f,\twist}(s)$ as in \eqref{def:gFellchi}. From Lemma~\ref{lem:funnel-determinant}, we have by \cite[Lemma~9.6]{Borthwick_book} that
\begin{align*}
\abs{g_f(s)}^9 \det(\I + 9 \abs{K_f(s)}^3) &\leq \left( \abs{g_f(s)}\det(\I + 3^{2/3} \abs{K_f(s)})\right)^9
\\
&\leq e^{C_1\ang{s}^2}\,,
\\
\intertext{and from Proposition~\ref{prop:cusp-determinant}, we have }
\abs{g_c(s)}^9 \det(\I + 9 \abs{K_c(s)}^3) &\leq \left( \abs{g_c(s)}\det(\I + 3^{2/3} \abs{K_c(s)})\right)^9
\\
&\leq e^{C_2\ang{s}^2}\,,
\end{align*}
for some $C_1, C_2 > 0$. Hence, by \eqref{eq:determinant-into-model} and Lemma~\ref{lem:interior-determinant},
\begin{align*}
\abs{g_c(s)}^9 \abs{g_{f}(s)}^9 \abs{D(s)} \leq e^{C_3\ang{s}^2}
\end{align*}
for some $C_3 > 0$. Since $g_c(s)^9 g_f(s)^9 D(s)$ is entire and not identically zero, we find $z_0 \in \C$ in a $1/4$-neighborhood of~$0$ such that $g_c(z_0)^9 g_f(z_0)^9 D(z_0) \not = 0$. Applying Jensen's formula \cite[Sect.~XII, Theorem~1.2]{LangComplexAnalysis} (or the weaker Number of Zeros Theorem by Titchmarsh~\cite{Titchmarsh_book}) we find $C > 0$ such that for any $r > 1$ the number of zeros of $g_c(s)^9 g_f(s)^9 D(s)$ in the ball $\{\abs{s - z_0} \leq r\}$ is bounded by $C r^2$. Thus,  for any $r > 1$ the number of zeros of $D(s)$ in the ball $\{\abs{s} \leq r\}$ is bounded by $C r^2$ for some $C > 0$.
\end{proof}

For completeness we detail the proof of Theorem~\ref{thm:upper-bound}.

\begin{proof}[Proof of Theorem~\ref{thm:upper-bound}]
By Lemma~\ref{lem:zeros-determinant}, the resonance counting function~$N_{X,\twist}$ is bounded above by the resonance counting function for the funnel ends of~$X$,  the zero counting function for the map~$D$ that is defined in Lemma~\ref{lem:zeros-determinant} and a constant of at most~$6$. By  Remark~\ref{rem:upper-bound-funnel} and Proposition~\ref{prop:zeros-determinant}, the two relevant counting functions are of order~$O(r^2)$ as $r\to\infty$. Thus, 
\[
 N_{X,\twist}(r) = O(r^2) \qquad \text{as $r \to \infty$}\,,
\]
as claimed. 
\end{proof}

\bibliography{ap_bib} 

\providecommand{\bysame}{\leavevmode\hbox to3em{\hrulefill}\thinspace}
\providecommand{\MR}{\relax\ifhmode\unskip\space\fi MR }
\providecommand{\MRhref}[2]{%
  \href{http://www.ams.org/mathscinet-getitem?mr=#1}{#2}
}
\providecommand{\href}[2]{#2}
\begin{thebibliography}{EMOT81}

\bibitem[ABHN11]{Arendt}
W.~Arendt, Ch. J.~K. Batty, M.~Hieber, and F.~Neubrander, \emph{Vector-valued
  {L}aplace transforms and {C}auchy problems}, second ed., Monographs in
  Mathematics, vol.~96, Birkh\"{a}user/Springer Basel AG, Basel, 2011.

\bibitem[Alp87]{Alperin}
R.~C. Alperin, \emph{An elementary account of {S}elberg's lemma}, Enseign.
  Math. (2) \textbf{33} (1987), no.~3-4, 269--273.

\bibitem[Bea83]{Beardon}
A.~Beardon, \emph{The geometry of discrete groups}, Graduate Texts in
  Mathematics, vol.~91, Springer-Verlag, New York, 1983.

\bibitem[Boa54]{Boas_entire}
R.~P. Boas, Jr., \emph{Entire functions}, Academic Press Inc., New York, 1954.

\bibitem[{Bor}16]{Borthwick_book}
D.~{Borthwick}, \emph{{Spectral theory of infinite-area hyperbolic surfaces.
  2nd edition}}, 2nd edition ed., Basel: Birkh\"auser/Springer, 2016.

\bibitem[Cas76]{Cassels_selberglemma}
J.~W.~S. Cassels, \emph{An embedding theorem for fields}, Bull. Austral. Math.
  Soc. \textbf{14} (1976), no.~2, 193--198.

\bibitem[CR02]{ChenRuan2001}
W.~Chen and Y.~Ruan, \emph{Orbifold {G}romov-{W}itten theory}, Orbifolds in
  mathematics and physics ({M}adison, {WI}, 2001), Contemp. Math., vol. 310,
  Amer. Math. Soc., Providence, RI, 2002, pp.~25--85.

\bibitem[Ebe96]{Eberlein}
P.~Eberlein, \emph{Geometry of nonpositively curved manifolds}, Chicago
  Lectures in Mathematics, University of Chicago Press, Chicago, IL, 1996.

\bibitem[EMOT81]{ErdelyiI}
A.~Erd\'{e}lyi, W.~Magnus, F.~Oberhettinger, and F.~G. Tricomi, \emph{Higher
  transcendental functions. {V}ol. {I}}, Robert E. Krieger Publishing Co.,
  Inc., Melbourne, Fla., 1981.

\bibitem[FPR]{FPR}
K.~Fedosova, A.~Pohl, and J.~Rowlett, \emph{Fourier expansions of twisted
  automorphic functions}, in preparation.

\bibitem[GR70]{Garland_Raghunathan}
H.~{Garland} and M.~S. {Raghunathan}, \emph{{Fundamental domains for lattices
  in (R-)rank 1 semisimple Lie groups}}, {Ann. Math. (2)} \textbf{92} (1970),
  279--326.

\bibitem[GR14]{Gradshteyn}
I.~S. Gradshteyn and I.~M. Ryzhik, \emph{Table of integrals, series, and
  products}, Academic press, 2014.

\bibitem[Gru96]{GrubbFunctionalCalculus}
G.~Grubb, \emph{Functional calculus of pseudodifferential boundary problems},
  second ed., Progress in Mathematics, vol.~65, Birkh\"{a}user Boston, Inc.,
  Boston, MA, 1996.

\bibitem[{Gui}92]{Guillope}
L.~{Guillop\'e}, \emph{{Fonctions z\^eta de Selberg et surfaces de
  g\'eom\'etrie finie}}, {Zeta functions in geometry}, Tokyo: Kinokuniya
  Company Ltd., 1992, pp.~33--70.

\bibitem[Gui05]{Guillarmou05}
C.~Guillarmou, \emph{Meromorphic properties of the resolvent on asymptotically
  hyperbolic manifolds}, Duke Math. J. \textbf{129} (2005), no.~1, 1--37.

\bibitem[GZ95]{GuZw95}
L.~Guillop\'{e} and M.~Zworski, \emph{Upper bounds on the number of resonances
  for non-compact {R}iemann surfaces}, J. Funct. Anal. \textbf{129} (1995),
  no.~2, 364--389.

\bibitem[GZ97]{GuZw97}
\bysame, \emph{Scattering asymptotics for {R}iemann surfaces}, Ann. of Math.
  (2) \textbf{145} (1997), no.~3, 597--660.

\bibitem[Hej76]{Hejhal1}
D.~Hejhal, \emph{The {S}elberg trace formula for {${\rm PSL}(2,\,{\bf R})$}.
  {V}ol. {I}}, Lecture Notes in Mathematics, Springer-Verlag, Berlin, 1976.

\bibitem[Hej83]{Hejhal2}
\bysame, \emph{The {S}elberg trace formula for {${\rm PSL}(2,\,{\bf R})$}.
  {V}ol. 2}, Lecture Notes in Mathematics, vol. 1001, Springer-Verlag, Berlin,
  1983.

\bibitem[Kat95]{Kato}
T.~Kato, \emph{Perturbation theory for linear operators}, Classics in
  Mathematics, Springer-Verlag, Berlin, 1995, Reprint of the 1980 edition.

\bibitem[Kg81]{KumanoGo}
H.~Kumano-go, \emph{Pseudodifferential operators}, MIT Press, Cambridge,
  Mass.-London, 1981, Translated from the Japanese by the author, R\'{e}mi
  Vaillancourt and Michihiro Nagase.

\bibitem[Lan99]{LangComplexAnalysis}
S.~Lang, \emph{Complex analysis}, fourth ed., Graduate Texts in Mathematics,
  vol. 103, Springer-Verlag, New York, 1999.

\bibitem[Mel96]{Melrosemwc}
R.~B. Melrose, \emph{Differential analysis on manifolds with corners},
  unpublished, 1996,
  \href{http://www-math.mit.edu/~rbm/book.html}{www-math.mit.edu/$\sim$rbm/book.html}.

\bibitem[MM87]{MaMe87}
R.~Mazzeo and R.~B. Melrose, \emph{Meromorphic extension of the resolvent on
  complete spaces with asymptotically constant negative curvature}, J. Funct.
  Anal. \textbf{75} (1987), no.~2, 260--310.

\bibitem[Olv97]{Olver74}
F.~W.~J. Olver, \emph{Asymptotics and special functions}, AKP Classics, A K
  Peters, Ltd., Wellesley, MA, 1997.

\bibitem[Phi97]{Phillips_perturb_twist}
R.~Phillips, \emph{Perturbation theory for twisted automorphic functions},
  Geom. Funct. Anal. \textbf{7} (1997), no.~1, 120--144.

\bibitem[Phi98]{Phillips_scattering_twist}
\bysame, \emph{Scattering theory for twisted automorphic functions}, Trans.
  Amer. Math. Soc. \textbf{350} (1998), no.~7, 2753--2778.

\bibitem[Sel60]{Selberg_lemma}
A.~Selberg, \emph{On discontinuous groups in higher-dimensional symmetric
  spaces}, Contributions to function theory (internat. {C}olloq. {F}unction
  {T}heory, {B}ombay, 1960), Tata Institute of Fundamental Research, Bombay,
  1960, pp.~147--164.

\bibitem[{Tit}58]{Titchmarsh_book}
E.~C. {Titchmarsh}, \emph{{The Theory of Functions}}, Oxford University Press,
  Oxford, 1958, Reprint of the second (1939) edition.

\bibitem[Ven82]{Venkov_book}
A.~Venkov, \emph{Spectral theory of automorphic functions}, Proc. Steklov Inst.
  Math. (1982), no.~4(153), ix+163 pp., A translation of Trudy Mat. Inst.
  Steklov. {{\bf{153}}} (1981).

\bibitem[Vod94]{Vodev94}
G.~Vodev, \emph{Sharp bounds on the number of scattering poles in
  even-dimensional spaces}, Duke Math. J. \textbf{74} (1994), no.~1, 1--17.

\bibitem[Wat66]{Watson44}
G.N. Watson, \emph{Treatise on the theory of {B}essel functions}, 2nd ed.,
  Cambridge Univ. Press, 1966.

\end{thebibliography}
\bibliographystyle{amsalpha}

\setlength{\parindent}{0pt}

\end{document}